\newtheorem{thm}{Theorem}[section]
\newtheorem{lem}{Lemma}[section]
\newtheorem{cor}{Corollary}[section]
\newtheorem{prop}{Proposition}[section]
\newtheorem{rem}{Remark}[section]
\theoremstyle{definition}
\begin{document}
\numberwithin{equation}{section}

 \title[Flag-like  singular integrals and    associated  Hardy spaces ]{Flag-like  singular integrals and   associated Hardy spaces on  a kind of     nilpotent Lie
 groups of
step two}
\author {Wei Wang${}^\dag$ and Qingyan Wu${}^\ddag$}
\thanks{$  \dag$ School of Mathematical Science, Zhejiang University (Zijingang campus),  Zhejiang 310058, China, Email: wwang@zju.edu.cn;}
\thanks{
$ \ddag$ Department of Mathematics,
         Linyi University,
         Shandong  276005, China, Email: qingyanwu@gmail.com}

\subjclass[2010]{42B20, 42B30, 43A85, 32A35}

\begin{abstract} The Cauchy-Szeg\H o singular integral is a fundamental tool in the study of holomorphic $H^p$ Hardy space. But for a kind of Siegel domains, the Cauchy-Szeg\H o kernels are neither product ones nor flag ones  on the  Shilov boundaries, which have the structure of  nilpotent Lie groups $\mathscr N $ of step two.  We use the lifting method to investigate flag-like singular integrals on $\mathscr N $, which includes  these  Cauchy-Szeg\H o ones as a special case.The lifting group is the product $\tilde {\mathscr N }$ of three Heisenberg groups, and   naturally geometric or analytical objects on  $\mathscr N $ are the projection of those on $\tilde {\mathscr N } $.   As in  the   flag case, we introduce  various notions on $\mathscr N $ adapted to  geometric feature of these kernels, such as tubes,  nontangential regions, tube maximal functions,   Littlewood-Paley  functions, tents, shards   and atoms    etc.  They have the feature of tri-parameters, although the second step of the  group  $\mathscr N$ is only $2$-dimensional, i.e. there exists a hidden parameter as in  the   flag case. We also establish  the  corresponding Calder\'on reproducing formula,   characterization of $ L^p(\mathscr N)$ by Littlewood-Paley  functions,    $ L^p$-boundedness of  tube  maximal functions and flag-like singular integrals  and   atomic decomposition of  $H^1$ Hardy space  on $  {\mathscr N } $.
\end{abstract}
\keywords{ flag-like  singular integrals;   lifting method;    nilpotent Lie
 groups   of
step two; Cauchy-Szeg\H o kernels;  tube maximal function;    atomic decomposition of  $H^1$ Hardy spaces}
\thanks{The first author  is supported
by the National Nature Science Foundation in China (NNSF) (No. 12371082). The second author   is supported by NNSF
(Nos. 12171221, 12071197), the Natural Science Foundation of Shandong Province (Nos. ZR2021MA031,
No. 2020KJI002)}
 \maketitle
 \tableofcontents
\section{Introduction }
A {\it  Siegel domain} is  a  domain given by
\begin{equation}\label{eq:Siegel-domain}
   {\mathcal D }: = \left\{ \zeta= (\zeta',\zeta'') \in\mathbb{C}^{N }\times \mathbb{C}^{m};  \operatorname{Im}
   \zeta'' - \Psi(\zeta' ,\zeta' )\in \Omega\right\},
\end{equation}for some regular cone $\Omega\subset    \mathbb{R}^{m}$   and $\Omega$-positive
 Hermitian form
 $\Psi: \mathbb{C}^{N}\times \mathbb{C}^{N }\rightarrow \mathbb{C}^{m}$. The holomorphic  Hardy space $H^p(\mathcal D )$
consists of all holomorphic functions $f $ on $\mathcal D $ such that
\begin{equation}\label{eq:H2}
   \|f\|_{H^p(\mathcal D )}^p=\sup_{y \in \Omega}\int_{ \mathbb{C}^{N }\times\mathbb{R}^{m}}\left|f(\zeta',  x +\mathbf{i}y
   +\mathbf{i}\Psi(\zeta',\zeta' )  )\right|^pdx
   d\zeta'  <\infty.
\end{equation}

 The bidisc is the simplest  Siegel domain   with non-smooth boundary, but compared to the disc,   the
boundary behavior of holomorphic functions and  holomorphic
Hardy space  become much more complicated   by Malliavin-Malliavin
\cite{MM} and Gundy-Stein \cite{GuS} in  1970s. This phenomenon has stimulated  the development of multi-parameter  harmonic
analysis
 since then (cf. e.g. \cite{CF80} \cite{CF85}  \cite{J} \cite{P}). In particular, the definition of a  multi-parameter  atom
 is more complicated than the  one-parameter one.

The next step  is consider the product $ \mathcal U:=    {\mathcal U}_1\times   {\mathcal U}_2$ of two Siegel upper half spaces
\begin{equation}\label{eq:U-alpha}
    {\mathcal U}_\alpha:=\left\{( {\mathbf{z}}_\alpha,  w_\alpha)\in
\mathbb{C}^{n_\alpha } \times \mathbb{C};\rho_\alpha( { \mathbf z}_\alpha
  ,  w_\alpha
):=\operatorname{Im}  {w}_\alpha-|  {\mathbf z}_\alpha|^2>0 \right\}
 ,\qquad \alpha=1,2.
\end{equation}   Its Shilov boundary,    defined by $\rho_1=\rho_2=0$,   has the
structure of the product $ \mathscr H_1\times\mathscr H_2$ of two Heisenberg groups.
In \cite{WW}, the authors proved that  any  function $f $ belonging to  holomorphic  Hardy space $H^1( \mathcal{ U})$ has a boundary distribution $f^b$ in the bi-parameter
Hardy space $  H^1 (\mathscr H_1\times \mathscr H_2)$ on its Shilov boundary. Since  $f^b$ has an atomic decomposition  by recently developed bi-parameter
harmonic analysis (cf. e.g. \cite{CDLWY} \cite{HLPW} \cite{HLW}), the    Cauchy-Szeg\H o projection can be used to produce holomorphic atoms and to decompose a  holomorphic $H^1( \mathcal{ U})$ function into a sum
of holomorphic atoms.

Harmonic analysis  on the Shilov boundaries of general Siegel domains has the feature of multi-parameters and   plays  an
important
role in the understanding of
boundary behavior of holomorphic functions  and holomorphic   Hardy spaces on  these domains. Notably, the  Cauchy-Szeg\H o kernels on Siegel domains are usually   complicated singular integral kernels, some of which do not fall within the scope of known classes of kernels.   It is known that   for a tube domain  (i.e. Siegel domains \eqref{eq:Siegel-domain} with $m=0$), it is the sum of flag kernels \cite{NRS}.
In this paper, we will consider a family of more complicated
Siegel domains:
\begin{equation}\label{eq:U}
    {\mathcal D} =\Big\{(\mathbf {z},{\mathbf w})\in
\mathbb{C}^{n_1+ n_2+ n_3}\times\mathbb{C}^2 ;\rho_\alpha(
 { \mathbf z}_\alpha,   w_\alpha
):=\operatorname{Im}  {w}_\alpha-|  {\mathbf z}_\alpha|^2-| {\mathbf z}_3 |^2>0, \alpha=1,2\Big\},
\end{equation} where $   {\mathbf{z}}=(  {\mathbf{z}}_1,  {\mathbf{z}}_2,  {\mathbf{z}}_3)\in \mathbb{C}^{n_1 }\times \mathbb{C}^{ n_2 }\times \mathbb{C}^{  n_3}$,
$\mathbf  w=(w_1,w_2)\in \mathbb{C}^2$.
Its Shilov boundary,   defined by $\rho_1=\rho_2=0$,  has the structure of   a nilpotent Lie group $\mathscr N$ of
step two, which  is  $
      \mathbb{C}^{N}\times \mathbb{R}^2$, $N:=n_1+ n_2+ n_3$,  with the multiplication given by
 \begin{equation}\label{eq:multiplication1}
   ( \mathbf{z} , \mathbf{t} ) ( \mathbf{z} ' ,\mathbf{t}  ')= \Big(\mathbf{z}  +\mathbf{z} ',t_1 +t_1 '+ \Phi_1(
   \mathbf{z}_1,  \mathbf{z}_1
   ')+  \Phi_3(
   \mathbf{z}_3,  \mathbf{z}_3
   ' ),t_2 +t_2 '+ \Phi_2(
   \mathbf{z}_2 ,  \mathbf{z}_2
   ')+  \Phi_3(
   \mathbf{z}_3,  \mathbf{z}_3
   ' )  \Big),
\end{equation}  where $\mathbf{t}=(t_1,t_2)$, $\mathbf{t}'=(t_1',t_2')\in \mathbb{R}^2$, and
\begin{equation*}
   \Phi_\mu(\mathbf{z}_\mu,\mathbf{z}_\mu')=2 {\rm Im}  \langle
   \mathbf{z}_\mu,  \mathbf{z}_\mu
   '\rangle .
\end{equation*}Here $\langle \cdot ,  \cdot\rangle $ is the standard Hermitian inner product on $\mathbb{C}^{
n_\mu}$, $\mu=1,2,3$.

When restricted to the group $\mathscr N$, the  Cauchy-Szeg\H o    kernel (cf. Corollary \ref{cor:flat-Szego1}) is
\begin{equation}\begin{split}\label{eq:singular-kernel}
      \frac { 1 }{ 4^2(\frac \pi
 2)^{N+2}} \sum_{k=0}^{n_3}\binom {n_3}{k}
      \frac {( n_1+k)!}{ (|{\mathbf z}_1 |^2+|{\mathbf z}_3 |^2  - \mathbf{i}  t_1  ) ^{n_1+k+1}    }  \cdot \frac {( n_2+n_3-k)!}{
      (|{\mathbf z}_2 |^2+|{\mathbf z}_3 |^2 - \mathbf{i}  t_2  ) ^{n_2+ n_3-k+1}    }.
\end{split} \end{equation}
To see it belonging to a new kind of singular integral kernels, recall the definition  of   product kernels
 and  flag kernels \cite{NS}. Consider   a
decomposition $\mathbb{R}^M=\mathbb{R}^{m_1}\times\cdots \times\mathbb{R}^{m_n}$ into $n$ homogeneous subspaces with given weighted
dilations, and denote the elements of $\mathbb{R}^M$ by $n$-tuples $\mathbf{x}=(\mathbf{x}_1 , \cdots, \mathbf{x}_n)$.
A {\it product kernel on $\mathbb{R}^M$}  relative to this decomposition
  is a distribution $K$ on $\mathbb{R}^M$ which coincides with a smooth function
away from the coordinate subspaces $\mathbf{x}_j=\mathbf{0}$, and satisfies
the size
estimates:    for each multi-index $\alpha:=(\alpha_1 , \cdots, \alpha_n)$,
there is a constant $C_\alpha$ so that
\begin{equation}\label{eq:size-estimates-product}
   |\partial_{\mathbf{x}_1}^{\alpha_1} \cdots  \partial_{\mathbf{x}_n}^{\alpha_n}K(\mathbf{x} )|\leq   C_\alpha  | \mathbf{x}_1  |^{-Q_1-|{\alpha_1}| }\cdots   |
   \mathbf{x}_n  |^{-Q_n- |{\alpha_n}| }
\end{equation}away from the coordinate subspaces,
and also satisfies suitable cancellation conditions, where $ Q_j$ is  the homogeneous dimension of $\mathbb{R}^{m_j}$ and   $|\mathbf{x}_j |$ is
a smooth homogeneous norm on $\mathbb{R}^{m_j}$. Relative to the {\it flag}
\begin{equation*}
   0 \subset V_1\subset\cdots V_{n-1}\subset  \mathbb{R}^M \qquad {\rm with} \quad  V_j=\mathbb{R}^{m_1}\times\cdots \times\mathbb{R}^{m_j},
\end{equation*}
a {\it flag kernel}  is a distribution $K$ on $\mathbb{R}^M$ which coincides with a smooth function
away from   $\mathbf{x}_n=\mathbf{0}$, and satisfies
the size
estimates:
\begin{equation}\label{eq:size-estimates-flag}
   |\partial_{\mathbf{x}_1}^{\alpha_1} \cdots  \partial_{\mathbf{x}_n}^{\alpha_n}K(\mathbf{x} )|\leq   C_\alpha  (| \mathbf{x}_1  |+\cdots+ | \mathbf{x}_n
   |)^{-Q_1-|{\alpha_1}| }\cdots   (| \mathbf{x}_{n -1}
   |+| \mathbf{x}_n  |)^{-Q_{n -1}- |{\alpha_{n -1}}| } | \mathbf{x}_n  |^{-Q_n- |{\alpha_n}| }
\end{equation}for $\mathbf{x}_n \neq \mathbf{0}$, and also satisfies suitable cancellation conditions.

The
Cauchy-Szeg\H o  kernel in \eqref{eq:singular-kernel} is neither a  product one, nor a flag one according to their definitions. Note that
two factors in \eqref{eq:singular-kernel} are  homogenous singular kernels on subspaces
\begin{equation}\label{eq:multiplication_V}
    {\mathscr V}_1 :=   \mathbb{C}^{n_1+   n_3}\times \mathbb{R}_{t_1}, \qquad  {\mathscr V}_2 :=   \mathbb{C}^{n_2+   n_3}\times \mathbb{R}_{t_2},
\end{equation}  with singularities on $\mathbb{C}^{   n_2}$ and $\mathbb{C}^{   n_1}$, respectively.
${\mathscr V}_1$ and  ${\mathscr V}_2$  both have structures of the Heisenberg groups, while their interaction ${\mathscr V}_1\cap {\mathscr V}_2=\mathbb{C}^{
n_3}$ is
nonempty. But  the powers in \eqref{eq:singular-kernel} depend on $k$, and so are different from the homogeneous dimensions $Q_\alpha=2n_\alpha+
2n_3+2$, $\alpha=1,2 $.
To understand the behavior of Cauchy-Szeg\H o singular integral and holomorphic  Hardy space on this class of  Siegel  domains, we    develop  necessary
tools of harmonic
analysis  to handle   singular integral operators with this new type   kernels and associated new Hardy spaces on the group $\mathscr N$. Further applications to
holomorphic  Hardy spaces
on this kind of  Siegel  domains will be discussed in a separate  paper.

The Cauchy-Szeg\H o  kernel in \eqref{eq:singular-kernel}  involves  two   metrics on the Heisenberg groups  ${\mathscr V}_1$ and $ {\mathscr V}_2$, respectively.
The general idea to handle   complicated singular integral operators involving the conflicting metrics  is   ``lifting" to a
 product  or ``simpler" situation.   This powerful idea
has already appeared in different forms in the study of the sub-Laplacian associated to H\"ormander's vector fields by Rothschild-Stein
\cite{RS},  in the study of  Marcinkiewicz multipliers on the Heisenberg group by M\"uller-Ricci-Stein \cite{MRS,MRS2},
  and then in the study of  Kohn-Laplacian $\Box_b$ on quadratic CR manifolds of
higher-codimensions  by Nagel-Ricci-Stein
\cite{NRS} and on     rigid decoupled hypersurfaces  of finite type in $\mathbb{C}^n$ by Nagel-Stein
 \cite{NS} etc. By lifting   the Heisenberg group  $\mathscr  H$ to the product of    $\mathscr  H \times \mathbb{R}$, M\"uller-Ricci-Stein \cite{MRS} introduced
 the flag
 kernels  on  $\mathscr  H$ as the the projection of  standard convolution   kernels of bi-parameters on $\mathscr  H \times
 \mathbb{R}$. They also characterized
flag  kernels directly in terms of the size
estimates and the cancellation conditions on $\mathscr  H  $,  and proved their $L^p$-boundedness for $1<p<\infty$.  The notions of  flag  maximal functions,  flag
Littlewood-Paley  functions,   flag  Calder\'on
reproducing formula, and   flag  $H^p$ Hardy spaces etc. were introduced and developed by
 Han-Lu-Sawyer \cite{HLS}. Based on fractal tiling   of    the Heisenberg  group,
Chen-Cowling-Lee-Li-Ottazzi \cite{CCLLO} constructed shards, as stacks of tiles, to obtain
a ``dyadic decomposition" of  the Heisenberg group   that is adapted to flag  singular integrals, by which they could establish the  atomic decomposition of  flag
$H^1$ Hardy space  and the equivalence of various characterizations of  flag   Hardy space. Recently, the theory of flag singular integrals has developed   rapidly
(cf. \cite{CHW,CCLLO,DLOPW,HLLW,HLS,HLW19,NRSW12,NRSW} and the references therein).

In our case, the lifting group   of  $\mathscr N$ is the product
$
    \tilde{\mathscr  N}:=  {\mathscr H}_1\times {\mathscr H}_2\times {\mathscr H}_3
$ of three
    Heisenberg groups, where  $ {\mathscr H}_\mu:=\mathbb{C}^{ n_\mu }\times  \mathbb{R}$  with the
multiplication given by
\begin{equation}\label{eq:multiplication3}
   (\mathbf{z}_\mu ,t_\mu ) ( \mathbf{z}_\mu' ,t_\mu ')= \left(\mathbf{z}_\mu +\mathbf{z}_\mu', t_\mu +t_\mu '+  \Phi_\mu(
   \mathbf{z}_\mu,  \mathbf{z}_\mu
   ')     \right),
\end{equation}$\mu=1,2,3$.
 To pass   objects   on the lifting  group $\tilde{\mathscr  N} $  to   ones on the group $\mathscr  N$, we use
the   projection $ \pi: \tilde{\mathscr  N}=\mathbb{ C} ^N \times\mathbb{R}^3\rightarrow\mathscr  N=\mathbb{ C} ^N \times\mathbb{R}^2$ defined by
\begin{equation}\label{eq:pi}\begin{split}
   (\mathbf{z},\mathbf{u}) \mapsto (\mathbf{z},u_1+u_3,u_2+u_3),
\end{split} \end{equation}for  $  \mathbf{z} \in \mathbb{ C} ^N,$ $ \mathbf{u}=(u_1 ,u_2,u_3)\in\mathbb{R}^3 $ and $  N=  n_1+  n_2
   + n_3 $.
   The fiber of the  projection $\pi$ over the point $(\mathbf{z},\mathbf{t})\in \mathscr  N$  is the straight line
 \begin{equation}\label{eq:fiber}
    \pi^{-1}(\mathbf{z},\mathbf{t})=\left\{(\mathbf{z}, t_1-u,t_2-u,u ); u\in \mathbb{R}\right\}.
 \end{equation}
 In particular, $\pi$ is a homomorphism of groups
   with the kernel to be the $1$-dimensional Abelian subgroup $
 \pi^{-1}(\mathbf{0}_{\mathbf{z}},\mathbf{0}_{\mathbf{t}})=\{(\mathbf{0}_{\mathbf{z}},-u,-u,u); u\in\mathbb{ R}\}$.

For an   $  L^1 $-function   $F$ on $\tilde{\mathscr  N}$, we define the {\it push-forward function} $\pi_*F$ on $\mathscr  N$
simply to be the integral of $F$ along the fiber as
 \begin{equation}\label{eq:transfer-F}
  \left ( \pi_*F\right)(\mathbf{z},\mathbf{t}):=\int_{ \mathbb{R} }F(\mathbf{z}, t_1-u,t_2-u,u)du.
 \end{equation}
 We introduce the lifting given by  \eqref{eq:pi} because the projection of the Cauchy-Szeg\H o  kernel on $\tilde{\mathscr  N}$ by $\pi$ is exactly
 the Cauchy-Szeg\H o  kernel   \eqref{eq:singular-kernel} on the group $\mathscr  N$ (see Section 2.3 for the details).

Since
  $\tilde{\mathscr  N}$ has three commutative dilations, a natural ball
  is the product of three balls in the Heisenberg groups $ {\mathscr H}_\mu$'s, respectively. Thus a natural ball in the group $\mathscr  N$ is the image of such a
  product under the projection   $\pi$. Motivated by this, we introduce the notion of a tube $ T(\mathbf{g},\mathbf{r})$
for $\mathbf{g}\in {\mathscr  N}$ and $  \mathbf{r} :=\left({r_1} ,
          {r_2} ,r_3 \right)\in \mathbb{  R}^3_+$. It plays the role of a ball  for $\mathscr  N$,  and has the feature of tri-parameters, although the second step
          of the  group  $\mathscr N$ is only $2$-dimensional. Namely, there exists a hidden parameter
as in the theory of flag   singular integrals.
Define the {\it tube maximal function}   as
\begin{equation*}
   M   (f)(\mathbf{g})=\sup_{\mathbf{r}\in \mathbb{R}^3_+} \frac 1{| T(\mathbf{g},\mathbf{r})|}\int_{  T(\mathbf{g},\mathbf{r})}|f (\mathbf{h})|d\mathbf{h}.
\end{equation*}
\begin{thm}\label{thm:maximal} For $1<p<\infty$,    tube  maximal function  $ M  $ is bounded from $L^p(\mathscr N)$ to $L^p(\mathscr N)$.
 \end{thm}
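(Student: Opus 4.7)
My strategy is the lifting philosophy invoked throughout the paper: control $Mf$ on $\mathscr N$ by a strong (product) maximal function $\tilde M$ on the lifting group $\tilde{\mathscr N}=\mathscr H_1\times\mathscr H_2\times\mathscr H_3$. Here $\tilde M$ is the supremum of averages over product Koranyi balls $\tilde B(\tilde g,\mathbf r)=B_1(g_1,r_1)\times B_2(g_2,r_2)\times B_3(g_3,r_3)$. It is bounded on $L^p(\tilde{\mathscr N})$ for $1<p<\infty$ because it factors as iterated Hardy--Littlewood maximal operators on the doubling spaces $\mathscr H_\mu$.

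To pass from $f$ on $\mathscr N$ to a function on $\tilde{\mathscr N}$, fix a non-negative bump $\phi\in C_c^\infty(\mathbb R)$ with $\int\phi=1$ and, for $f\ge 0$, define the lift
$$F(\mathbf z,u_1,u_2,u_3):= f(\mathbf z,u_1+u_3,u_2+u_3)\,\phi(u_3).$$
The change of variables $(u_1,u_2)\mapsto(u_1+u_3,u_2+u_3)$ (with Jacobian $1$) yields $\|F\|_{L^p(\tilde{\mathscr N})}=\|\phi\|_{L^p(\mathbb R)}\|f\|_{L^p(\mathscr N)}$, and by construction $\pi_*F=f$.

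The core technical step is a pointwise domination
$$Mf(\mathbf g)\le C\,\tilde M F\bigl(\tilde g_0(\mathbf g)\bigr),\qquad \tilde g_0(\mathbf g):=(\mathbf z,t_1,t_2,0),$$
which I would prove as follows. Using $T(\mathbf g,\mathbf r)=\pi(\tilde B(\tilde g_0(\mathbf g),\mathbf r))$ and Fubini along the one-dimensional fiber of $\pi$, the strong average of $F$ over $\tilde B$ becomes a weighted integral of $f$ over $T(\mathbf g,\mathbf r)$, the weight being $\omega(\mathbf h)=\int\phi(u_3)\chi_{\tilde B}(\mathbf z',t_1'-u_3,t_2'-u_3,u_3)\,du_3$. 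Koranyi-ball geometry in $\mathscr H_3$ yields a uniform lower bound on $\omega$ over a comparable sub-tube, while the ratio $|\tilde B|/|T(\mathbf g,\mathbf r)|$ matches the fiber length $|\pi^{-1}(\mathbf h)\cap\tilde B|$ up to constants; together these imply that the weighted average dominates $|T|^{-1}\int_T f$. The global $L^p$ bound then follows by Fubini on the slab $\{|u_3|\le\delta\}\subset\tilde{\mathscr N}$:
$$\|Mf\|_{L^p(\mathscr N)}^p\le C^p\int_{\mathscr N}|\tilde M F(\tilde g_0(\mathbf g))|^p\,d\mathbf g\le C'\,\|\tilde M F\|_{L^p(\tilde{\mathscr N})}^p\le C''\,\|\phi\|_p^p\,\|f\|_p^p.$$

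The main obstacle is proving the pointwise bound \emph{uniformly} across all scales $\mathbf r\in\mathbb R_+^3$: the fixed bump $\phi$ is ``active'' only at a definite scale of $u_3$, whereas $r_3$ can be arbitrarily large or small. This is precisely the ``hidden parameter'' phenomenon highlighted in the introduction. A standard remedy, familiar from the flag theory of \cite{MRS,HLS}, is to decompose the supremum dyadically in $r_3$ and employ a scale-adapted family of bumps $\{\phi_k\}$, or equivalently to translate the cross-section $\tilde g_0$ along the fiber so that the support of $\phi$ aligns with the current $r_3$-scale. With this in place, the remaining ingredients---doubling of tubes, the identity $T=\pi(\tilde B)$, and the iterated Hardy--Littlewood theorem on $\tilde{\mathscr N}$---are routine.
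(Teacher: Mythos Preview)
Your lifting-and-transference strategy is natural, but the obstacle you flag is genuine and the remedies you sketch do not close it. The pointwise domination $Mf(\mathbf g)\le C\,\tilde M F(\tilde g_0(\mathbf g))$ must hold at a \emph{single} lift point $\tilde g_0(\mathbf g)$ uniformly in $\mathbf r$, yet the weight $\omega$ you write down satisfies $\omega\le\int\phi=1$, while the comparison you need is $\omega\gtrsim |\tilde B|/|T|\sim |\pi^{-1}(\mathbf h)\cap\tilde B|$, and the latter is unbounded as the scales grow. A scale-adapted family $\{\phi_k\}$ produces a family of lifts $F_k$, and you are then left with $Mf(\mathbf g)\lesssim\sup_k\tilde MF_k(\tilde g_0(\mathbf g))$, whose $L^p$ norm is not obviously controlled by a single $\|F_k\|_p$. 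Translating $\tilde g_0$ along the fiber as a function of $\mathbf r$ only yields $Mf(\mathbf g)\lesssim\sup_v\tilde MF(\mathbf z,t_1-v,t_2-v,v)$, a fiberwise supremum that is again not dominated by $\|\tilde MF\|_p$. So as written there is a real gap.

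The paper takes a different and much shorter route that avoids lifting $f$ entirely. The point is that the push-forward $\pi_*$ is applied only to the \emph{kernel}: by Proposition~\ref{prop:tube} one has $M\approx M_{it}$, and the structural identity in Lemma~\ref{lem:hat-convolution} says that convolution on $\mathscr N$ with $\chi_{\mathbf r}=\pi_*(\chi^{(1)}_{r_1}\chi^{(2)}_{r_2}\chi^{(3)}_{r_3})$ factors as an iterated convolution along the three mutually commuting embedded Heisenberg subgroups $\tau_\mu(\mathscr H_\mu)\subset\mathscr N$,
\[
|f|*\chi_{\mathbf r}=\bigl((|f|*_1\chi^{(1)}_{r_1})*_2\chi^{(2)}_{r_2}\bigr)*_3\chi^{(3)}_{r_3}\le M_3\circ M_2\circ M_1(|f|).
\]
Hence $M_{it}f\le M_3\circ M_2\circ M_1(|f|)$ pointwise on $\mathscr N$, and each $M_\mu$ is bounded on $L^p(\mathscr N)$ by the one-parameter Heisenberg maximal theorem applied slice by slice. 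No bump, no dyadic decomposition, and no scale-matching device is needed: because $\tau_1(\mathscr H_1),\tau_2(\mathscr H_2),\tau_3(\mathscr H_3)$ commute inside $\mathscr N$, the projected kernel already splits, and the ``hidden parameter'' issue simply does not arise.
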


 For an   $  L^1 $-function   $\varphi^{
(\mu)}$
on $ {\mathscr
  {H}}_\mu$, let $\varphi_{r_\mu}^{
(\mu)}$ be
the normalised dilates  and let $\chi_{r_\mu}^{
(\mu)}$ be the normalised characteristic function of the ball $  {B}_\mu(\mathbf{0}_\mu,r_\mu) $ of $ {\mathscr
  {H}}_\mu$. Set
 \begin{equation}\label{eq:varphi-r0}
    \varphi_{\mathbf{r} }:= \pi_* \left(\varphi_{r_1}^{
(1)}   \varphi_{r_2}^{
(2)}\varphi_{r_3}^{
(3)}\right),\qquad \chi_{\mathbf{r} }:= \pi_*  \left(\chi_{r_1}^{
(1)}   \chi_{r_2}^{
(2)}\chi_{r_3}^{
(3)}\right ) ,
 \end{equation}for  $\mathbf{r} :=\left({r_1} ,
          {r_2} ,r_3 \right)\in \mathbb{  R}^3_+  $. Denote by  $ {*}$ and $\tilde{*}$ the convolutions of functions on the groups $  \mathscr N  $ and $ \tilde{{\mathscr N}}
$, respectively.
  We establish
the Calder\'on
reproducing formula on $\mathscr N $.
 \begin{thm}\label{prop:reproducing}
    Suppose that   $\varphi^{
(\mu)}$
is Poisson bounded  on $ {\mathscr H}_\mu$ and w-invertible   with w-inverses $\psi^{
(\mu)}$, $\mu=1,2,3$. Then for $ f \in    L
^1\cap L
^2
( {\mathscr N})$, we have
\begin{equation}\label{eq:reproducing} f =
\int_{\mathbb{R}^3_+} f*\varphi_ {\mathbf{r}}*\psi_ {\mathbf{r}} \frac { d\mathbf{r}}{\mathbf{r}},\qquad \frac {d\mathbf{r}}{\mathbf{r}}=\frac
{dr_1}{r_1}\frac {dr_2}{r_2}\frac {dr_3}{r_3}.
 \end{equation}
 \end{thm}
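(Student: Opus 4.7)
The plan is to exploit the lifting setup developed in Section~2: reduce the identity on $\mathscr N$ to the easier tri-parameter Calder\'on reproducing formula on the product group $\tilde{\mathscr N}=\mathscr H_1\times\mathscr H_2\times\mathscr H_3$, then transport the formula back through the projection $\pi$ of \eqref{eq:pi}. The key engine is the fact that $\pi$ is a group homomorphism, which forces the compatibility
\begin{equation*}
\pi_*(F_1\,\tilde{*}\,F_2)=(\pi_*F_1)*(\pi_*F_2),\qquad F_1,F_2\in L^1(\tilde{\mathscr N}).
\end{equation*}
This identity is an elementary Fubini-style calculation: test both sides against $\phi\in C_c(\mathscr N)$ and use $\pi(\tilde g_1\tilde g_2)=\pi(\tilde g_1)\pi(\tilde g_2)$ together with the change of variables $d\tilde g=dg\,du$ along the fibers \eqref{eq:fiber}. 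Iterating twice and using \eqref{eq:varphi-r0}, one obtains, for any $\tilde F\in L^1(\tilde{\mathscr N})$,
\begin{equation*}
\pi_*\bigl(\tilde F\,\tilde{*}\,(\varphi^{(1)}_{r_1}\varphi^{(2)}_{r_2}\varphi^{(3)}_{r_3})\,\tilde{*}\,(\psi^{(1)}_{r_1}\psi^{(2)}_{r_2}\psi^{(3)}_{r_3})\bigr)=(\pi_*\tilde F)*\varphi_{\mathbf r}*\psi_{\mathbf r}.
\end{equation*}

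Next I would exhibit an explicit lift $\tilde F$ of $f$. Using the fiber parametrisation \eqref{eq:fiber}, fix any $\eta\in C_c^\infty(\mathbb R)$ with $\int_{\mathbb R}\eta=1$ and set
\begin{equation*}
\tilde F(\mathbf z,u_1,u_2,u_3):=f(\mathbf z,u_1+u_3,u_2+u_3)\,\eta(u_3).
\end{equation*}
A direct computation gives $\pi_*\tilde F=f$, and Fubini yields $\|\tilde F\|_{L^p(\tilde{\mathscr N})}=\|\eta\|_{L^p(\mathbb R)}\|f\|_{L^p(\mathscr N)}$ for $p=1,2$, so $\tilde F\in L^1\cap L^2(\tilde{\mathscr N})$.

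The Poisson-bound and $w$-invertibility hypotheses on each pair $(\varphi^{(\mu)},\psi^{(\mu)})$ give the single-variable Calder\'on reproducing formula on each factor $\mathscr H_\mu$. Because $\tilde{\mathscr N}$ is a direct product and the three dilation parameters $r_\mu$ act independently, iterating this one-variable formula three times (or, equivalently, running a Plancherel argument on the product group) delivers
\begin{equation*}
\tilde F=\int_{\mathbb R^3_+}\tilde F\,\tilde{*}\,(\varphi^{(1)}_{r_1}\varphi^{(2)}_{r_2}\varphi^{(3)}_{r_3})\,\tilde{*}\,(\psi^{(1)}_{r_1}\psi^{(2)}_{r_2}\psi^{(3)}_{r_3})\,\frac{d\mathbf r}{\mathbf r}
\end{equation*}
in $L^2(\tilde{\mathscr N})$. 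Applying $\pi_*$ and invoking the compatibility identity of the first paragraph turns this into
\begin{equation*}
f=\pi_*\tilde F=\int_{\mathbb R^3_+}(\pi_*\tilde F)*\varphi_{\mathbf r}*\psi_{\mathbf r}\,\frac{d\mathbf r}{\mathbf r}=\int_{\mathbb R^3_+}f*\varphi_{\mathbf r}*\psi_{\mathbf r}\,\frac{d\mathbf r}{\mathbf r},
\end{equation*}
which is precisely \eqref{eq:reproducing}.

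The main obstacle is justifying the commutation of $\pi_*$ with $\int_{\mathbb R^3_+}(\cdot)\frac{d\mathbf r}{\mathbf r}$: since $\pi_*$ is not bounded on any $L^p(\tilde{\mathscr N})$ (the fiber is non-compact), one cannot simply pull it past an $L^p$-valued integral. I would handle this by pairing the lifted identity against a test function $g\in C_c^\infty(\mathscr N)$, reducing the matter to the scalar Fubini statement
\begin{equation*}
\int_{\mathbb R^3_+}\int_{\mathscr N}\bigl(f*\varphi_{\mathbf r}*\psi_{\mathbf r}\bigr)(\mathbf h)\,\overline{g(\mathbf h)}\,d\mathbf h\,\frac{d\mathbf r}{\mathbf r}=\int_{\mathscr N}f\overline g,
\end{equation*}
whose absolute convergence in $\mathbf r$ is precisely what the Poisson-bounded/$w$-invertible hypothesis furnishes via the standard square-function estimate on each Heisenberg factor. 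Once this interchange is validated, the remainder of the argument is entirely formal.
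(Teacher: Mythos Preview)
Your proof is correct and follows essentially the same strategy as the paper: lift $f$ to $\tilde{\mathscr N}$, apply the tri-parameter Calder\'on reproducing formula there, and push forward using the convolution compatibility $\pi_*(F\tilde*G)=\pi_*F*\pi_*G$ of Lemma~\ref{lem:convolution}. The only differences are cosmetic---your lift cuts off in the fiber variable $u_3$ rather than the paper's $\chi(t_1+t_2)$ (both give $\pi_*\tilde F=f$ and $\tilde F\in L^1\cap L^2$), and you justify the $\pi_*$--integral interchange by pairing with test functions, whereas the paper simply invokes Fubini pointwise.
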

 The key tool to prove this formula is the following commutativity of convolutions with  $\pi_*$.
 \begin{lem}\label{lem:convolution} For $ F,G  \in L
^1
(\tilde{\mathscr N})$, we have
$\pi_*(F \tilde{*} G)
  = \pi_*(F)*\pi_*(G).
$
     \end{lem}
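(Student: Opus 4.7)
My plan is to exploit the fact emphasized just below \eqref{eq:fiber}: $\pi$ is a group homomorphism whose kernel $K = \{k_u := (\mathbf{0}, -u, -u, u) : u \in \mathbb{R}\}$ is a one-parameter subgroup, and it is in fact \emph{central} in $\tilde{\mathscr N}$, since in \eqref{eq:multiplication3} every $\Phi_\mu$-term vanishes when one of its arguments is $\mathbf{0}$. Consequently, the push-forward \eqref{eq:transfer-F} can be written more intrinsically as $(\pi_* F)(\mathbf{g}) = \int_\mathbb{R} F(\tilde{\mathbf{g}}\cdot k_u)\,du$ for any lift $\tilde{\mathbf{g}}\in \pi^{-1}(\mathbf{g})$, and by centrality this value is independent of the choice of lift.

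With this reformulation in hand, I would substitute the definition of $\tilde{*}$ into $(\pi_*(F \tilde{*} G))(\mathbf{g}) = \int_\mathbb{R} (F \tilde{*} G)(\tilde{\mathbf{g}} k_u)\,du$ and apply Fubini --- legitimate because the hypothesis $F,G \in L^1$ together with Young's inequality make the resulting triple integral absolutely convergent for a.e.\ $\mathbf{g}$ --- to pull the $u$-integration inside:
\begin{equation*}
(\pi_*(F \tilde{*} G))(\mathbf{g}) = \int_{\tilde{\mathscr N}} F(\tilde{\mathbf{h}}) \left[\int_\mathbb{R} G(\tilde{\mathbf{h}}^{-1} \tilde{\mathbf{g}} k_u)\,du\right] d\tilde{\mathbf{h}}.
\end{equation*}
Since $\pi$ is a homomorphism, as $u$ sweeps $\mathbb{R}$ the point $\tilde{\mathbf{h}}^{-1} \tilde{\mathbf{g}} k_u$ runs over the entire fiber $\pi^{-1}(\pi(\tilde{\mathbf{h}})^{-1} \mathbf{g})$ exactly once, so the bracketed integral equals $(\pi_* G)(\pi(\tilde{\mathbf{h}})^{-1} \mathbf{g})$.

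It remains to disintegrate the outer integral along $\pi$. The Euclidean change of variables $(\mathbf{z}, u_1, u_2, u_3) \mapsto ((\mathbf{z}, u_1+u_3, u_2+u_3), u_3)$ identifies $\tilde{\mathscr N}$ with $\mathscr N \times \mathbb{R}$ with unit Jacobian, so Haar (= Lebesgue) measure factors as $d\tilde{\mathbf{h}} = d\mathbf{h}\,du_3$ with $\mathbf{h} = \pi(\tilde{\mathbf{h}})$. Integrating out $u_3$ converts the outer integral into $\int_{\mathscr N}(\pi_* F)(\mathbf{h})(\pi_* G)(\mathbf{h}^{-1} \mathbf{g})\,d\mathbf{h} = ((\pi_* F) * (\pi_* G))(\mathbf{g})$, which is the claimed identity. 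There is no serious obstacle: the whole argument is essentially bookkeeping, resting only on the centrality of $K$ and the triviality of the Jacobian, both of which are immediate from the explicit coordinates in \eqref{eq:multiplication3} and \eqref{eq:pi}. A brute-force alternative --- substituting coordinates and performing direct changes of variables in $(s_1, s_2, v, u) \in \mathbb{R}^4$ --- yields the same identity but obscures the structural reason behind it.
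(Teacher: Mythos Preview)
Your proof is correct. The paper's own argument is precisely the ``brute-force alternative'' you describe at the end: it writes out $\pi_*(F\tilde*G)(\mathbf{z},\mathbf{t})$ in explicit coordinates, expands the convolution, and performs the change of variables $u'=u-\tilde s_3-\Phi_3(\mathbf{w}_3,\mathbf{z}_3)$, $s_\alpha'=\tilde s_\alpha+\tilde s_3$, $s_3'=\tilde s_3$ to recognize the result as $\pi_*(F)*\pi_*(G)$.

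Your route is the same computation organized structurally. By isolating the two facts that drive it --- that $\ker\pi$ is central (so the fiber integral is lift-independent) and that the identification $\tilde{\mathscr N}\cong\mathscr N\times\mathbb{R}$ has unit Jacobian --- you make transparent why the identity holds and why it would hold for any surjective homomorphism of nilpotent Lie groups with central one-dimensional kernel. The paper's coordinate proof, by contrast, verifies the identity directly but leaves these ingredients implicit in the algebra of the substitution. Neither approach requires anything the other does not; yours simply names the mechanism.
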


      For $f \in L
^p
(\mathscr N)$, we define
the {\it    Littlewood-Paley  function} of $f$
as
\begin{equation*}
   g_{\boldsymbol\varphi }(f)(\mathbf{g}):=\left(\int_{\mathbb{R}^3_+}|f* \varphi_{\mathbf{r} }(\mathbf{g})|^2\frac {d\mathbf{r}}{\mathbf{r}}\right)^{\frac 12},
\end{equation*}
for all $\mathbf{g} \in \mathscr N $, where      $\varphi_{\mathbf{r} }(\mathbf{g}) $ is given by    \eqref{eq:varphi-r0}.

\begin{thm} \label{thm:g-function}  Suppose that  Poisson bounded function $\varphi^{
(\mu)}$
  on $ {\mathscr H}_\mu$ has mean value zero, $\mu=1,2,3$.  For    $f \in L
^p
(\mathscr N)$ ($1<p<\infty$), we have
$
      \|g_{\boldsymbol\varphi }(f)\|_p\lesssim \|f\|_p
$. If they are also w-invertible, then $
      \|g_{\boldsymbol\varphi }(f)\|_p\approx \|f\|_p
$.
\end{thm}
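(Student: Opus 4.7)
The plan is to lift the problem to the product group $\tilde{\mathscr N}=\mathscr H_1\times\mathscr H_2\times\mathscr H_3$, apply the classical tri-parameter product Littlewood--Paley theory there, and descend via the push-forward $\pi_*$; the reverse inequality then follows by duality from the Calder\'on reproducing formula of Theorem~\ref{prop:reproducing}. Concretely, fix a non-negative $\eta\in C_c^\infty(\mathbb R)$ with $\int_{\mathbb R}\eta(u)\,du=1$ and lift $f\in L^p(\mathscr N)$ to $\tilde{\mathscr N}$ by $F(\mathbf z,u_1,u_2,u_3):=f(\mathbf z,u_1+u_3,u_2+u_3)\,\eta(u_3)$. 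A direct change of variables yields $\pi_*F=f$ and $\|F\|_{L^p(\tilde{\mathscr N})}=\|\eta\|_{L^p(\mathbb R)}\,\|f\|_{L^p(\mathscr N)}$. Writing $\Phi_{\mathbf r}:=\varphi^{(1)}_{r_1}\varphi^{(2)}_{r_2}\varphi^{(3)}_{r_3}$, Lemma~\ref{lem:convolution} gives $f*\varphi_{\mathbf r}=\pi_*(F\,\tilde{*}\,\Phi_{\mathbf r})$, and Minkowski's inequality for the $L^2(d\mathbf r/\mathbf r)$-norm, applied inside the fiber integral defining $\pi_*$, produces the pointwise domination
\[g_{\boldsymbol\varphi}(f)(\mathbf g)\leq\pi_*\bigl(\tilde g_{\boldsymbol\varphi}(F)\bigr)(\mathbf g),\qquad\tilde g_{\boldsymbol\varphi}(F)(\tilde{\mathbf g})^{2}:=\int_{\mathbb R_+^3}\bigl|F\,\tilde{*}\,\Phi_{\mathbf r}(\tilde{\mathbf g})\bigr|^{2}\,\frac{d\mathbf r}{\mathbf r}.\]

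Since $\tilde{\mathscr N}$ is a direct product of three Heisenberg groups and each $\varphi^{(\mu)}$ is mean-zero and Poisson bounded on $\mathscr H_\mu$, the classical one-parameter Littlewood--Paley theorem on $\mathscr H_\mu$ applies; iterating it in each of the three factors via the Fefferman--Stein vector-valued inequalities yields $\|\tilde g_{\boldsymbol\varphi}(F)\|_{L^p(\tilde{\mathscr N})}\lesssim\|F\|_{L^p(\tilde{\mathscr N})}$. Combined with the pointwise bound above and the fiber-integration estimate $\|\pi_*H\|_{L^p(\mathscr N)}\lesssim\|H\|_{L^p(\tilde{\mathscr N})}$ for $H=\tilde g_{\boldsymbol\varphi}(F)$---which exploits both the compactness of $\operatorname{supp}\eta$ in $u_3$ and the Poisson-type decay of $\varphi^{(3)}_{r_3}$ along the $u_3$-fiber---we conclude $\|g_{\boldsymbol\varphi}(f)\|_{L^p(\mathscr N)}\lesssim\|f\|_{L^p(\mathscr N)}$.

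For the reverse bound, assume each $\varphi^{(\mu)}$ is w-invertible with w-inverse $\psi^{(\mu)}$; both families then satisfy the hypotheses of the upper bound. Pairing the reproducing formula of Theorem~\ref{prop:reproducing} with $h\in L^{p'}\cap L^2$ and using the adjoint identity for group convolution gives, with $\check\psi_{\mathbf r}(\mathbf g):=\overline{\psi_{\mathbf r}(\mathbf g^{-1})}$,
\[\langle f,h\rangle=\int_{\mathbb R_+^3}\langle f*\varphi_{\mathbf r},\,h*\check\psi_{\mathbf r}\rangle\,\frac{d\mathbf r}{\mathbf r}.\]
Cauchy--Schwarz in $\mathbf r$ pointwise followed by H\"older on $\mathscr N$ yields $|\langle f,h\rangle|\leq\|g_{\boldsymbol\varphi}(f)\|_p\,\|g_{\check{\boldsymbol\psi}}(h)\|_{p'}$, and the already-established upper bound applied to $\check{\boldsymbol\psi}$ (which inherits the Poisson-bound and mean-zero properties from $\boldsymbol\psi$) gives $\|g_{\check{\boldsymbol\psi}}(h)\|_{p'}\lesssim\|h\|_{p'}$. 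Taking the supremum over $\|h\|_{p'}=1$ delivers $\|f\|_p\lesssim\|g_{\boldsymbol\varphi}(f)\|_p$.

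The main obstacle is the descent $\|\pi_*H\|_{L^p(\mathscr N)}\lesssim\|H\|_{L^p(\tilde{\mathscr N})}$ for $H=\tilde g_{\boldsymbol\varphi}(F)$: in general, integration along the non-compact 1-dimensional fiber of $\pi$ is not $L^p$-bounded, so one must exploit the special structure inherited from the compactly-supported lift $F$ together with the Poisson bounds on the $\varphi^{(\mu)}$. This is precisely where the \emph{hidden parameter} emphasized in the introduction becomes essential---the Poisson-type decay of $\varphi^{(3)}_{r_3}$ forces $F\,\tilde{*}\,\Phi_{\mathbf r}$ to decay quantitatively along $u_3$, making the fiber integral convergent in the correct $L^p(\mathscr N)$ sense.
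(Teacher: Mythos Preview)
Your duality argument for the reverse inequality is correct and matches the paper. The genuine gap is in the forward bound, precisely at what you yourself flag as ``the main obstacle'': the estimate $\|\pi_*H\|_{L^p(\mathscr N)}\lesssim\|H\|_{L^p(\tilde{\mathscr N})}$ for $H=\tilde g_{\boldsymbol\varphi}(F)$.

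This step fails as written. For $p>1$ the push-forward $\pi_*$ (integration along a non-compact line) is never bounded $L^p(\tilde{\mathscr N})\to L^p(\mathscr N)$, and the Poisson decay of $\varphi^{(3)}_{r_3}$ together with the compact support of $\eta$ does not repair it: once you apply Minkowski pointwise you have discarded exactly the cancellation along the fiber that makes $g_{\boldsymbol\varphi}(f)$ finite in the first place. Any pointwise decay bound one can extract for $\tilde g_{\boldsymbol\varphi}(F)$ along the fiber (for nice $f$) is governed by $\|F\|_{L^1}$ rather than $\|F\|_{L^p}$, so even when the fiber integral converges the resulting constant depends on the size of $\operatorname{supp}f$ and does not yield an a~priori $L^p$ inequality. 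Your final paragraph acknowledges the difficulty but supplies no actual mechanism.

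The paper bypasses lifting for this theorem entirely. Via Lemma~\ref{lem:hat-convolution} it factors
\[
f*\varphi_{\mathbf r}=\bigl((f*_1\varphi^{(1)}_{r_1})*_2\varphi^{(2)}_{r_2}\bigr)*_3\varphi^{(3)}_{r_3}
\]
\emph{on $\mathscr N$ itself}, where $*_\mu$ is convolution along the embedded subgroup $\tau_\mu(\mathscr H_\mu)$. Since $\tau_3$ sends $(\mathbf z_3,t_3)$ to $(\mathbf 0_1,\mathbf 0_2,\mathbf z_3,t_3,t_3)$, the linear change $(t_1,t_2)=(v+u,v-u)$ turns $*_3$ into an honest Heisenberg convolution in $(\mathbf z_3,v)$ with $(\mathbf z_1,\mathbf z_2,u)$ frozen. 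One then applies the vector-valued one-parameter Littlewood--Paley inequality on $\mathscr H_3$ (values in $L^2(\mathbb R_+^2,\tfrac{dr_1}{r_1}\tfrac{dr_2}{r_2})$), integrates over the frozen variables with constant Jacobian, and iterates for $\mu=2,1$. No descent from $\tilde{\mathscr N}$, and hence no fiber-integrability issue, ever arises. If you want to rescue a lifting approach, the right template is a vector-valued form of the transference Theorem~\ref{thm:transference}, not pointwise Minkowski followed by $\pi_*$.
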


Recall that
a flag singular integral is a  convolution operator
with a distribution  kernel, which is exactly the    push-forward   distribution  of  a distributional convolution kernel  of bi-parameters  on $\mathscr  H\times
\mathbb{R}$ \cite{MRS}, by the projection  $\mathscr  H\times \mathbb{R}\rightarrow \mathscr  H$ given by $(z,t,u)\mapsto(z,t+u)$. Motivated by this, we show that for a
distributional    convolution kernel
$K$ of tri-parameters  on $ \tilde{{\mathscr N}} $, defined by the size
estimates and the cancellation conditions,
the   {\it push-forward   distribution} $ K^\flat$ on $ {\mathscr  N}$   exists (cf. Section 5.2), and  define  a    convolution
operator   on ${\mathscr N}$ by
\begin{equation}\label{eq:T-K}
   T_{K^\flat}(f)= f * K^\flat.
\end{equation}Following  the   flag case \cite{MRS},  we call $ K^\flat$   a {\it flag-like convolution kernel} and  $T_{K^\flat} $    a {\it flag-like
singular integral}.
 The Cauchy-Szeg\H o  kernel \eqref{eq:singular-kernel} on the group $\mathscr  N$ is a   flag-like convolution kernel  by Proposition \ref{prop:CS-flag-like}.

\begin{thm} \label{thm:Kb} For a  flag-like     convolution kernel $ {K^\flat}$ on $ {\mathscr N} $, the operator
$
   T_{K^\flat} $ is bounded from $  L
^p
(\mathscr N)$ to $  L
^p
(\mathscr N)$ for $1<p<\infty$.
\end{thm}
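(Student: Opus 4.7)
The plan is to reduce $L^p$-boundedness of $T_{K^\flat}$ to the Littlewood--Paley characterization of $L^p(\mathscr N)$ (Theorem~\ref{thm:g-function}) by means of the Calder\'on reproducing formula (Theorem~\ref{prop:reproducing}) and a standard duality argument, using Lemma~\ref{lem:convolution} to transfer the analysis to the product group $\tilde{\mathscr N}$.

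Choose $\varphi^{(\mu)}$ Poisson bounded on $\mathscr H_\mu$, w-invertible with w-inverse $\psi^{(\mu)}$, and of mean value zero, so that the hypotheses of both Theorem~\ref{prop:reproducing} and Theorem~\ref{thm:g-function} are satisfied. Applying the reproducing formula to $f$ and convolving with $K^\flat$,
\begin{equation*}
T_{K^\flat}f\;=\;\int_{\mathbb R^3_+}(f*\varphi_{\mathbf r})*\Psi_{\mathbf r}\,\frac{d\mathbf r}{\mathbf r},\qquad \Psi_{\mathbf r}\;:=\;\psi_{\mathbf r}*K^\flat.
\end{equation*}
By duality, followed by pointwise Cauchy--Schwarz in $(\mathbf r,\mathbf g)\in\mathbb R^3_+\times\mathscr N$ and H\"older's inequality in $\mathbf g$,
\begin{equation*}
\|T_{K^\flat}f\|_p\;\leq\;\bigl\|g_{\boldsymbol\varphi}(f)\bigr\|_p\,\cdot\,\sup_{\|h\|_{p'}=1}\bigl\|g_{\boldsymbol\Psi^\vee}(h)\bigr\|_{p'},
\end{equation*}
where $\Psi_{\mathbf r}^\vee(\mathbf g):=\overline{\Psi_{\mathbf r}(\mathbf g^{-1})}$ is the involution. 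Theorem~\ref{thm:g-function} controls the first factor by $\|f\|_p$, so the task reduces to showing $\|g_{\boldsymbol\Psi^\vee}(h)\|_{p'}\lesssim\|h\|_{p'}$ uniformly in $h$.

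For this, Lemma~\ref{lem:convolution} transfers the question to $\tilde{\mathscr N}$:
\begin{equation*}
\Psi_{\mathbf r}\;=\;\pi_*\Bigl(\bigl(\psi^{(1)}_{r_1}\psi^{(2)}_{r_2}\psi^{(3)}_{r_3}\bigr)\,\tilde{*}\,K\Bigr).
\end{equation*}
Since $K$ is a tri-parameter product convolution kernel on $\tilde{\mathscr N}=\mathscr H_1\times\mathscr H_2\times\mathscr H_3$ and each $\psi^{(\mu)}_{r_\mu}$ carries a cancellation condition at scale $r_\mu$ on $\mathscr H_\mu$, the classical product-kernel analysis applied factor by factor suggests that $(\psi^{(1)}_{r_1}\psi^{(2)}_{r_2}\psi^{(3)}_{r_3})\tilde{*}K$ is a tri-parameter Littlewood--Paley bump on $\tilde{\mathscr N}$ at scale $\mathbf r$, with vanishing moments in each Heisenberg variable. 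Pushing forward via $\pi_*$ should preserve these properties, so that $\Psi_{\mathbf r}$ is dominated in the $g$-function sense by families of the form $\varphi'_{\mathbf r}=\pi_*(\varphi'^{(1)}_{r_1}\varphi'^{(2)}_{r_2}\varphi'^{(3)}_{r_3})$ covered by Theorem~\ref{thm:g-function}, yielding the desired $L^{p'}$ bound.

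The main obstacle lies precisely in the last step: $(\psi^{(1)}_{r_1}\psi^{(2)}_{r_2}\psi^{(3)}_{r_3})\tilde{*}K$ is not literally a product of functions on the three factors, so Theorem~\ref{thm:g-function} does not apply to $\Psi_{\mathbf r}$ verbatim. One must either decompose this expression---via a tri-parameter Littlewood--Paley resolution of $K$ on $\tilde{\mathscr N}$---into a superposition of genuine product test functions at various scales with uniform $\mathbf r$-dependence, or prove a vector-valued extension of the $g$-function theorem covering this wider class of test families. Tracking the cancellation across the hidden third parameter---the $r_3$-scale, whose fiber is annihilated by $\pi_*$---and propagating it uniformly through all three factors after the push-forward is the analytical heart of the proof, and represents the tri-parameter analogue of the flag-kernel $L^p$-boundedness analysis of M\"uller--Ricci--Stein and Nagel--Ricci--Stein.
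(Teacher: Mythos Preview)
Your setup is correct and matches the paper's strategy: duality plus the Calder\'on reproducing formula reduces matters to a square-function bound. But the gap you yourself flag in the last paragraph is the whole point, and the paper resolves it by a device you did not find: apply the reproducing formula a \emph{second} time (with another w-invertible family $\hat\varphi^{(\mu)},\hat\psi^{(\mu)}$, chosen so that $\psi^{(\mu)}$ and $\hat\varphi^{(\mu)}$ have compact support). This writes
\[
f*K^\flat\;=\;\int_{\mathbb R^3_+}\int_{\mathbb R^3_+} f*\varphi_{\mathbf r}*\psi_{\mathbf r}*K^\flat*\hat\varphi_{\mathbf r\mathbf r'}*\hat\psi_{\mathbf r\mathbf r'}\,\frac{d\mathbf r}{\mathbf r}\frac{d\mathbf r'}{\mathbf r'}\;=\;\int_{\mathbb R^3_+} T_{\mathbf r'}f\,\frac{d\mathbf r'}{\mathbf r'},
\]
and now the middle piece $\psi_{\mathbf r}*K^\flat*\hat\varphi_{\mathbf r\mathbf r'}$ carries cancellation on \emph{both} sides of $K^\flat$. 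This is exactly what allows the almost-orthogonality estimate (Lemma~\ref{lem:T-r-r'} and Corollary~\ref{cor:decay}): one gets
\[
\bigl|\psi_{\mathbf r}*K^\flat*\hat\varphi_{\mathbf r\mathbf r'}(\mathbf g)\bigr|\;\lesssim\;\prod_{\mu=1}^3\Bigl(r_\mu'\wedge\tfrac{1}{r_\mu'}\Bigr)^{1/3}\sum_{\mathbf k\in\mathbb Z_+^3}2^{-|\mathbf k|}\chi_{2^{\mathbf k}\mathbf r}(\mathbf g),
\]
so that $|F*\psi_{\mathbf r}*K^\flat*\hat\varphi_{\mathbf r\mathbf r'}|\lesssim C_{\mathbf r'}\,M_3M_2M_1(F)$ pointwise. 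Then the Fefferman--Stein vector-valued maximal inequality and Theorem~\ref{thm:g-function} give $\|T_{\mathbf r'}f\|_p\lesssim C_{\mathbf r'}\|f\|_p$, and since $C_{\mathbf r'}=\prod_\mu(r_\mu'\wedge 1/r_\mu')^{1/3}$ is integrable against $d\mathbf r'/\mathbf r'$, Minkowski finishes the job.

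Your single application of the reproducing formula leaves $\Psi_{\mathbf r}=\psi_{\mathbf r}*K^\flat$ with cancellation only on one side, and the square function $g_{\boldsymbol\Psi^\vee}$ is genuinely outside the scope of Theorem~\ref{thm:g-function}; there is no shortcut here. The ``tri-parameter Littlewood--Paley resolution of $K$'' you allude to is precisely the second reproducing formula, and the integrable factor $C_{\mathbf r'}$ is what makes the superposition converge.
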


   For $\mathbf{g} \in \mathscr N$,
   define the {\it nontangential region}
  $
     \Gamma (\mathbf{g}):=\{(\mathbf{g}',\mathbf{r})\in \mathscr N \times\mathbb{R}_+^3;\mathbf{g}'\in T(\mathbf{g}, \mathbf{r}) \}
 $.
For $f \in L
^1
(\mathscr N)$, we define
  the {\it Lusin-Littlewood-Paley area function}
\begin{equation}\label{eq:area-function}
   S_{area, \boldsymbol \varphi}(f)(\mathbf{g})=\left( \int_{\Gamma (\mathbf{g})}|f* \varphi_{\mathbf{r} }(\mathbf{h}) |^2 \frac {d\mathbf{h}}{|
   T(\mathbf{0},\mathbf{r})|} \frac
   {d\mathbf{r}}{\mathbf{r}}\right)^{\frac 12}.
\end{equation}
The {\it square function Hardy space} $H_{area,\boldsymbol \varphi}^
1(\mathscr N)$
    is defined to be the
set  of all $f \in L
^1
(\mathscr N)$ such that $   S_{area, \boldsymbol \varphi}(f) \in L
^1
(\mathscr N)$  with the norm
$
   \|f\|_{H_{area,\boldsymbol\varphi}^
1(\mathscr N)}:=\|S_{area,\boldsymbol  \varphi}(f)\|_{L
^1
(\mathscr N)}
$.

By generalizing   Chen-Cowling-Lee-Li-Ottazzi's construction \cite{CCLLO}   for
flag singular integrals on the Heisenberg group, we use suitable products of fractal tiles   in   the Heisenberg subgroups $\mathscr  H_1$ and $\mathscr  H_2$ to
construct shards to obtain a partition  of
    $\mathscr  N$ for each scale. Shards   have the size comparable to    tubes   and play the role of dyadic rectangles in the product spaces. They can be used to
    define
atoms on $\mathscr  N$.

We say that $f \in L^1
(\mathscr N)$ has an {\it atomic decomposition} if we may write $f$ as a sum $\sum_{j\in \mathbb{N}} \lambda_j a_j$,
converging in $ L^1
(\mathscr N)$ with $\sum_{j\in \mathbb{N}} |\lambda_j|<\infty$  and each $a_j $ is an atom. The {\it atomic Hardy space} $ H^1_{atom}(\mathscr N)$   is defined to
be
the completion of the linear space of all $f \in L^1
(\mathscr N)$ that have atomic decompositions, with the norm
\begin{equation*}
  \|f\|_{H^1_{atom}(\mathscr N)}=\inf\left\{\sum_{j\in \mathbb{N}} |\lambda_j|; f\sim \sum_{j\in \mathbb{N}} \lambda_j a_j\right\}.
\end{equation*}
 \begin{thm} \label{thm:atomic-decomposition}
Suppose that $M  $ is positive integer
  and that $\varphi^{
(\mu)}$
on $ {\mathscr
  {H}}_\mu$
are Poisson bounded  and w-invertible   with w-inverses $\psi^{
(\mu)}$
 of the form $  \widetilde{\triangle}_\mu^M \dot{ \psi}^{
(\mu)}$
for some w-invertible Poisson bounded  $   \dot{ \psi}^{
(\mu)}$ with support in the unit ball in $ {\mathscr
  {H}}_\mu$, where $
    \widetilde{ \triangle}_{\mu}
$ is the   sub-Laplacian  on     $ { \mathscr H}_\mu$, $\mu=1,2,3$.  Then there is a constant $C$, depending on $\mathscr N$, $\varphi^{
(\mu)}$ and $\psi^{
(\mu)}$, such that for all $f\in H_{area, \boldsymbol\varphi}^
1(\mathscr N)$,
there exist numbers $\lambda_j$ and    atoms $ a_j  $ such that $f \sim \sum_j\lambda_j   a_j  $, and
\begin{equation*}
   \|f\|_{H_{area,\boldsymbol \varphi}^
1(\mathscr N)}\leq\sum_j|\lambda_j | \leq C\|f\|_{H_{area,\boldsymbol \varphi}^
1(\mathscr N)}.
\end{equation*}
\end{thm}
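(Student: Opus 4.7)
The plan is to run a Calder\'on-Zygmund style stopping-time argument on the area function $S_{area,\boldsymbol\varphi}(f)$, feed each level-set piece into the Calder\'on reproducing formula of Theorem \ref{prop:reproducing}, and read off atoms whose shard support, cancellation and $L^\infty$ size match the atomic definition. The skeleton is borrowed from the flag setting of Chen-Cowling-Lee-Li-Ottazzi, but rewritten through the push-forward $\pi_*$ so that the three Heisenberg scales $r_1,r_2,r_3$ play symmetric roles even though $\mathscr N$ has only a two-dimensional second layer.

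For $k\in\mathbb Z$ set $\Omega_k:=\{\mathbf g\in\mathscr N : S_{area,\boldsymbol\varphi}(f)(\mathbf g)>2^k\}$ and enlarge to $\widetilde\Omega_k:=\{\mathbf g: M(\chi_{\Omega_k})(\mathbf g)>1/2\}$, where $M$ is the tube maximal function of Theorem \ref{thm:maximal} (promoted to weak type $(1,1)$ by a Vitali-type covering with tubes), so $|\widetilde\Omega_k|\lesssim 2^{-k}\|f\|_{H^1_{area,\boldsymbol\varphi}}$. Form the tent $\widehat\Omega_k:=\{(\mathbf h,\mathbf r)\in\mathscr N\times\mathbb R_+^3 : T(\mathbf h,\mathbf r)\subset\widetilde\Omega_k\}$, split $\mathscr N\times\mathbb R_+^3=\bigsqcup_k(\widehat\Omega_k\setminus\widehat\Omega_{k+1})$, and apply the shard decomposition of $\widetilde\Omega_k$ (the products of fractal tiles of $\mathscr H_1,\mathscr H_2,\mathscr H_3$ pushed down by $\pi$, constructed earlier in the paper) to write each layer as $\bigsqcup_j E_{k,j}$, where $E_{k,j}$ is the Carleson region over a single shard $S_{k,j}\subset\widetilde\Omega_k$. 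Plugging this partition into Theorem \ref{prop:reproducing} gives $f=\sum_{k,j}\lambda_{k,j}a_{k,j}$ with
\begin{equation*}
a_{k,j}(\mathbf g):=\frac{1}{\lambda_{k,j}}\iint_{E_{k,j}}f*\varphi_{\mathbf r}*\psi_{\mathbf r}(\mathbf g)\,\frac{d\mathbf r}{\mathbf r},\qquad \lambda_{k,j}:=C\cdot 2^k|S_{k,j}|,
\end{equation*}
the convergence being justified in $L^2(\mathscr N)$ via Theorem \ref{thm:g-function}.

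To verify that each $a_{k,j}$ is an atom, the hypothesis $\psi^{(\mu)}=\widetilde{\triangle}_\mu^M\dot\psi^{(\mu)}$ with $\dot\psi^{(\mu)}$ supported in the unit ball of $\mathscr H_\mu$ is crucial: using Lemma \ref{lem:convolution} the operators $\widetilde{\triangle}_\mu^M$ commute through the push-forward, so each atom can be written as $\widetilde{\triangle}_1^M\widetilde{\triangle}_2^M\widetilde{\triangle}_3^M$ applied to a function supported in a fixed enlargement of $S_{k,j}$. This simultaneously supplies the shard support and the cancellation demanded by the atom definition, while the size bound $\|a_{k,j}\|_\infty\lesssim|S_{k,j}|^{-1}$ follows from Cauchy-Schwarz in $\mathbf r$ against the area integral together with $S_{area,\boldsymbol\varphi}(f)\le 2^{k+1}$ on $E_{k,j}$. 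Summing, $\sum_{k,j}|\lambda_{k,j}|\lesssim\sum_k 2^k|\widetilde\Omega_k|\lesssim\|f\|_{H^1_{area,\boldsymbol\varphi}}$; the reverse estimate $\|f\|_{H^1_{area,\boldsymbol\varphi}}\le\sum_j|\lambda_j|$ reduces by linearity to the uniform bound $\|S_{area,\boldsymbol\varphi}(a)\|_1\lesssim 1$ for a single atom, proved by splitting the integral into a tube around the enlarged shard (use the $L^2$ control from Theorem \ref{thm:g-function} and Cauchy-Schwarz) and its complement (use the $\widetilde{\triangle}_\mu^M$-cancellation to gain pointwise decay of $a*\varphi_{\mathbf r}$).

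The main obstacle I anticipate is the shard geometry itself. The hidden third parameter $r_3$ is not a genuine direction of $\mathscr N$, so the $S_{k,j}$ are projected objects rather than honest dyadic rectangles, and one must show carefully that the pushed-down shards still tile each $\widetilde\Omega_k$, have measure comparable to $|T(\mathbf g,\mathbf r)|$ at the matching scale, and remain well-behaved under the fixed enlargements dictated by the supports of the $\dot\psi^{(\mu)}$. This is precisely where the interaction subspace $\mathscr V_1\cap\mathscr V_2=\mathbb C^{n_3}$ enters, and where the fiber integral \eqref{eq:transfer-F} must be used to transfer the product shard structure from $\tilde{\mathscr N}$ down to $\mathscr N$ without destroying the covering and separation properties required for the Calder\'on-Zygmund argument.
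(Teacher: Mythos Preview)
Your outline has the right large-scale architecture (level sets of $S_{area,\boldsymbol\varphi}(f)$, enlargement via the tube maximal function, Calder\'on reproducing formula sliced over tents), and that is essentially what the paper does. But two structural points are off, and one of them is a genuine gap.

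\textbf{The atoms are not what you think.} In the paper's definition an atom is associated to an \emph{open set} $E$ of finite measure; it is a sum $a=\sum_{R\in\mathfrak M(E)}a_R$ over the \emph{maximal} shards in $E$, subject to the $L^2$ size condition $\|a_\tau\|_{L^2}\le |E|^{-1/2}$ uniformly over sign sequences $\tau$. Your proposal produces one ``atom'' $a_{k,j}$ per shard, with a claimed $L^\infty$ bound $\|a_{k,j}\|_\infty\lesssim|S_{k,j}|^{-1}$ and $\lambda_{k,j}=C\,2^k|S_{k,j}|$. That is the one-parameter paradigm; in multi-parameter (product, flag, and here flag-like) settings single-rectangle $L^\infty$ atoms do not generate the correct Hardy space---this is the Carleson counterexample behind Chang--Fefferman's atom. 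Concretely, the paper groups \emph{all} shards $R\in\mathfrak R_\ell$ into a \emph{single} atom associated to $\widetilde E_\ell$, sets
\[
\lambda_\ell=\Bigl\|\Bigl(\sum_{R\in\mathfrak R_\ell}\int_{\mathbb R_+^3}|f*\varphi_{\mathbf r}|^2\chi_{T(R)}(\cdot,\mathbf r)\,\tfrac{d\mathbf r}{\mathbf r}\Bigr)^{1/2}\Bigr\|_{L^2}\,|E_\ell|^{1/2},
\]
and checks the $L^2$ size condition by duality against $h\in L^2$ using the square-function bound of Theorem~\ref{thm:g-function}. Your per-shard $L^\infty$ estimate is neither proved nor the right target; the argument ``Cauchy--Schwarz in $\mathbf r$ together with $S_{area,\boldsymbol\varphi}(f)\le 2^{k+1}$ on $E_{k,j}$'' does not give a pointwise bound of the right order because the integrand $f*\varphi_{\mathbf r}$ is controlled only in an averaged sense over the nontangential region.

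\textbf{The tube maximal function is not weak $(1,1)$.} You write ``promoted to weak type $(1,1)$ by a Vitali-type covering with tubes''; tri-parameter maximal operators fail weak $(1,1)$ in general. Fortunately you do not need it: $|\widetilde\Omega_k|\lesssim|\Omega_k|$ follows from the $L^2$-boundedness of $M$ in Theorem~\ref{thm:maximal} via Chebyshev applied to $M(\mathbbm 1_{\Omega_k})$, and then $|\Omega_k|\le 2^{-k}\|S_{area,\boldsymbol\varphi}(f)\|_1$ by the level-set definition. The paper uses exactly this $L^2$ route. A smaller related point: you propose to decompose $\widetilde\Omega_k$ into shards, but shards here are \emph{not nested} (Proposition~\ref{prop:shard} gives only a partition at each fixed scale), so there is no Whitney-type maximal-shard decomposition of an open set in the usual sense; the paper instead assigns each shard $R$ to the unique level $\ell$ for which $|R^*\cap E_{\ell+1}|\le c|R^*|<|R^*\cap E_\ell|$ and uses the tent partition of $\mathscr N\times\mathbb R_+^3$ (Proposition~\ref{prop:tent-decomposition}) directly.
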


The paper is organized as follows. In Section 2, we discuss the lifting from the   group  $\mathscr N$ to the   group $\tilde{\mathscr N}$,  prove a general
transference theorem, and then  show   the coincidence of the  Cauchy-Szeg\H o kernel on  $\mathscr N$ with the push-forward of the  Cauchy-Szeg\H o kernel on
$\tilde{\mathscr N}$. In Section 3, we introduce the notions  of a tube, which is the natural ball  of tri-parameters on $\mathscr N$,  and the tube maximal function.
In Section 4, we show  the   commutativity of convolutions with the projection $\pi_*$ and use it to establish the Calder\'on
reproducing formula. The characterization of $ L
^p
(\mathscr N)$ by the Littlewood-Paley  function  in Theorem \ref{thm:g-function} is proved. In Section 5, we discuss a general class of  convolution kernels
 of tri-parameters  on $ \tilde{{\mathscr N}} $,  which are distributions satisfying the size
estimates and the cancellation conditions in terms of normalized bump functions. Our flag-like singular integrals are convolution operators with   kernels to be the
push-forward  of  convolution kernels
 of tri-parameters on $ \tilde{{\mathscr N}} $. Their boundedness on $ L
^p
(\mathscr N)$  is established. In Section 6, we use  tiles  in     the Heisenberg  groups to construct shards in the   group  $\mathscr N$, which   have  sizes comparable to tubes and constitute a    partition of $ {\mathscr N} $ for each scale. In Section 7, we introduce the notion
of the  tent of a  shard   and use it to give a    partition of   $\mathscr N \times\mathbb{R}_+^3$, based on which    atomic decomposition of  $H^1$ Hardy space
is established.
In the appendix, we collect basic facts about   the  Cauchy-Szeg\H o kernels on the  Siegel domains, and use general
Gindikin's formula to deduce the ones on domains \eqref{eq:U} and the product   of three Siegel upper half spaces.
 \section{Lifting  and transference   }
\subsection{The lifting group $\tilde{\mathscr N}$ }
We write a point of $ {\mathscr H}_\mu$ as    $\mathbf{g}_\mu := (\mathbf{z}_\mu, t_\mu )  $ with
$t_\mu\in \mathbb{R}$ and
    \begin{equation} \label{eq:z-x}
      \mathbf{z}_\mu=({z}_{\mu 1},\ldots,{z}_{\mu n_\mu})\in\mathbb{C}^{ n_\mu} ,\qquad {\rm where}\qquad {z}_{\mu j}=
   x_{\mu j}+\mathbf{i}x_{\mu (n_\mu+j)} ,
   \end{equation}
   $j=1,\ldots, n_\mu$. Since
${\rm Im} \langle \mathbf{z}_\mu,  \mathbf{z}_\mu
   '\rangle= \sum_{j=1}^{n_\mu}(x_{\mu(n_\mu+j)}x'_{\mu j}-x_{\mu j} x'_{\mu(n_\mu+j)} )$,
 \begin{equation} \label{eq:left-invariant}\begin{split}
    Y_{\mu j} =\frac \partial{\partial x_{\mu j}}+2  x_{\mu(n_\mu+j)}\frac \partial{\partial t_\mu},
    \qquad \qquad   Y_{\mu(n_\mu+j)} =\frac
   {\partial\hskip 8mm}{\partial
    x_{\mu(n_\mu+j)}}-2  x_{\mu j}\frac \partial{\partial t_\mu },
 \end{split}  \end{equation}$j=1,\ldots,n_\mu$,  are left invariant vector fields on the  Heisenberg group $ {\mathscr
  {H}}_\mu$ for  $\mu=1,2,3$. Then
 \begin{equation} \label{eq:brackets}
   \left [Y_{\mu j},Y_{\mu(n_\mu+j)}\right]=-4 \frac \partial{\partial t_\mu },
 \end{equation}
 and all other brackets vanish. The {\it sub-Laplacian} on  the Heisenberg  group  $ { \mathscr H}_\mu$ is
$
   \widetilde{ { \triangle}}_{\mu}:=  - \sum_{j=1}^{2n_\mu}   Y_{\mu j} ^2 .
$ The dilation of  $ { \mathscr H}_\mu$ is denoted as
$\delta_a(\mathbf{z}_\mu, t_\mu )=(a\mathbf{z}_\mu, a^2 t_\mu )$ for $a>0$. Sometimes we write it briefly as $ a(\mathbf{z}_\mu, t_\mu )  $.

We will use the following  {\it  pseudodistance}  on    the Heisenberg  group $ {\mathscr H}_\mu$:
 \begin{equation*}
    d_\infty(\mathbf{h}_\mu, \mathbf{g}_\mu )=\|\mathbf{h}_\mu
 ^{-1}
\mathbf{g}_\mu\| _\infty
 \end{equation*}
 where $\|\cdot\|_\infty$ is given by
$
   \|(\mathbf{z}_\mu ,t_\mu )\|_\infty=\max \{|  x_{\mu1} |,\ldots, | x_{\mu(2n_\mu)}|,  |t_\mu|^{\frac 12}  \}
$ for $\mathbf{z}_\mu$ given by \eqref{eq:z-x}.
Balls  on    the Heisenberg  group $ {\mathscr H}_\mu$  are $ {B}_\mu(\mathbf{g}_\mu,r_\mu):=\{ {\mathbf{h}}_\mu\in  {\mathscr H} _ \mu; \|
{\mathbf{h}}_\mu^{-1}\mathbf{g}_\mu\|_\infty <  {r}_\mu \}$ for some $\mathbf{g}_\mu\in {\mathscr H}_\mu$ and  $  {r}_\mu>0$. If   denote by $\Box_{a}^{ (\mu) }$
the cube
$
  (-a,a)^{ 2n_\mu
}:=\{\mathbf{z}_\mu\in\mathbb{C}^{n_\mu};  | {x}_{\mu j}| <a\}
$ in $\mathbb{C}^{n_\mu}$,
  and    by $I_b$  the interval $(-b,b)$, then
 \begin{equation*}
    {B}_\mu(\mathbf{0}_\mu,r_\mu)=\Box_{r_\mu}^{ (\mu) }\times I_{r_\mu^2}
 \end{equation*}
is a cuboid, where $\mathbf{0}_\mu$ is the origin of the Heisenberg group $ {\mathscr H} _ \mu$.
Let $ \tau_\mu: {\mathscr H}_\mu\rightarrow
{\mathscr  N}$        be the  (embedding) homomorphism   from the Heisenberg group       $ {\mathscr H}_\mu$  ($\mu=1,2,3$) into $\mathscr N$:
 \begin{equation}\label{eq:embedding} \begin{split}
       \tau_1(\mathbf{z}_1, t_1)&: =(\mathbf{z}_1,\mathbf{0}_2,\mathbf{0}_3, t_1,0),\\
         \tau_2(\mathbf{z}_2, t_2)&: =(\mathbf{0}_1,\mathbf{z}_2,\mathbf{0}_3,0, t_2 ),\\
           \tau_3(\mathbf{z}_3, t_3)&: =(\mathbf{0}_1,\mathbf{0}_2,\mathbf{z}_3, t_3,t_3).
 \end{split}  \end{equation}

 \begin{prop} \label{prop:homomorphism} (1) $\pi$ in \eqref{eq:pi} is a homomorphism of groups.
 \\
 (2) $ \tau_\mu({\mathscr H}_\mu)$ commutes with $ \tau_\nu({\mathscr H}_\nu)$ for $\mu\neq\nu$.
     \end{prop}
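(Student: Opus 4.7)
The plan is to verify both claims by direct computation using the multiplication rule \eqref{eq:multiplication1} on $\mathscr N$ and the product structure of $\tilde{\mathscr N}$, exploiting the bilinearity of the symplectic forms $\Phi_\mu$ together with the key fact that $\Phi_\mu(\mathbf{0},\cdot)=\Phi_\mu(\cdot,\mathbf{0})=0$.

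For part (1), I would take arbitrary $(\mathbf{z},\mathbf{u}),(\mathbf{z}',\mathbf{u}')\in\tilde{\mathscr N}$ and expand both sides. The product in $\tilde{\mathscr N}={\mathscr H}_1\times{\mathscr H}_2\times{\mathscr H}_3$ is coordinatewise, so by \eqref{eq:multiplication3} its third-coordinate entries are $u_\mu+u_\mu'+\Phi_\mu(\mathbf{z}_\mu,\mathbf{z}_\mu')$ for $\mu=1,2,3$. Applying $\pi$ according to \eqref{eq:pi} yields
\[
\bigl(\mathbf{z}+\mathbf{z}',\,u_1+u_1'+\Phi_1+u_3+u_3'+\Phi_3,\,u_2+u_2'+\Phi_2+u_3+u_3'+\Phi_3\bigr).
\]
On the other hand, multiplying $\pi(\mathbf{z},\mathbf{u})=(\mathbf{z},u_1+u_3,u_2+u_3)$ and $\pi(\mathbf{z}',\mathbf{u}')=(\mathbf{z}',u_1'+u_3',u_2'+u_3')$ by \eqref{eq:multiplication1} produces the same expression. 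So the two sides agree and $\pi$ is a group homomorphism.

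For part (2), the three cases $(\mu,\nu)=(1,2),(1,3),(2,3)$ are again a straightforward substitution into \eqref{eq:multiplication1}. For $(1,2)$, each of $\tau_1(\mathbf{g}_1)\tau_2(\mathbf{g}_2)$ and $\tau_2(\mathbf{g}_2)\tau_1(\mathbf{g}_1)$ reduces to $(\mathbf{z}_1,\mathbf{z}_2,\mathbf{0}_3,t_1,t_2)$, since every $\Phi_\mu$ arising involves a zero argument. For $(1,3)$, the third coordinates $\mathbf{z}_3$ paired with $\mathbf{0}_3$ (or $\mathbf{z}_1$ with $\mathbf{0}_1$) again kill all $\Phi$-terms, and both orders give $(\mathbf{z}_1,\mathbf{0}_2,\mathbf{z}_3,t_1+t_3,t_3)$; case $(2,3)$ is symmetric. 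Thus $\tau_\mu({\mathscr H}_\mu)$ and $\tau_\nu({\mathscr H}_\nu)$ commute elementwise whenever $\mu\neq\nu$.

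There is essentially no obstacle here beyond bookkeeping; both statements are formal consequences of the explicit definitions. The only point that might be worth emphasizing is that the homomorphism in (1) sends the fiber $\pi^{-1}(\mathbf{0}_{\mathbf z},\mathbf{0}_{\mathbf t})$ to the identity, consistent with \eqref{eq:fiber}, which then confirms that the kernel of $\pi$ is the one-dimensional abelian subgroup $\{(\mathbf{0}_{\mathbf z},-u,-u,u)\}$ announced just after \eqref{eq:fiber}. This link back to the fiber description serves as a useful sanity check on the computation.
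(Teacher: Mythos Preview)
Your proposal is correct and follows exactly the same approach as the paper: both parts are verified by direct substitution into the multiplication laws \eqref{eq:multiplication1} and \eqref{eq:multiplication3}, with the paper simply asserting the computations are ``direct'' while you spell them out.
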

  \begin{proof} (1) It is direct to see $ \pi(\mathbf{z} , \mathbf{u} )\pi(\mathbf{z} ', \mathbf{u} ')  = \pi((\mathbf{z} , \mathbf{u} ) (\mathbf{z} ', \mathbf{u}
  '))
  $ by the multiplication  laws   of $\mathscr N$  and $\tilde{\mathscr N}$.

 (2) It is direct to check $ \tau_\mu(\mathbf{z}_\mu, t_\mu)\tau_\nu(\mathbf{z}_\nu, t_\nu)= \tau_\nu(\mathbf{z}_\nu, t_\nu) \tau_\mu(\mathbf{z}_\mu, t_\mu)$ for
 $\mu\neq\nu$ by the multiplication  law \eqref{eq:multiplication1} of $\mathscr N$ and the definition  of $\tau_\mu$'s in \eqref {eq:embedding}.
   \end{proof}
  $ \tau_1({\mathscr H}_1)$ is a normal subgroup  of $ {\mathscr  N}$ by
  \begin{equation*}
     (\mathbf{z}', \mathbf{t}') (\mathbf{z}_1,\mathbf{0}_2,\mathbf{0}_3, t_1,0)(\mathbf{z}', \mathbf{t}')^{-1}=(\mathbf{z}_1,\mathbf{0}_2,\mathbf{0}_3, t_1+2
     \Phi_1(
   \mathbf{z}_1 ',  \mathbf{z}_1
),0)\in \tau_1({\mathscr H}_1),
  \end{equation*} for any $(\mathbf{z}', \mathbf{t}')\in {\mathscr  N}$. Similarly, $ \tau_2({\mathscr H}_2)$ and $ \tau_3({\mathscr H}_3)$ are also normal subgroups of $ {\mathscr  N}$.
  Thus, $ {\mathscr V}_\alpha:=\mathbb{C}^{ n_\alpha+ n_3 }\times\mathbb{R}$, $\alpha=1,2$, are quotient groups:
 \begin{equation*}
    {\mathscr  N}/\tau_1({\mathscr H}_1)\cong {\mathscr V}_2,\qquad{\mathscr  N}/\tau_2({\mathscr H}_2)\cong {\mathscr V}_1,
 \end{equation*}which are isomorphic to  the Heisenberg groups with
  multiplications   given by
\begin{equation}\label{eq:multiplication2}
   (\mathbf{z}_\alpha,{\mathbf{z}}_3 , t_\alpha ) ( \mathbf{z}_\alpha',{\mathbf{z}}_3',t_\alpha ')= \Big( \mathbf{z}_\alpha +\mathbf{z}_\alpha', {\mathbf{z}}_3+
   {\mathbf{z}}_3',t_\alpha +t_\alpha '+  \Phi_\alpha(
   \mathbf{z}_\alpha,  \mathbf{z}_\alpha
   ')+  \Phi_3(
   \mathbf{z}_3,  \mathbf{z}_3
   ' )\Big).
\end{equation}

The embedding  $ \tau_\mu$ maps left invariant vector fields $  Y_{\mu j}$'s in \eqref{eq:left-invariant} on the group $ {\mathscr
  {H}}_\mu$ to left invariant vector fields $  X_{\mu j}$'s  on the group $  \mathscr
  N$, which are given by
\begin{equation} \label{eq:left-invariant2}\begin{split}
    X_{\mu j} &=\frac \partial{\partial x_{\mu j}}+2  x_{\mu(n_\mu+j)}\frac \partial{\partial t_\mu},
    \qquad \qquad \qquad  X_{\mu(n_\mu+j)} =\frac
   {\partial\hskip 8mm}{\partial
    x_{\mu(n_\mu+j)}}-2  x_{\mu j}\frac \partial{\partial t_\mu },\qquad \mu=1,2 ,\\
      X_{3 j} &=\frac \partial{\partial x_{3j}}+2  x_{3(n_3+j)}\left(\frac \partial{\partial t_1}+\frac \partial{\partial t_2}\right),
    \qquad X_{3(n_3+j)} =\frac
   {\partial\hskip 8mm}{\partial
    x_{3(n_3+j)}}-2  x_{3 j}\left(\frac \partial{\partial t_1}+\frac \partial{\partial t_2}\right).
 \end{split}  \end{equation}The   sub-Laplacian  on  the  group  $ { \mathscr H}_\mu$ induces an operator $
 { \triangle}_{\mu}:=  - \sum_{j=1}^{2n_\mu}   X_{\mu j} ^2
$ on
  the    group  $   \mathscr N$.

The normalized Haar measure  on  $\mathscr  N$
is the Lebesgue measure, which is denoted by $d\mathbf{g}$. The
  convolution  on  $\mathscr  N$ is defined as
  \begin{equation}\label{eq:convolution-def}
     f_1*f_2(\mathbf{g}):=\int_{\mathscr  N}f_1(\mathbf{h})f_2(\mathbf{h}^{-1}\mathbf{g})d\mathbf{h}=\int_{\mathscr
     N}f_1(\mathbf{g}\mathbf{h}^{-1})f_2(\mathbf{h})d\mathbf{h},
  \end{equation}
  by taking coordinates transformation $\mathbf{h}^{-1}\mathbf{g}\rightarrow \mathbf{h}$, which preserving the Haar measure. The same formulae hold  for  $
  {\mathscr
  H}_\mu$. For a function $f$ on a nilpotent group, denote   \begin{equation*}
    \breve{f} (\mathbf{g} ):=f (\mathbf{g}^{-1}).
 \end{equation*}

An  $  L^p $-function on $   {\mathscr N} $ can be lifted to  an $  L^p $-function on $  \tilde{\mathscr N} $ by the following lemma.

 \begin{lem} \label{lem:lift-function} For $f\in L^p(\mathscr N)$, define
 \begin{equation}\label{f-sharp}
    f^{\sharp}(\mathbf{z}, t_1,t_2,u): =2 \chi( t_1+t_2)f (\mathbf{z}, t_1+u,t_2+u)
 \end{equation}
    where $\chi$ is a non-negative smooth function supported on $[1/2, 1]\cup [-1,-1/2 ]$ such that $\chi(t)=\chi(-t)$ and
$\int_{
      \mathbb{R} }  \chi = 1$. Then $ f^{\sharp}\in L^p(\tilde{\mathscr N})$ and $\pi_*f^{\sharp}=f$. Moreover, $ f^{\sharp}\in C_c^\infty (\tilde{\mathscr N})$ if
      $ f
\in C_c^\infty ( {\mathscr N})$.
 \end{lem}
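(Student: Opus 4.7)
The plan is a direct three-step verification, using only the definition \eqref{eq:transfer-F} of the push-forward together with the single linear substitution $v := t_1+t_2-2u$; the role of the factor $2$ in front of $\chi$ in \eqref{f-sharp} is precisely to normalize this substitution, and no harmonic analysis is needed.

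First, for the identity $\pi_* f^{\sharp}=f$, I would substitute \eqref{f-sharp} into \eqref{eq:transfer-F}. The arguments $(t_1-u)+u$ and $(t_2-u)+u$ of the $f$-factor collapse to $t_1$ and $t_2$, so that $f(\mathbf{z},t_1,t_2)$ pulls out of the $u$-integral, leaving $\int_{\mathbb R} 2\chi(t_1+t_2-2u)\,du$; applying $v=t_1+t_2-2u$ (Jacobian $\tfrac 12$) reduces this to $\int_{\mathbb R}\chi(v)\,dv=1$.

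Next, for the $L^p$-bound, I would apply Fubini and perform the unit-Jacobian change of variables $(s_1,s_2,u):=(t_1+u,t_2+u,u)$ inside $\int_{\tilde{\mathscr N}}|f^{\sharp}|^p$, and then integrate out $u$ by the same substitution $v=s_1+s_2-2u$. This yields an explicit identity of the form $\|f^{\sharp}\|_{L^p(\tilde{\mathscr N})}^p = 2^{p-1}\|\chi\|_{L^p(\mathbb R)}^p\|f\|_{L^p(\mathscr N)}^p$, giving both the $L^p$-membership and a constant depending only on $p$ and $\chi$.

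Finally, if $f\in C_c^\infty(\mathscr N)$, smoothness of $f^{\sharp}$ is immediate as a product and composition of smooth functions. For compact support, assume $f$ is supported in $\{|\mathbf z|\le R,\ |t_\alpha|\le R\}$: the factor $\chi(t_1+t_2)$ forces $|t_1+t_2|\le 1$, while the factor $f$ forces $\mathbf z$ bounded and $|t_\alpha+u|\le R$ for $\alpha=1,2$; writing $2u=(t_1+u)+(t_2+u)-(t_1+t_2)$ then shows $u$ is bounded, and hence so are $t_1,t_2$. I do not anticipate a real obstacle in this lemma—it is essentially a bookkeeping check that the constant $2$ in \eqref{f-sharp} has been chosen correctly to normalize $\pi_*f^{\sharp}$, and the same substitution governs all three claims.
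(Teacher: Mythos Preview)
Your proposal is correct and essentially identical to the paper's own proof: the paper performs the same collapse of $(t_\alpha-u)+u$ in the push-forward, the same unit-Jacobian substitution $(t_1,t_2,u)\mapsto(t_1+u,t_2+u,u)$ followed by $v=t_1+t_2-2u$ to get $\|f^\sharp\|_{L^p}^p=2^{p-1}\|\chi\|_{L^p}^p\|f\|_{L^p}^p$, and for compact support it uses the equivalent algebraic identity $t_1-t_2=(t_1+u)-(t_2+u)$ in place of your $2u=(t_1+u)+(t_2+u)-(t_1+t_2)$. There is nothing to add.
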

 \begin{proof} This is because
 \begin{equation*}\begin{split}
     (\pi_* f^{\sharp})(\mathbf{z},\mathbf{t})& =f (\mathbf{z}, t_1 ,t_2 )\int_{ \mathbb{R} } 2\chi( t_1+t_2-2u)du= f (\mathbf{z},\mathbf{t}),
\\
      \|f^{\sharp}\|^p_{L^p(\tilde{\mathscr N})}& =\int_{\mathbb{C}^{N}\times\mathbb{ R}^2} d\mathbf{z}d t_1 dt_2 \int_{
      \mathbb{R} }  |f (\mathbf{z}, t_1 ,t_2 )|^p 2^p \chi^p ( t_1+t_2-2u)  du= C_p\|f \|_{L^p( {\mathscr N})}^p,
 \end{split}\end{equation*}
 by definition and changing variables, where $C_p=\int_{
      \mathbb{R} } 2^{p-1} \chi^p(  u) du$.

Suppose that $f$   is compactly supported. If $ f^{\sharp}(\mathbf{z},\mathbf{t},u)\neq 0$, then by definition \eqref{f-sharp},  there exists $M>0$ such that
$|\mathbf{z}|<M$, $|t_1+u|<M$, $|t_2+u|<M$ and
 $|t_1+t_2|<1$. Thus, 
 $$|t_1-t_2|\leq |t_1+u| +|t_2+u|< 2M.$$
   Consequently, $|t_1 | ,  |t_2 |,
 |u|< M+1/ 2$. Namely, $ f^{\sharp}$ is compactly supported.
 \end{proof}

 \subsection{A general transference theorem} The
transference method  (cf. e.g. \cite{AC,CW,FWZ,GH,NS} and the references therein) works for our lifting.
 Let
$\tilde{ T}$ be a measurable function on
$\mathbb{C}^N\times\mathbb{C}^N\times\mathbb{R}^3$ with compact support.
Define an operator $\tilde{\mathcal{T}}$
 acting on functions on $ \mathbb{C}^N\times\mathbb{R}^3$ by
  \begin{equation*}
  \tilde{\mathcal{T}}(F)(\mathbf{z},\tilde{\mathbf{t}}) :=\int_{\mathbb{C}^N\times\mathbb{R}^3}\tilde {
    T}(\mathbf{z},\mathbf{w},\tilde{\mathbf{t}}-\tilde{\mathbf{s}} )F(\mathbf{w},\tilde{\mathbf{s} })d\mathbf{w}d\tilde{\mathbf{s}},
 \end{equation*} for $(\mathbf{z},\tilde{\mathbf{t}})\in \mathbb{C}^N \times\mathbb{R}^3$.
 Next define the projection of the kernel $\widetilde{T}$ by $\pi$
 as
  \begin{equation}\label{eq:transfer-T}
     {T} (\mathbf{z},\mathbf{w},\mathbf{t} ) :=\int_{\mathbb{R}} \tilde{{ T}}(\mathbf{z},\mathbf{w},t_1-u,t_2-u,u)du,
 \end{equation}  for $(\mathbf{z},\mathbf{w},\mathbf{t} )\in \mathbb{C}^N\times\mathbb{C}^N\times\mathbb{R}^2$.
Given  a $L^1$-function $f$ on
$ \mathbb{C}^N\times\mathbb{R}^2$, for $(\mathbf{z},\mathbf{t} )\in \mathbb{C}^N\times\mathbb{R}^2$,  define
  \begin{equation*}\begin{split}
   \mathcal{ T }(f)(\mathbf{z},\mathbf{t }) :&=\int_{\mathbb{C}^N\times\mathbb{R}^2} { T}(\mathbf{z}, \mathbf{w},\mathbf{t} -\mathbf{s}   )f(\mathbf{w},\mathbf{s
   }   )d\mathbf{w}d\mathbf{s}.
\end{split} \end{equation*}

 \begin{lem} For $f\in L^1(\mathbb{C}^N\times\mathbb{R}^2)$, we have
  $  \mathcal{T} (f)\circ\pi= \tilde{\mathcal{ T}} (f\circ\pi)$.
 \end{lem}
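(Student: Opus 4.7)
The plan is to unwind the definitions on both sides, express everything as a single iterated integral against $\tilde T$, and then match the two via a single linear change of variables in $\mathbb{R}^3$ coming from the fiber structure of $\pi$.

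First, I would evaluate $\mathcal{T}(f)\circ \pi$ at a point $(\mathbf{z},\mathbf{u})\in\mathbb{C}^N\times\mathbb{R}^3$. By \eqref{eq:pi} and the definition of $\mathcal{T}$, this equals
\[
\int_{\mathbb{C}^N\times\mathbb{R}^2} T(\mathbf{z},\mathbf{w},u_1+u_3-s_1,u_2+u_3-s_2)\,f(\mathbf{w},s_1,s_2)\,d\mathbf{w}\,ds_1\,ds_2,
\]
and then insert the formula \eqref{eq:transfer-T} to write the kernel $T$ as a fiber integral of $\tilde T$ against a new variable $u'\in\mathbb{R}$. By Fubini this exhibits $\mathcal{T}(f)\circ\pi$ as a quadruple integral against $\tilde T$ in the variables $(\mathbf{w},s_1,s_2,u')$.

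Next I would compute $\tilde{\mathcal T}(f\circ\pi)(\mathbf{z},\mathbf{u})$ directly from its definition, noting that $(f\circ\pi)(\mathbf{w},\tilde s_1,\tilde s_2,\tilde s_3)=f(\mathbf{w},\tilde s_1+\tilde s_3,\tilde s_2+\tilde s_3)$. This produces a quadruple integral in the variables $(\mathbf{w},\tilde s_1,\tilde s_2,\tilde s_3)$. The key step is then the linear change of variables
\[
s_1=\tilde s_1+\tilde s_3,\qquad s_2=\tilde s_2+\tilde s_3,\qquad u'=u_3-\tilde s_3,
\]
whose Jacobian has absolute value $1$ and under which the arguments of $\tilde T$ transform as $u_1-\tilde s_1 = u_1+u_3-s_1-u'$, $u_2-\tilde s_2 = u_2+u_3-s_2-u'$, and $u_3-\tilde s_3=u'$. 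After this substitution $\tilde{\mathcal T}(f\circ \pi)(\mathbf{z},\mathbf{u})$ becomes exactly the same iterated integral obtained for $\mathcal T(f)\circ \pi$ above, proving the identity.

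The manipulations are routine; the only point requiring attention is the choice of the change of variables. It is forced by the geometry of the fiber $\pi^{-1}(\mathbf{z},\mathbf{t})$ in \eqref{eq:fiber}: parameterising the fiber by $u$ is precisely what lets one translate the $u_3$ variable away and produces the Jacobian $1$. Compact support of $\tilde T$ guarantees all integrals are absolutely convergent so Fubini is legitimate, and no further hypothesis on $f\in L^1(\mathbb{C}^N\times\mathbb{R}^2)$ is needed.
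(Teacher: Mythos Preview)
Your proof is correct and follows essentially the same route as the paper: both sides are expanded from the definitions, and the change of variables $s_\alpha=\tilde s_\alpha+\tilde s_3$, $u'=u_3-\tilde s_3$ (with Jacobian $1$) is exactly the one the paper uses to match the two integrals. Your remark on compact support of $\tilde T$ justifying Fubini is a small addition the paper leaves implicit.
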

  \begin{proof} Note that
  \begin{equation*}\begin{split}\tilde{\mathcal{ T}} (f\circ\pi)(\mathbf{z} ,\tilde{\mathbf{t}} )&=\int_{\mathbb{C}^N\times\mathbb{R}^3} \tilde{{ T}}(\mathbf{z},
  \mathbf{w},\tilde{\mathbf{t}} -\tilde{\mathbf{s}}    )f( \mathbf{w},\tilde s_1 +\tilde s_3    ,\tilde s_2 +\tilde s_3  )d\mathbf{w}d\tilde{\mathbf{s}} \\
  &=\int_{\mathbb{C}^N\times\mathbb{R}^2}d\mathbf{w}d\mathbf{s} \int_{ \mathbb{R} }\tilde{{ T}}(\mathbf{z}, \mathbf{w},\tilde t_1+\tilde t_3 -s_1-u, \tilde t_2+\tilde t_3 -s_2-u, u  )f( \mathbf{w},s_1
  ,s_2
  )du,  \end{split} \end{equation*}
    by taking coordinates transformation $s_\alpha= \tilde s_\alpha +\tilde s_3   ,$ $\alpha=1,2 $, $u=\tilde t_3-\tilde s_3  $, which preserve  the Lebesgue
    measure. On the other hand,  we have
  \begin{equation*}\begin{split}\mathcal{T} (f)\circ\pi(\mathbf{z} ,\tilde {\mathbf{t}} )&=\int_{\mathbb{C}^N\times\mathbb{R}^2} { T}(\mathbf{z}, \mathbf{w},
  \tilde t_1+\tilde t_3 -   {s}_1
    ,\tilde t_2+\tilde t_3- {s}_2   )f(\mathbf{w},\mathbf{s }  )d\mathbf{w}d\mathbf{s}.
\end{split} \end{equation*}The identity follows.
   \end{proof}

 Let $ R_{\tilde{\mathbf{s}}}$ be the action of the   group $\mathbb{R}^3$  on $  L^p(\mathbb{C}^N\times\mathbb{R}^2)$ given by
 $R_{\tilde{\mathbf{s}}}(f)(\mathbf{w},\mathbf{t}  )=f(\pi(\mathbf{0}, \tilde{\mathbf{s}})^{-1}(\mathbf{w},\mathbf{t}  ))$, i.e.
\begin{equation}\label{eq:representation}   R_{\tilde{\mathbf{s}}}(f)(\mathbf{w},\mathbf{t}  ): =f (\mathbf{w},t_1- \tilde s_1- \tilde s_3,t_2- \tilde s_2- \tilde
s_3),
\end{equation}
for fixed $\tilde{\mathbf{s}} \in \mathbb{R}^3$. It is a {\it representation} of the   group $\mathbb{R}^3$  on $  L^p(\mathbb{C}^N\times\mathbb{R}^2)$, i.e.
\begin{equation}\label{eq:representation-def}
   R_{\tilde{\mathbf{s}}'}\circ R_{\tilde{\mathbf{s}}}(f)=R_{\tilde{\mathbf{s}}'+\tilde{\mathbf{s}}}(f),
\end{equation}
  for  any $\tilde{\mathbf{s}}, \tilde{\mathbf{s}}'\in \mathbb{R}^3$.  Obviously, we have
\begin{equation}\label{eq:Rf}
   \| R_{\tilde{\mathbf{s}}}(f)  \|_{L^p(\mathbb{C}^N\times\mathbb{R}^2)}=\|  f \|_{L^p(\mathbb{C}^N\times\mathbb{R}^2)}.
\end{equation}
We now have the following transference result generalizing Nagel-Stein \cite[Theorem 5.1.1]{NS},  which shows that
a priori $L^p$ bound for $\tilde{\mathcal{T}}$
 gives the same bound for $ \mathcal{T} $.
 \begin{thm}\label{thm:transference}
    Suppose $\tilde{ T}$ be a measurable function on
$\mathbb{C}^N\times\mathbb{C}^N\times\mathbb{R}^3$ with compact support, and there is a constant $A_p$ such that
    \begin{equation*}
       \| \tilde{\mathcal{T}}(F)\|_{L^p(\mathbb{C}^N\times\mathbb{R}^3)}\leq A_p\|  F \|_{L^p(\mathbb{C}^N\times\mathbb{R}^3)},
    \end{equation*}for  any $F\in L^p(\mathbb{C}^N\times\mathbb{R}^3)$.
Then the operator $\mathcal{T}$ satisfies for any $f\in L^p(\mathbb{C}^N\times\mathbb{R}^2)$,
  \begin{equation*}
       \|  {\mathcal{T}}(f)\|_{L^p(\mathbb{C}^N\times\mathbb{R}^2)}\leq A_p\|  f \|_{L^p(\mathbb{C}^N\times\mathbb{R}^2)}.
    \end{equation*}
 \end{thm}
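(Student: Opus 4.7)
The plan is to carry out the classical Nagel--Stein transference argument, exploiting the identity $\mathcal{T}(f)\circ\pi=\tilde{\mathcal{T}}(f\circ\pi)$ from the preceding lemma. The obstruction is that $f\circ\pi$ is constant along the one-dimensional fibers of $\pi$ and therefore never lies in $L^p(\tilde{\mathscr N})$; we must truncate along the fibers and then let the truncation parameter tend to infinity, absorbing the truncation error via the compact support of $\tilde T$.

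First I would reduce to compactly supported $f$ by a density argument (the compact support of $\tilde T$ passes to $T$ through the fiber integral \eqref{eq:transfer-T}, so $\mathcal T$ is a priori bounded on $L^p$ by Young's inequality). Fix $M>0$ so that $\tilde T$ vanishes for $|\tilde s_3|>M$, and for $R>M$ set $\phi_R:=\mathbf 1_{[-R,R]}$ and define the truncated lift
\[
F_R(\mathbf w,\tilde{\mathbf s}):=\phi_R(\tilde s_3)\,(f\circ\pi)(\mathbf w,\tilde{\mathbf s})=\phi_R(\tilde s_3)\,f\bigl(\mathbf w,\tilde s_1+\tilde s_3,\tilde s_2+\tilde s_3\bigr).
\]
A routine change of variables $(s_1,s_2)=(\tilde s_1+\tilde s_3,\tilde s_2+\tilde s_3)$ gives $\|F_R\|_{L^p(\tilde{\mathscr N})}^p=2R\,\|f\|_{L^p(\mathscr N)}^p$, so the hypothesis applied to $F_R$ yields
\[
\|\tilde{\mathcal T}F_R\|_{L^p(\tilde{\mathscr N})}^p\le 2R\,A_p^p\,\|f\|_{L^p(\mathscr N)}^p.
\]

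Next, starting from the definition of $\tilde{\mathcal T}F_R$ and performing the successive substitutions $u=\tilde s_3$, $s_\alpha=\tilde s_\alpha+\tilde s_3$ and then $u'=\tilde t_3-u$, one finds that the result differs from $\mathcal T(f)\circ\pi(\mathbf z,\tilde{\mathbf t})$ (rewritten via $T(\cdot,\mathbf t)=\int\tilde T(\cdot,t_1-u',t_2-u',u')\,du'$) only by the extra factor $\phi_R(\tilde t_3-u')$ inside the $u'$-integral. Because the $u'$-integrand vanishes outside $|u'|\le M$ and $\phi_R(\tilde t_3-u')\equiv 1$ on this range whenever $\tilde t_3\in[-R+M,R-M]$, this identifies $\tilde{\mathcal T}F_R$ with $\mathcal T(f)\circ\pi$ pointwise on the slab $\{|\tilde t_3|\le R-M\}$. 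Restricting the $L^p$-norm of $\tilde{\mathcal T}F_R$ to this slab, and translating $(t_1,t_2)=(\tilde t_1+\tilde t_3,\tilde t_2+\tilde t_3)$ in each horizontal section, yields
\[
\|\tilde{\mathcal T}F_R\|_{L^p(\tilde{\mathscr N})}^p\ge 2(R-M)\,\|\mathcal T(f)\|_{L^p(\mathscr N)}^p.
\]
Combining the two displayed inequalities gives $\|\mathcal T(f)\|_p^p\le \frac{R}{R-M}\,A_p^p\,\|f\|_p^p$, and letting $R\to\infty$ produces the desired bound $\|\mathcal T(f)\|_p\le A_p\|f\|_p$ for compactly supported $f$; density then finishes the proof.

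The main obstacle is the comparison step: one has to carefully exploit the compact support of $\tilde T$ in its last coordinate so that the ratio $(R-M)/R$ tends to $1$, ensuring that the loss coming from truncating $f\circ\pi$ by $\phi_R$ is asymptotically negligible. Once this is in place, all remaining ingredients are routine Fubini and linear changes of variables; the representation $R_{\tilde{\mathbf s}}$ of \eqref{eq:representation} is not directly invoked here, although morally it is the translation invariance encoded in \eqref{eq:Rf} that makes the $L^p$-norm of the truncated lift behave as $2R\|f\|_p^p$.
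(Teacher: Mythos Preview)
Your proof is correct and is the same Nagel--Stein transference argument as the paper's, just packaged more concretely: the paper phrases it through the representation $R_{\tilde{\mathbf s}}$ of $\mathbb R^3$ on $L^p(\mathscr N)$ and averages over a large F{\o}lner set $V\subset\mathbb R^3$ with $|V+E|/|V|<1+\epsilon$, whereas you lift $f$ to $f\circ\pi$ and truncate only in the one-dimensional fiber variable $\tilde s_3$ by $\phi_R$. Your ratio $(R-M)/R\to 1$ is exactly the paper's $|V|/|V+E|\to 1$, and since the fiber of $\pi$ is one-dimensional your one-variable cutoff is slightly cleaner, but the logic is identical.
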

 \begin{proof} Note that for $f \in L^p(\mathbb{C}^N\times\mathbb{R}^2)$,
     \begin{equation}\label{eq:T-Rs}\begin{split}
   \mathcal{ T }(f)(\mathbf{z}, {\mathbf{t}} )
    &=\int_{\mathbb{C}^N\times\mathbb{R}^2}\int_{\mathbb{R}} \tilde T (\mathbf{z},\mathbf{w},t_1- s_1 -u,t_2- s_2 -u,u)f(\mathbf{w},\mathbf{s}
    )d\mathbf{w}d\mathbf{s}  du
    \\&=
    \int_{\mathbb{C}^N\times\mathbb{R}^3}  \tilde{{ T}}(\mathbf{z},\mathbf{w}, \tilde s_1 , \tilde s_2
    , \tilde s_3)f(\mathbf{w},t_1- \tilde s_1- \tilde s_3,t_2- \tilde s_2- \tilde s_3)d\mathbf{w}d \tilde {\mathbf{s} }
    \\&=\int_{\mathbb{C}^N\times\mathbb{R}^3} \tilde{ T}(\mathbf{z},\mathbf{w}, \tilde{\mathbf{s}} )R_{\tilde{\mathbf{s}}}(f)(\mathbf{w},\mathbf{t}
    )d\mathbf{w}d\tilde{\mathbf{s}}
\end{split} \end{equation}by  using definition \eqref{eq:representation} and taking coordinates transformation $ \tilde s_\alpha=t_\alpha- s_\alpha -u$,  $\alpha=1,2 $, $
\tilde s_3 =u$, which preserve  the Lebesgue measure.
Moreover, for fixed $\tilde{\mathbf{u}}\in \mathbb{R}^3$,
 \begin{equation}\label{eq:Rf-2}\begin{split}
  R_{\tilde{\mathbf{u}} }( \mathcal{ T }(f)) (\mathbf{z},{\mathbf{t}})&= \mathcal{ T }(f)(\mathbf{z},t_1-\tilde u_1-\tilde u_3,t_2-\tilde u_2-\tilde u_3)
\\&=\int_{\mathbb{C}^N\times\mathbb{R}^3} \tilde{ T}(\mathbf{z},\mathbf{w}, \tilde{\mathbf{s}} )R_{\tilde{\mathbf{s}}}(f)(\mathbf{w},t_1-\tilde u_1-\tilde
u_3,t_2-\tilde u_2-\tilde u_3
)d\mathbf{w}d\tilde{\mathbf{s}} \\&=\int_{\mathbb{C}^N\times\mathbb{R}^3} \tilde{ T}(\mathbf{z},\mathbf{w}, \tilde{\mathbf{s}} )R_{\tilde{\mathbf{u}}
+\tilde{\mathbf{s}}}(f)(\mathbf{w},\mathbf{t} )d\mathbf{w}d\tilde{\mathbf{s}}  \\
    &=\int_{\mathbb{C}^N\times\mathbb{R}^3} \tilde{ T}(\mathbf{z},\mathbf{w}, \tilde{\mathbf{s}}- \tilde{\mathbf{u}
    })R_{\tilde{\mathbf{s}}}(f)(\mathbf{w},\mathbf{t} )d\mathbf{w}d\tilde{\mathbf{s}}
\end{split} \end{equation}
by using \eqref{eq:representation}, \eqref{eq:representation-def}, \eqref{eq:T-Rs} and taking transformation $\tilde{\mathbf{s}} \rightarrow
\tilde{\mathbf{s}}-\tilde{\mathbf{u}}$.

Let $E\subset \mathbb{R}^3$ be the
(compact) projection onto $\mathbb{R}^3$ of the compact support of the function
$ \tilde{T}$.  Given $\epsilon > 0$,   choose a (large) bounded
open set $V\subset \mathbb{R}^3$ so that
$
 \frac {|V+E|}{|V|}<1+\epsilon.
$
Then  we get
\begin{equation*}\begin{split}
     \|  {\mathcal{T}}(f)\|^p_{L^p(\mathbb{C}^N\times\mathbb{R}^2)}
    &=\frac 1{|V|}\int_V \|  R_{\tilde{\mathbf{u}} }( \mathcal{ T }(f))\|^p_{L^p(\mathbb{C}^N\times\mathbb{R}^2)}d\tilde{\mathbf{u}}\\
     &=\frac 1{|V|}\int_V d\tilde{\mathbf{u}}\int_{\mathbb{C}^N\times\mathbb{R}^2}  |R_{\tilde{\mathbf{u} }}( \mathcal{ T }(f))(\mathbf{z}, \mathbf{t} )|^p
    d\mathbf{z}d\mathbf{t} \\
    &=\frac 1{|V|}\int_V d\tilde{\mathbf{u}} \int_{\mathbb{C}^N\times\mathbb{R}^2} \left | \int_{\mathbb{C}^N\times\mathbb{R}^3} \tilde{
    T}(\mathbf{z},\mathbf{w}, \tilde{\mathbf{s}}-\tilde{ \mathbf{u}} )R_{\tilde{\mathbf{s}}}(f)(\mathbf{w},\mathbf{t}
    )d\mathbf{w}d\tilde{\mathbf{s}}\right|^p  d\mathbf{z}d\mathbf{t}\\ &\leq \frac 1{|V|}\int_{ \mathbb{R}^2}  d\mathbf{t}\int_{\mathbb{C}^N\times\mathbb{R}^3}
    \left | \int_{\mathbb{C}^N\times\mathbb{R}^3} \tilde{
    T}(\mathbf{z},\mathbf{w}, \tilde{\mathbf{s}}- \tilde{\mathbf{u} })R_{\tilde{\mathbf{s}}}(f)(\mathbf{w},\mathbf{t}
    )\chi_{V+E}(\tilde{\mathbf{s}})d\mathbf{w}d\tilde{\mathbf{s}}\right|^p  d\mathbf{z}d\tilde{\mathbf{u}},\end{split} \end{equation*}by using \eqref{eq:Rf} \eqref{eq:Rf-2} and the kernel $\tilde{
    T}(\mathbf{z},\mathbf{w}, \tilde{\mathbf{s}}-\tilde{ \mathbf{u}} )$
vanishing unless $\tilde{\mathbf{s}} \in V+E$.
 If denote
$$
   \tilde{F}_{\mathbf{t}}(\mathbf{w}, \tilde{\mathbf{s}}):=R_{-\tilde{\mathbf{s}}}(f)(\mathbf{w},\mathbf{t} )\chi_{V+E}(-\tilde{\mathbf{s}})
$$
 and take the transformation $\tilde{\mathbf{s}} \rightarrow -\tilde{\mathbf{s}} $ in the last integral, we get \begin{equation*}\begin{split}
   \|  {\mathcal{T}}(f)\|^p_{L^p(\mathbb{C}^N\times\mathbb{R}^2)}  &\leq\frac 1{|V|}\int_{ \mathbb{R}^2}  d\mathbf{t}\int_{\mathbb{C}^N\times\mathbb{R}^3} \left |
   \tilde{\mathcal{T}}(\tilde{F}_{\mathbf{t}})(\mathbf{z}, -
    \tilde{\mathbf{u} })\right|^p  d\mathbf{z}d\tilde{\mathbf{u}}\\
    &=\frac 1{|V|}\int_{ \mathbb{R}^2}\left \|
    \tilde{\mathcal{T}}(\tilde{F}_{\mathbf{t}})\right\|^p_{L^p(\mathbb{C}^N\times\mathbb{R}^3)}d\mathbf{t}\\
    & \leq
    \frac  {A_p^p}{|V|}\int_{ \mathbb{R}^2}\left \|  \tilde{F}_{\mathbf{t}} \right\|^p_{L^p(\mathbb{C}^N\times\mathbb{R}^3)}d\mathbf{t},
\end{split} \end{equation*}
by the  assumption of the theorem.
But
\begin{equation*}\begin{split}  \int_{ \mathbb{R}^2} \left\|  \tilde{F}_{\mathbf{t}} \right\|^p_{L^p(\mathbb{C}^N\times\mathbb{R}^3)}d\mathbf{t}&=
\int_{ \mathbb{R}^3}d\tilde{\mathbf{s}}\int_{\mathbb{C}^N\times\mathbb{R}^2}  \left|R_{-\tilde{\mathbf{s}}}(f)(\mathbf{w},\mathbf{t}
)\chi_{V+E}(-\tilde{\mathbf{s}})\right|^pd\mathbf{w}d\mathbf{t}
 =|V+E|\| f \|^p_{L^p(\mathbb{C}^N\times\mathbb{R}^2)},
\end{split}\end{equation*}by using \eqref{eq:Rf} again. It follows that
 \begin{equation*}\begin{split}
     \|  {\mathcal{T}}(f)\|^p_{L^p(\mathbb{C}^N\times\mathbb{R}^2)}
    & \leq{A_p^p}(1+\epsilon)\| f \|^p_{L^p(\mathbb{C}^N\times\mathbb{R}^2)},
\end{split} \end{equation*}
which completes the proof by taking $\epsilon\rightarrow 0$.
 \end{proof}
  \subsection{Transference of   Cauchy-Szeg\H o kernels} \label{Transference-CS} As in Corollary \ref{cor:flat-Szego1} in the appendix, under the identification $\tilde\iota: \mathscr U \rightarrow \tilde{{\mathcal U} }$ in \eqref{eq:tilde-ota}, the  Cauchy-Szeg\H o
  kernel $\tilde{S}_{\boldsymbol \varepsilon}$ on the domain $   \tilde{ \mathscr N }\times \mathbb{R}_+^3$   can   written as
\begin{equation}\label{eq:flat-Szego-tilde}
   \tilde{S}_{\boldsymbol \varepsilon}(  \mathbf{g}) =\prod_{\mu=1}^3
   \widetilde{{S}}_\mu (\varepsilon_\mu,  \mathbf{g}_\mu ),
\end{equation} for $\mathbf{g}=(\mathbf{g}_1,\mathbf{g}_2, \mathbf{g}_3)  \in \mathscr
H_1\times\mathscr H_2 \times\times\mathscr H_3$ and $\boldsymbol \varepsilon=(  \varepsilon_1,  \varepsilon_2,   \varepsilon_3)\in
\mathbb{R}^3_+ $,
where
\begin{equation}\label{eq:flat-Szego2}
   \widetilde{{S}}_\mu (\varepsilon_\mu, \mathbf{g}_\mu)=
 \frac {c_\mu}{ (|{\mathbf z}_\mu |^2 +\varepsilon_\mu- \mathbf{i} t_\mu ) ^{n_\mu+1}    },\qquad c_\mu:=\frac { n_\mu! }{ 4(\frac \pi
 2)^{n_\mu+1}},
\end{equation}is the
Cauchy-Szeg\H o kernel   on the domain  $ \mathscr H_\mu \times \mathbb{R}_+= \mathbb{C}^{n_\mu}\times \mathbb{R } \times \mathbb{R}_+$, if we write
$\mathbf{g}_\mu =( \mathbf{z}_\mu , t_\mu )\in \mathscr H_\mu$.  Its  push-forward  is
 \begin{equation*}\begin{split}
 \pi_*(  \tilde{ S}_{\boldsymbol \varepsilon})(\mathbf{z},\mathbf{t} ) &=\int_{\mathbb{R}} \tilde{S}_{\boldsymbol \varepsilon}(\mathbf{z},  t_1-u,t_2-u,u)du\\
    &=\int_{\mathbb{R}}  \frac {c_1}{ (|{\mathbf z}_1 |^2+\varepsilon_1 - \mathbf{i} (t_1 -u)) ^{n_1+1}    }  \frac {c_2}{ (|{\mathbf z}_2 |^2+\varepsilon_2 -
    \mathbf{i} (t_2-u) ) ^{n_2+1}    } \frac {c_3}{ (|{\mathbf z}_3 |^2+\varepsilon_3 - \mathbf{i} u ) ^{n_3+1}    }du\\ &=\frac
    1{2\pi\mathbf{i}}\int_{\mathbf{i}\mathbb{R}}  \frac {f(\zeta) }{ (\zeta -|{\mathbf z}_3 |^2-\varepsilon_3 ) ^{n_3+1}    }d\zeta
    \end{split} \end{equation*}
    by changing the integral to a contour integral ($\mathbf{i} u\rightarrow \zeta$),
    where
    \begin{equation*}
      f(\zeta)=2\pi c_1 c_2 c_3(-1)^{n_3+1}\frac { 1}{ (|{\mathbf z}_1 |^2 +\varepsilon_1- \mathbf{i}  t_1 +\zeta) ^{n_1+1}    }  \frac {1}{ (|{\mathbf z}_2
      |^2+\varepsilon_2 - \mathbf{i}  t_2+ \zeta ) ^{n_2+1}    }
    \end{equation*}is holomorphic on the right half plane $\{\zeta\in \mathbb{C};{\rm Re}\, \zeta>-\min(\varepsilon_1,\varepsilon_2)\}$. Now consider the
    semicircle
    $\Gamma_R:= \{R e^{i\theta}; \theta\in(-\pi/2,\pi/2)\}$. We see that
    \begin{equation*}
       \frac 1{2\pi\mathbf{i}}\int_{\mathbf{i}\mathbb{R}}  \frac {f(\zeta) }{ (\zeta -|{\mathbf z}_3 |^2-\varepsilon_3 ) ^{n_3+1}    }d\zeta =-\frac
       1{2\pi\mathbf{i}}
       \lim_{R\rightarrow +\infty }\int_{[-\mathbf{i}R,\mathbf{i}R]\cup \Gamma_R }  \frac {f(\zeta) }{ (\zeta -|{\mathbf z}_3 |^2-\varepsilon_3 ) ^{n_3+1}
       }d\zeta
    \end{equation*}
   by the integral over the semicircle $   \Gamma_R  $ tending to zero by simple estimate, where the contour integral in the right hand side is counter clockwise. Now apply  the residue theorem to this integral to get
     \begin{equation*}\begin{split}  \pi_*( & \tilde{ S}_{\boldsymbol \varepsilon})(\mathbf{z},\mathbf{t} )=-{\rm Res}\, \left(\frac {f(\zeta) }{ (\zeta -|{\mathbf
     z}_3 |^2-\varepsilon_3 ) ^{n_3+1}    },
     |{\mathbf z}_3
     |^2+\varepsilon_3\right)\\&
    =2\pi c_1 c_2 c_3\frac {(-1)^{n_3 }}{n_3!}\left.\left[  \frac {1}{ (|{\mathbf z}_1 |^2 +\varepsilon_1- \mathbf{i}  t_1 +\zeta) ^{n_1+1}    }  \frac {1}{
    (|{\mathbf z}_2 |^2+\varepsilon_2 - \mathbf{i}  t_2+\zeta ) ^{n_2+1}    }\right]^{(n_3)}\right|_{\zeta=|{\mathbf z}_3 |^2+\varepsilon_3}\\  &=2\pi c_1 c_2 c_3
    \frac
    {(-1)^{n_3 }}{n_3!} \left.\sum_{k=0}^{n_3}\binom {n_3}{k}
    \frac {(-n_1-1)\cdots(-n_1-k)}{ (|{\mathbf z}_1 |^2 +\varepsilon_1- \mathbf{i}  t_1 +\zeta) ^{n_1+k+1}    }  \cdot  \frac {(-n_2-1)\cdots(-n_2-n_3+k)}{
    (|{\mathbf
    z}_2 |^2+\varepsilon_2 - \mathbf{i}  t_2+\zeta ) ^{n_2+ n_3-k+1}    } \right|_{\zeta=|{\mathbf z}_3 |^2+\varepsilon_3}\\
    &= \frac {1}{ 4^2(\frac \pi
 2)^{N+2}} \sum_{k=0}^{n_3}\binom {n_3}{k}
      \frac {( n_1+k)!}{ (|{\mathbf z}_1 |^2+|{\mathbf z}_3 |^2+\varepsilon_1+\varepsilon_3 - \mathbf{i}  t_1  ) ^{n_1+k+1}    }  \cdot \frac {( n_2+n_3-k)!}{
      (|{\mathbf z}_2 |^2+|{\mathbf z}_3 |^2+\varepsilon_2+\varepsilon_3 - \mathbf{i}  t_2  ) ^{n_2+ n_3-k+1}    }\\
      &=  S_{\varepsilon_1+\varepsilon_3,\varepsilon_2+\varepsilon_3 }(\mathbf{z},\mathbf{t} )
\end{split} \end{equation*}by the expression of $S_{\varepsilon_1 ,\varepsilon_2 }(\mathbf{z},\mathbf{t} )$ in  Corollary \ref{cor:flat-Szego1}.
\section{Tubes and   tube   maximal function  }
 \subsection{Tubes }  Since
  $\tilde{\mathscr  N}$ has three commutative dilations, a natural ball
  is $\tilde{\mathbf{g}} \tilde{{B}} (\mathbf{0} ,\mathbf{r})$, where
  \begin{equation}\label{eq:product-balls}
  \tilde{{B}} (\mathbf{0} ,\mathbf{r}):={B}_1(\mathbf{0}_1,r_1)\times  {B}_2(\mathbf{0}_2,r_2)\times  {B}_3(\mathbf{0}_3, {r}_3),
  \end{equation}  for $\mathbf{r} :=\left({r_1} ,
          {r_2} ,r_3 \right)\in \mathbb{  R}^3_+ $,  is the product of three balls on  the Heisenberg groups $ {\mathscr H}_\mu$'s, respectively. A natural ball in
          the group $\mathscr  N$ is the image ${\mathbf{g}} \pi(\tilde{{B}} (\mathbf{0} ,\mathbf{r}))$
    under the projection   $\pi$ by $\pi$   homomorphic, where $\mathbf{g}=\pi(\tilde{\mathbf{g}})$.
  To see the shape of this image,   we   write
 $
     \pi=\left({\rm id}_{\mathbb{C}^{ N}}, \widehat{\pi}\right),
$
 where ${\rm id}_{\mathbb{C}^{ N}}$  is the  identity transformation of $ \mathbb{C}^{ N}$,    and  $  \widehat{\pi}$ is  the projection $ \widehat{\pi}:
 \mathbb{R}^3 \longrightarrow\mathbb{R}^2$  given by
  \begin{equation}\label{eq:hat-pi}\begin{split}
  (u_1 ,u_2,u_3)&\mapsto (u_1+u_3,u_2+u_3).
  \end{split}  \end{equation}
Note that \begin{equation*}
    {B}_1(\mathbf{0}_1,r_1)\times   {B}_2(\mathbf{0}_2,r_2)  \times  {B}_3(\mathbf{0}_3,r_3)                  =\Box_{\mathbf{r}}\times I_{\mathbf{r}^2}
 \end{equation*}is a cuboid, if we write
      $$
          \Box_{\mathbf{r}}:=\Box_{r_1}^{(1)}\times \Box_{r_2}^{(2)}\times \Box_{r_3}^{(3)} , \quad I_{\mathbf{r}^2}:=I _{r_1^2}\times I
    _{r_2^2}\times I _{r_3^2}.
      $$

Now suppose that     $r_1> r_2$.
 It is direct  to see that the image of $I_{\mathbf{r}^2}$ under $ \widehat{\pi}$ is a
 hexagon in the following  Figure  1(a),
 \begin{figure}[ht]
  \centering
  \subfigure[$\widehat{\pi}(I_{\mathbf{r}^2})$]{
    \includegraphics[width=0.48\textwidth]{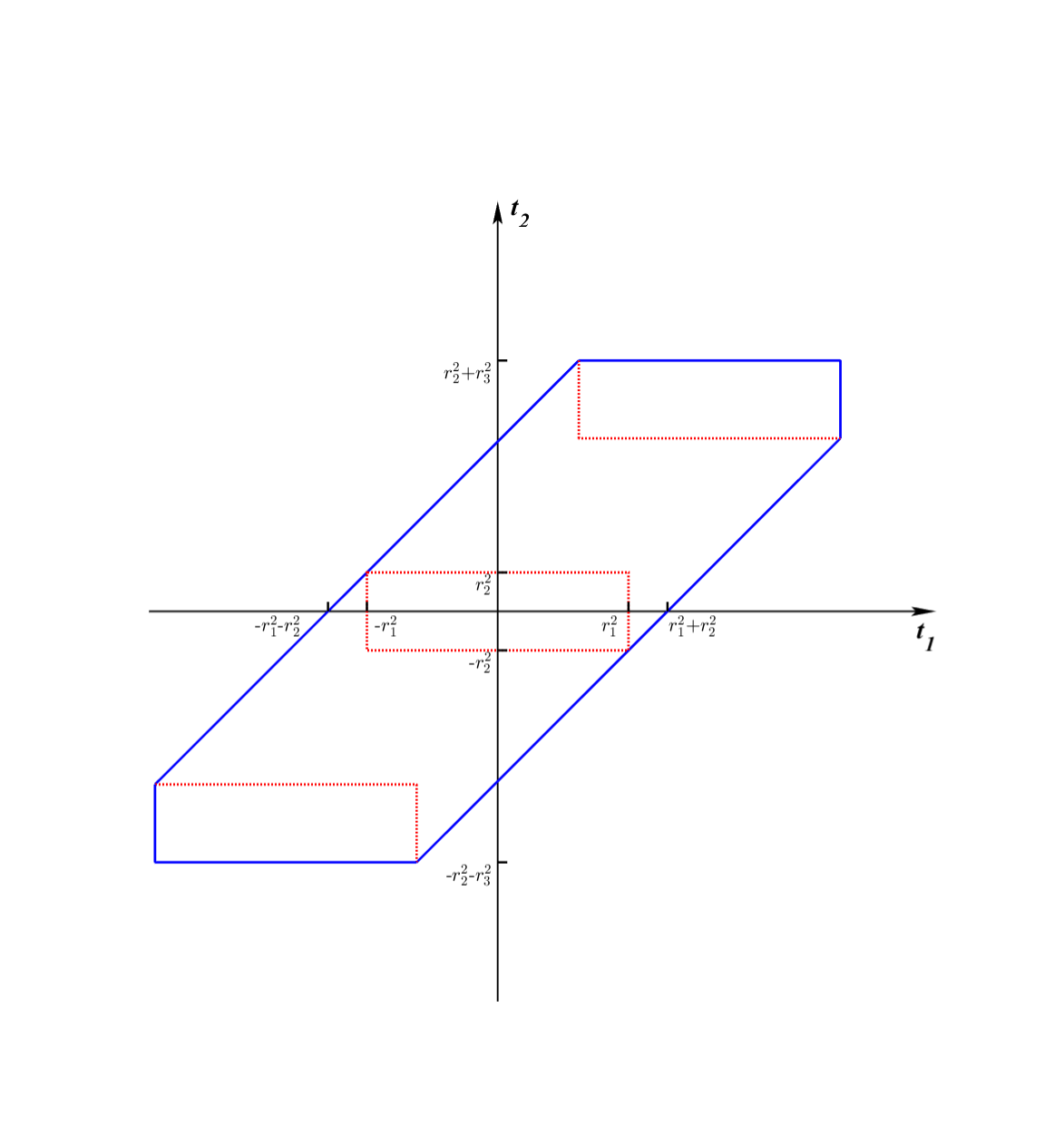}
    \label{fig:subfig1}
  }
  \subfigure[$P_{a,b }$]{
    \includegraphics[width=0.48\textwidth]{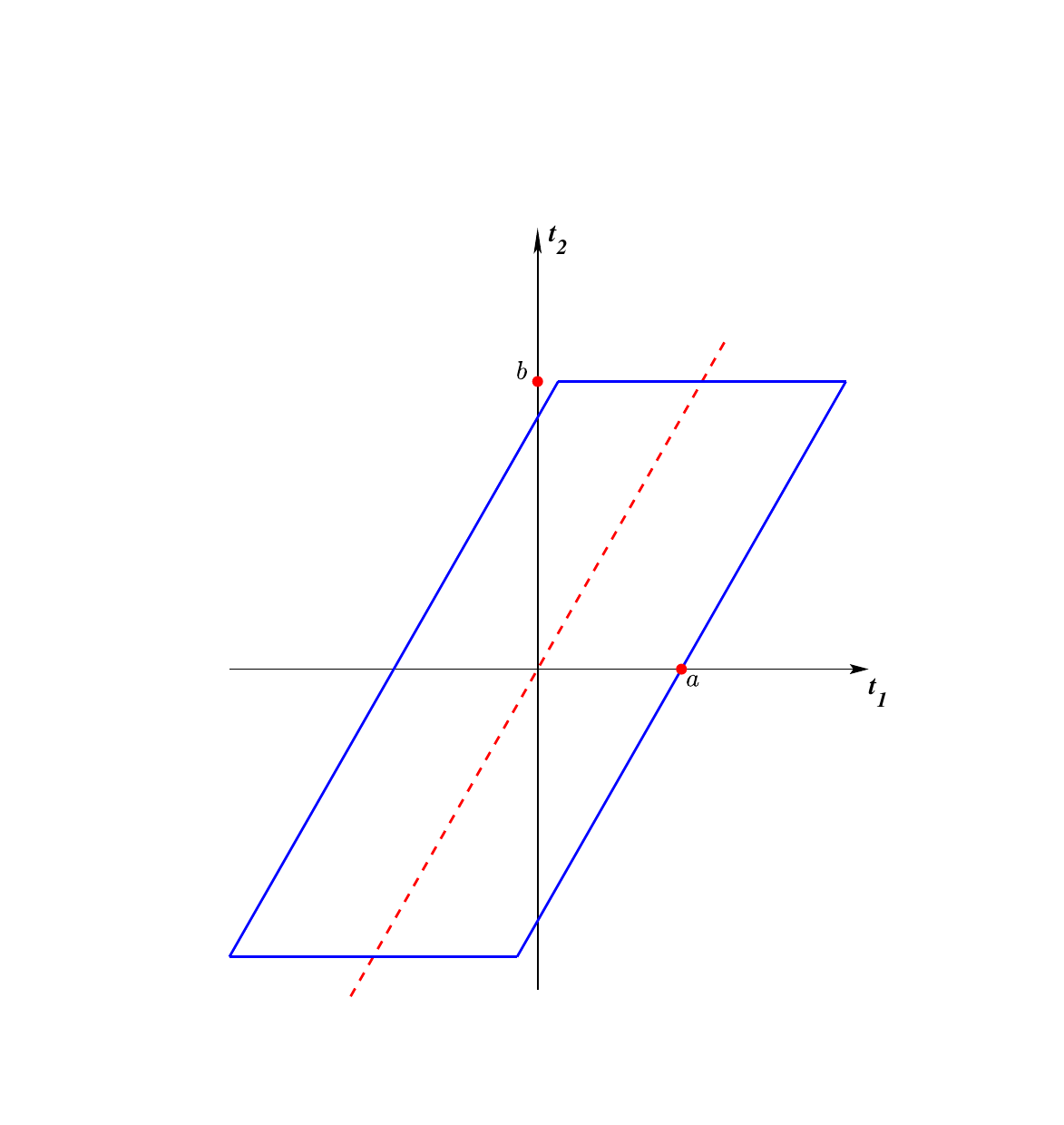}
    \label{fig:subfig2}
  }
  \caption{}
  \label{fig:both}
\end{figure} which is basically the {\it parallelogram}:
       \begin{equation*}
          P_{a,b }:=\{(t_1,t_2)\in \mathbb{R}^2; -a< t_1-t_2<a, |t_2|<b\},
       \end{equation*} with $a=r_1 ^{2}+r_2 ^{2}, b =r_2 ^{2}+r_3^{2}$.
 So  (1) if $r_3> r_2$, we define
$
     T(\mathbf{0},\mathbf{r}): = \Box_{\mathbf{r}} \times  {P}_{r_1 ^{2} ,  r_3^{2} }
    $;
            (2) if  $r_3\leq r_2$, we simply define
     $
     T(\mathbf{0},\mathbf{r}): = \Box_{\mathbf{r}}\times {P}_{r_1 ^{2} ,  r_2^{2} }.
  $ Namely, in both cases, we define
        \begin{equation*}\label{eq:tube-cube}
     T(\mathbf{0},\mathbf{r}):= \Box_{\mathbf{r}}\times {P}_{r_1 ^{2} ,  r_2^{2}\vee  r_3^{2}}.
       \end{equation*} While for $r_1\leq r_2$, define
$
     T(\mathbf{0},\mathbf{r})= \Box_{\mathbf{r}}\times {P}_{r_1 ^{2}\vee  r_3^{2} ,  r_2^{2}}.
 $ For $\mathbf{g}\in \mathscr  N$, set $ T(\mathbf{g},\mathbf{r}):= \mathbf{g}T(\mathbf{0},\mathbf{r})$.
    Following \cite{CCLLO}, we call $ T(\mathbf{g},\mathbf{r})$ a {\it tube}. For $r_1> r_2$, its volume is
  \begin{equation}\label{eq:tube-volume}
    | T(\mathbf{g},\mathbf{r})|= 2 ^{Q-2}  r_1 ^{2n_1}  r_2 ^{2n_2}  r_3 ^{2n_3} r_1^2  \max\{ r_2^2,  r_3^2\},
  \end{equation}where $Q =2N+4$ is the {\it homogeneous dimension} of $ { \mathscr
  N}$. It is similar for $r_1 \leq r_2$.
  \begin{prop}\label{prop:tube-pi}
$ T(\mathbf{g},\mathbf{r}/2)\subset    \pi  (\tilde{{B}} (\mathbf{g} ,\mathbf{r}) )\subset T(\mathbf{g},2\mathbf{r})$  for $ \mathbf{r}  \in \mathbb{
R}^3_+$.
  \end{prop}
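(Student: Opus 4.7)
Since $\pi$ is a group homomorphism and the tubes $T(\mathbf{g},\mathbf{r})$ are defined by left translation, $\pi(\tilde{\mathbf{g}}\tilde{B}(\mathbf{0},\mathbf{r}))=\mathbf{g}\pi(\tilde{B}(\mathbf{0},\mathbf{r}))$ for $\mathbf{g}=\pi(\tilde{\mathbf{g}})$. Hence the plan is to reduce immediately to $\mathbf{g}=\mathbf{0}$ and prove $T(\mathbf{0},\mathbf{r}/2)\subset\pi(\tilde{B}(\mathbf{0},\mathbf{r}))\subset T(\mathbf{0},2\mathbf{r})$.

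Writing $\pi=(\mathrm{id}_{\mathbb{C}^{N}},\widehat{\pi})$ as in \eqref{eq:hat-pi}, and using $\tilde{B}(\mathbf{0},\mathbf{r})=\Box_{\mathbf{r}}\times I_{\mathbf{r}^2}$, we have $\pi(\tilde{B}(\mathbf{0},\mathbf{r}))=\Box_{\mathbf{r}}\times\widehat{\pi}(I_{\mathbf{r}^2})$. The $\Box$-factor gives trivial inclusions $\Box_{\mathbf{r}/2}\subset\Box_{\mathbf{r}}\subset\Box_{2\mathbf{r}}$, so the whole problem reduces to the following hexagon vs.\ parallelogram comparison: assuming $r_1>r_2$ (the case $r_1\leq r_2$ is symmetric),
\begin{equation*}
P_{r_1^2/4,\; r_2^2/4\,\vee\, r_3^2/4}\;\subset\;\widehat{\pi}(I_{\mathbf{r}^2})\;\subset\; P_{(2r_1)^2,\;(2r_2)^2\,\vee\,(2r_3)^2}.
\end{equation*}
Recall that $(t_1,t_2)\in\widehat{\pi}(I_{\mathbf{r}^2})$ iff there exists $u\in(-r_3^2,r_3^2)$ with $|t_1-u|<r_1^2$ and $|t_2-u|<r_2^2$.

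For the \emph{outer inclusion}, given such a $(t_1,t_2)$ and $u$, the triangle inequality gives $|t_1-t_2|<r_1^2+r_2^2<2r_1^2<(2r_1)^2$ and $|t_2|\leq|t_2-u|+|u|<r_2^2+r_3^2\leq 2(r_2^2\vee r_3^2)<(2r_2)^2\vee(2r_3)^2$, which is what is needed.

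For the \emph{inner inclusion}, the key idea is to produce the witness $u$ by hand, splitting on whether $r_3>r_2$. If $r_3>r_2$ and $(t_1,t_2)\in P_{r_1^2/4,\,r_3^2/4}$, take $u:=t_2$: then $|u|=|t_2|<r_3^2/4<r_3^2$, $|t_2-u|=0<r_2^2$ and $|t_1-u|=|t_1-t_2|<r_1^2/4<r_1^2$. If $r_3\leq r_2$ and $(t_1,t_2)\in P_{r_1^2/4,\,r_2^2/4}$, take $u:=0$: then $u\in(-r_3^2,r_3^2)$, $|t_2-u|<r_2^2/4<r_2^2$ and $|t_1|\leq|t_1-t_2|+|t_2|<r_1^2/4+r_2^2/4<r_1^2$ since $r_2<r_1$. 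In both cases the required $u$ exists, so $(t_1,t_2)\in\widehat{\pi}(I_{\mathbf{r}^2})$.

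The main (and only) mild subtlety is the case analysis in the inner inclusion, where the correct choice of witness $u$ changes depending on the relative sizes of $r_2$ and $r_3$; this is precisely why the tube was defined with $r_2^2\vee r_3^2$ in the last slot. Once the choice is made, all estimates reduce to the triangle inequality and the factor $\tfrac{1}{2}$ (resp.\ $2$) provides the needed slack.
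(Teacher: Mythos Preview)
Your proof is correct and follows essentially the same route as the paper: reduce to $\mathbf{g}=\mathbf{0}$, factor off the cube $\Box_{\mathbf{r}}$, and compare $\widehat{\pi}(I_{\mathbf{r}^2})$ with parallelograms via the triangle inequality and an explicit witness $u$. The only cosmetic difference is in the inner inclusion: the paper first passes through the single intermediate parallelogram $P_{r_1^2/4,\,(r_2^2+r_3^2)/4}$ and then case-splits on whether $|t_2|$ exceeds $r_3^2/4$ (choosing $u=t_2$ or $u=\operatorname{sgn}(t_2)\cdot r_3^2/4$), whereas you case-split directly on $r_3>r_2$ versus $r_3\le r_2$ (choosing $u=t_2$ or $u=0$), which tracks the tube definition more transparently; both choices do the same job.
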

   \begin{proof} It is sufficient to prove the case for $\mathbf{g}=\mathbf{0}$.  Without loss of generality, assume $r_1> r_2$. It is obvious that
   \begin{equation*}
      \pi (\tilde{{B}} (\mathbf{0} ,\mathbf{r}))\subset \Box_{\mathbf{r}} \times  {P}_{r_1 ^{2}+r_2 ^{2} ,r_2 ^{2} +r_3^{2} }\subset \Box_{\mathbf{r}} \times
      {P}_{2r_1 ^{2}  ,2(r_2 ^{2}\vee r_3^{2}) } \subset T(\mathbf{0},2\mathbf{r}),
   \end{equation*}
by definition.
  Then the result follows from the following inclusions
   \begin{equation*}
      \pi (\tilde{{B}} (\mathbf{0} ,\mathbf{r}))\supset \Box_{\mathbf{r}} \times  {P}_{r_1 ^{2}/4  ,(r_2 ^{2}  +r_3^{2})/4 } \supset T(\mathbf{0},\mathbf{r}/2).
   \end{equation*}The second inclusion is obvious by definition again. For
  the first one, note that for $(t_1,t_2)\in  {P}_{r_1 ^{2}/4  ,(r_2 ^{2}  +r_3^{2})/4 } $, we have $|t_1-t_2|<r_1 ^{2}/4 $, $| t_2|< (r_2 ^{2}  +r_3^{2})/4$.
  \\
   {\it
  Case i:    $| t_2|>  r_3^{2}  /4$}. We take $s_3={\rm sgn}\, t_2 \cdot r_3^{2} /4$, $s_1=t_1-s_3$ and $s_2=t_2-s_3$. Then $\widehat{\pi}(s_1,s_2,s_3)=(t_1,t_2)$ satisfying
   $|s_1-s_2|=|t_1-t_2|<r_1 ^{2}/4 $, $|s_2|=|t_2-s_3|\leq r_2 ^{2}/4  $, and   $|s_1 |\leq|s_1-s_2|+|s_2 | <r_1 ^{2}/2 $. Thus for $\mathbf{z}\in \Box_{\mathbf{r}}$, we have $(\mathbf{z},s_1,s_2,s_3)\in    \tilde{{B}}
   (\mathbf{0} ,\mathbf{r})$, and so $(\mathbf{z}, t_1,t_2)\in  \pi(\tilde{{B}} (\mathbf{g} ,\mathbf{r}) )$.
   \\{\it Case ii:    $| t_2|<  r_3^{2}  /4 $}. We take $s_3=t_2$,
   $s_2=0$ and $ s_1=t_1-t_2 $. Then, $|s_1|<r_1 ^{2}/4 $, and so $(\mathbf{z},s_1,s_2,s_3)\in    \tilde{{B}}
   (\mathbf{0} ,\mathbf{r})$ for $\mathbf{z}\in \Box_{\mathbf{r}}$. Hence, $(\mathbf{z},t_1,t_2)=\pi(\mathbf{z},s_1,s_2,s_3)\in  \pi (\tilde{{B}} (\mathbf{g} ,\mathbf{r}) )$.
      \end{proof}
 \subsection{Tube maximal function}
Recall that the {\it normalised dilate  of a function } $\varphi^{
(\mu)}$
on $ {\mathscr
  {H}}_\mu$ is
  \begin{equation}\label{eq:normalised}
     \varphi_{r_\mu}^{
(\mu)}(\mathbf{g}_\mu ):=r_\mu^{-Q_\mu}\varphi_{}^{
(\mu)}(r_\mu^{-1}\mathbf{g}_\mu ),
  \end{equation}  where $Q_\mu=2n_\mu+2$ is the {\it homogeneous dimension} of ${\mathscr
  {H}}_\mu$,
and the  {\it normalised characteristic function} is
 \begin{equation*}
    \chi_{r_\mu}^{
(\mu)}(\mathbf{g}_\mu ):=\frac 1{\left| {B}_\mu(\mathbf{0}_\mu,r_\mu)\right|}\mathbbm 1_{
 {B}_\mu(\mathbf{0}_\mu,r_\mu)}(\mathbf{g}_\mu ),
  \end{equation*}
where $\mathbbm{1}_E$ denotes the characteristic function of a set $E$.
For $\mathbf{r}\in \mathbb{R}^3_+$, let
    $ \varphi_{\mathbf{r} }$ and $\chi_{\mathbf{r} }$ be given by \eqref{eq:varphi-r0}.
  Define  the {\it iterated maximal operator}  by
 \begin{equation*}
    M_{it}  (f)(\mathbf{g})=\sup_{\mathbf{r}\in \mathbb{R}^3_+} \left |f* \chi_{\mathbf{r} }(\mathbf{g} )\right|.
 \end{equation*}

  The following proposition implies that  $\chi_{\mathbf{r} } $ is essentially the normalised characteristic function of the tube
 $  T(\mathbf{0},\mathbf{r})$.

  \begin{prop} \label{prop:tube} For $ f \in L^1(\mathscr N )$ and $\mathbf{r}  \in \mathbb{  R}^3_+ $, we have
\begin{equation*}
   \frac 19  |f|* \chi_{\frac 13\mathbf{r} }(\mathbf{g} ) \leq \frac 1{| T(\mathbf{g},\mathbf{r})|}\int_{  T(\mathbf{g},\mathbf{r})}|f (\mathbf{h})|d\mathbf{h}\leq
     |f|* \chi_{3 \mathbf{r} }(\mathbf{g} ).
\end{equation*}
Consequently, $M   (f) \approx M_{it}  (f)$.
    \end{prop}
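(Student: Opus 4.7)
The plan is to prove that $\chi_{\mathbf{r}}$ is pointwise comparable to $\frac{1}{|T(\mathbf{0},\mathbf{r})|}\mathbbm{1}_{T(\mathbf{0},\mathbf{r})}$ up to absolute constants, from which both sides of the sandwich inequality follow by straightforward majorisation of the convolution, and the equivalence $M\approx M_{it}$ is immediate upon taking suprema in $\mathbf{r}\in \mathbb{R}^3_+$.

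First I would compute $\chi_{\mathbf{r}}$ explicitly. Since each $\chi_{r_\mu}^{(\mu)}$ is the normalised characteristic function of the cuboid $\Box_{r_\mu}^{(\mu)} \times I_{r_\mu^2}$, the definition of the push-forward gives
\[
\chi_{\mathbf{r}}(\mathbf{z},\mathbf{t}) \;=\; \frac{\mathbbm{1}_{\Box_{\mathbf{r}}}(\mathbf{z})}{|\tilde{B}(\mathbf{0},\mathbf{r})|}\cdot L_{\mathbf{r}}(\mathbf{t}),
\]
where $L_{\mathbf{r}}(\mathbf{t})$ is the Lebesgue measure of
$\{u\in\mathbb{R}:\;|u-t_1|<r_1^2,\;|u-t_2|<r_2^2,\;|u|<r_3^2\}$.
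A direct computation from \eqref{eq:tube-volume} and the formula $|B_\mu(\mathbf{0}_\mu,r_\mu)|=2^{Q_\mu-1}r_\mu^{Q_\mu}$ shows, e.g.\ when $r_1>r_2$, that $|T(\mathbf{0},\mathbf{r})|/|\tilde{B}(\mathbf{0},\mathbf{r})|=1/(2\min(r_2^2,r_3^2))$, with the symmetric formula in the other case.

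The upper bound on $\chi_{\mathbf{r}}$ is then immediate from the trivial estimate $L_{\mathbf{r}}(\mathbf{t})\leq 2\min(r_1^2,r_2^2,r_3^2)$ combined with the volume ratio above; this shows $\chi_{\mathbf{r}}\leq C/|T(\mathbf{0},\mathbf{r})|$ on its support, which by Proposition 3.1 is contained in $T(\mathbf{0},2\mathbf{r})$. The matching lower bound $\chi_{3\mathbf{r}}\gtrsim 1/|T(\mathbf{0},\mathbf{r})|$ throughout $T(\mathbf{0},\mathbf{r})$ is the technical heart of the argument: for $(\mathbf{z},\mathbf{t})\in T(\mathbf{0},\mathbf{r})$ with $r_1>r_2$, the defining constraint $|t_1-t_2|<r_1^2$, together with a judicious base-point $u^{*}$ (take $u^{*}=t_2$ when $r_3\geq r_2$, and $u^{*}=0$ when $r_2\geq r_3$), produces an entire interval around $u^{*}$ of length $\gtrsim \min(r_2^2,r_3^2)$ that satisfies all three \emph{dilated} Heisenberg-ball conditions at once. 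This step is a short case analysis on the ordering of $r_1,r_2,r_3$ and is where the specific asymmetric shape of tubes is exploited.

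Finally, the two desired inequalities follow by the change of variables $\mathbf{k}=\mathbf{g}\mathbf{h}^{-1}$ in the convolution together with the symmetry $\chi_{\mathbf{r}}(\mathbf{g}^{-1})=\chi_{\mathbf{r}}(\mathbf{g})$ (inherited from the symmetry of each $B_\mu(\mathbf{0}_\mu,r_\mu)$ under inversion in $\mathscr{H}_\mu$ and the invariance of $L_{\mathbf{r}}$ under $\mathbf{t}\mapsto-\mathbf{t}$). The upper bound in the proposition then reduces to the lower pointwise bound $\chi_{3\mathbf{r}}\geq 1/|T(\mathbf{g},\mathbf{r})|$ on $T(\mathbf{g},\mathbf{r})$, while the lower bound reduces to the fact that $\chi_{\mathbf{r}/3}$ is supported in $T(\mathbf{g},2\mathbf{r}/3)\subset T(\mathbf{g},\mathbf{r})$ (again by Proposition 3.1) and satisfies $\|\chi_{\mathbf{r}/3}\|_\infty\leq 9/|T(\mathbf{g},\mathbf{r})|$. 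Taking the supremum over $\mathbf{r}\in \mathbb{R}^3_+$ in both inequalities yields $M\approx M_{it}$, as claimed. The hard part throughout is the asymmetric volume accounting and the case analysis producing the lower bound on $L_{\mathbf{r}}$; once that is in hand, the rest is a routine left-invariant reduction.
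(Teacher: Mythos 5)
Your proposal is correct and follows essentially the same route as the paper: you compute the push-forward $\chi_{\mathbf r}$ explicitly as $\mathbbm 1_{\Box_{\mathbf r}}$ times an interval-overlap length (the paper's $W_{\mathbf r}(\mathbf t')$, your $L_{\mathbf r}(\mathbf t)$), bound it above by $2\min(r_2^2,r_3^2)$ on a parallelogram and below by the same case analysis on the ordering of $r_2,r_3$ with the same base points $u^*=0$ resp.\ $u^*=t_2$, and finish by the inversion symmetry of $\chi_{\mathbf r}$ and left-invariance. This is exactly the paper's argument, so nothing further is needed.
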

 \begin{proof} Without loss of generality, we can  assume  $r_1> r_2$. Note that
\begin{equation*}\begin{split}\frac 1{| T(\mathbf{g},\mathbf{r})|}\int_{  T(\mathbf{g},\mathbf{r})}|f (\mathbf{h})|d\mathbf{h}&=\int_{\mathbb C^{N}\times
\mathbb{R}^2}|f ((\mathbf{z},\mathbf{t})   (\mathbf{z}',\mathbf{t}')) |v_{\mathbf{r} } (\mathbf{z}',\mathbf{t}')d  \mathbf{z}'d \mathbf{t}',
    \end{split}  \end{equation*}
    where $\mathbf{g}=(\mathbf{z},\mathbf{t})\in\mathscr N  $, and
    \begin{equation*}
       v_{\mathbf{r} } (\mathbf{z}',\mathbf{t}')=\frac 1{|\Box_{\mathbf{r}}|  }\mathbbm 1_{\Box_{\mathbf{r}}}(\mathbf{z}' ) \frac 1{  |P_{r_1^2 , r_2^2\vee
       r_3^2} |}   \mathbbm
       1_{P_{r_1^2 , r_2^2\vee r_3^2}}( \mathbf{t}').
    \end{equation*}
On the other hand, we have
  \begin{equation*}\label{eq:mean-tube}\begin{split}|f|* \chi_{\mathbf{r} }(\mathbf{z},\mathbf{t}) &=   \int_{\mathbb C^{N}\times \mathbb{R}^2}\left|f
  \left((\mathbf{z},\mathbf{t})   (\mathbf{z}',\mathbf{t}') ^{-1}\right)\right|  \chi_{\mathbf{r} } (\mathbf{z}',\mathbf{t}')d  \mathbf{z}'d \mathbf{t}' =
  \int_{\mathbb C^{N}\times \mathbb{R}^2}|f ((\mathbf{z},\mathbf{t})   (\mathbf{z}',\mathbf{t}')) |w_{\mathbf{r} } (\mathbf{z}',\mathbf{t}')d
  \mathbf{z}'d \mathbf{t}'
    \end{split}  \end{equation*}by \eqref{eq:convolution-def} and taking transformation $(-\mathbf{z}',-\mathbf{t}')\rightarrow (\mathbf{z}',\mathbf{t}')$,
where
    \begin{equation*}
      w_{\mathbf{r} } (\mathbf{z}' ,\mathbf{t}' )=\frac 1{|\Box_{\mathbf{r}}|  } \mathbbm 1_{\Box_{\mathbf{r}}} (\mathbf{z} '  )
\cdot\frac {W_{\mathbf{r} }(\mathbf{t}')}{ 2^3r_1^2r_2^2 r_3^2}
    \end{equation*}
with
    \begin{equation}\label{eq:w-r}
    W_{\mathbf{r} }(\mathbf{t}'):=     \int_{\mathbb{R}}\mathbbm  1_{(-r_1^2,r_1^2)}( t_1'-u)\mathbbm  1_{(-r_2^2,r_2^2)}(t_2'-u) \mathbbm
    1_{(-r_3^2,r_3^2)}(  u)du.
    \end{equation}It is direct to see that $W_{\mathbf{r} }( -\mathbf t')=W_{\mathbf{r} }( \mathbf{t}')$.
    We claim that  $W_{\mathbf{r} }$ satisfies the estimate
    \begin{equation}\label{eq:w-claim}
        \frac 29 \min\{r_2^2,r_3^2\}\mathbbm   1_{P_{ (r_1^2+r_2^2 )/9,   (r_2^2+r_3^2 ) /9}} (\mathbf{t}')   \leq     W_{\mathbf{r} }(\mathbf{t}')\leq
         2\min\{r_2^2,r_3^2\}\mathbbm  1_{P_{r_1^2+r_2^2, r_2^2+r_3^2}}(\mathbf{t}'),
    \end{equation}for any $\mathbf{t}'\in \mathbb{R}^2$.
    Then it follows from the claim and
  \begin{equation*}
 \frac {    \min\{r_2^2,r_3^2\}}{4r_1^2 r_2^2 r_3^2}= \frac 1{4 r_1^2  \max\{r_2^2,r_3^2\} }=\frac 1{  |P_{r_1^2 , r_2^2\vee r_3^2} |}
  \end{equation*} that
$
   \frac 19    v_{\mathbf{r} /3} (\mathbf{z}' ,\mathbf{t} ') \leq  w_{\mathbf{r} } (\mathbf{z}' ,\mathbf{t}' )\leq    v_{3\mathbf{r} } (\mathbf{z} ',\mathbf{t}' )
$.
  The  result follows.

To show the claim  \eqref{eq:w-claim}, note that if $W_{\mathbf{r} }(\mathbf{t}')\neq0$,  we must have $|t_1'-t_2'|<r_1^2+r_2^2 $. Otherwise,   for any $u$
satisfying
$|t_1'-u|< r_1^2$, we  must have  $|t_2'-u|\geq r_2^2$, which implies $W_{\mathbf{r} }(\mathbf{t}') =0$ by definition \eqref{eq:w-r}. It is a contradiction. By the same
argument, we get $|
t_2'|<r_2^2+r_3^2 $ if $W_{\mathbf{r} }(\mathbf{t}')\neq0$. Thus
supp $ W_{\mathbf{r} } \subset P_{r_1^2+r_2^2, r_2^2+r_3^2} $. Moreover, if  $r_3< r_2$,
then
   \begin{equation*}
      W_{\mathbf{r} }(\mathbf{t}')\leq  \int_{\mathbb{R}}\mathbbm 1_{(-r_3^2,r_3^2)}(  u)du =2r_3^2;
   \end{equation*}
 while if  $r_3\geq r_2$, then
     \begin{equation*}
       W_{\mathbf{r} }(\mathbf{t}')\leq  \int_{\mathbb{R}} 1_{(-r_2^2,r_2^2)}(t_2'-u) du =2r_2^2.
     \end{equation*}
     So the second inequality  in \eqref{eq:w-claim} holds.

  For the first inequality in  the claim  \eqref{eq:w-claim},  let $\mathbf{ t }'\in P_{(r_1^2+r_2^2)/9, (r_2^2+r_3^2)/9}$, i.e. $|t_1'-t_2'|<(r_1^2+r_2^2)/9,|t_2'|<(r_2^2+r_3^2)/9$.
   \\{\it Case i. $r_3 \leq r_2  $.}
For $u$ satisfying $| u|<
    r_3^2 /9 $,  we have \begin{equation*}  \begin{split}
     |t_1'-u|&\leq |t_1'-t_2'|+|t_2'|+|u|<(r_1^2+r_2^2)/3<r_1^2, \\  |t_2'-u|&\leq |t_2'|+|u|<(r_2^2+r_3^2)/3< r_2^2.
   \end{split}   \end{equation*} Thus,    the integrand in \eqref{eq:w-r}
equals to $1$ over the interval $   u \in( -r_3^2 /9,
    r_3^2 /9 )$ and so $ W_{\mathbf{r} }(\mathbf{t}')\geq 2 r_3^2 /9$.
  \\
   {\it Case ii. $r_3  >  r_2 $}.
For  $u$ satisfying  $|t_2'-u|<r_2^2 /9$,  we have
   \begin{equation*} \begin{split}
     |t_1'-u|&\leq |t_1'-t_2'|+|t_2'-u|<(r_1^2+r_2^2)/3<r_1^2, \\ |u|&< |t_2'-u|+| t_2'| <(r_2^2+r_3^2)/3< r_3^2.
   \end{split}   \end{equation*}
     Thus the    integrand in \eqref{eq:w-r}
equals to $1$ over the interval  $u\in (t_2'- r_2^2 /9, t_2'+r_2^2 /9)$ and so $ W_{\mathbf{r} }(\mathbf{t}')\geq  2r_2^2 /9$.
     So the first inequality  in \eqref{eq:w-claim} holds. The claim is proved.
\end{proof}

\section{The Calder\'on reproducing formula} Denote by $  { {*}}_{\mu}$   the {\it convolution on $\mathscr N$ along the subgroup  $  {\mathscr H}_\mu $}, i.e. for
$f\in L^1(\mathscr N)$ and $ H\in L^1({\mathscr H}_\mu)$,
 \begin{equation}\label{eq:convolution-subgroup} \begin{split}
f*  _{\mu } H (\mathbf{g}    ):
=&\int_{  \mathscr H _\mu }  f\left(\mathbf{g}\tau_\mu(\mathbf{h}_\mu )^{-1}  \right)H ( \mathbf{h}_\mu ) d \mathbf{h}_\mu
 ,
 \end{split}\end{equation}for $\mathbf{g}\in \mathscr N $.
 Define {\it maximal function     along the subgroup $ {\mathscr H}_\mu$} as
 \begin{equation*}\begin{split}
   M_{ \mu}  (f)(\mathbf{g}):=\sup_{ {r}_\mu\in \mathbb{R} _+} \frac 1{\left| {B}_\mu(\mathbf{0}_\mu,r_\mu)\right |}\int_{
  {B}_\mu(\mathbf{0}_\mu,r_\mu)  }|f (\mathbf{g} \tau_\mu(\mathbf{h}_\mu) )|d\mathbf{h_\mu},
 \end{split}\end{equation*}for $\mathbf{g}\in \mathscr N $ and $f\in L^1(\mathscr N ) $.
Denote    by $  \tilde{{
{*}}}_{\mu}$    the {\it convolution on $\tilde{\mathscr N} $ along the subgroup  $  {\mathscr H}_\mu $}, i.e. for
$f\in L^1(\tilde{\mathscr N})$ and $ H \in L^1({\mathscr H}_\mu)$, define
 \begin{equation*}\begin{split}
f\tilde{* } _{\mu } H (\tilde{\mathbf{g} }  )
=&\int_{  \mathscr H _\mu }  f\left( \tilde{\mathbf{g} } \tilde \tau_\mu(\mathbf{h}_\mu)  ^{-1}  \right)H ( \mathbf{h}_\mu ) d \mathbf{h}_\mu ,
 \end{split}\end{equation*}for $\tilde{\mathbf{g}}\in \tilde{\mathscr N }$,
 where $\tilde \tau_\mu:\mathscr H _\mu \rightarrow \mathscr H _1 \times \mathscr H _2 \times \mathscr H _3$ is the natural embedding homomorphism, $\mu=1,2,3$.
 Here $\tilde{\mathbf{g} } \tilde  \tau_\mu(\mathbf{h}_\mu)  ^{-1} =(\mathbf{g}_{\mu'},\mathbf{g}_\mu  \mathbf{h}_\mu ^{-1} ,\mathbf{g}_{\mu''})$ if $\tilde{\mathbf{g}}
 =(\mathbf{g}_{\mu'},\mathbf{g}_\mu    ,\mathbf{g}_{\mu''})$ in $\tilde{\mathscr N}$.

  \begin{proof}[Proof of Lemma \ref{lem:convolution}] Note that
\begin{equation*}\begin{split}
  \pi_*(F \tilde{*} G)(\mathbf{z},\mathbf{t}  ) = &\int_{\mathbb{R}}(F \tilde{*} G)(\mathbf{z},t_1-u,t_2-u,u)du
\\=&\int_{\mathbb{R}}du\int_{\mathbb{C}^{N}\times\mathbb{ R}^3} d\mathbf{w}\, d\tilde{\mathbf{s }}  \, F ( \mathbf{{w}}, \tilde{\mathbf{s}}) \\&\quad\cdot
G\Big (\mathbf{z}-\mathbf{w},t_1-u-\tilde s_1-\Phi_1(\mathbf{w}_1,\mathbf{z}_1),t_2-u-\tilde s_2-\Phi_2(\mathbf{w}_2,\mathbf{z}_2),u-\tilde
s_3-\Phi_3(\mathbf{w}_3,\mathbf{z}_3)\Big) .
\end{split}\end{equation*}
Now take coordinates transformation
$u-\tilde s_3-\Phi_3(\mathbf{w}_3,\mathbf{z}_3)= u'$,   $\tilde s_\alpha+\tilde s_3=  s_\alpha'$, $ \alpha=1,2$ and  $\tilde s_  3=  s_3'$ to get
\begin{equation*}\begin{split} \pi_*(F \tilde{*} G) &(\mathbf{z},\mathbf{t } )=\int_{\mathbb{R}}du ' \int_{\mathbb{C}^{N}\times\mathbb{ R}^3} d\mathbf{w}\, d\tilde
{\mathbf{s}} '   \, F
( \mathbf{{w}},  s_1'- s_3', s_2'-s_3',  s_3' )
\\&  \cdot G\Big(\mathbf{z}-\mathbf{w},t_1- s_1' -u'-\Phi_1(\mathbf{w}_1,\mathbf{z}_1)-\Phi_3(\mathbf{w}_3,\mathbf{z}_3),t_2 - s_2'
-u'-\Phi_2(\mathbf{w}_2,\mathbf{z}_2)-
\Phi_3(\mathbf{w}_3,\mathbf{z}_3),u'\Big)\\=& \int_{\mathbb{C}^{N}\times\mathbb{ R}^2} d\mathbf{w}ds_1'd s_2'\, \pi_*(F) ( \mathbf{{w}},  s_1
', s_2'  )
\\&\quad\cdot \pi_*(G)\Big(\mathbf{z}-\mathbf{w},t_1- s_1 '-\Phi_1(\mathbf{w}_1,\mathbf{z}_1)-\Phi_3(\mathbf{w}_3,\mathbf{z}_3),t_2- s_2'
-\Phi_2(\mathbf{w}_2,\mathbf{z}_2)-
\Phi_3(\mathbf{w}_3,\mathbf{z}_3) \Big) \\
=&\pi_*(F)*\pi_*(G)(\mathbf{z},\mathbf{t } ).
 \end{split}\end{equation*}
  The lemma is proved.
    \end{proof}

 \begin{lem}\label{lem:hat-convolution} For $ \varphi^{
(\mu)}\in L^1({\mathscr H}_\mu)$,  $F\in L^1(\tilde {\mathscr N} ) $ and $f\in L^1(\mathscr N ) $, we have
    \begin{equation}\label{eq:hat-convolution}\begin{split}
 F\tilde *\Phi_{\mathbf{r} } = &    \left(\left(F\tilde*_1\varphi_{r_1}^{
(1)}\right)\tilde* _{ 2} \varphi_{r_2}^{
(2)}\right ) \tilde{ *} _3  \varphi_{r_3}^{
(3)} , \qquad f*\varphi_{\mathbf{r} } =    \left(\left(f*_1\varphi_{r_1}^{
(1)}\right)* _{ 2} \varphi_{r_2}^{
(2)}\right ) { *} _3  \varphi_{r_3}^{
(3)}
,
 \end{split}\end{equation}where $\Phi_{\mathbf{r} }  :=  \varphi_{r_1}^{
(1)} \varphi_{r_2}^{
(2)}  \varphi_{r_3}^{
(3)}  $ and $\varphi_{\mathbf{r} } =\pi_*(\Phi_{\mathbf{r} }) $.
 \end{lem}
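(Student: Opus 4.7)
The plan is to prove the $\tilde{\mathscr N}$-identity first by Fubini on the direct product, and then deduce the $\mathscr N$-identity by lifting $f$ to $\tilde{\mathscr N}$ and pushing the result back down.

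The first identity is essentially the fact that convolution on a direct product group with a tensor-product kernel factors as an iterated convolution along the factors. Writing $\tilde{\mathbf{h}} = \tilde\tau_1(\mathbf{h}_1)\tilde\tau_2(\mathbf{h}_2)\tilde\tau_3(\mathbf{h}_3)$ with $\mathbf{h}_\mu \in \mathscr H_\mu$, the Haar measure on $\tilde{\mathscr N}$ decomposes as $d\tilde{\mathbf{h}} = d\mathbf{h}_1\, d\mathbf{h}_2\, d\mathbf{h}_3$, and since $\Phi_{\mathbf{r}}(\tilde{\mathbf{h}}) = \varphi_{r_1}^{(1)}(\mathbf{h}_1)\varphi_{r_2}^{(2)}(\mathbf{h}_2)\varphi_{r_3}^{(3)}(\mathbf{h}_3)$ with the three subgroups $\tilde\tau_\mu(\mathscr H_\mu)$ pairwise commuting, Fubini rearranges the triple integral defining $F\,\tilde{*}\,\Phi_{\mathbf{r}}$ as three successive single-subgroup convolutions $\tilde{*}_1, \tilde{*}_2, \tilde{*}_3$ in any order.

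For the second identity I would invoke Lemma \ref{lem:lift-function} to lift $f \in L^1(\mathscr N)$ to $f^\sharp \in L^1(\tilde{\mathscr N})$ with $\pi_* f^\sharp = f$, then apply the first identity to $F = f^\sharp$ and push forward through $\pi_*$. Lemma \ref{lem:convolution}, together with $\pi_*\Phi_{\mathbf{r}} = \varphi_{\mathbf{r}}$, converts the left-hand side directly into $f * \varphi_{\mathbf{r}}$. To handle the right-hand side, I would establish a subgroup-level intertwining
\begin{equation*}
\pi_*\bigl(G\,\tilde{*}_\mu\, H\bigr) = (\pi_* G) *_\mu H, \qquad G \in L^1(\tilde{\mathscr N}),\ H \in L^1(\mathscr H_\mu),
\end{equation*}
and iterate it for $\mu = 1, 2, 3$ to obtain the triple iterated convolution $((f *_1 \varphi_{r_1}^{(1)}) *_2 \varphi_{r_2}^{(2)}) *_3 \varphi_{r_3}^{(3)}$.

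The main (and only real) obstacle is establishing this intertwining. The computation hinges on the easily-checked identity $\pi \circ \tilde\tau_\mu = \tau_\mu$, from which $\pi^{-1}(\mathbf{g}\tau_\mu(\mathbf{h}_\mu)^{-1}) = \pi^{-1}(\mathbf{g})\cdot \tilde\tau_\mu(\mathbf{h}_\mu)^{-1}$, and the fiber measure is preserved under the right action of $\tilde\tau_\mu(\mathbf{h}_\mu)^{-1}$ because each fiber is a coset of the one-dimensional abelian kernel of $\pi$. Interchanging the fiber integration defining $\pi_*$ with the $\mathscr H_\mu$-integration (justified by Fubini in the $L^1$ setting) then yields the intertwining in one line, completing the proof.
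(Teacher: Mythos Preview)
Your proof is correct but takes a different route from the paper. The paper proves the second identity by a direct computation entirely on $\mathscr N$: it expands $f*\varphi_{\mathbf{r}}(\mathbf{z},\mathbf{t})$ using the definition of $\pi_*\Phi_{\mathbf{r}}$, then establishes the algebraic factorization
\[
(\mathbf{z},\mathbf{t})(\mathbf{w},\mathbf{s})^{-1}=(\mathbf{z},\mathbf{t})\cdot\tau_3(\mathbf{w}_3,u)^{-1}\cdot\tau_2(\mathbf{w}_2,s_2-u)^{-1}\cdot\tau_1(\mathbf{w}_1,s_1-u)^{-1}
\]
inside $\mathscr N$, and peels off one $\tau_\mu$-factor at a time to read off the iterated subgroup convolutions. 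Your approach instead treats the first identity as the primitive one (trivial on the direct product $\tilde{\mathscr N}$), lifts $f$ to $f^\sharp$ via Lemma~\ref{lem:lift-function}, and descends through the intertwining $\pi_*(G\,\tilde*_\mu H)=(\pi_*G)*_\mu H$. This is more in the spirit of the paper's lifting philosophy and cleanly separates the product-group fact from the projection step; the paper's route is shorter and self-contained, needing neither Lemma~\ref{lem:lift-function} nor a separate intertwining lemma. One small sharpening: your justification that ``the fiber measure is preserved \ldots\ because each fiber is a coset of the one-dimensional abelian kernel'' should really invoke that $\ker\pi=\{(\mathbf{0}_{\mathbf z},-u,-u,u)\}$ is \emph{central} in $\tilde{\mathscr N}$ (or that conjugation in a nilpotent group is unipotent), so that right translation by $\tilde\tau_\mu(\mathbf{h}_\mu)^{-1}$ acts on the fiber parameter $u$ by a pure shift.
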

\begin{proof} We prove the second identity, which is a little bit more complicated. By definition,
 \begin{equation}\label{eq:f*varphi}\begin{split}
f*\varphi_{\mathbf{r} }(\mathbf{z} ,\mathbf{t})&= \int_{\mathscr N} d\mathbf{w}d\mathbf{s}
\int_{\mathbb{R}}f \left((\mathbf{z} ,\mathbf{t})( \mathbf{{w}}, \mathbf{s}  \right)^{-1}) \varphi_{r_1}^{
(1)}  ( \mathbf{w}_1,s_1 -u)\varphi_{r_2}^{
(2)}  ( \mathbf{w}_2, s_2-u  ) \varphi_{r_3}^{
(3)}( \mathbf{w}_3,u)   d u.
 \end{split}\end{equation}
It follows from  the multiplication law \eqref{eq:multiplication1} of $\mathscr N$ and the embedding \eqref{eq:embedding}  that
 \begin{equation*}\begin{split}
(\mathbf{z} ,\mathbf{t}) &( \mathbf{{w}}, \mathbf{s}  )^{-1} =
 \left (\mathbf{z}-\mathbf{w},t_1-s_1 -\Phi_1(\mathbf{z}_1,\mathbf{w}_1)-\Phi_3(\mathbf{z}_3,\mathbf{w}_3),t_2-s_2 -\Phi_2(\mathbf{z}_2,\mathbf{w}_2)-
\Phi_3(\mathbf{w}_3,\mathbf{z}_3)\right ) \\=&
(\mathbf{z}_1,\mathbf{z}_2- \mathbf{w}_2,\mathbf{z}_3- \mathbf{w}_3,t_1-u  -\Phi_3(\mathbf{z}_3,\mathbf{w}_3),t_2-s_2 -\Phi_2(\mathbf{z}_2,\mathbf{w}_2)-
\Phi_3(\mathbf{z}_3,\mathbf{w}_3) )\cdot\tau_1 ( \mathbf{w}_1, s_1 -u ) ^{-1}\\=&
(\mathbf{z}_1,\mathbf{z}_2 ,\mathbf{z}_3- \mathbf{w}_3,t_1-u  -\Phi_3(\mathbf{z}_3,\mathbf{w}_3),t_2 -u-
\Phi_3(\mathbf{z}_3,\mathbf{w}_3) )\cdot\tau_2( \mathbf{w}_2, s_2-u  ) ^{-1}\cdot\tau_1 ( \mathbf{w}_1, s_1 -u ) ^{-1}\\=&
(\mathbf{z} ,\mathbf{t})\cdot\tau_3( \mathbf{w}_3, u) ^{-1}\cdot\tau_2( \mathbf{w}_2, s_2-u  ) ^{-1}\cdot\tau_1( \mathbf{w}_1, s_1 -u ) ^{-1}.\end{split}\end{equation*}
Substituting it into \eqref{eq:f*varphi}, we
find that
\begin{equation*}\begin{split}
f*\varphi_{\mathbf{r} }(\mathbf{z} ,\mathbf{t})&=   \int_{\mathbb{R}} d u\int_{\mathbb{C}^{n_2+n_3}\times\mathbb{R} } \left(f*_1\varphi_{r_1}^{
(1)}\right) \Big((\mathbf{z} ,\mathbf{t})\cdot\tau_3( \mathbf{w}_3, u) ^{-1}\cdot\tau_2( \mathbf{w}_2, s_2-u  ) ^{-1}\Big) \\
&\qquad \qquad\qquad\qquad \qquad\qquad\qquad\cdot\varphi_{r_2}^{
(2)}( \mathbf{w}_2, s_2-u  )
\varphi_{r_3}^{
(3)}( \mathbf{w}_3,u)  d\mathbf{w}_2  d\mathbf{w}_3d {s }_2\\
&=   \int_{\mathbb{C}^{ n_3} \times\mathbb{R} }\left(\left(f*_1\varphi_{r_1}^{
(1)}\right)* _{ 2} \varphi_{r_2}^{
(2)}\right ) \Big((\mathbf{z} ,\mathbf{t})\cdot\tau_3( \mathbf{w}_3, u) ^{-1} \Big)
\varphi_{r_3}^{
(3)}( \mathbf{w}_3,u)   d\mathbf{w}_3 d u\\
&= \left(\left(\left(f*_1\varphi_{r_1}^{
(1)}\right)* _{ 2} \varphi_{r_2}^{
(2)}\right ) { *} _3  \varphi_{r_3}^{
(3)} \right)(\mathbf{z} ,\mathbf{t})
 \end{split}\end{equation*}by the definition  \eqref{eq:convolution-subgroup}  of   convolution  along  a subgroup.
  \end{proof}

   \begin{proof}[Proof of Theorem \ref{thm:maximal}]
Note that $\tau_\mu(\mathbf{h}_\mu) ^{-1}=\tau_\mu(\mathbf{h}_\mu^{-1})$ ($\tau_\mu$ is a homomorphism), and so for $\mathbf{g}\in\mathscr N$,
 \begin{equation*}\begin{split}
   \frac 1{|  {B}_\mu(\mathbf{0}_\mu,r_\mu) |}\int_{ {B}_\mu(\mathbf{0}_\mu,r_\mu)  }|f (\mathbf{g} \tau_\mu(\mathbf{h}_\mu) )|d\mathbf{h_\mu}&=\int_{ {\mathscr
   H}_\mu }|f
   (\mathbf{g} \tau_\mu(\mathbf{h}_\mu) )|\chi_{r_\mu}^{
(1)}(  \mathbf{h}_\mu   ) d\mathbf{h_\mu}
   \\&=\int_{ {\mathscr H}_\mu }\left|f \left(\mathbf{g} \tau_\mu(\mathbf{h}_\mu) ^{-1}\right)\right|\chi_{r_\mu}^{
(1)}(  \mathbf{h}_\mu   ) d\mathbf{h_\mu} =|f |{*}_\mu\chi_{r_\mu}^{
(\mu)}( \mathbf{g}),
 \end{split}\end{equation*} by $\chi_{r_\mu}^{
(\mu)}(\mathbf{h}_\mu^{-1} )=\chi_{r_\mu}^{
(\mu)}(\mathbf{h}_\mu  ) $.
Hence, $|f |{*}_\mu\chi_{r_\mu}^{
(\mu)}( \mathbf{g})\leq M_{ \mu}  (f)(\mathbf{g})$, and consequently,
   \begin{equation}\label{eq:it-maximal-function}\begin{split}
 | f|* \chi_{\mathbf{r} } (\mathbf{g})= &   \left(\left(|f|  {{*}}_{1 }\chi_{r_1}^{
(1)}\right) {{*}}_{ 2}\chi_{r_2}^{
(2)}\right) {*  }_3   \chi_{r_3}^{
(3)}   (\mathbf{g})  \leq   M_{ 3}\circ  M_{ 2} \circ M_{ 1}  (f)(\mathbf{g}),
 \end{split}\end{equation}
  by Lemma \ref{lem:hat-convolution}. Thus, the   boundedness of $  M_{it}  $  on $L^p(  \mathscr N)$ follows from the boundedness of $M_{ \mu}$ on
  $L^p( \mathscr N) $, which is the consequence of  the
  boundedness of $M_{ \mu}$ on $L^p(  \mathscr H _\mu)$.
 The result follows from the equivalence of $  M   $ and $  M_{it}  $ in  Proposition \ref{prop:tube}.
    \end{proof}

The convolutions  along subgroups are commutative/associative by he following proposition.
 \begin{prop}\label{prop:associativity}  For $f\in L^1(\tilde{\mathscr N})$,  $ H ,H'\in L^1({\mathscr H}_\mu)$ and  $ G \in L^1({\mathscr H}_\nu)$, we have
  \begin{equation}\label{eq:associativity}\begin{split}
  \left ( f\tilde{*}_\mu   H \right)  \tilde{*}_\nu G&=\left(f\ \tilde{*}_\nu G\right)  \tilde{*}_\mu   H,\\
  \left ( f\tilde{*}_\mu   H \right)  \tilde{*}_\mu   H '&=   f\tilde{*}_\mu \left (  H  \tilde{*}_\mu   H ' \right),
 \end{split}  \end{equation}
 where $ H  \tilde{*}_\mu   H '$ is a convolution $ H  $ and $   H '$ on ${\mathscr H}_\mu$.
 \end{prop}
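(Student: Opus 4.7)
My plan is to exploit the fact that $\tilde{\mathscr N}= {\mathscr H}_1\times {\mathscr H}_2\times {\mathscr H}_3$ is a \emph{direct} product of groups, so that $\tilde \tau_\mu(\mathbf{h}_\mu)$ is supported only in the $\mu$-th coordinate of $\tilde{\mathbf g} =(\mathbf g_1, \mathbf g_2, \mathbf g_3)$. Concretely, from the definition,
\[
\tilde{\mathbf g}\,\tilde\tau_\mu(\mathbf h_\mu)^{-1}\,\tilde\tau_\nu(\mathbf h_\nu)^{-1}
= \tilde{\mathbf g}\,\tilde\tau_\nu(\mathbf h_\nu)^{-1}\,\tilde\tau_\mu(\mathbf h_\mu)^{-1}
\qquad (\mu\neq\nu),
\]
because the two factors modify disjoint coordinates of $\tilde{\mathbf g}$. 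This is the only algebraic fact needed; the rest is application of Fubini's theorem and the associativity of convolution on a single Heisenberg group.

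For the first identity (with $\mu\neq\nu$), I unfold the outer convolution and then the inner one:
\[
\bigl((f\tilde*_\mu H)\tilde*_\nu G\bigr)(\tilde{\mathbf g})
=\int_{{\mathscr H}_\nu}\int_{{\mathscr H}_\mu} f\bigl(\tilde{\mathbf g}\,\tilde\tau_\nu(\mathbf h_\nu)^{-1}\tilde\tau_\mu(\mathbf h_\mu)^{-1}\bigr)\, H(\mathbf h_\mu)\,G(\mathbf h_\nu)\,d\mathbf h_\mu\,d\mathbf h_\nu.
\]
Since $f, H, G$ are all $L^1$, the iterated integral is absolutely convergent (for a.e.\ $\tilde{\mathbf g}$) and Fubini applies. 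Swapping the two integrals and then invoking the commutation displayed above yields $(f\tilde*_\nu G)\tilde*_\mu H$.

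For the second identity (same $\mu$), I freeze the two coordinates $\mathbf g_{\mu'},\mathbf g_{\mu''}$ that $\tilde\tau_\mu$ does not touch, and view $F(\mathbf g_\mu):=f(\mathbf g_{\mu'},\mathbf g_\mu,\mathbf g_{\mu''})$ as a function on ${\mathscr H}_\mu$. Then $f\tilde*_\mu H$ restricted to the slice is exactly $F*H$ on ${\mathscr H}_\mu$ (in the sense of \eqref{eq:convolution-def} applied to ${\mathscr H}_\mu$), so both sides of the identity become $(F*H)*H'$ and $F*(H*H')$ respectively. The equality then follows from the classical associativity of convolution on a group, which in turn rests on $\tilde\tau_\mu(\mathbf h)\tilde\tau_\mu(\mathbf h')=\tilde\tau_\mu(\mathbf h\mathbf h')$ (since $\tilde\tau_\mu$ is a homomorphism) and a change of variables $\mathbf k=\mathbf h\mathbf h'$ of unit Jacobian on ${\mathscr H}_\mu$.

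The only delicate point is justifying Fubini in both identities, and this is taken care of by the $L^1$ hypotheses on $f,F,H,H',G$ together with translation invariance of the Haar measures on ${\mathscr H}_\mu$ and ${\mathscr H}_\nu$. There is no real analytic obstacle beyond carefully unwinding the notation of convolution along a subgroup.
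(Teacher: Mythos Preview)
Your proof is correct and follows essentially the same approach as the paper: both arguments unfold the two convolutions, use the commutativity $\tilde\tau_\mu(\mathbf h_\mu)^{-1}\tilde\tau_\nu(\mathbf h_\nu)^{-1}=\tilde\tau_\nu(\mathbf h_\nu)^{-1}\tilde\tau_\mu(\mathbf h_\mu)^{-1}$ in the direct product $\tilde{\mathscr N}$, and swap the order of integration via Fubini. For the second identity the paper simply says ``the similar identity gives us the second one,'' whereas you spell out the reduction to ordinary associativity of convolution on ${\mathscr H}_\mu$; this is a harmless elaboration rather than a different method.
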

\begin{proof}
 Recall that   elements $\tilde\tau_1( {\mathbf{h}}_1), \tilde\tau_2( {\mathbf{h}}_2)$ and $ \tilde\tau_3( {\mathbf{h}}_3 )$ for any  $\mathbf{h}_\mu\in  {\mathscr
 H}
  _ \mu$ are mutually commutative by definition.
Then,
  \begin{equation*}\begin{split}
   ( f\tilde{*}_\mu    H) \ \tilde{*}_\nu  G( {\mathbf{g}}) &=  \int_{  \mathscr H _\nu} (f\tilde{*}_\mu   H)\left( \mathbf{g}\tilde \tau_\nu(\mathbf{h}_\nu) ^{-1}
   \right) G( \mathbf{h}_\nu ) d\mathbf{h}_\nu \\
&=\int_{  \mathscr H _\nu } \left(\int_{  \mathscr H _\mu }  f\left( \mathbf{g} \tilde \tau_\nu(\mathbf{h}_\nu)  ^{-1} \tilde \tau_\mu(\mathbf{h}_\mu)  ^{-1}
\right) H( \mathbf{h}_\mu   ) d
\mathbf{h}_\mu\right)  G(   \mathbf{h}_\nu   ) d \mathbf{h}_\nu\\
&=\int_{  \mathscr H _\mu } \left( \int_{  \mathscr H _\nu }f\left( \mathbf{g}  \tilde\tau_\mu(\mathbf{h}_\mu)  ^{-1} \tilde\tau_\nu(\mathbf{h}_\nu)  ^{-1}
\right) G(  \mathbf{h}_\nu  ) d
 \mathbf{h}_\nu \right) H( \mathbf{h}_\mu ) d \mathbf{h}_\mu
 =(f\ \tilde{*}_\nu  G)\tilde{*}_\mu    H.
  \end{split}\end{equation*}The similar identity gives us the second one in \eqref{eq:associativity}.
  The proposition is proved.\end{proof}

A  function $\varphi^{(\mu)}$  on ${\mathscr H}_\mu$,
 $\mu=1,2,3$,
is called {\it Poisson-bounded} if
\begin{equation*}
   \|\varphi^{(\mu)}\|_M:=  \sup\left\{(1+\|\mathbf{g}_\mu\|)^{|I_\mu|+Q_\mu+1}
\left|\mathbf{Y}_\mu^{I_\mu}\varphi^{(\mu)}(\mathbf{g}_\mu) \right| ;  \mathbf{g}_\mu \in {\mathscr H}_\mu
, 0 \leq |I_\mu| \leq M\right\}
\end{equation*}
is
bounded for  sufficiently large $M$, where $Q_\mu=2n_\mu
 +2$ is the homogeneous dimension of ${\mathscr H}_\mu$ and
\begin{equation}\label{eq:Y-I}
   \mathbf{Y}_\mu^{I_\mu}:=Y_{\mu1}^{I_{\mu 1}}\cdots Y_{\mu
 (2n_\mu)}^{I_{\mu (2n_\mu)}},
\end{equation}for the multi-index $ I_\mu =  (I_{\mu 1},\cdots , I_{\mu (2n_\mu)})$, and
$Y_{\mu j}$'s are   left invariant vector fields \eqref{eq:left-invariant} on   $ {\mathscr H}_\mu$.  A Poisson-bounded  function $\varphi^{
(\mu)}$ on  $ {\mathscr
  {H}}_\mu$    is said to be   {\it w-invertible} if there exists a Poisson-bounded function $\psi^{
(\mu)}$, called a   {\it
w-inverse of $\varphi^{
(\mu)}$}, satisfying the {\it Calder\'on
reproducing formula} \begin{equation}\label{eq:reproducing-Heisenberg}
\int_{\mathbb{R}_+} f*\varphi_{r_\mu}^{
(\mu)}*\psi^{
(\mu)}_{r_\mu}( {\mathbf{g}}_\mu )\frac { d {r_\mu}} {r_\mu}=f( {\mathbf{g}}_\mu ),  \end{equation}
  for $   f \in  L
^2
( {\mathscr H}_\mu)$ .
 The Poisson kernel is an important special case of Poisson-bounded  function. Let $p  ^{
(\mu)}(t_\mu)$
be the convolution kernel  of the
operator  $e^{-t_\mu\sqrt{  \widetilde{ \triangle}_\mu}}$
  on $ {\mathscr
  {H}}_\mu$. We
obtain the {\it flag-like  Poisson kernel}  given by
 $p_{\mathbf{r} }  =\pi_* (p_{r_1}^{
(1)} p_{r_2}^{
(2)}p_{r_3}^{
(3)} ) $. The {\it flag-like heat kernel} $h_{\mathbf{r} }$ arises similarly.

 \begin{proof}[Proof of Theorem \ref{prop:reproducing}] By Lemma \ref{lem:lift-function},   we have $f=\pi_*( f^{\sharp})$ with $ f^{\sharp} \in L
^1\cap L
^2
 (\tilde{\mathscr N} )$. By applying the Calder\'on
reproducing formula \eqref{eq:reproducing-Heisenberg} to $L^1$-function $f^{\sharp}(
   {\mathbf{g}}_1  , {\mathbf{g}}_2 , \cdot )$ on  $ {\mathscr
  {H}}_3$ for almost all fixed $ {\mathbf{g}}_1 , {\mathbf{g}}_2  $,  we get
  \begin{equation*}
     f^{\sharp}( {\mathbf{g}}_1 , {\mathbf{g}}_2 , {\mathbf{g}}_3 )=\int_{\mathbb{R} _+} \left[f^{\sharp}
     \tilde{*} _3 \varphi_{r_3}^{
(3)} \ \tilde{*}_3 \psi_{r_3}^{
(3)}\right ] ( {\mathbf{g}}_1 , {\mathbf{g}}_2 , {\mathbf{g}}_3 )\frac { d {r_3}}{ {r_3}},
 \end{equation*} for $  {\mathbf{g} }_\mu\in  {\mathscr
  {H}}_\mu$.
  Then
  apply  the Calder\'on
reproducing formula \eqref{eq:reproducing-Heisenberg} on   $ {\mathscr
  {H}}_2$ and  $ {\mathscr
  {H}}_1$ again to obtain
 \begin{equation*}\begin{split}
     f^{\sharp}( {\mathbf{g}})&=\int_{\mathbb{R}^3_+} \left[\left( f^{\sharp}\tilde{*}_1  \varphi_{r_1}^{
(1)} \ \tilde{*}_1 \psi_{r_1}^{(1)}  \right)\tilde{*}_2   \varphi_{r_2}^{
(2)} \ \tilde{*}_2 \psi_{r_2}^{(2)} \right]\tilde{*}_3   \varphi_{r_3}^{
(3)} \ \tilde{*}_3 \psi_{r_3}^{(3)} ( {\mathbf{g}})\frac { d\mathbf{r}}{\mathbf{r}}
     \\&=\int_{\mathbb{R}^3_+} \left(\cdots\left.\left.\left.\left( f^{\sharp}\tilde{*}_1  \varphi_{r_1}^{
(1)} \right) \tilde{*}_2 \varphi_{r_2}^{(2)} \right)\tilde{*}_3   \varphi_{r_3}^{
(3)}\right) \tilde{*}_1 \psi_{r_1}^{(1)}  \right)  \ \tilde{*}_2 \psi_{r_2}^{(2)} \right)\ \tilde{*}_3 \psi_{r_3}^{(3)} ( {\mathbf{g}})\frac { d\mathbf{r}}{\mathbf{r}}
     \\&=\int_{\mathbb{R}^3_+}  f^{\sharp}\tilde{*}\left(\varphi_{r_1}^{
(1)} \varphi_{r_2}^{(2)} \varphi_{r_3}^{(3)}\right)\tilde{*}\left(\psi_{r_1}^{
(1)} \psi_{r_2}^{(2)} \psi_{r_3}^{(3)}\right)( {\mathbf{g}})\frac { d\mathbf{r}}{\mathbf{r}},
 \end{split} \end{equation*}
 by the commutativity/associativity of convolutions  along subgroups in Proposition \ref{prop:associativity} and using Lemma \ref{lem:hat-convolution}.
 Then, for ${\mathbf{g}}=(\mathbf{z},\mathbf{ t } )\in \mathscr N $,
 \begin{equation*}\begin{split}
    f( {\mathbf{g}}) &=\pi_*( f^{\sharp})( {\mathbf{g}})=\int_{\mathbb{R}}\int_{\mathbb{R}^3_+}
\left(   f^{\sharp}\tilde{*}\left(\varphi_{r_1}^{
(1)} \varphi_{r_2}^{(2)} \varphi_{r_3}^{(3)}\right)\tilde{*}\left(\psi_{r_1}^{
(1)} \psi_{r_2}^{(2)} \psi_{r_3}^{(3)}\right)\right)
    (\mathbf{z}, t_1-u,t_2-u,u)du \frac { d\mathbf{r}}{\mathbf{r}}
   \\
      & =\int_{\mathbb{R}^3_+}\pi_*\left(   f^{\sharp}\tilde{*}\left(\varphi_{r_1}^{
(1)} \varphi_{r_2}^{(2)} \varphi_{r_3}^{(3)}\right)\tilde{*}\left(\psi_{r_1}^{
(1)} \psi_{r_2}^{(2)} \psi_{r_3}^{(3)}\right)\right)( {\mathbf{g}}) \frac { d\mathbf{r}}{\mathbf{r}}
\\&= \int_{\mathbb{R}^3_+} \pi_*(   f^{\sharp})*\pi_*\left( \varphi_{r_1}^{
(1)} \varphi_{r_2}^{(2)} \varphi_{r_3}^{(3)} \right)*\pi_*\left( \psi_{r_1}^{
(1)} \psi_{r_2}^{(2)} \psi_{r_3}^{(3)}\right)( {\mathbf{g}})  \frac { d\mathbf{r}}{\mathbf{r}}
 =\int_{\mathbb{R}^3_+} f*\varphi_ {\mathbf{r}}*\psi_ {\mathbf{r}}( {\mathbf{g}}) \frac { d\mathbf{r}}{\mathbf{r }}
 \end{split}\end{equation*}
 by using Fubini's  Theorem and  Lemma \ref{lem:convolution}.
    \end{proof}
\begin{rem}\label{rem:compact-support} In the Calder\'on
reproducing formula \eqref{eq:reproducing-Heisenberg}  on the Heisenberg group, it is possible to
  choose  the function $\varphi^{(\mu)}$ or $\psi^{(\mu)}$
  compactly supported and w-invertible Poisson bounded, and they both have mean value zero  \cite[Section 4]{MRS}.
\end{rem}

 \begin{proof}[Proof of Theorem \ref{thm:g-function}]   Define a function $F:\mathscr N\longrightarrow \mathcal{ L}:=L
^2
(\mathbb{R}^2_+,\frac {dr_1}{r_1}\frac {dr_2}{r_2} )$ by
$
  F(\mathbf{g} )=     \left(f   {{*}}_{1 } \varphi_{r_1}^{
(1)}\right) {{*}}_{ 2}\varphi_{r_2}^{
(2)} (\mathbf{g} )
$
 with $
  | F(\mathbf{g} )|_{\mathcal{ L}}^2 :  =  \int_{\mathbb{R}^2_+} | F(\mathbf{g} )  |^2  \frac {dr_1}{r_1}\frac {dr_2}{r_2}  .
$ Here $\mathcal{ L}$ is a Hilbert space.
For a function $G:\mathscr H_3\longrightarrow \mathcal{ L} $,  define
 \begin{equation*}
   g_{\varphi^{(3)}} (G  )(\mathbf{z}_3 ,v ):=\left( \int_{\mathbb{R} _+}\left|  G  \tilde{ *}  _3\varphi_{r_3}^{(3)} (\mathbf{z}_3 ,v )\right|_{\mathcal{ L}} ^2 \frac {dr_3}{r_3}\right)^{\frac 12},
\end{equation*}where $\tilde{ *}  _3$ is the convolution on $\mathscr H_3$. For fixed $
\mathbf{z}_1 , \mathbf{z}_2, u   $ and $F$ given above, let $F_{\mathbf{z}_1,\mathbf{z}_2,u}:\mathscr H_3\longrightarrow \mathcal{ L}  $ be the
function on $\mathscr H_3$ given by  $F_{\mathbf{z}_1,\mathbf{z}_2,u}(\mathbf{z}_3 ,v ):=F(\mathbf{z} ,v+u,v- u)$. Then,
\begin{equation} \label{eq:g-varphi-3}\begin{split}
   g_{\varphi^{(3)}}^2(F_{\mathbf{z}_1,\mathbf{z}_2,u}  )(\mathbf{z}_3 ,v )& =\int_{\mathbb{R} _+}\left| \int_{\mathscr H_3}
   F_{\mathbf{z}_1,\mathbf{z}_2,u}((\mathbf{z}_3 ,v )  (\mathbf{z}_3' ,v ')^{-1})  \varphi_{r_3}^{
(3)} (\mathbf{z}_3' ,v ') d \mathbf{z}_3'dv ' \right|_{\mathcal{ L}} ^2 \frac {dr_3}{r_3}
   \\&=\int_{\mathbb{R}^3 _+} \left| \int_{\mathscr H_3}F\Big((\mathbf{z} ,v+u,v- u) \tau_3 (\mathbf{z}_3' ,v ')^{-1}\Big)  \varphi_{r_3}^{
(3)} (\mathbf{z}_3' ,v ') d \mathbf{z}_3'dv '\right|^2 \frac {d\mathbf{r} }{\mathbf{r} }
   \\&=\int_{\mathbb{R}^3_+}\left|\left(\left(f   {{*}}_{1 } \varphi_{r_1}^{
(1)}\right) {{*}}_{ 2}\varphi_{r_2}^{
(2)}\right) {   {*} }_3\varphi_{r_3}^{
(3)} \right|^2(\mathbf{z} ,  v+u,v- u)  \frac {d\mathbf{r} }{\mathbf{r} }\\& =\int_{\mathbb{R}^3_+}\left| f *  \varphi_{\mathbf{r }} \right|^2(\mathbf{z} ,
v+u,v- u  )  \frac {d\mathbf{r} }{\mathbf{r} }
 \end{split}\end{equation}by Lemma \ref{lem:hat-convolution}.
By the vector-valued Littlewood-Paley inequality on $\mathscr H_3$, we have\begin{equation*} \begin{split}
 \int_{   {{\mathscr H}}_3} g_{\varphi^{(3)}}^p(F_{\mathbf{z}_1,\mathbf{z}_2,u}  )(\mathbf{z}_3 ,v )d\mathbf{z}_3d v\leq C  \int_{   {{\mathscr H}}_3} \left|
 F_{\mathbf{z}_1,\mathbf{z}_2,u} \right|_{\mathcal{ L}} ^p(\mathbf{z}_3 ,v )   d\mathbf{z}_3d v .
 \end{split}\end{equation*}
Integrate it over $\mathbf{z}_1,\mathbf{z}_2,u$ and use \eqref{eq:g-varphi-3} to get
 \begin{equation*}  \begin{split}
  \|g_{\boldsymbol\varphi }(f)\|_p ^p=& \frac 12\int_{ \mathbb{C}^{n_1+n_2} \times \mathbb{R}^1} d\mathbf{z}_1 d\mathbf{z}_2 d u \int_{  \mathbb{C}^{n_3} \times
  \mathbb{R}^1} g_{\varphi^{(3)}}^p(F_{\mathbf{z}_1,\mathbf{z}_2,u}  )(\mathbf{z}_3 ,v )d\mathbf{z}_3d v\\\leq & \frac   C 2 \int_{  \mathscr N }
\left(\int_{\mathbb{R}^2_+}\left| \left(f   {{*}}_{1 } \varphi_{r_1}^{
(1)}\right) {{*}}_{ 2}\varphi_{r_2}^{
(2)} \right|^2(\mathbf{z} , v+u,v- u  )\frac {dr_1}{r_1}\frac {dr_2}{r_2}\right)^{\frac p2}  d\mathbf{z} d udv\\ =& C  \int_{  \mathscr N }
\left(\int_{\mathbb{R}^2_+}\left| \left(f   {{*}}_{1 } \varphi_{r_1}^{
(1)}\right) {{*}}_{ 2}\varphi_{r_2}^{
(2)} \right|^2(\mathbf{z} ,\mathbf{ t}  )\frac {dr_1}{r_1}\frac {dr_2}{r_2}\right)^{\frac p2}  d\mathbf{z} d \mathbf{ t} .
 \end{split}\end{equation*}
Repeating this procedure, we get
\begin{equation*}  \begin{split}
  \|g_{\boldsymbol\varphi }(f)\|_p ^p \lesssim &  \int_{  \mathscr N } \left(\int_{\mathbb{R} _+}\left|  f   {{*}}_{1 } \varphi_{r_1}^{
(1)}  \right|^2(\mathbf{z} , \mathbf{t}  )\frac {dr_1}{r_1} \right)^{\frac p2}  d\mathbf{z} d \mathbf{t} \lesssim  \int_{  \mathscr N }  |  f(\mathbf{z} ,
\mathbf{t}  )   | ^{  p }  d\mathbf{z} d \mathbf{t}.
 \end{split}\end{equation*}

 The converse can be proved  by duality. Let $\langle \cdot , \cdot\rangle$ be the pair between $  L
^p
(\mathscr N)$ and $  L
^q
(\mathscr N)$ with $\frac 1p+\frac 1q=1$. For any $h \in L
^q
(\mathscr N)$, we have
 \begin{equation}\label{eq:g-converse}\begin{split}
|\langle f, h\rangle |=&  \left| \int_{\mathbb{R}^3_+}\left\langle f*   {\varphi}_{\mathbf{r} }*  {\psi}_{\mathbf{r} } ,  h \right \rangle \frac
{d\mathbf{r}}{\mathbf{r}}\right|
 =   \left| \int_{ \mathscr N} \int_{\mathbb{R}^3_+}  f*   {\varphi}_{\mathbf{r} } (\mathbf{g}) \overline{ { h} *  \check{{\psi}}_{\mathbf{r} }(\mathbf{g})}
\frac {d\mathbf{r}}{\mathbf{r}} d
 \mathbf{g}\right|\\
\leq &\int_{ \mathscr N} \left(\int_{\mathbb{R}^3_+}\left| f*  {\varphi}_{\mathbf{r} }  (\mathbf{g})\right|^2\frac {d\mathbf{r}}{\mathbf{r}}\right)^{\frac
12}\left(\int_{\mathbb{R}^3_+}\left|  {h}* \check{ \psi} _{\mathbf{r} }  (\mathbf{g})\right|^2\frac {d\mathbf{r}}{\mathbf{r}}\right)^{\frac 12}d \mathbf{g}
\\
\leq &\left(\int_{ \mathscr N} \left(\int_{\mathbb{R}^3_+}\left| f*  {\varphi}_{\mathbf{r} }  (\mathbf{g})\right|^2\frac {d\mathbf{r}}{\mathbf{r}}\right)^{\frac
p2}d \mathbf{g}   \right)^{\frac 1p}\left(\int_{ \mathscr N} \left(\int_{\mathbb{R}^3_+}\left|  {h}* \check{ \psi }_{\mathbf{r} }
(\mathbf{g})\right|^2\frac
{d\mathbf{r}}{\mathbf{r}}\right)^{\frac q2}d \mathbf{g}   \right)^{\frac 1q}\\
\leq & \|g_{\boldsymbol\varphi }(f)\|_p  \|g_{\breve{\boldsymbol\psi }}( {h})\|_q\lesssim  \|g_{\boldsymbol \varphi}(f)\|_p   \|h\|_q,
 \end{split}\end{equation}where $\psi$ is real,
$
    \breve{{\psi}}_{\mathbf{r} } (\mathbf{g} ):={\psi}_{\mathbf{r} } (\mathbf{g}^{-1}).
$
   It follows that $\|  f \|_p\lesssim \|g_{\boldsymbol\varphi }(f)\|_p $.
  \end{proof}

\begin{rem}   We have $\| S_{area,\boldsymbol \varphi}(f)\|_{ L
^2
(\mathscr N)}= \| g_{\boldsymbol \varphi}(f)\|_{ L
^2
(\mathscr N)}\approx\| f\|_{ L
^2
(\mathscr N)}$ by the definition of the  Lusin-Littlewood-Paley area function \eqref{eq:area-function},  Fubini's theorem and Theorem \ref{thm:g-function}.
\end{rem}

 \section{Flag-like singular integrals}

\subsection{Convolution kernels of tri-parameters on $ \tilde{{\mathscr N}} $} \label{section:convolution}
The theory of singular integrals of bi-parameters on the product of two stratified nilpotent groups in \cite{MRS} can be easily generalized to the case of  tri-parameters.
It is convenient to formulate the  size
estimates and the  cancellation conditions
for a kernel in terms of normalized bump functions (cf. e.g. \cite{MRS}).
 By definition, a {\it normalized bump function} (briefly denoted by n.b.f.) is a smooth function  supported on the
unit ball bounded by a fixed constant together with its gradient. Observe that
the $L^1$-norm of a normalized bump function is also bounded by a fixed constant.
  A {\it  standard  convolution kernel of tri-parameters} on $ \tilde{{\mathscr N}} $ is a distribution $ K$
on $ \tilde{{\mathscr N}} $, which coincides with a smooth function away from the subgroups $\mathscr H_ \mu\times\mathscr H_\nu$  for all $\mu\neq\nu$,    and
satisfies
 \\
(1) ({\it The size
estimates}) For any multi-indices  ${I_1},{I_2},{I_3}$,
\begin{equation}\label{eq:size-estimates}
   |\mathbf{Y}_1^{I_1} \mathbf{Y}_2^{I_2} \mathbf{Y}_3^{I_3}K(\mathbf{g} )|\leq C_{{I_1},{I_2},{I_3}}  \| \mathbf{g}_1 \|^{-Q_1-|{I_1}| }\| \mathbf{g}_2
   \|^{-Q_2-|{I_2}| | }\| \mathbf{g}_3 \|^{-Q_3- |{I_3}| }
\end{equation}
 for all $\mathbf{g}=(\mathbf{g}_1 ,\mathbf{g}_2 ,\mathbf{g}_3 )\in  \tilde{{\mathscr N}}$, where $\mathbf{Y}_\mu^{I_\mu} $ for a multi-index $I_\mu  $  is given by
 \eqref{eq:Y-I},  $\mu=1,2,3$.
  \\
 (2)  ({\it The cancellation conditions})
 \begin{equation}\label{eq:cancellation1}
   \left|\int_{\mathscr H_{\mu' }\times\mathscr  H_{\mu''}}\mathbf{Y}_\mu^{I_\mu }  K(\mathbf{g} )\phi  (\delta_1\mathbf{g}_{\mu'},
   \delta_2\mathbf{g}_{\mu''})d\mathbf{g}_{\mu'}d
   \mathbf{g}_{\mu''} \right|\leq C_{ I_\mu }\| \mathbf{g}_\mu\|^{-Q_\mu-|{I_\mu }| },
\end{equation}
for every multi-index $I_\mu $ and every  n.b.f. $\phi $ on $\mathscr H_{\mu'} \times \mathscr H_{\mu''}$ and
every $\delta_1, \delta_2> 0$;
\begin{equation}\label{eq:cancellation2}
   \left|\int_{\mathscr  H_{\mu''}}\mathbf{Y}_\mu^{I_\mu } \mathbf{Y}_{\mu'}^{I_{\mu'} } K(\mathbf{g} )\phi  (  \delta\mathbf{g}_{\mu''}) d \mathbf{g}_{\mu''}
   \right|\leq C_{ I_\mu,I_{\mu'} }\| \mathbf{g}_\mu\|^{-Q_\mu-|{I_\mu }| }\| \mathbf{g}_{\mu'} \|^{-Q_{\mu'}-|{I_{\mu'}}|   },
\end{equation}
 for every multi-indices $I_\mu,I_{\mu'}  $ and every  n.b.f. $\phi $ on $\mathscr H_{\mu''}$ and
every $\delta> 0$; and
\begin{equation}\label{eq:cancellation3}
   \left|\int_{\mathscr N}  K(\mathbf{g} )\phi (\delta_1\mathbf{g}_1, \delta_2\mathbf{g}_2,\delta_3\mathbf{g}_3 )d\mathbf{g}  \right|\leq C,
\end{equation}
for   every  n.b.f. $\phi $ on $\tilde {\mathscr N}$ and
every $\delta_1, \delta_2 , \delta_2> 0$.

It is convenient to use the above estimates for
 n.b.f. on a bounded ball  instead of the  unit ball in definition, because it is a dilation of a  n.b.f.

 Recall that on a nilpotent group $G$, the {\it space $\mathscr D(G)$ of test functions} consists of all compactly supported
 smooth  functions with the topology  of uniform convergence of functions and any partial  derivatives  on  compact  subsets. The {\it space $\mathscr D'(G)$ of distributions} is the dual of $\mathscr D(G)$. Let $\langle \cdot, \cdot\rangle$ be the pair between
 $\mathscr D'(G)$ and $\mathscr D(G)$.
Given a test function $ \phi \in\mathscr D({\mathscr H}_\mu)$, define the distribution $K_{\phi }\in \mathscr D'(\mathscr  H_{\mu' }\times \mathscr  H_{\mu''})$ by
\begin{equation*}
   \langle K_{\phi }, \varphi\rangle=\langle K, {\phi }\otimes \varphi \rangle
\end{equation*}
for any test function  $\varphi\in \mathscr D(\mathscr H_{\mu' }\times\mathscr   H_{\mu''}) $; Given a test function $\varphi$ on $\mathscr  H_{\mu' }\times \mathscr
H_{\mu''}$, define
the distribution $K_{\varphi}\in \mathscr D'( {\mathscr H}_\mu)$ by
\begin{equation*}
   \langle K_{\varphi}, \phi_\mu\rangle=\langle K, {\phi_\mu}\otimes\varphi \rangle
\end{equation*}
for any test function $ \phi_\mu\in \mathscr D( {\mathscr H}_\mu)$. The integrals over subgroups in the above cancellation conditions should be interpreted as
such pairs.

Fix  a test function $ \varphi(\mathbf{g} ) :=  \varphi ^{
(1)}  (\mathbf{g}_1 ) \varphi ^{
(2)}(\mathbf{g}_2 )\varphi ^{
(3)} (\mathbf{g}_3 ) $  with $ \varphi ^{
(\mu)}  $ supported in $B_\mu (\mathbf{0}_\mu,1)$,  $\varphi ^{
(\mu)} (\mathbf{g}_\mu^{-1})=\varphi ^{
(\mu)}(\mathbf{g}_\mu )$ and $\int \varphi ^{
(\mu)}=1$. Consider the following {\it regularization} of $K$:
\begin{equation}\label{eq:K-varepsilon}
   K_\varepsilon(\mathbf{g} )=\varphi( \varepsilon \mathbf{g})(\varphi_{\varepsilon}* K)(\mathbf{g} ),
\end{equation}where $\varphi_{\varepsilon }:=  \varphi_{\varepsilon}^{
(1)}   \varphi_{\varepsilon}^{
(2)}\varphi_{\varepsilon}^{
(3)}$.
Here the convolution of $ \phi\in \mathscr D(\tilde{\mathscr  N})$  with a distribution $K$  is defined as
\begin{equation}\label{eq:convolution-distribution}
 \phi*  K(\mathbf{g})=\langle K, \phi_{\mathbf{g}}\rangle,\qquad \phi_{\mathbf{g}} (\mathbf{h}):= \phi( \mathbf{g}\mathbf{h}^{-1}),
\end{equation}
which coincides with the definition \eqref{eq:convolution-def} of convolution for $L^1$ functions. Obviously,  $\phi*   K $ is   smooth.

  \begin{prop} \label{prop:regularization} For $0<\varepsilon<1$,  $K_\varepsilon$ is a convolution kernel  on $ \tilde{{\mathscr N}} $  with constants independent
  of $\varepsilon$.
     \end{prop}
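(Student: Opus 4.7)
\emph{Proof plan.} Since $\varphi_{\varepsilon}*K$ is smooth (the convolution of a distribution with a smooth compactly supported function) and is multiplied by the smooth cutoff $\varphi(\varepsilon\cdot)$, which is supported in a product of balls of radii $\sim 1/\varepsilon$, the regularization $K_\varepsilon$ is a smooth function of compact support. It remains to verify the size estimates \eqref{eq:size-estimates} and the cancellation conditions \eqref{eq:cancellation1}--\eqref{eq:cancellation3} with constants independent of $\varepsilon\in(0,1)$. The approach exploits the product structure $\varphi(\varepsilon\mathbf{g})=\prod_{\mu=1}^{3}\varphi^{(\mu)}(\varepsilon\mathbf{g}_\mu)$ and $\varphi_\varepsilon=\varphi_\varepsilon^{(1)}\varphi_\varepsilon^{(2)}\varphi_\varepsilon^{(3)}$, so that the three Heisenberg factors can be treated essentially independently.

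For the size estimates, I apply Leibniz's rule to $\mathbf{Y}_1^{I_1}\mathbf{Y}_2^{I_2}\mathbf{Y}_3^{I_3}K_\varepsilon$, distributing the derivatives between the cutoff and the mollification. Each derivative that hits $\varphi(\varepsilon\cdot)$ brings out a factor $\varepsilon$, which is absorbable as $\|\mathbf{g}_\mu\|^{-1}$ on the support $\|\mathbf{g}_\mu\|\lesssim\varepsilon^{-1}$. For derivatives acting on $\varphi_{\varepsilon}*K$, I move them onto $K$ by left invariance. On the region $\|\mathbf{g}_\mu\|\geq 2\varepsilon$ for every $\mu$, the convolution inherits \eqref{eq:size-estimates} directly from $K$. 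Where some $\|\mathbf{g}_\mu\|<2\varepsilon$, the bound $|\mathbf{Y}_\mu^{I_\mu}(\varphi_{\varepsilon}*K)|\lesssim\varepsilon^{-Q_\mu-|I_\mu|}$---obtained by combining the cancellation \eqref{eq:cancellation1} of $K$ (tested against the n.b.f.\ $\varphi_\varepsilon^{(\mu')}\varphi_\varepsilon^{(\mu'')}$) with the size estimates in the remaining directions---yields exactly the factor $\|\mathbf{g}_\mu\|^{-Q_\mu-|I_\mu|}$ needed.

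For the cancellation conditions, I use the duality
\begin{equation*}
\int K_\varepsilon(\mathbf{g})\,\Phi(\mathbf{g})\,d\mathbf{g}=\bigl\langle K,\,\breve{\varphi}_\varepsilon *\bigl[\varphi(\varepsilon\,\cdot)\,\Phi\bigr]\bigr\rangle,
\end{equation*}
together with the observation that for any n.b.f.\ $\phi$, the product $\varphi(\varepsilon\mathbf{g})\phi(\delta_1\mathbf{g}_1,\delta_2\mathbf{g}_2,\delta_3\mathbf{g}_3)$ is, up to a uniform constant, an n.b.f.\ at scale $\max(\delta_\mu,\varepsilon)^{-1}$ in each factor, and that further convolution with $\breve{\varphi}_\varepsilon$ preserves this property at a possibly coarser scale. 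Applying \eqref{eq:cancellation3} for $K$ to the rescaled n.b.f.\ gives the uniform bound required. For \eqref{eq:cancellation1} and \eqref{eq:cancellation2}, I factor $\varphi^{(\mu)}(\varepsilon\mathbf{g}_\mu)$ (respectively $\varphi^{(\mu)}(\varepsilon\mathbf{g}_\mu)\varphi^{(\mu')}(\varepsilon\mathbf{g}_{\mu'})$) out of the subgroup integration, absorb the remaining bumps into $\phi$ at the scale $\min(\delta_i^{-1},\varepsilon^{-1})$, and then invoke the matching cancellation of $K$. The main obstacle lies in \eqref{eq:cancellation1}: three scales $\delta_1,\delta_2,\varepsilon$ compete across two subgroups while the free $\mathbf{g}_\mu$-direction still carries derivatives, and one must track dilations carefully to show that the test function remains an n.b.f.\ uniformly after rescaling and that $\breve{\varphi}_\varepsilon *$ does not degrade its scale---essentially bookkeeping, but the only place where constants could a priori depend on $\varepsilon$.
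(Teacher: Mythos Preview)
Your plan is correct and follows essentially the same route as the paper: both arguments reduce to the case analysis $\|\mathbf{g}_\mu\|\gtrsim\varepsilon$ versus $\|\mathbf{g}_\mu\|\lesssim\varepsilon$ in each factor, invoking the appropriate level of the cancellation hierarchy \eqref{eq:cancellation1}--\eqref{eq:cancellation3} for $K$ in each case (the paper in fact treats only $\varphi_\varepsilon*K$, leaving the outer cutoff to the reader, which you also handle). One small caution on your phrasing: ``the matching cancellation of $K$'' is not always the same-index condition---when a free variable satisfies $\|\mathbf{g}_\mu\|\lesssim\varepsilon$ you must drop to the next-deeper cancellation (e.g.\ \eqref{eq:cancellation3} rather than \eqref{eq:cancellation2}), exactly as you already do for the size estimates and as your final paragraph anticipates.
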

 \begin{proof} This fact for convolution   kernels on product spaces were used in \cite{MRS} \cite{NS} without proof. Here we sketch a proof for the
 convenience of readers. We only consider $K_\varepsilon =  \varphi_{\varepsilon}* K $.
Note that
 \begin{equation*}\begin{split}
    Y_{\mu j}  K_\varepsilon(\mathbf{g} )&=\left. \frac d{da}\right|_{a=0}
     \Big\langle K, ({\varphi}_\varepsilon)_{\mathbf{g}e(a)}\Big\rangle=\left\langle K,\left.\frac
    d{da}\right|_{a=0} ({\varphi}_\varepsilon)_{\mathbf{g} }(\cdot e(-a))\right\rangle \\&
    = \Big\langle K,  - Y_{\mu j}({\varphi}_\varepsilon)_{\mathbf{g} }
  \Big\rangle =  \Big\langle Y_{\mu j}K,   ({\varphi}_\varepsilon)_{\mathbf{g} }\Big\rangle,
\end{split} \end{equation*}by using \eqref{eq:convolution-distribution}, the continuity of a distribution and    definition of partial derivatives of a distribution,
where $e(a)$ is the  one-parameter subgroup generated by left invariant vector $Y_{\mu j}$.
 Consequently, we have
  \begin{equation}\label{eq:Y-K-varepsilon}\begin{split}
  \mathbf{Y}_1^{I_1} \mathbf{Y}_2^{I_2} \mathbf{Y}_3^{I_3} K_\varepsilon(\mathbf{g} )=    \left    \langle K,   (-1)^{|I|}\mathbf{Y}_1^{I_1} \mathbf{Y}_2^{I_2}
  \mathbf{Y}_3^{I_3}({\varphi}_\varepsilon)_{\mathbf{g} } \right\rangle =    \left\langle  \mathbf{Y}_1^{I_1} \mathbf{Y}_2^{I_2} \mathbf{Y}_3^{I_3}K,
  ({\varphi}_\varepsilon)_{\mathbf{g} } \right\rangle   .
\end{split}\end{equation}

For the cancellation conditions for $K_\varepsilon$,  note that for a  n.b.f. $\psi$ on $ \mathscr H_3$,
\begin{equation} \label{eq:Y-K-varepsilon-int}\begin{split} \int_{  \mathscr H_3} \mathbf{Y}_1^{I_1}\mathbf{Y}_2^{I_2} K_\varepsilon(\mathbf{g} )\psi (
\delta_3\mathbf{g}_3 ) d\mathbf{g}_3 & =  \int_{  \mathscr H_3}  \left\langle  \mathbf{Y}_1^{I_1}\mathbf{Y}_2^{I_2} K,    ({\varphi}_\varepsilon)_{\mathbf{g} }
\right\rangle\psi (  \delta_3\mathbf{g}_3 )  d\mathbf{g}_3\\& = \left\langle  \mathbf{Y}_1^{I_1} \mathbf{Y}_2^{I_2} K,  \int_{\mathscr H_3}
({\varphi}_\varepsilon)_{\mathbf{g} } \psi (  \delta_3\mathbf{g}_3 )  d\mathbf{g}_3\right\rangle    = \left\langle  \mathbf{Y}_1^{I_1} \mathbf{Y}_2^{I_2} K,
\Psi\right\rangle ,
 \end{split} \end{equation}by using \eqref{eq:Y-K-varepsilon}. The second identity follows from the  continuity of a distribution, and
 \begin{equation*}
    \Psi( \mathbf{ h} ):=
 (\varphi^{(1)}_{  \varepsilon})_{\mathbf{g}_1}(\mathbf{ h}_1 )  (\varphi^{(2)}_{  \varepsilon})_{\mathbf{g}_2}(\mathbf{ h}_2 ) \cdot \widehat{\psi} (
 \delta_3 \mathbf{ h}_3 )
 \end{equation*}
   with
 \begin{equation}\label{eq:widehat-psi} \begin{split}
\widehat{ \psi} (  \mathbf{ h}_3 )&= \int_{\mathscr H_3}    (\varphi^{(3)}_{  \varepsilon})_{\mathbf{g}_3}(\delta_3^{-1}\mathbf{ h}_3  )  \psi (
\delta_3\mathbf{g}_3 )  d\mathbf{g}_3
= \int_{\mathscr H_3}     \varphi^{(3)} _{\varepsilon \delta_3 }( \mathbf{g}_3)  \psi ( \mathbf{g}_3 \mathbf{ h}_3 )  d\mathbf{g}_3.
 \end{split} \end{equation}

{\it Case i:   $\|\mathbf{g}_1\|,\|\mathbf{g}_2\| >C_0\varepsilon$ for a large $C_0>0$}. If $ \varepsilon \delta_3 <1$, $\widehat{ \psi}$ is   a  n.b.f.   on a bounded ball by \eqref{eq:widehat-psi}. Thus
\begin{equation}\label{eq:L.H.S.}
  \left |\left\langle  \mathbf{Y}_1^{I_1} \mathbf{Y}_2^{I_2} K,
\Psi\right\rangle\right|\lesssim \int_{\mathscr H_1\times \mathscr H_2} \frac {  \varphi^{(1)}_{  \varepsilon} (\mathbf{g}_1\mathbf{
   h}_1^{-1} ) \varphi^{(2)}_{  \varepsilon}  (\mathbf{g}_2\mathbf{ h}_2^{-1} )  }{\| \mathbf{h}_1\|^{ Q_1+| I_1| }\| \mathbf{h}_{2} \|^{ Q_2+| I_{2}   |  }}
   d\mathbf{h}_1  d\mathbf{h}_2\lesssim \| \mathbf{g}_1\|^{-Q_1-| I_1| }\| \mathbf{g}_{2} \|^{-Q_2-| I_{2}   | },
\end{equation}by the cancellation condition
\eqref{eq:cancellation2} for $K$. If $\varepsilon \delta_3 \geq1$, note that $ \Psi( \mathbf{ h} ) = (\varphi^{(1)}_{  \varepsilon})_{\mathbf{g}_1}(\mathbf{ h}_1 )
(\varphi^{(2)}_{  \varepsilon})_{\mathbf{g}_2}(\mathbf{ h}_2 )  \check{\psi } ( \varepsilon^{-1}\mathbf{ h}_3 )$ with
\begin{equation*}
   \check{  \psi} (  \mathbf{ h}_3 )
= \int_{\mathscr H_3}     \varphi^{(3)} ( \mathbf{g}_3\mathbf{ h}_3^{-1})  \psi ( \varepsilon \delta_3 \mathbf{g}_3 )  d\mathbf{g}_3
\end{equation*}  also to be a n.b.f. on a bounded ball. The estimate \eqref{eq:L.H.S.} holds similarly.

{\it Case ii: $\|\mathbf{g}_1\|,\|\mathbf{g}_2\|\leq C_0\varepsilon$}. Note that $ \langle  \mathbf{Y}_1^{I_1} \mathbf{Y}_2^{I_2} K,  \Psi \rangle=(- \varepsilon)^{
-Q_1-|{I_1}|-Q_2-|{I_2}|} \langle  K,  \widehat{ \Psi} ({\varepsilon}^{-1} \cdot, {\varepsilon}^{-1}\cdot,\delta_3 \cdot ) \rangle$ with
\begin{equation*}
   \widehat{ \Psi }(  \mathbf{ h}  ):=  ({\mathbf{Y}}_1^{I_1}  \psi_1) \left( \mathbf{ h}_1 \right)  ({\mathbf{Y}}_2^{I_2}  \psi_2 )
   \left(\mathbf{ h}_2 \right)\widehat{ \psi} (  \mathbf{ h}_3 ),
\end{equation*} by \eqref{eq:Y-K-varepsilon}, where $\psi_1\left( \mathbf{ h}_1 \right):=\varphi^{(1)}_{{\varepsilon}^{-1}  \mathbf{g}_1}  \left( \mathbf{ h}_1 \right)$, $\psi_2   \left(\mathbf{ h}_2 \right):=\varphi^{(2)}_{{\varepsilon}^{-1}
\mathbf{g}_2}   \left(\mathbf{ h}_2 \right)  $.
  $ \widehat{ \Psi }$ is  a  n.b.f. on a bounded ball, and so
  \begin{equation}\label{eq:varepsilon-g}
   \left |\left\langle  \mathbf{Y}_1^{I_1} \mathbf{Y}_2^{I_2} K,
\Psi\right\rangle\right| \ \lesssim  \varepsilon ^{
-Q_1-|{I_1}|-Q_2-|{I_2}|}     \lesssim \prod_{\mu=1}^2\| \mathbf{g}_\mu \|^{-Q_\mu-|{I_\mu}| }
   \end{equation}
  by  the cancellation condition
\eqref{eq:cancellation3} for $K$.

{\it Case iii:   $\|\mathbf{g}_2\|>C_0\varepsilon,\|\mathbf{g}_1\|\leq C_0\varepsilon$ or $\|\mathbf{g}_2\|\leq C_0\varepsilon,\|\mathbf{g}_1\|>C_0\varepsilon$}. The combined method of
the above $2$ cases yields the estimate.

So the cancellation condition \eqref{eq:cancellation2} for $K_\varepsilon$ holds with constants independent of $\varepsilon$.
It is similar to get the cancellation conditions \eqref{eq:cancellation1} and \eqref{eq:cancellation3} for $K_\varepsilon$. We omit the details.

For the size
estimate   \eqref{eq:size-estimates} for  $K_\varepsilon$, if $\|\mathbf{g}_\mu \|\geq C_0\varepsilon$ for each $\mu$, we have \begin{equation*}
  \left|\mathbf{Y}_1^{I_1} \mathbf{Y}_2^{I_2} \mathbf{Y}_3^{I_3} K_\varepsilon(\mathbf{g} )\right|
 \lesssim  \int \frac {{\varphi}_\varepsilon (\mathbf{g}\mathbf{h}^{-1})}{\| \mathbf{h}_1 \|^{ Q_1+|{I_1}| }\| \mathbf{h}_2
   \|^{ Q_2+|{I_2}|  }\| \mathbf{h}_3 \|^{ Q_3+|{I_3}| }} d\mathbf{h}  \lesssim \prod_{\mu=1}^2\| \mathbf{g}_\mu \|^{-Q_\mu-|{I_\mu}| },
\end{equation*}by applying the size
estimate  for $K$ to the right hand side of \eqref{eq:Y-K-varepsilon}. The estimate holds by $ \varphi_\varepsilon  $ supported in $B(\mathbf{0},\varepsilon)$.
If $\|\mathbf{g}_\mu \|< C_0\varepsilon$ for each $\mu$, note that  $ \hat{{\varphi}} ( \mathbf{h} ) =   \varphi ( \varepsilon ^{-1}\mathbf{g}  \cdot  \mathbf{h}^{-1} )$ is a   n.b.f. on a bounded ball,  and
\begin{equation}\label{eq:Y-K-varepsilon2}
   \mathbf{Y}_1^{I_1} \mathbf{Y}_2^{I_2} \mathbf{Y}_3^{I_3}({\varphi}_\varepsilon)_{\mathbf{g} }(
   \mathbf{h}) = \varepsilon^{ - Q -|{I_1}| -|{I_2}|  - |{I_3}| }\left (\mathbf{Y}_1^{I_1} \mathbf{Y}_2^{I_2} \mathbf{Y}_3^{I_3} \hat{{\varphi}} \right)(\varepsilon^{-1} \mathbf{h} ),
\end{equation}while  $\mathbf{Y}_1^{I_1} \mathbf{Y}_2^{I_2} \mathbf{Y}_3^{I_3} \hat{{\varphi}}$ is also a   n.b.f. on a bounded ball.
   Thus
  $
      |\mathbf{Y}_1^{I_1} \mathbf{Y}_2^{I_2} \mathbf{Y}_3^{I_3} K_\varepsilon(\mathbf{g} )|
 \lesssim \prod_{\mu=1}^3\| \mathbf{g}_\mu \|^{-Q_\mu-|{I_\mu}| }
  $ as \eqref{eq:varepsilon-g}
   by  the cancellation condition  \eqref{eq:cancellation3} for $K $.
   It is similar to get the size
estimate  when some of $\|\mathbf{g}_\mu\|$'s $\geq C_0\varepsilon$   and others are $< C_0\varepsilon$. We omit the details.
   \end{proof}

 \subsection{Flag-like singular integrals    and     boundedness on $L^p(\mathscr N)$  }Since for
 $F\in L_{loc}^1(\tilde {\mathscr N})$,
 \begin{equation}\label{eq:phi}\begin{split}
   \pi_*\left( \breve{\varphi}^\sharp \tilde{ * } F\right)(\mathbf{0})&=    \breve{\varphi }  * \pi_*(F)(\mathbf{0}) =   \int_{ {\mathscr N}}
 \varphi(\mathbf{h})  \pi_*(F)(\mathbf{h}) d\mathbf{h}=\left \langle \pi_*F,  \varphi \right\rangle_{  {{\mathscr N}}},
 \end{split}\end{equation}
 we define the {\it push-forward distribution} $K^\flat$ for  a convolution kernel $ K $ on $ \tilde{{\mathscr N}} $ as
\begin{equation}\label{eq:K-flat}
    \langle K^\flat,  \varphi  \rangle_{  {{\mathscr N}}} = \pi_*\left( \breve{\varphi}^\sharp \tilde{ * } K\right)(\mathbf{0}),
\end{equation}
for $\varphi\in \mathscr  D(\mathscr N)$.

\begin{prop}   For a convolution kernel $ K $ on $ \tilde{{\mathscr N}} $,
 \eqref{eq:K-flat} defines a distribution $ K^\flat $ on $  \mathscr N   $. \end{prop}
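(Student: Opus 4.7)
The plan is to verify three things: linearity, well-definedness (that is, convergence of the fiber integral), and continuity of $\varphi\mapsto \pi_*(\breve\varphi^\sharp\,\tilde{*}\,K)(\mathbf{0})$ on $\mathscr D(\mathscr N)$. Linearity is immediate from the bilinearity of the distributional convolution, from the linearity of $\pi_*$ and evaluation at $\mathbf 0$, and from the linearity of the lifting $\varphi\mapsto\breve\varphi^\sharp$, so the real content lies in well-definedness and continuity, which are closely tied.

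I would start by noting that by Lemma~\ref{lem:lift-function}, $\breve\varphi^\sharp \in C^\infty_c(\tilde{\mathscr N})$, with compact support and with derivatives bounded in terms of those of $\varphi$. Consequently the convolution $F:=\breve\varphi^\sharp\,\tilde{*}\,K$ is a smooth function on $\tilde{\mathscr N}$, given pointwise by $F(\mathbf{g})=\langle K,(\breve\varphi^\sharp)_{\mathbf{g}}\rangle$ via \eqref{eq:convolution-distribution}. The task is then to bound $u\mapsto F(\mathbf{0}_{\mathbf z},-u,-u,u)$ along the non-compact fiber $\pi^{-1}(\mathbf 0)=\{(\mathbf{0}_{\mathbf z},-u,-u,u):u\in\mathbb R\}$ so that $\pi_*F(\mathbf 0)=\int_{\mathbb R}F(\mathbf{0}_{\mathbf z},-u,-u,u)\,du$ converges and depends continuously on $\varphi$.

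The main step, which I also expect to be the main obstacle, is the tail decay. A direct computation of the componentwise Heisenberg product $(\mathbf 0,-u,-u,u)\mathbf h^{-1}$ shows that if $\mathbf h=(\mathbf w_1,s_1,\mathbf w_2,s_2,\mathbf w_3,s_3)$ lies in the support of $(\breve\varphi^\sharp)_{(\mathbf 0,-u,-u,u)}$, then $|\mathbf w|$ is bounded while $|s_1+u|,|s_2+u|,|s_3-u|$ are each bounded by a constant $A$ depending only on $\mathrm{supp}\,\varphi$. Hence, once $|u|\geq R:=2A$, we have $\|\mathbf h_\mu\|_\infty\geq|s_\mu|^{1/2}\asymp|u|^{1/2}$ for every $\mu=1,2,3$, so the support stays uniformly away from all coordinate subgroups $\mathbf{g}_\mu=\mathbf{0}_\mu$. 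There $K$ is a smooth function obeying the size estimate \eqref{eq:size-estimates} (with $|I_\mu|=0$), and Fubini yields
\[
|F(\mathbf 0_{\mathbf z},-u,-u,u)|\lesssim |u|^{-(Q_1+Q_2+Q_3)/2}\,\|\breve\varphi^\sharp\|_{L^1},\qquad |u|>R.
\]
Since $Q_\mu=2n_\mu+2\geq 2$, the exponent is at least $3$, and the tail is integrable.

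For $|u|\leq R$, the smooth function $F$ is uniformly bounded on this compact fiber-segment; more precisely, since $K$ is a distribution, its pairing with the bounded family $\{(\breve\varphi^\sharp)_{(\mathbf 0,-u,-u,u)}:|u|\leq R\}$ of test functions on a common compact set of $\tilde{\mathscr N}$ is controlled by finitely many $C^M$-seminorms of $\breve\varphi^\sharp$, which by Lemma~\ref{lem:lift-function} reduce to $C^M$-seminorms of $\varphi$. Combining the two regimes gives an estimate of the form
\[
|\langle K^\flat,\varphi\rangle|\leq C(K,\mathrm{supp}\,\varphi)\sum_{|I|\leq M}\|\partial^I\varphi\|_\infty,
\]
which is exactly the continuity of $K^\flat$ on $\mathscr D(\mathscr N)$. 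The proof really hinges on the geometric observation that the fiber direction $(-u,-u,u)$ propagates simultaneously into \emph{all three} Heisenberg factors, so the factor-by-factor decay in the product size estimate of $K$ combines to beat the one-dimensional fiber measure $du$.
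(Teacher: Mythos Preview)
Your proposal is correct and follows essentially the same approach as the paper: split the fiber integral at a fixed radius, use the size estimate \eqref{eq:size-estimates} for the tail (exploiting that the fiber direction $(-u,-u,u)$ forces $\|\mathbf h_\mu\|\asymp|u|^{1/2}$ in all three factors), and handle the bounded segment by pairing $K$ with a bounded family of test functions. The only cosmetic difference is that for $|u|\le R$ the paper invokes the cancellation condition \eqref{eq:cancellation3} on normalized bump functions to get a $\|\varphi\|_{C^1}$ bound directly, whereas you appeal to the finite order of $K$ on compacta; both yield the required continuity.
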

\begin{proof}
If we write
\begin{equation}\label{eq:K-flat1} \langle K^\flat,  \varphi  \rangle_{  {{\mathscr N}}}
  =\int_{\mathbb{R}}(\breve{\varphi}^\sharp\tilde*  K )(\mathbf{0}_{\mathbf{z}},-u,-u,u)du=\int_{|u|\leq M} +\int_{|u|>M},
\end{equation}we find that
\begin{equation}\label{eq:K-flat2} \begin{split}
 \left | \int_{|u|\leq M} \left\langle  { K },\breve{\varphi}^\sharp_{(\mathbf{0}_{\mathbf{z}},-u,-u,u)}   \right\rangle du \right|&\lesssim  \|\varphi\|_{C^1},
\\
 \left | \int_{|u|> M} \int_{\mathscr N}  { K } (\mathbf{h})\breve{\varphi}^\sharp ((\mathbf{0}_{\mathbf{z}},-u,-u,u)\mathbf{h}^{-1} )d \mathbf{h}  du \right|& \lesssim
 \int_{|u|> M} \frac 1{|u|^{Q/2}} du\lesssim  \|\varphi\|_{C^1},
\end{split}\end{equation}
for large $M$, where we have used the facts that $\breve{\varphi}^\sharp_{(\mathbf{0}_{\mathbf{z}},-u,-u,u)} /\|\varphi\|_{C^1}$ for $|u| \leq M$ is a   n.b.f. on a bounded ball,
and for $|u| > M$, $|{ K } (\mathbf{z},\mathbf{t})|\lesssim\frac 1{|u|^{Q/2}}$ for $ (\mathbf{z},\mathbf{t})$ in the support of  the function $\breve{\varphi}^\sharp ((\mathbf{0}_{\mathbf{z}},-u,-u,u)(-\mathbf{z},-\mathbf{t}) )$, where $\mathbf{t}\approx ( -u,-u, u)$, by the size
estimate   \eqref{eq:size-estimates} for  $K $. Thus, $|\langle K^\flat,  \varphi  \rangle_{  {{\mathscr N}}}|\lesssim  \|\varphi\|_{C^1}$. The result follows.
  \end{proof}

\begin{cor} \label{prop:pi-K-varepsilon} $\pi_*(K_\varepsilon)$ converges to the distribution $K^\flat$ as $ {\varepsilon\rightarrow0}$.
    \end{cor}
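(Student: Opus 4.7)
The plan is to show $\langle\pi_*(K_\varepsilon),\varphi\rangle_{\mathscr N}\to\langle K^\flat,\varphi\rangle_{\mathscr N}$ for every $\varphi\in\mathscr D(\mathscr N)$. By Lemma \ref{lem:lift-function}, $\breve\varphi^\sharp\in\mathscr D(\tilde{\mathscr N})$, and since $K_\varepsilon$ is smooth and compactly supported by Proposition \ref{prop:regularization}, identity \eqref{eq:phi} applied to $F=K_\varepsilon$ together with the definition \eqref{eq:K-flat} of $K^\flat$ reduces the claim to
\[
\int_{\mathbb R} F_\varepsilon(u)\,du \;\longrightarrow\; \int_{\mathbb R} F(u)\,du,
\]
where
\[
F_\varepsilon(u):=\bigl(\breve\varphi^\sharp\,\tilde *\, K_\varepsilon\bigr)(\mathbf 0_{\mathbf z},-u,-u,u)=\bigl\langle K_\varepsilon,\bigl(\breve\varphi^\sharp\bigr)_{(\mathbf 0_{\mathbf z},-u,-u,u)}\bigr\rangle,
\]
and $F(u)$ is defined analogously with $K$ in place of $K_\varepsilon$. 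I will close this by dominated convergence.

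For the pointwise limit $F_\varepsilon(u)\to F(u)$ at each $u\in\mathbb R$, note that the translate $(\breve\varphi^\sharp)_{(\mathbf 0_{\mathbf z},-u,-u,u)}$ is a fixed element of $\mathscr D(\tilde{\mathscr N})$. The standard mollifier argument, using that $\{\varphi_\varepsilon\}$ is an approximate identity on $\tilde{\mathscr N}$ and that $\varphi(\varepsilon\,\cdot)$ is a smooth cut-off tending to the identity on every compact set as $\varepsilon\to0$, gives $K_\varepsilon\to K$ in $\mathscr D'(\tilde{\mathscr N})$, and testing against this fixed function yields the pointwise convergence.

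For an integrable majorant uniform in $\varepsilon$, the crucial input is Proposition \ref{prop:regularization}: each $K_\varepsilon$ is a convolution kernel on $\tilde{\mathscr N}$ whose size and cancellation constants are independent of $\varepsilon$. The bounds from the proof of Proposition 5.2 (namely those in \eqref{eq:K-flat2}) therefore apply verbatim to $F_\varepsilon$. For $|u|\le M$, the translate $(\breve\varphi^\sharp)_{(\mathbf 0_{\mathbf z},-u,-u,u)}/\|\varphi\|_{C^1}$ is a normalized bump function on a fixed ball, so the cancellation condition for $K_\varepsilon$ gives $|F_\varepsilon(u)|\lesssim\|\varphi\|_{C^1}$; for $|u|>M$, the size estimate \eqref{eq:size-estimates} applied to $K_\varepsilon$ yields the integrable tail bound $|F_\varepsilon(u)|\lesssim\|\varphi\|_{C^1}|u|^{-Q/2}$. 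Dominated convergence now finishes the proof.

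The main obstacle is not the limit itself but the $\varepsilon$-uniformity of the kernel estimates for the regularization, which is exactly what Proposition \ref{prop:regularization} secures; the pointwise convergence is then a routine consequence of the mollifier/cut-off structure of $K_\varepsilon$.
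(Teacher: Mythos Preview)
Your argument is correct and follows essentially the same route as the paper: both reduce to $\pi_*(\breve\varphi^\sharp\,\tilde*\,K_\varepsilon)(\mathbf 0)\to\pi_*(\breve\varphi^\sharp\,\tilde*\,K)(\mathbf 0)$ via \eqref{eq:phi} and \eqref{eq:K-flat}, invoke $K_\varepsilon\to K$ in $\mathscr D'(\tilde{\mathscr N})$, and use the $\varepsilon$-uniform bounds \eqref{eq:K-flat2} (justified by Proposition~\ref{prop:regularization}) to control the integral over the fiber. You have simply made the dominated-convergence step explicit where the paper leaves it implicit; one cosmetic point is that the smoothness and compact support of $K_\varepsilon$ come directly from its definition \eqref{eq:K-varepsilon} rather than from Proposition~\ref{prop:regularization}.
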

 \begin{proof}  By   \eqref{eq:phi},   $\langle  \pi_*(K_\varepsilon) ,  \varphi  \rangle_{  {{\mathscr N}}} = \pi_*\left( \breve{\varphi}^\sharp \tilde{ * } K_\varepsilon\right)(\mathbf{0})$.
   The result follows from the definition
 of $     K^\flat$ in \eqref{eq:K-flat} by taking limit $\varepsilon\rightarrow 0$, $K_\varepsilon\rightarrow K$ in $\mathscr D'(\tilde{ {\mathscr N}})$, and using \eqref{eq:K-flat2} for $K_\varepsilon$ instead of $K$.
 \end{proof}
As \cite[Lemma 4.2]{MRS},  we can prove the following pointwise estimate.
\begin{prop}
 \label{prop:decay} Suppose that $K$ is a     convolution kernel on $ \tilde{{\mathscr N} }$  and  $ {{\varphi}}^{(\mu)}$, $\mu=1,2,3$, are
  n.b.f. on $ {\mathscr H}_\mu$, respectively,   with  mean value zero. Then, there exists a constant $C>0$, only depending on $I^{(\mu)}$'s, ${{\varphi}}^{(\mu)}$'s and
  constants in the size
estimates   and   cancellation conditions for $K$, such that
   \begin{equation*}
      \left|\mathbf{Y}_1^{I_1} \mathbf{Y}_2^{I_2} \mathbf{Y}_3^{I_3}(\Phi_{\mathbf{r} } \tilde{ {*}}  K)\right|\left ( {\mathbf{g}}\right)\leq C\prod_{\mu=1}^3\frac {r_\mu}{\left(r_\mu+ \left\|
       {\mathbf{g}}_\mu\right\|\right)^{ Q_\mu+|I_\mu|+1}  },
   \end{equation*}where $ \Phi_{\mathbf{r} } ({\mathbf{g}} ):=\varphi_{r_1}^{
(1)} ({\mathbf{g}}_1)  \varphi_{r_2}^{
(2)}  ({\mathbf{g}}_2)  \varphi_{r_3}^{
(3)} ({\mathbf{g}}_3) $.
   \end{prop}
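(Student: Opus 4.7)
The plan is to adapt the proof of Lemma 4.2 of M\"uller--Ricci--Stein \cite{MRS} to the tri-parameter setting on $\tilde{\mathscr N}$, proceeding by (i) reducing the number of derivatives, (ii) normalizing the scales via the commuting dilations, and (iii) a case analysis depending on the relative sizes of $\|\mathbf{g}_\mu\|$ and $r_\mu$.

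First I would observe that, writing $(\Phi_{\mathbf{r}}\tilde{*}K)(\mathbf{g}) = \int \Phi_{\mathbf{r}}(\mathbf{g}\mathbf{k}^{-1})K(\mathbf{k})\,d\mathbf{k}$, the left-invariant derivatives $\mathbf{Y}_\mu^{I_\mu}$ fall onto $\Phi_{\mathbf{r}}$ and simply replace $\varphi^{(\mu)}$ by $r_\mu^{-|I_\mu|}\mathbf{Y}_\mu^{I_\mu}\varphi^{(\mu)}$, which remains a mean-zero normalized bump function up to dilation. So it suffices to treat the case $I_1=I_2=I_3=0$. Next, using that the tri-parameter kernel class is invariant under the three commuting dilations, the rescaled kernel $K^{(\mathbf{r})}(\mathbf{g}) := r_1^{Q_1}r_2^{Q_2}r_3^{Q_3}K(r_1\mathbf{g}_1, r_2\mathbf{g}_2, r_3\mathbf{g}_3)$ satisfies the same size and cancellation bounds with uniform constants, and a change of variables reduces the problem to proving
\begin{equation*}
|(\Phi_{\mathbf{1}}\tilde{*}K)(\mathbf{g})| \leq C\prod_{\mu=1}^3 (1+\|\mathbf{g}_\mu\|)^{-Q_\mu-1}.
\end{equation*}

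Then I would split into $2^3=8$ regimes according to whether $\|\mathbf{g}_\mu\|\leq 2$ (``small'') or $\|\mathbf{g}_\mu\|>2$ (``large''). In the large coordinates, the key move is to exploit $\int\varphi^{(\mu)}=0$: for $\mathbf{k}_\mu$ in the unit ball one replaces $\varphi^{(\mu)}(\mathbf{g}_\mu\mathbf{k}_\mu^{-1})$ by $\varphi^{(\mu)}(\mathbf{g}_\mu\mathbf{k}_\mu^{-1})-\varphi^{(\mu)}(\mathbf{g}_\mu)$ and uses a one-term Taylor expansion along paths in $\mathscr H_\mu$, gaining a factor $\|\mathbf{k}_\mu\|/\|\mathbf{g}_\mu\|$ which supplies the missing $\|\mathbf{g}_\mu\|^{-1}$ in the desired bound. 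In the small coordinates, one cannot rely on the mean-zero gain and must invoke the kernel's own cancellation. Letting $S=\{\mu:\|\mathbf{g}_\mu\|\leq 2\}$, after performing the bump integration in the coordinates $\mu\notin S$ (where Taylor subtraction has been carried out), the remaining integral over $\prod_{\mu\in S}\mathscr H_\mu$ is controlled by: the size estimate \eqref{eq:size-estimates} when $|S|=0$; the cancellation condition \eqref{eq:cancellation1} when $|S|=1$; \eqref{eq:cancellation2} when $|S|=2$; and \eqref{eq:cancellation3} when $|S|=3$. In each configuration the cancellation exponents exactly match the decay required in the remaining large variables, giving the product bound claimed.

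The main obstacle, as in the bi-parameter case, is the mixed regimes: one must carry out the Taylor subtractions in the large coordinates in a way compatible with the subgroup cancellation integrations in the small ones, choosing reference points so that the reference values are admissible test functions on the correct subgroups. The calculations are essentially the tri-parameter replica of \cite[Lemma 4.2]{MRS}, and while the combinatorics of integrating nine-fold and keeping track of three mean-zero expansions is heavier, each individual estimate reduces to a standard manipulation. No new analytic ingredient beyond the definitions of Section 5.1 is required.
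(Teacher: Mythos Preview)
Your overall strategy—normalize to $\mathbf r=\mathbf 1$, split into the $2^3$ regimes, and combine the mean-zero of the bumps in the large coordinates with the cancellation conditions on $K$ in the small ones—is precisely the paper's approach (which, as you note, is the tri-parameter version of \cite[Lemma 4.2]{MRS}). However, your description of the key subtraction step is backwards and, taken literally, does not produce the needed gain.

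In a ``large'' coordinate $\mu$ the correct move (and the paper's) is to write the convolution with the bump variable ranging over the unit ball, e.g.\ $\int \breve\varphi^{(\mu)}(\mathbf h_\mu)\,K(\ldots,\mathbf h_\mu\mathbf g_\mu,\ldots)\,d\mathbf h_\mu$, then use $\int\varphi^{(\mu)}=0$ to subtract $K(\ldots,\mathbf g_\mu,\ldots)$; the stratified mean value theorem applied to $K$ yields the extra factor $\|\mathbf h_\mu\|\cdot\|\mathbf g_\mu\|^{-Q_\mu-1}$. Your version subtracts $\varphi^{(\mu)}(\mathbf g_\mu)$, which is already $0$ since $\|\mathbf g_\mu\|>2$ lies outside $\operatorname{supp}\varphi^{(\mu)}$, and in your parametrization $\mathbf k_\mu$ sits near $\mathbf g_\mu$, not in the unit ball—so no $\|\mathbf g_\mu\|^{-1}$ gain emerges. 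Two smaller slips: for left-invariant $\mathbf Y_\mu^{I_\mu}$ one has $\mathbf Y_\mu^{I_\mu}(\Phi_{\mathbf r}\tilde*K)=\Phi_{\mathbf r}\tilde*(\mathbf Y_\mu^{I_\mu}K)$, so the derivatives land on $K$, not on $\Phi_{\mathbf r}$ (this is the paper's route for the reduction to $I=0$); and your assignment of cancellation conditions is transposed—$|S|=1$ requires \eqref{eq:cancellation2} (integration over one subgroup) while $|S|=2$ requires \eqref{eq:cancellation1} (integration over two). With these corrections your outline coincides with the paper's proof.
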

\begin{proof} We assume $r_\mu = 1$. The general case is similar. We   also assume  $K$  smooth, if replace $K$ by $K_\varepsilon$.

   If $\|\mathbf{g}_\mu\|$, $\mu=1,2,3$, are all small,
 \begin{equation*}\begin{split}
 \left|\Phi\tilde{ {*}}  K\left( {\mathbf{g}}\right) \right|=\left|\int_{ {\mathscr N}}  K \left( {\mathbf{h}}\right)
 \breve{ \Phi}\left( {\mathbf{h}}{\mathbf{g}}^{-1}\right)d {\mathbf{h}} \right|\leq C,
 \end{split}  \end{equation*} since $\breve{ \Phi}( \cdot {\mathbf{g}}^{-1} )$ is  a   n.b.f.   on a bounded ball.

  If one of $\|\mathbf{g}_\mu\|$'s is large   and other two are small, without loss of generality, we can assume that
  $\|\mathbf{g}_1\|$ and $\|\mathbf{g}_2\|$ are small and  $\|\mathbf{g}_3\|$ is large. Then  we have
 \begin{equation*}\begin{split}
  \Phi\tilde{ {*}} K \left( {\mathbf{g}}\right)&=\int_{ {\mathscr N}}  K\left ( {\mathbf{h}}_1,
  {\mathbf{h}}_2, {\mathbf{h}}_3\right)  \breve{ \Phi}\left({\mathbf{h}}_1 {\mathbf{g}}_1^{-1},
{\mathbf{h}}_2 {\mathbf{g}}_2^{-1},{\mathbf{h}}_3{\mathbf{g}}_3^{-1} \right)d {\mathbf{h}}\\&=\int_{ {\mathscr N}}  K\left ( {\mathbf{h}}_1,  {\mathbf{h}}_2,
{\mathbf{h}}_3{\mathbf{g}}_3 \right)
\breve{ \Phi}\left({\mathbf{h}}_1 {\mathbf{g}}_1^{-1},
{\mathbf{h}}_2 {\mathbf{g}}_2^{-1} , {\mathbf{h}}_3 \right)d {\mathbf{h}} \\
&=\int_{ {\mathscr N}} \Big [K \left( {\mathbf{h}}_1,  {\mathbf{h}}_2, {\mathbf{h}}_3{\mathbf{g}}_3\right) - K
\left( {\mathbf{h}}_1,  {\mathbf{h}}_2, {\mathbf{g}}_3 \right )\Big  ] \breve{ \varphi} ^{
(1)}
\left({\mathbf{h}}_1   {\mathbf{g}}_1^{-1}   \right)\breve{ {\varphi}} ^{
(2)} \left(
{\mathbf{h}}_2 {\mathbf{g}}_2^{-1}
\right)\breve{ {\varphi }} ^{(3)} \left(  {\mathbf{h}}_3\right)d {\mathbf{h}} .
 \end{split}  \end{equation*}since $ {{\varphi}} ^{
(3)} $ has mean value zero on $ {\mathscr H}_3$.  By applying the stratified mean value theorem of the integral form
 \cite{FS} and  the cancellation condition to   n.b.f.   $\breve{ {\varphi
 }} ^{
(1)} (\cdot{\mathbf{g}}_1^{-1})\breve{ {\varphi }} ^{
(2)} (\cdot {\mathbf{g}}_2^{-1} )$ on a bounded ball, we get
  \begin{equation*}\begin{split}
 \left|\Phi\tilde{ {*}} K\left( {\mathbf{g}}\right)\right|&\lesssim \int_{ {\mathscr H}_3}  \left| \breve{ {\varphi }} ^{
(3)} \left(
  {\mathbf{h}}_3\right)\right|\frac {\left\| {\mathbf{h}}_3\right\|}{\left\| {\mathbf{g}}_3\right\|^{Q_3+1}}d {\mathbf{h}}_3 \lesssim
 \frac {1}{\left\| {\mathbf{g}}_3\right\|^{Q_3+1}}.
 \end{split}  \end{equation*}

 If one of $\|\mathbf{g}_\mu\|$'s is   small and other two are large, without loss of generality, we can assume that $\|\mathbf{g}_1\|$ is small and
 $\|\mathbf{g}_2\|$, $\|\mathbf{g}_3\|$ are large. Then,  we have
 \begin{equation*}\begin{split}
 \left| \Phi\tilde{ {*}} K\left( {\mathbf{g}}\right)\right|=& \int_{ {\mathscr N}} \Big  [K \left( {\mathbf{h}}_1,
  {\mathbf{g}}_2  {\mathbf{h}}_2,  {\mathbf{h}}_3 {\mathbf{g}}_3\right) -K \left( {\mathbf{h}}_1,
  {\mathbf{g}}_2, {\mathbf{h}}_3{\mathbf{g}}_3 \right) -K \left( {\mathbf{h}}_1,
  {\mathbf{h}}_2{\mathbf{g}}_2,  {\mathbf{g}}_3\right)  + K \left( {\mathbf{h}}_1,  {\mathbf{g}}_2, {\mathbf{g}}_3 \right )\Big  ]
 \\& \qquad \cdot\breve{ {\varphi }} ^{
(1)} \left( {\mathbf{h}}_1 {\mathbf{g}}_1^{-1}\right)\breve{ {\varphi }} ^{
(2)} \left(
  {\mathbf{h}}_2\right)\breve{ {\varphi }} ^{
(3)} \left(  {\mathbf{h}}_3\right)d {\mathbf{h}}
 \end{split}  \end{equation*}as above, since $ {{\varphi}}^{(2)}$ and $ {{\varphi}}^{(3)}$ have mean value zero. By using the stratified mean
 value theorem  of the integral form twice and  the cancellation condition again, we get
  \begin{equation*}\begin{split}
 \left| K \tilde{ {*}} \Phi \left( {\mathbf{g}}\right)\right|&\lesssim \int_{ {\mathscr H}_3}\left | \breve{ {\varphi }} ^{
(2)} \left(
  {\mathbf{h}}_2\right)\breve{ {\varphi }} ^{
(3)} \left(  {\mathbf{h}}_3\right)\right|\frac
 {\left\| {\mathbf{h}}_2\right\|}{\left\| {\mathbf{g}}_2\right\|^{Q_2+1}} \frac
 {\left\| {\mathbf{h}}_3\right\|}{\left\| {\mathbf{g}}_3\right\|^{Q_3+1}}d {\mathbf{h}}_ 2d {\mathbf{h}}_3 \lesssim \frac
 {1}{\left\| {\mathbf{g}}_2\right\|^{Q_2+1}\left\| {\mathbf{g}}_3\right\|^{Q_3+1}}.
 \end{split}  \end{equation*}

 If $\|\mathbf{g}_\mu\|$'s      are all large,   we  use difference of $K$ of three order to get the estimate similarly.

The general case follows from $ \mathbf{Y}_1^{I_1} \mathbf{Y}_2^{I_2} \mathbf{Y}_3^{I_3}(\Phi_{\mathbf{r} } \tilde{ {*}}  K)=\Phi_{\mathbf{r} } \tilde{ {*}} \mathbf{Y}_1^{I_1} \mathbf{Y}_2^{I_2} \mathbf{Y}_3^{I_3} K$.
\end{proof}

\begin{lem}\label{lem:T-r-r'} Let  $\Phi_{\mathbf{r} } ({\mathbf{g}} ):=\varphi_{r_1}^{
(1)} ({\mathbf{g}}_1)  \varphi_{r_2}^{
(2)}  ({\mathbf{g}}_2)  \varphi_{r_3}^{
(3)} ({\mathbf{g}}_3) $ and $\Psi_{\mathbf{r} } ({\mathbf{g}} ):=\psi_{r_1}^{
(1)} ({\mathbf{g}}_1) \psi_{r_2}^{
(2)}  ({\mathbf{g}}_2)  \psi_{r_3}^{
(3)} ({\mathbf{g}}_3) $ with    $   {{\varphi}}^{(\mu)}$  and  $ {{\psi}}^{(\mu)}$ satisfying conditions in Proposition \ref{prop:decay}, $\mu=1,2,3$. Then
for $\mathbf{r } ,\mathbf{ r'}\in\mathbb{R}^3_+$,
\begin{equation*}
  \left |{\Phi}_{\mathbf{r} }\tilde *K  \tilde *  {\Psi}_{\mathbf{ r'} }\left( {\mathbf{g}}\right)\right|\lesssim \prod_{\mu=1}^3\left\{\left(\frac {r_\mu}{r_\mu'} \wedge \frac {r_\mu'}{r_\mu}\right)^{\frac 13}\frac {r_\mu}{\left(r_\mu+ \left\|
       {\mathbf{g}}_\mu\right\|\right)^{ Q_\mu+1}  }\right\} .
\end{equation*}
\end{lem}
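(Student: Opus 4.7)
The plan is to establish this almost-orthogonality estimate by combining the pointwise bounds of Proposition \ref{prop:decay} with the mean-value-zero hypotheses on $\varphi^{(\mu)}$ and $\psi^{(\mu)}$, then interpolating. By the associativity/commutativity of convolutions along the subgroups $\mathscr H_\mu$ (Proposition \ref{prop:associativity}), we may regroup the triple convolution in two ways. Set
\[
F:=\Phi_{\mathbf{r}}\tilde{*}K,\qquad G:=K\tilde{*}\Psi_{\mathbf{r'}},
\]
so that $\Phi_{\mathbf{r}}\tilde{*}K\tilde{*}\Psi_{\mathbf{r'}}=F\tilde{*}\Psi_{\mathbf{r'}}=\Phi_{\mathbf{r}}\tilde{*}G$. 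Proposition \ref{prop:decay}, applied to $F$ and (symmetrically) to $G$, gives
\[
\bigl|\mathbf{Y}_1^{I_1}\mathbf{Y}_2^{I_2}\mathbf{Y}_3^{I_3}F(\mathbf{g})\bigr|\lesssim\prod_{\mu=1}^{3}\frac{r_\mu}{(r_\mu+\|\mathbf{g}_\mu\|)^{Q_\mu+|I_\mu|+1}},
\]
with the analogous bound for $G$ in terms of $r_\mu'$.

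First I would establish the trivial (no-gain) estimate
\[
\bigl|\Phi_{\mathbf{r}}\tilde{*}K\tilde{*}\Psi_{\mathbf{r'}}(\mathbf{g})\bigr|\lesssim\prod_{\mu=1}^{3}\frac{r_\mu\wedge r_\mu'}{\bigl((r_\mu\vee r_\mu')+\|\mathbf{g}_\mu\|\bigr)^{Q_\mu+1}}.
\]
For this, in each direction $\mu$ I choose whichever of $F,G$ corresponds to the smaller scale $r_\mu\wedge r_\mu'$, apply Proposition \ref{prop:decay} there, and bound the remaining convolution factor by $L^1$-normalization $\|\varphi^{(\mu)}_{r_\mu}\|_1$ or $\|\psi^{(\mu)}_{r_\mu'}\|_1$ (uniformly bounded since these are n.b.f.\ dilates). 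The non-trivial step is verifying that the Poisson-like integrals in each Heisenberg factor $\mathscr H_\mu$ combine correctly across all three directions; this requires the decoupling guaranteed by the product structure of $\Phi_{\mathbf{r}}$, $\Psi_{\mathbf{r'}}$ and the convolution identity \eqref{eq:hat-convolution}.

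Next I would derive a full-gain bound carrying the improvement factor $\prod_\mu (r_\mu\wedge r_\mu')/(r_\mu\vee r_\mu')$. In each direction $\mu$ with $r_\mu\le r_\mu'$, I write the $\mu$-convolution as
\[
\bigl(F\tilde{*}_\mu\psi^{(\mu)}_{r_\mu'}\bigr)(\mathbf{g})=\int_{\mathscr H_\mu}\bigl[F(\mathbf{g}\tilde\tau_\mu(\mathbf{h}_\mu)^{-1})-F(\mathbf{g})\bigr]\psi^{(\mu)}_{r_\mu'}(\mathbf{h}_\mu)\,d\mathbf{h}_\mu,
\]
exploiting the mean-value-zero of $\psi^{(\mu)}$, and apply the stratified mean value theorem of \cite{FS} together with the $Y_{\mu j}$-derivative bound from Proposition \ref{prop:decay} (which gives an extra $(r_\mu+\|\mathbf{g}_\mu\|)^{-1}$ factor); since $\|\mathbf{h}_\mu\|\lesssim r_\mu'$ on the support, this produces the ratio $r_\mu/r_\mu'$. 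In directions $\mu$ with $r_\mu>r_\mu'$ I instead regroup as $\Phi_{\mathbf{r}}\tilde{*}G$ and use mean-value-zero of $\varphi^{(\mu)}$ against the $Y_{\mu j}$-derivative bound on $G$, producing $r_\mu'/r_\mu$. Iterating this independently in each of the three directions (using commutativity of the convolutions along distinct $\mathscr H_\mu$, Proposition \ref{prop:associativity}) yields the joint bound
\[
\bigl|\Phi_{\mathbf{r}}\tilde{*}K\tilde{*}\Psi_{\mathbf{r'}}(\mathbf{g})\bigr|\lesssim\prod_{\mu=1}^{3}\frac{r_\mu\wedge r_\mu'}{r_\mu\vee r_\mu'}\cdot\frac{r_\mu}{(r_\mu+\|\mathbf{g}_\mu\|)^{Q_\mu+1}}.
\]

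Finally, the target exponent $1/3$ is obtained by geometric interpolation between the two bounds (the no-gain bound with weight $2/3$ and the full-gain bound with weight $1/3$). This is the place where the exponent $1/3$ enters: since the Poisson-decay factor $r_\mu/(r_\mu+\|\mathbf{g}_\mu\|)^{Q_\mu+1}$ appears in both estimates (once with scale $r_\mu\wedge r_\mu'$ and once with $r_\mu$), the interpolation preserves this factor while distributing the improvement $(r_\mu\wedge r_\mu')/(r_\mu\vee r_\mu')$ with exponent $1/3$ in each direction. The principal obstacle is the third step: one must verify that the mean-value-zero gain can be carried out independently and simultaneously in all three directions without destroying the Poisson decay in the remaining coordinates. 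This is handled by iterating the argument in the proof of Proposition \ref{prop:decay}, partitioning the integration region according to which $\|\mathbf{g}_\mu\|$ is small or large and using up to third-order iterated differences of $F$ (respectively $G$) in the large-$\|\mathbf{g}_\mu\|$ regions.
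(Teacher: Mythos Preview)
There is a genuine gap in your ``no-gain'' step, and the overall route differs from the paper's in a way worth noting.

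Your stated no-gain bound $\prod_\mu\frac{r_\mu\wedge r_\mu'}{((r_\mu\vee r_\mu')+\|\mathbf g_\mu\|)^{Q_\mu+1}}$ cannot be obtained by the method you describe. If, say, $r_\mu'\le r_\mu$ and you apply Proposition~\ref{prop:decay} to $G=K\tilde*\Psi_{\mathbf r'}$, you obtain Poisson decay at the smaller scale $r_\mu'$; convolving with the larger-scale bump $\varphi^{(\mu)}_{r_\mu}$ spreads this out to scale $r_\mu$, but with $r_\mu$ (not $r_\mu'$) in the numerator---the $L^1$-normalization alone cannot manufacture the factor $r_\mu'/r_\mu$. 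In fact the bound you call ``no-gain'' already \emph{is} the full-gain bound (write $\frac{r_\mu'}{(r_\mu+\|\mathbf g_\mu\|)^{Q_\mu+1}}=\frac{r_\mu'}{r_\mu}\cdot\frac{r_\mu}{(r_\mu+\|\mathbf g_\mu\|)^{Q_\mu+1}}$), so your interpolation is between a bound and itself and the argument collapses.

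The paper's approach is much shorter. After using the flip identity $\varphi^{(\mu)}_{r_\mu}\tilde*_\mu K\tilde*_\mu\psi^{(\mu)}_{r_\mu'}=\breve\psi^{(\mu)}_{r_\mu'}\tilde*_\mu\breve K_\mu\tilde*_\mu\breve\varphi^{(\mu)}_{r_\mu}$ (with $\breve K_\mu$ still a convolution kernel) in each direction where $r_\mu'>r_\mu$, one reduces to the case $r_\mu'\le r_\mu$ for all $\mu$. Then one picks the \emph{single} index $\mu_0$ for which $r_{\mu_0}'/r_{\mu_0}=\min_\mu r_\mu'/r_\mu$, uses the mean-value-zero of $\psi^{(\mu_0)}$ once, and applies Proposition~\ref{prop:decay} with one extra derivative in that direction. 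This yields the single gain $r_{\mu_0}'/(r_{\mu_0}+\|\mathbf g_{\mu_0}\|)\le r_{\mu_0}'/r_{\mu_0}$, and since the minimum ratio is at most the geometric mean, $r_{\mu_0}'/r_{\mu_0}\le\prod_\mu(r_\mu'/r_\mu)^{1/3}$. That inequality is the sole source of the exponent $1/3$; no interpolation enters. Your triple-difference plan, if executed correctly, would actually prove the stronger estimate with exponent $1$ rather than $1/3$, so the interpolation you propose would be unnecessary anyway---but it is considerably more work than the paper's single-difference argument, and the $1/3$ suffices for the application in Theorem~\ref{thm:Kb}.
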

 \begin{proof} At first, we   assume $ {r}_\mu'\leq {r}_\mu$. Without loss of generality, let $r_1'/r_1 =\min\{r_1' /r_1,r_2'/r_2 ,r_3' /r_3\} $. Then
 \begin{equation*}\begin{split}
    {\Phi}_{\mathbf{r} }\tilde *K \tilde  *  {\Psi}_{\mathbf{ r'} } (\mathbf{g} )&= \int_{\mathscr N} {\Phi}_{\mathbf{r} }\tilde *K   (\mathbf{g}  \mathbf{h}^{-1}  ) {\Psi}_{\mathbf{ r'} } \left( {\mathbf{h}}\right)d\mathbf{h} \\
    &=\int_{\mathscr N}\Big[ {\Phi}_{\mathbf{r} }\tilde *K \left (\mathbf{g}_1\mathbf{h}_1^{-1}, \mathbf{g}_2  \mathbf{h}_2^{-1} ,\mathbf{g}_3  \mathbf{h}_3^{-1} \right )- {\Phi}_{\mathbf{r} } \tilde*K  \left (\mathbf{g}_1, \mathbf{g}_2  \mathbf{h}_2^{-1} ,\mathbf{g}_3  \mathbf{h}_3^{-1} \right ) \Big]{\Psi}_{\mathbf{ r'} } \left( {\mathbf{h}}\right)d\mathbf{h} .
\end{split}\end{equation*}Note that on supp ${\Psi}_{\mathbf{ r'} }$, we have $\|\mathbf{h}_1\| \leq  r_1' \leq r_1  $  and $r_\mu +\|\mathbf{g}_\mu  \mathbf{h}_\mu^{-1}\|  \sim r_\mu +\|\mathbf{g}_\mu   \|$, $\mu=2,3$.
  Therefore,
  \begin{equation*}\begin{split}
  \Big|{\Phi}_{\mathbf{r} }\tilde *K  \left (\mathbf{g}_1\mathbf{h}_1^{-1}, \mathbf{g}_2  \mathbf{h}_2^{-1} ,\mathbf{g}_3  \mathbf{h}_3^{-1} \right )- {\Phi}_{\mathbf{r} }\tilde *K  \left (\mathbf{g}_1, \mathbf{g}_2  \mathbf{h}_2^{-1} ,\mathbf{g}_3  \mathbf{h}_3^{-1} \right )\Big| &\lesssim\frac {  r_1'}{ r_1+ \left\|
       {\mathbf{g}}_1\right\|   } \prod_{\mu=1}^3\frac {r_\mu}{\left(r_\mu+ \left\|
       {\mathbf{g}}_\mu\right\|\right)^{ Q_\mu+1}  }\\& \leq\prod_{\mu=1}^3 \left(  \frac {r_\mu'}{r_\mu}\right)^{\frac 13}\frac {r_\mu}{\left(r_\mu+ \left\|
       {\mathbf{g}}_\mu\right\|\right)^{ Q_\mu+1}  },
\end{split}\end{equation*}
by  using the stratified mean
 value theorem  and  Proposition \ref{prop:decay}.
  The general  case follows from
  \begin{equation*}
     \varphi_{r_\mu}^{(\mu)}\tilde *_\mu K\tilde *_\mu \psi_{r_\mu '}^{(\mu)}=\breve \psi_{r_\mu '}^{(\mu)}\tilde *_\mu
\breve K_\mu\tilde *_\mu   \breve\varphi_{r_\mu}^{(\mu)},
  \end{equation*}
   and applying the above argument. Here $\breve K_\mu (\mathbf{g}_{\mu},\mathbf{g}_{\mu'},\mathbf{g}_{\mu''}  )=K (\mathbf{g}_{\mu}^{-1},\mathbf{g}_{\mu'},\mathbf{g}_{\mu''}  )$ is still a convolution kernel, since the    size
estimates and the  cancellation conditions can be easily checked. Because in the    size
estimates \eqref{eq:size-estimates}  and the  cancellation conditions \eqref{eq:cancellation1}-\eqref{eq:cancellation3} for $K$, the left invariant vector fields  $Y_{\mu j}$'s can be replaced by the right ones.
\end{proof}

\begin{cor}  \label{cor:decay} Assume as in Lemma \ref{lem:T-r-r'} . The we have
   \begin{equation*}
      \left|{{\varphi}}_{\mathbf{r} }{*} K^\flat * \psi_{\mathbf{ r'} }\right|\left( {\mathbf{g}}\right)\lesssim \prod_{\mu=1}^3 \left(\frac {r_\mu}{r_\mu'} \wedge \frac {r_\mu'}{r_\mu}\right)^{\frac 13}\sum_{\mathbf{k}\in \mathbb{Z}_+^3} 2^{-|\mathbf{k}|}
      \chi_{\mathbf{2^{\mathbf{k}} r} }\left( {\mathbf{g}}\right),
   \end{equation*}
   where ${{\varphi}}_{\mathbf{r} }=\pi_*(\Phi_{\mathbf{r} })$, $\psi_{\mathbf{r}' }=\pi_*(\Psi_{\mathbf{r}' })$, $2^{\mathbf{k}} \mathbf{r}=(2^{k_1} {r}_1   , 2^{k_2} {r}_2   , 2^{k_3} {r}_3 )$ and
   $|\mathbf{k}|:=k_1+k_2+k_3$.
\end{cor}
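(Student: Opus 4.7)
The plan is to transfer the pointwise estimate of Lemma \ref{lem:T-r-r'} from $\tilde{\mathscr N}$ down to $\mathscr N$ via $\pi_*$, and then replace each Poisson-type decay factor on the Heisenberg side by a dyadic sum of characteristic functions of Heisenberg balls.

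First I would work with the regularization $K_\varepsilon \in L^1(\tilde{\mathscr N})$ defined in \eqref{eq:K-varepsilon}, which by Proposition \ref{prop:regularization} is a convolution kernel with constants uniform in $\varepsilon \in (0,1)$. Since the n.b.f.'s $\varphi^{(\mu)},\psi^{(\mu)}$ are compactly supported, so are $\Phi_{\mathbf{r}}$ and $\Psi_{\mathbf{r}'}$, and two applications of Lemma \ref{lem:convolution} yield
\begin{equation*}
    \varphi_{\mathbf{r}} * \pi_*(K_\varepsilon) * \psi_{\mathbf{r}'} \;=\; \pi_*\bigl(\Phi_{\mathbf{r}} \,\tilde{*}\, K_\varepsilon \,\tilde{*}\, \Psi_{\mathbf{r}'}\bigr).
\end{equation*}
Lemma \ref{lem:T-r-r'} applied to $K_\varepsilon$ then controls the integrand on the right uniformly in $\varepsilon$ by $\prod_{\mu} \bigl(r_\mu/r_\mu' \wedge r_\mu'/r_\mu\bigr)^{1/3} \cdot r_\mu/(r_\mu + \|\mathbf{g}_\mu\|)^{Q_\mu+1}$.

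The key step is then an elementary dyadic decomposition of each Poisson factor: by testing the annuli $\|\mathbf{g}_\mu\| \sim 2^{k_\mu} r_\mu$ for $k_\mu \geq 0$, the left-hand side is bounded by $2^{-k_\mu}/|B_\mu(\mathbf{0}_\mu, 2^{k_\mu} r_\mu)|$ on each annulus, so that
\begin{equation*}
    \frac{r_\mu}{(r_\mu + \|\mathbf{g}_\mu\|)^{Q_\mu + 1}} \;\lesssim\; \sum_{k_\mu \geq 0} 2^{-k_\mu}\, \chi_{2^{k_\mu} r_\mu}^{(\mu)}(\mathbf{g}_\mu).
\end{equation*}
Multiplying over $\mu=1,2,3$ and then invoking both the monotonicity $|\pi_* F| \leq \pi_* |F|$ and the identity $\pi_*\bigl(\chi_{2^{k_1} r_1}^{(1)} \chi_{2^{k_2} r_2}^{(2)} \chi_{2^{k_3} r_3}^{(3)}\bigr) = \chi_{2^{\mathbf{k}} \mathbf{r}}$ from \eqref{eq:varphi-r0} gives the claimed bound for the regularized object $\varphi_{\mathbf{r}} * \pi_*(K_\varepsilon) * \psi_{\mathbf{r}'}$.

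Finally I would pass $\varepsilon \to 0$ using Corollary \ref{prop:pi-K-varepsilon}. Rewriting $(\varphi_{\mathbf{r}} * \pi_*(K_\varepsilon) * \psi_{\mathbf{r}'})(\mathbf{g}) = \langle \pi_*(K_\varepsilon),\, F_{\mathbf{g}}\rangle$ for the smooth compactly supported test function $F_{\mathbf{g}}$ built from $\varphi_{\mathbf{r}}$ and $\psi_{\mathbf{r}'}$, the distributional convergence $\pi_*(K_\varepsilon) \to K^\flat$ delivers pointwise convergence to $(\varphi_{\mathbf{r}} * K^\flat * \psi_{\mathbf{r}'})(\mathbf{g})$, and since the dominating sum is $\varepsilon$-independent, it is inherited by the limit. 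The main obstacle is purely bookkeeping: verifying that the constants in the application of Lemma \ref{lem:T-r-r'} to $K_\varepsilon$ are genuinely independent of $\varepsilon$ (this is exactly what Proposition \ref{prop:regularization} provides), and that the product-then-push-forward collapses to the single factor $\chi_{2^{\mathbf{k}}\mathbf{r}}$ appearing on the right-hand side. The decay decomposition itself is routine once Lemma \ref{lem:T-r-r'} is in hand.
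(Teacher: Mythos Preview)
Your proposal is correct and follows essentially the same route as the paper: apply Lemma \ref{lem:T-r-r'} to the regularized kernel $K_\varepsilon$ (with constants uniform in $\varepsilon$ by Proposition \ref{prop:regularization}), dyadically decompose each Poisson-type factor into a sum of normalized characteristic functions on $\mathscr H_\mu$, push forward via $\pi_*$ using Lemma \ref{lem:convolution} and the definition \eqref{eq:varphi-r0} of $\chi_{\mathbf r}$, and then let $\varepsilon\to 0$ via Corollary \ref{prop:pi-K-varepsilon}. Your justification of the limiting step (writing the triple convolution as a pairing of $\pi_*(K_\varepsilon)$ against a fixed test function in $\mathscr D(\mathscr N)$) is in fact more explicit than the paper's, which simply asserts the limit.
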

\begin{proof} The estimate in Proposition \ref{prop:decay} implies
   \begin{equation}\label{eq:T-r-r'}
      \left|\Phi_{\mathbf{r} }  \tilde   {*} K_\varepsilon \tilde   * {\Psi}_{\mathbf{r'} }( {\mathbf{g}})\right|
     \lesssim\prod_{\mu=1}^3 \left(\frac {r_\mu}{r_\mu'} \wedge \frac {r_\mu'}{r_\mu}\right)^{\frac 13} \sum_{\mathbf{k}\in \mathbb{Z}_+^3} 2^{-|\mathbf{k}|}  {\chi}_{2^{k_1} {r}_1 }^{
(1)} ( {\mathbf{g}}_1)  { \chi}_{2^{k_2} {r}_2 }^{
(2)}   ( {\mathbf{g}}_2) {\chi}_{2^{k_3} {r}_3 } ^{
(3)} ( {\mathbf{g}}_3),
   \end{equation}where the implicit constant is independent of $\varepsilon$.
The result follows from $ {{\varphi}}_{\mathbf{r} } {*} K^\flat * \psi_{\mathbf{r'} } = \lim_{\varepsilon\rightarrow 0} {{\varphi}}_{\mathbf{r} } {*} K_\varepsilon^\flat * \psi_{\mathbf{r'} } =
\lim_{\varepsilon\rightarrow 0} \pi_*(\Phi_{\mathbf{r} } \tilde  {*} K_\varepsilon \tilde  *{\Phi}_{\mathbf{r'} })   $ and applying $\pi_*$ to
  both sides of  \eqref{eq:T-r-r'}.
\end{proof}

 \begin{proof}[Proof of Theorem \ref{thm:Kb}] We may assume $f\in\mathscr D(\mathscr N)$.  Denote $ \mathbf{rr'}:=(r_1r_1', r_2r_2',r_3r_3')$. Note that
 \begin{equation*}
    f*K_\varepsilon ^\flat (\mathbf{g})=\int_{\mathbb{R}^3_+} \int_{\mathbb{R}^3_+}  f *  {\varphi}_{\mathbf{r} }  *  {\psi}_{\mathbf{r} } *K_\varepsilon ^\flat  * \hat {\varphi}_{\mathbf{r r'} }  *
  \hat  {\psi}_{\mathbf{r r'} }(\mathbf{g})\frac
{d\mathbf{r}}{\mathbf{r}} \frac
{d\mathbf{r'}}{\mathbf{r'}}= \int_{\mathbb{R}^3_+} T_{ \mathbf{r}'  }f(\mathbf{g})
 \frac
{d\mathbf{r'}}{\mathbf{r'}}
 \end{equation*}by using the Calder\'on
reproducing formula \eqref{eq:reproducing} twice,
 where
 \begin{equation*}
    T_{ \mathbf{r}'  }f(\mathbf{g}) := \int_{\mathbb{R}^3_+}  f *  {\varphi}_{\mathbf{r} }  *  {\psi}_{\mathbf{r} } *K^\flat  *  \hat {\varphi}_{\mathbf{rr'} }  *
   \hat  {\psi}_{\mathbf{rr'} }(\mathbf{g})\frac
{d\mathbf{r}}{\mathbf{r}}.
 \end{equation*}
 Here we can choose ${\psi}$ and $\hat {\varphi} $ to have compact supports by Remark \ref{rem:compact-support}.  Arguing as \eqref{eq:g-converse}, we can find
 \begin{equation}\label{eq:T-g}\begin{split}
\|T_{  \mathbf{r}'  }f\|_p&  \lesssim   \left\|\left(\int_{\mathbb{R}^3_+}\left|  f *  {\varphi}_{\mathbf{r} }  *  {\psi}_{\mathbf{r} } *K^\flat  *  \hat {\varphi}_{\mathbf{ rr'} }   \right|^2\frac
{d\mathbf{r}}{\mathbf{r}}\right)^{\frac 12}\right\|_p.
 \end{split}\end{equation}
 But for $F=f*  {\varphi}_{\mathbf{r} }$,
\begin{equation*}\begin{split}
  | F*  \psi_{\mathbf{r} } *K_\varepsilon ^\flat *  \hat {\varphi}_{\mathbf{r r'} } (\mathbf{g})|&\lesssim C_{ \mathbf{r}' }\sum_{\mathbf{k}\in \mathbb{Z}_+^3} 2^{-|\mathbf{k}|} |F|*\chi_{2^k\mathbf{r} }\left( {\mathbf{g}}\right)  \lesssim  C_{  \mathbf{r}' } M_{ 3}\circ  M_{ 2} \circ M_{ 1}  (F)(\mathbf{g}),
 \end{split}\end{equation*}by Corollary \ref{cor:decay} and the estimate \eqref{eq:it-maximal-function}, where $C_{  \mathbf{r}' }=\prod_{\mu=1}^3 \left(\frac {1}{r_\mu'} \wedge  r_\mu' \right)^{\frac 13}$.
  Thus
\begin{equation*}\begin{split}
\text{R.H.S. of  } \eqref{eq:T-g}&  \lesssim C_{ \mathbf{r}' } \left\| \left(\int_{\mathbb{R}^3_+} \left|M_{ 3}\circ  M_{ 2} \circ M_{ 1} ( f*
{\varphi}_{\mathbf{r} })  \right|^2\frac {d\mathbf{r}}{\mathbf{r}}\right)^{\frac 12} \right\|_p \\&\lesssim C_{  \mathbf{r}' } \left\| \left(\int_{\mathbb{R}^3_+}  \left|  f*
{\varphi}_{\mathbf{r} }   \right|^2\frac {d\mathbf{r}}{\mathbf{r}}\right)^{\frac 12} \right\|_p
\lesssim C_{ \mathbf{r}' }\left\|  f \right\|_p
 \end{split}\end{equation*}
 by using   Fefferman-Stein  vector-valued maximal
inequality \cite{FeffS} and Theorem \ref{thm:g-function}. Then
\begin{equation*}\begin{split}
   \| f*K_\varepsilon ^\flat \|_p \leq  \int_{\mathbb{R}^3_+} \| T_{ \mathbf{r}'  }f\|_p\frac
{d\mathbf{r'}}{\mathbf{r'}}&
\lesssim \int_{\mathbb{R}^3_+}\prod_{\mu=1}^3 \left(\frac {1}{r_\mu'} \wedge  r_\mu' \right)^{\frac 13}\left\|  f \right\|_p
 \frac
{d\mathbf{r'}}{\mathbf{r'}}\lesssim  \left\|  f \right\|_p.
  \end{split}\end{equation*}by Minkowski's inequality.
  The theorem follows by taking $\varepsilon \rightarrow 0$.
  \end{proof}

  Theorem \ref{thm:Kb} can also be proved by using the general transference Theorem \ref{thm:transference}. We omit the details.

 \subsection{Cauchy-Szeg\H o  kernels as   flag-like convolution   kernels} Recall that
 the
Cauchy-Szeg\H o projection operator $\mathcal{P}$ on the Heisenberg group $\mathscr H_\mu$ can be written as $\mathcal{P}(\varphi)=\varphi*\widetilde{{S}}_\mu$
with
$\widetilde{{S}}_\mu=\lim_{\epsilon\rightarrow 0}\widetilde{S}_\mu ( \epsilon,\cdot)$ as a distribution \cite[Section 2.4 in chapter XII]{St93}, where $\widetilde{S}_\mu ( \epsilon,\cdot)$ is given by
\eqref{eq:flat-Szego2}.
Noting that
\begin{equation}\label{eq:S-mu-partial}
  \widetilde{S}_\mu ( \epsilon,\cdot)=\frac {c_\mu}{\mathbf{i} n_\mu} \frac {\partial}{\partial t_\mu }\frac {1}{ (|{\mathbf z}_\mu |^2 +\varepsilon_\mu- \mathbf{i} t_\mu )
  ^{n_\mu }    },
\end{equation}
and   $(|{\mathbf z}_\mu |^2 +\varepsilon_\mu- \mathbf{i} t_\mu ) ^{-n_\mu }\rightarrow
(|{\mathbf z}_\mu |^2  - \mathbf{i} t_\mu ) ^{-n_\mu }    $ in $L_{loc}^1(\mathscr H_\mu )$ as $\varepsilon_\mu\rightarrow0$,  we see that
\begin{equation*}
   \widetilde{{S}}_\mu=\frac {c_\mu}{\mathbf{i} n_\mu}\frac {\partial}{\partial t_\mu }\frac {1}{ (|{\mathbf z}_\mu |^2  - \mathbf{i} t_\mu ) ^{n_\mu }    }
\end{equation*} as a distribution, since taking partial derivatives is continuous on $\mathscr  D'(\mathscr H_\mu)$.
So the
Cauchy-Szeg\H o  kernel $\widetilde{{S}}_\mu$ is a distribution smooth outside the origin and   homogeneous of  degree $-Q_\mu$ on the Heisenberg group $\mathscr H_\mu$. Thus   \cite[Theorem 6.13]{FS} implies that
\begin{equation*}
   \widetilde{{S}}_\mu=p.v. \widetilde{{S}}_\mu  +c\delta_{0}
\end{equation*}for some constant $c$,  and $\widetilde{{S}}_\mu$ also satisfies the {\it strong cancellation condition}:
\begin{equation}\label{eq:cancellation-int}
   \int_{\|\mathbf{h}_\mu\|=1}\tilde{S}_\mu(  \mathbf{h}_\mu)d\sigma=0,
\end{equation}where $d\sigma$ is a measure on the sphere $\|\mathbf{h}_\mu\|=1$ satisfying $\int_{\mathscr H_\mu}f(\mathbf{h}_\mu)d\mathbf{h}_\mu= \int_0^\infty r^{Q-1}dr\int_{\|\mathbf{h}_\mu\|=1}f( r \mathbf{h}_\mu)d\sigma$ for any $f\in L^1(\mathscr H_\mu)$.

 It is direct to check   $\widetilde{{S}}_\mu$  satisfying the size
estimates \eqref{eq:size-estimates} by differentiating the expressions \eqref{eq:flat-Szego2} for $ \varepsilon={0}$. For the  cancellation conditions
\eqref{eq:cancellation3} for $ \widetilde{{S}}_\mu$,  note that for a n.b.f.  $\phi$,
\begin{equation}\label{eq:int-cancellation}
  p.v. \int_{\mathscr H_\mu}\widetilde{{S}}_\mu ( \mathbf{g}_\mu )\phi(\delta \mathbf{g}_\mu  )d  \mathbf{g}_\mu =   \int_{\mathscr H_\mu}\widetilde{{S}}_\mu (
  \mathbf{g} _\mu ) (\phi( \delta\mathbf{g}_\mu  )- \phi( \mathbf{0}_\mu  ))d  \mathbf{g}_\mu ,
\end{equation}by the strong cancellation condition \eqref{eq:cancellation-int}.
Since $| \phi( \delta\mathbf{g}_\mu  )- \phi( \mathbf{0}_\mu  )|\lesssim \|\delta\mathbf{g}_\mu\| $ by the stratified mean value theorem again, the integral
in the right hand side of \eqref{eq:int-cancellation} converges and is  uniformly bounded by homogeneity of $\widetilde{S}_\mu $.

 Similarly, the
Cauchy-Szeg\H o projection operator $\mathcal{P}$  on $\tilde{\mathscr  N}$ can be written as $\mathcal{P}(\varphi)=\varphi*\widetilde{{S}} $ with
$\widetilde{{S}} =\lim_{\boldsymbol \varepsilon\rightarrow 0}\tilde{S}_{\boldsymbol \varepsilon} $  as a distribution and
 \begin{equation}\label{eq:S-mu-partial-2}
\widetilde{{S}}_{\boldsymbol \varepsilon} =\frac {\partial^3\widetilde{\mathscr {S}}_{\boldsymbol \varepsilon}}{\partial t_1\partial t_2 \partial t_3 } \qquad \text{with}\qquad  \widetilde{\mathscr {S}}_{\boldsymbol \varepsilon}=\prod_{\mu=1}^3\frac {   c_\mu}{\mathbf{i} n_\mu }\frac {   1}{ (|{\mathbf z}_\mu |^2+\varepsilon_\mu  -
   \mathbf{i} t_\mu ) ^{n_\mu }    }
\end{equation}
by the expression of $\tilde{S}_{\boldsymbol \varepsilon}$ in \eqref{eq:flat-Szego-tilde}-\eqref{eq:flat-Szego2}.
  Thus the Cauchy-Szeg\H o  kernel $\widetilde{{S}}$  is a distribution  given by \eqref{eq:S-mu-partial-2} with $\boldsymbol \varepsilon=\mathbf{0}$. It is also  the tensor product $\widetilde{{S}}_1\otimes \widetilde{{S}}_2\otimes \widetilde{{S}}_3$ of distributions.
  It is obviously smooth  away from the subgroups $\mathscr H_ \mu\times\mathscr H_\nu$  for all $\mu\neq\nu$, where     $\widetilde{{S}}( \mathbf{h}
  )=\prod_{\mu=1}^3\widetilde{{S}}_\mu (  \mathbf{h}_\mu)$.  Thus,  the size
estimates \eqref{eq:size-estimates} and the cancellation conditions   \eqref{eq:cancellation1}-\eqref{eq:cancellation3} for $\widetilde{{S}}$ follows from those of  $\widetilde{S}_\mu $ on the Heisenberg group $\mathscr H_\mu$, and so
it is a standard convolution kernel  on $\tilde{\mathscr  N}$.

\begin{prop}\label{prop:CS-flag-like}
   The  Cauchy-Szeg\H o  kernel   $S$  on $ {\mathscr  N}$ is exactly $\tilde  { S}^\flat$, i.e. it is a   flag-like   kernel.
\end{prop}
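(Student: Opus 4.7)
\medskip

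\noindent\textbf{Proof proposal.} The plan is to identify the distribution $\tilde{S}^\flat$ by testing against a test function $\varphi\in\mathscr D(\mathscr N)$, using the explicit computation of $\pi_*(\tilde S_{\boldsymbol\varepsilon})$ performed in Section~2.3, Lemma~\ref{lem:convolution}, and a dominated convergence argument to justify interchange of the push-forward with the limit $\boldsymbol\varepsilon\to 0$. Concretely, I would test: show that $\langle \tilde S^\flat,\varphi\rangle_{\mathscr N}=\langle S,\varphi\rangle_{\mathscr N}$ for every $\varphi\in\mathscr D(\mathscr N)$.

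First I would fix $\boldsymbol\varepsilon\in\mathbb R_+^3$, where $\tilde S_{\boldsymbol\varepsilon}$ is a genuine smooth $L^1_{\mathrm{loc}}$ function on $\tilde{\mathscr N}$ given by the product formula \eqref{eq:flat-Szego-tilde}--\eqref{eq:flat-Szego2}. For such an honest function, definition \eqref{eq:K-flat} combined with Lemma~\ref{lem:convolution} and the identity $\pi_*(\breve\varphi^\sharp)=\breve\varphi$ (Lemma~\ref{lem:lift-function}) gives
\begin{equation*}
 \pi_*\!\left(\breve\varphi^\sharp\,\tilde*\,\tilde S_{\boldsymbol\varepsilon}\right)(\mathbf 0)
 =\bigl[\breve\varphi * \pi_*(\tilde S_{\boldsymbol\varepsilon})\bigr](\mathbf 0)
 =\bigl[\breve\varphi * S_{\varepsilon_1+\varepsilon_3,\,\varepsilon_2+\varepsilon_3}\bigr](\mathbf 0)
 =\langle S_{\varepsilon_1+\varepsilon_3,\,\varepsilon_2+\varepsilon_3},\varphi\rangle_{\mathscr N},
\end{equation*}
where the second equality uses the explicit calculation of Section~\ref{Transference-CS} identifying $\pi_*(\tilde S_{\boldsymbol\varepsilon})$ with $S_{\varepsilon_1+\varepsilon_3,\,\varepsilon_2+\varepsilon_3}$, and the last equality uses the invariance of Haar measure under inversion together with $\breve\varphi(\mathbf h^{-1})=\varphi(\mathbf h)$.

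Next I would take the limit $\boldsymbol\varepsilon\to\mathbf 0$ in the above identity. The right-hand side tends to $\langle S,\varphi\rangle_{\mathscr N}$ by the very definition of $S=\lim_{\boldsymbol\varepsilon\to 0}S_{\varepsilon_1+\varepsilon_3,\,\varepsilon_2+\varepsilon_3}$ as a distribution on $\mathscr N$. For the left-hand side, since $\tilde S_{\boldsymbol\varepsilon}\to\tilde S$ in $\mathscr D'(\tilde{\mathscr N})$, the smooth function $\breve\varphi^\sharp\,\tilde*\,\tilde S_{\boldsymbol\varepsilon}$ converges pointwise to the smooth function $\breve\varphi^\sharp\,\tilde*\,\tilde S$; the task is to pass this limit through the fiber integral $\int_{\mathbb R}du$. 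I would imitate the estimate used to define $\tilde S^\flat$ (the two bounds in \eqref{eq:K-flat2}): splitting the fiber into $\{|u|\le M\}$ and $\{|u|>M\}$, the inner region is handled by the distributional convergence of $\tilde S_{\boldsymbol\varepsilon}$ paired with a bounded family of test functions, while the outer region uses the uniform pointwise bound $|\tilde S_{\boldsymbol\varepsilon}(\mathbf z,\mathbf t)|\lesssim |u|^{-Q/2}$ on the support of the relevant translate of $\breve\varphi^\sharp$. This bound holds uniformly in $\boldsymbol\varepsilon\ge 0$ because each factor in \eqref{eq:flat-Szego2} is dominated by $(|\mathbf z_\mu|^2+|t_\mu|)^{-n_\mu-1}$ irrespective of $\varepsilon_\mu\ge 0$. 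Dominated convergence then gives
\begin{equation*}
 \pi_*\!\left(\breve\varphi^\sharp\,\tilde*\,\tilde S_{\boldsymbol\varepsilon}\right)(\mathbf 0)
 \;\longrightarrow\; \pi_*\!\left(\breve\varphi^\sharp\,\tilde*\,\tilde S\right)(\mathbf 0)
 \;=\;\langle \tilde S^\flat,\varphi\rangle_{\mathscr N}.
\end{equation*}
Comparing the two limits yields $\langle\tilde S^\flat,\varphi\rangle_{\mathscr N}=\langle S,\varphi\rangle_{\mathscr N}$ for all $\varphi$, hence $\tilde S^\flat=S$.

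The main obstacle is the last step: the fiber $\pi^{-1}(\mathbf 0)=\{(\mathbf 0_{\mathbf z},-u,-u,u)\}$ passes through the singular set of $\tilde S$, so one cannot simply insert a pointwise limit into $\int_{\mathbb R}du$. The remedy is that the convolution with the compactly supported smooth function $\breve\varphi^\sharp$ smooths out the local behaviour on each bounded piece of the fiber, while the explicit product structure of $\tilde S_{\boldsymbol\varepsilon}$ provides decay at infinity that is uniform in $\boldsymbol\varepsilon$, allowing the tail to be discarded uniformly before the distributional convergence is invoked on the remaining compact piece.
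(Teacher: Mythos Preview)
Your proposal is correct and follows essentially the same route as the paper: test against $\varphi\in\mathscr D(\mathscr N)$, use the identity $\pi_*(\tilde S_{\boldsymbol\varepsilon})=S_{\varepsilon_1+\varepsilon_3,\varepsilon_2+\varepsilon_3}$ from Section~\ref{Transference-CS} together with \eqref{eq:phi}, and pass to the limit $\boldsymbol\varepsilon\to\mathbf 0$ on both sides.

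The only difference is in how the limit through the fiber integral is justified. You argue by dominated convergence, splitting into $|u|\le M$ and $|u|>M$ and invoking the uniform size bound on $\tilde S_{\boldsymbol\varepsilon}$ for the tail while appealing to distributional convergence against a bounded family of test functions on the inner piece. The paper instead writes $\tilde S_{\boldsymbol\varepsilon}=\partial_{t_1}\partial_{t_2}\partial_{t_3}\tilde{\mathscr S}_{\boldsymbol\varepsilon}$ as in \eqref{eq:S-mu-partial-2}, transfers the three $t$-derivatives onto $\breve\varphi^\sharp$, and then uses that $\tilde{\mathscr S}_{\boldsymbol\varepsilon}\to\tilde{\mathscr S}_{\mathbf 0}$ in $L^1_{\mathrm{loc}}$ (each factor is homogeneous of degree $-2n_\mu>-Q_\mu$). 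This integration-by-parts trick sidesteps the need to argue uniform boundedness of $\langle\tilde S_{\boldsymbol\varepsilon},(\breve\varphi^\sharp)_{(\mathbf 0,-u,-u,u)}\rangle$ over $|u|\le M$ and $\boldsymbol\varepsilon>0$, which in your version would otherwise require either an equicontinuity argument or a direct appeal to the uniform cancellation conditions of the family $\tilde S_{\boldsymbol\varepsilon}$. Both approaches work; the paper's is slightly more self-contained at that step.
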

\begin{proof}As above, the
Cauchy-Szeg\H o projection operator $\mathcal{P}$  on $ {\mathscr  N}$ can be written as $\mathcal{P}(\varphi)=\varphi* S  $ with
$ S =\lim_{\boldsymbol \varepsilon\rightarrow 0}  S_{\boldsymbol\varepsilon}( \mathbf{h} ) $ to be the  distribution
 \begin{equation*}
 \frac { n_1 ! n_2 ! }{ 4^2(\frac \pi
 2)^{N+2}\mathbf{i}^{n_3  } }
    \left (\frac {\partial  }{\partial t_1  } + \frac {\partial  }  {   \partial t_2  }\right)^{n_3 }    \frac {1}{ (|{\mathbf z}_1 |^2+|{\mathbf z}_3 |^2  - \mathbf{i}  t_1  ) ^{n_1 +1}    }  \cdot \frac {1}{
      (|{\mathbf z}_2 |^2+|{\mathbf z}_3 |^2 - \mathbf{i}  t_2  ) ^{n_2+1 }    }
\end{equation*} as the partial derivative of a local integrable function on ${\mathscr  N}$,
where   $ S_{\boldsymbol\varepsilon}( \mathbf{h} )$ is given by  \eqref{eq:flat-Szego-tilde}. Thus for $\varphi\in \mathscr  D( {\mathscr N})$
 \begin{equation*}\begin{split} \langle S,\varphi\rangle&=\lim_{\epsilon\rightarrow 0}\langle S_{\epsilon,\epsilon },\varphi \rangle=\lim_{\epsilon\rightarrow 0}\left\langle \pi_*(   \tilde{ S}_   {\epsilon,\epsilon,\epsilon }),\varphi\right \rangle\\
 &=\lim_{\epsilon\rightarrow 0}\pi_*\left( \breve{\varphi}^\sharp \tilde{ * }\tilde{ S}_   {\epsilon,\epsilon,\epsilon }  \right)(\mathbf{0})= \pi_* \left( \breve{\varphi}^\sharp \tilde{ * }\tilde{ S}\right )  (\mathbf{0})=\left\langle  \tilde  { S}^\flat   ,\varphi\right \rangle,
 \end{split} \end{equation*}by using \eqref{eq:phi} and $
      \pi_*(   \tilde{ S}_{\boldsymbol \varepsilon}) = S_{\varepsilon_1+\varepsilon_3,\varepsilon_2+\varepsilon_3 }
 $ in  Subsection \ref{Transference-CS}. In the fourth identity, we use $\breve{\varphi}^\sharp \tilde{ * }\tilde{ S}_   {\epsilon,\epsilon,\epsilon }=\frac {\partial^3 \breve{\varphi}^\sharp }{\partial t_1\partial t_2 \partial t_3 }\tilde{ * } \widetilde{\mathscr {S}}_   {\epsilon,\epsilon,\epsilon }$ and take limit.
Thus,  $S=\tilde  { S}^\flat$.
\end{proof}

\begin{rem}
   It is interesting to  characterize flag-like convolution kernels directly  by suitable  size
estimates and the cancellation conditions on the group $ {\mathscr  N}$, as the characterization of flag kernels given by M\"uller-Ricci-Stein \cite{MRS}.
\end{rem}

 \section{Tiles, shards  and partitions of   $\mathscr  N$ }

 \subsection{Tiling   of   the Heisenberg  group } The Heisenberg subgroup
$\mathscr H ^\nu$  is $\mathbb{ R}^{2\nu} \times\mathbb{ R}  $ with the  multiplication    given by
\begin{equation}\label{eq:multiplicationH}
   (y  ,s ) (  y ' ,s  ')= \left( y  +  y ', s +s '+   B(y,y')
       \right),\qquad B(y,y')=\sum_{l=1}^{ \nu} (y_{2l-1} 'y_{2l }-y_{2l-1}y_{2l }'),
\end{equation}where $y ,y ' \in\mathbb{ R}^{2\nu} $, $ s  ,s  '\in \mathbb{R}$.
Let
$\mathscr H_{\mathbb{Z}}^\nu$
be the discrete subgroup
$
   \left\{(y, t)\in \mathscr H^\nu ;  y \in \mathbb{Z}^{2\nu},  t \in  \mathbb{Z}\right \}.
$

 Recall  \cite{St,Ty} that the {\it basic tile}  is
\begin{align}\label{basic tile}
 {T}_o:=\left\{(  y , t)\in\mathscr H^{\nu}\mid   y\in [0,1)^{2\nu}, ~ f _o(  y)\leq t<f_o (  y)+1   \right\},
\end{align}
where
\begin{equation}\label{eq:basic tile-f}
   f_o (  y)=\sum_{m=1}^\infty\frac 1{ 4^m}B \big( \left[ 2^m  y\right]~{\rm mod} ~2, \langle2^m  y\rangle\big)
\end{equation}is a continuous function.
Here [ , ] and $\langle\, ,\rangle$ denote the integer and fractional part functions, respectively,  interpreted componentwisely  ($[y]_j=[y_j]$, etc.),
and $B (  y,  y') $ is defined in \eqref{eq:multiplication3}.

   $\mathscr H^{\nu}=\cup_{g\in \mathscr H^{\nu}_{\mathbb Z}}~ g({T}_o)$ is a (disjoint) tiling of $\mathscr H^{\nu}$ and it is self-similar, i.e.
   \begin{equation*}
  \delta_{2}({T}_o)=\bigcup_{g\in\Gamma_o}  g {T}_o
\end{equation*}
where $\Gamma_o=\left\{(y, t): y_j=0,1, t=0, 1, 2,   3\right\}.$
Set
\begin{equation*}
  {  \mathfrak{T}}_0=\{g  {T}_o; g \in\mathscr H_{\mathbb{Z}}^\nu\},\qquad   {\mathfrak{T}}_j
 :=\delta_{2^j} {\mathfrak{T}}_0,\qquad   {\mathfrak{T} } =\bigcup_{j\in \mathbb{Z}}  {\mathfrak{T}}_j.
\end{equation*}
   An element $ T \in  {\mathfrak{T}}$ is called a {\it tile}, and   an element of $    {\mathfrak{T}}_j$ is called a {\it tile of scale $j$}.

\begin{thm}\label{thm:Tiles} \cite{St,Ty}
Let $\mathfrak{T}_j$ and $\mathfrak{T}$ be defined as above.
Then the following hold:
\begin{enumerate}
  \item for each $j \in \mathbb Z$, $\mathfrak{T}_j$ is a partition of $\mathscr H^{\nu}$, that is, $\mathscr H^{\nu} = \bigcup_{T \in \mathfrak{T}_j} T$;
  \item $\mathfrak{T}$ is nested, that is, if $T, T' \in \mathfrak{T}$, then either $T$ and $T'$ are disjoint or one is a subset of the other;
  \item for each $j \in \mathbb Z$ and $T\in\mathfrak{T}_j$, $T$ is a union of $2^{ 2\nu+2}$ disjoint congruent subtiles in $\mathfrak{T}_{j-1}$;
  \item for each $T \in \mathfrak{T}_j$, there exists $\xi\in T$  such that $B(\xi, C_1 2^j) \subseteq T \subseteq B(\xi, C_2  2^j)$, where   the
      constants $C_1$ and $C_2$ depend only on $\nu$;
  \item if $T \in \mathfrak{T}_j$, then $ g(T) \in \mathfrak{T}_j$ for all $g \in \delta_{2^j} (\mathscr H^{\nu}_{\mathbb Z})$, and $\delta_{2^k} (T) \in
      \mathfrak{T}_{j+k}$ for all $k \in \mathbb Z$.
\end{enumerate}
\end{thm}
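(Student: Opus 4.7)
The result is attributed to Strichartz and Tyson, so the plan is to indicate how their construction yields each of the five claims, relying on the self-similar recursion built into the defining function $f_o$ in \eqref{eq:basic tile-f}.

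The starting point is the key algebraic identity for $f_o$. From $f_o(y)=\sum_{m\geq 1} 4^{-m}B([2^m y]\bmod 2,\langle 2^m y\rangle)$ and the bilinearity of $B$ in \eqref{eq:multiplicationH}, a direct computation (separating the $m=1$ term and re-indexing $m\mapsto m-1$) produces the functional equation expressing $f_o(y/2)$ in terms of $f_o(y)$ up to a boundary correction linear in $B$. This is what encodes the self-similarity claim $\delta_2(T_o)=\bigcup_{g\in \Gamma_o} g\,T_o$, with $\Gamma_o$ accounting for the $2^{2\nu}$ possible fractional parts of $2y$ in $\{0,1\}^{2\nu}$ together with the $4$ possible values of the $t$-coordinate increment. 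First I would verify this identity and the disjointness of the $16$- (or $2^{2\nu+2}$-)fold union by examining the action on coordinates. This already gives (3), since $T\in\mathfrak T_j$ is $\delta_{2^j}$ of a tile in $\mathfrak T_0$ and thus decomposes into $2^{2\nu+2}$ pieces in $\mathfrak T_{j-1}$ by homogeneity of the measure ($|\delta_{2^j}E|=2^{j(2\nu+2)}|E|$).

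Next I would establish (1) at scale $j=0$. The candidates $\{g\,T_o: g\in \mathscr H^{\nu}_{\mathbb Z}\}$ clearly cover $\mathscr H^\nu$ once one shows (i) $\mathbb{R}^{2\nu}\times\mathbb{R}=\bigsqcup_g g(T_o)$, which reduces by projection to $\mathbb{R}^{2\nu}=\bigsqcup_{k\in\mathbb{Z}^{2\nu}}(k+[0,1)^{2\nu})$, and (ii) for each fixed $y\in[0,1)^{2\nu}$ the fiber $\{(y,t)\}$ is split into unit intervals by the $\mathbb{Z}$-translates in the $t$-direction, which follows from $(y,t)=(0,n)\cdot(y, t-n)$ in the Heisenberg law. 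Disjointness of distinct translates then comes from the fact that $g^{-1}g'\in \mathscr H^{\nu}_{\mathbb Z}\setminus\{e\}$ moves $[0,1)^{2\nu}\times[f_o(y),f_o(y)+1)$ off itself. Scale $j$ follows by applying $\delta_{2^j}$ and using that $\delta_{2^j}(\mathscr H^{\nu}_{\mathbb Z})$ is a subgroup.

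The nestedness (2) is then automatic from (1) and (3): iterating the self-similar decomposition, any tile in $\mathfrak T_j$ is a disjoint union of finitely many tiles in $\mathfrak T_{j-k}$ for every $k\ge 0$, so two tiles of different scales are either disjoint or one sits inside the other; at equal scales they are either equal or disjoint by (1). The containment property (4) reduces, after applying $\delta_{2^{-j}}$, to showing that $T_o$ itself sits inside some ball $B(\xi,C_2)$ and contains some $B(\xi,C_1)$. The upper bound uses that $f_o$ is bounded on $[0,1)^{2\nu}$ (the series is majorised by $\sum 4^{-m}\cdot\mathrm{const}$), so $T_o$ fits into a fixed cuboid and hence into a Kor\'anyi-type ball. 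The lower bound requires exhibiting an interior point of $T_o$, i.e.\ a $y$ at which $f_o$ is continuous and $t$ strictly between $f_o(y)$ and $f_o(y)+1$; genericity of such points follows from continuity of $f_o$. Finally, (5) is tautological from the definitions $\mathfrak T_0=\mathscr H^{\nu}_{\mathbb Z}\cdot T_o$ and $\mathfrak T_j=\delta_{2^j}\mathfrak T_0$.

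The main obstacle is the self-similarity identity and the accompanying disjointness of the $2^{2\nu+2}$ subtiles, since the Heisenberg twist in \eqref{eq:multiplicationH} mixes the $t$-coordinate with $B$ of the $y$-coordinates in a way that must exactly cancel the boundary contribution $\frac14 B([2y]\bmod 2,\langle 2y\rangle)$ picked up in the recursion; the combinatorial bookkeeping of the four allowed $t$-offsets $\{0,1,2,3\}$ in $\Gamma_o$ is the heart of the argument, and for this I would refer to the detailed construction in \cite{St,Ty}.
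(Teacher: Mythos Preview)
The paper does not prove this theorem; it is quoted verbatim from \cite{St,Ty} with no argument supplied. Your proposal is a reasonable outline of how the Strichartz--Tyson construction works, and you correctly identify the self-similarity identity $\delta_2(T_o)=\bigcup_{g\in\Gamma_o}gT_o$ as the crux and defer its verification to the original references, which is exactly what the paper does (implicitly) as well.
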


  Since
every tile is a dilate and translate of the basic tile $ {T}_o$, they all have similar geometry. Each tile in $\mathfrak{T}_j$ has fractal boundary
and approximates a Heisenberg ball of radius $2^{j}$.

 \subsection{Shards  in   $\mathscr N $ }
At first, note that
\begin{equation}\label{basic tile2}
   T  _o =  \Big \{(y,t): y\in [0,1)^{2\nu},  {t}\in
    [ f_o (y), f_o (y)+1) \Big \},
\end{equation}is   a twisted  product in the sense that for a fixed $y$, points $t$ belong
 to the interval $f_o (y)+[0,  1)$.
But we only have the following estimate
\begin{equation}\label{eq:f}
 -\nu< f_o (y)< \nu,
\end{equation}for $y\in [0,1)^{2\nu}$, by definition \eqref{eq:basic tile-f}. Thus, the position of the interval $f_o (y)+[0,  1)$ may change dramatically as $y$.
So we consider the following union of  translates of $ T _o$ along $\mathbb{R}^1$:
\begin{equation}\label{eq:shard0}
    {S}  _o:=\bigcup_{g\in\Gamma}    g T_o , \qquad{\rm with }\qquad \Gamma=   \left\{( \mathbf{ {0} } _y,
      m );m= 0  ,\ldots,2^{\kappa}-1  \right\},
\end{equation}where the positive integer $ {\kappa}   $ will be chosen latter. By    \eqref{basic tile2},
 we see that
\begin{equation}\label{eq:I-z-m}
   {S}   _o = \left \{(y,t): y\in [0,1)^{2\nu},  {t}\in
     {I}_{y  } \right\}
 ,
 \end{equation} with
 \begin{equation}\label{eq:I-y}
    {I}_{y  }:=\bigcup_{m=0 }^{ 2^{\kappa} -1}\left[f_o (y)+m  , f_o (y)+  m+1  \right) = f_o (y)+ \left[ 0 ,    2^{\kappa}   \right).
 \end{equation}
If we take $\kappa$ so large, we can assume $\nu<2^{\kappa-10}$, and so $|f_o (y)|$ is small compared to the length $2^{\kappa}$. Thus,
$f_0(y)+
 \left[  0 ,   2^{\kappa}   \right)
$ is a small translate of $[0,2^{\kappa } )$.
Let
$\mathscr H_{\mathbb{Z};\kappa}^\nu$
be the discrete subgroup
$   \mathbb{Z}^{2\nu} \times 2^{\kappa }\mathbb{Z}
$, and let\begin{equation*}
  {  \mathfrak{S}}_0=\{g  {S}_o; g \in\mathscr H_{\mathbb{Z};\kappa}^\nu\},\qquad   {\mathfrak{S}}_j
 :=\delta_{2^j} {\mathfrak{S}}_0,\qquad   {\mathfrak{S} } =\bigcup_{j\in \mathbb{Z}}  {\mathfrak{S}}_j.
\end{equation*}

\begin{cor}\label{cor:shard}
For each $j \in \mathbb Z$, $\mathfrak{S}_j$ is a partition of $\mathscr H^{\nu}$, that is, $\mathscr H^{\nu} = \bigcup_{S \in \mathfrak{S}_j} S$.
   \end{cor}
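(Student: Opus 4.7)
The plan is to exhibit $\mathfrak{S}_0$ as a regrouping of the already-known partition $\mathfrak{T}_0$ of $\mathscr H^\nu$ into vertical stacks of $2^\kappa$ consecutive tiles along the $t$-direction, then transport the conclusion to arbitrary scale $j$ by the dilation $\delta_{2^j}$.

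First I would observe that since $B(\mathbf{0}, y) = 0$, left multiplication by any $(\mathbf{0}_y, m)\in\Gamma$ acts as a pure translation in the $t$-coordinate, so $(\mathbf{0}_y, m) T_o = \{(y, t+m) : (y,t)\in T_o\}$ is itself an element of $\mathfrak{T}_0$. In particular, the $2^\kappa$ tiles appearing in \eqref{eq:shard0} are pairwise disjoint by Theorem \ref{thm:Tiles}(1), and $S_o$ is a disjoint union of $2^\kappa$ tiles of scale $0$ (consistent with the explicit description \eqref{eq:I-z-m}--\eqref{eq:I-y}).

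The key combinatorial step is a coset decomposition of $\mathscr H_{\mathbb Z}^\nu$. Any $h = (n, k) \in \mathscr H_{\mathbb Z}^\nu$ admits a unique Euclidean division $k = 2^\kappa k' + m$ with $k' \in \mathbb{Z}$ and $m \in \{0, \ldots, 2^\kappa - 1\}$; setting $g = (n, 2^\kappa k') \in \mathscr H_{\mathbb Z;\kappa}^\nu$ and $\gamma = (\mathbf{0}_y, m) \in \Gamma$, a direct computation using $B(n, \mathbf{0}) = 0$ gives $h = g\gamma$. Hence $\mathscr H_{\mathbb Z}^\nu = \bigsqcup_{g \in \mathscr H_{\mathbb Z;\kappa}^\nu} g\Gamma$, and
\begin{equation*}
\bigcup_{g \in \mathscr H_{\mathbb Z;\kappa}^\nu} g S_o \;=\; \bigcup_{g \in \mathscr H_{\mathbb Z;\kappa}^\nu}\bigcup_{\gamma \in \Gamma} g\gamma\, T_o \;=\; \bigcup_{h \in \mathscr H_{\mathbb Z}^\nu} h\, T_o \;=\; \mathscr H^\nu,
\end{equation*}
where the disjointness of the middle union is Theorem \ref{thm:Tiles}(1) and it transfers back to disjointness of $\{g S_o\}_{g \in \mathscr H_{\mathbb Z;\kappa}^\nu}$ via the uniqueness of the factorization $h = g\gamma$. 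This proves that $\mathfrak{S}_0$ is a partition of $\mathscr H^\nu$. For general $j$, since $\delta_{2^j}$ is a bijection of $\mathscr H^\nu$ and $\mathfrak{S}_j = \delta_{2^j}\mathfrak{S}_0$, the partition property transfers: applying $\delta_{2^j}$ to $\bigsqcup_{S \in \mathfrak{S}_0} S = \mathscr H^\nu$ yields $\bigsqcup_{S \in \mathfrak{S}_j} S = \mathscr H^\nu$.

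I do not expect serious obstacles here: the argument is purely combinatorial once the group factorization is written out, and all the analytic input (the fractal geometry of $T_o$, the partition property of $\mathfrak{T}_0$) is already contained in Theorem \ref{thm:Tiles}. The only point that warrants explicit mention is the triviality of the Heisenberg twist $B(\,\cdot\,, \mathbf{0})$ and $B(\mathbf{0}, \,\cdot\,)$ for elements of $\Gamma$, which makes the stacking of tiles along $t$ behave like ordinary Euclidean translation and hence compatible with reducing $k$ modulo $2^\kappa$.
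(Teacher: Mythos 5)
Your proof is correct and follows essentially the same route as the paper: reduce to the known tiling $\mathscr H^{\nu}=\bigcup_{h\in\mathscr H^{\nu}_{\mathbb Z}}hT_o$ via the factorization $\mathscr H^{\nu}_{\mathbb Z}=\mathscr H^{\nu}_{\mathbb Z;\kappa}\cdot\Gamma$, then dilate. The only difference is that you justify this factorization explicitly (unique Euclidean division of the $t$-component modulo $2^{\kappa}$, using that the twist $B(\cdot,\mathbf 0_y)$ vanishes), whereas the paper simply asserts it; this is a welcome but minor elaboration.
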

\begin{proof}
Since $\mathscr H^{\nu}=\bigcup_{g\in  \mathscr H_{\mathbb{Z} }^\nu}~ g T_o $ is a (disjoint) tiling of $\mathscr H^{\nu}$ by Theorem \ref{thm:Tiles}, we have
 \begin{equation*}\begin{split}
 \bigcup_{g\in \mathscr H_{\mathbb{Z};\kappa}^\nu} gS_o&= \bigcup_{g \in \mathscr H_{\mathbb{Z};\kappa}^\nu}   \bigcup_{g'\in \Gamma}    g g'  T_o
 =\bigcup_{h\in \mathscr H_{\mathbb{Z}}^\nu}   h  T_o,
\end{split} \end{equation*}
 by $\mathscr H_{\mathbb{Z}}^\nu=\mathscr H_{\mathbb{Z};\kappa}^\nu \cdot\Gamma$. The result for general $j$ follows by dilations.
\end{proof}

   Denote by $ {\mathfrak{T}}_{j_\mu}( {\mathscr H}_\mu)$ the set of
 tiles   of scale $j_\mu$ in $ {\mathscr H}_\mu$,   $\mu=1,2,3$.
    Note that natural tiles in the group $ \tilde{{\mathscr N }}$ are products
$
    {T}_1\times  {T}_2\times  {T}_3
$
with $ T_\mu\in  {\mathfrak{T}}_{ j_\mu}({\mathscr H}_\mu)$. For $\mathbf{j}=(j_1,j_2,j_3)\in \mathbb{Z}^3$, we assume $j_2\leq j_1$ without loss of generality.
Their
image under the projection of $\pi$ are
 \begin{equation*}
  \pi\left[  {T}_1\times  {T}_2\times  {T}_3\right]\approx
\Box_{ \mathbf{j }}^+\times P_{2  ^{ 2 j_1} , 2_2 ^{ 2 j_2} \vee 2  ^{ 2j_3}},
\end{equation*}  by Proposition \ref{prop:tube-pi} and Theorem \ref{thm:Tiles} (4), where
\begin{equation*}
   \Box_{ \mathbf{j }}^+:= \Box_{  j_1 }^{(1)+}\times \Box_{ j_2 }^{(2)+}\times \Box_{  j_3 }^{(3)+},\qquad \Box_{  j_\mu }^{(\mu)+}:=[0,2^{j_\mu})^{2n_\mu}.
\end{equation*}
So to obtain a partition of the group $ {\mathscr N }$ of scale $\mathbf{j}$, we consider
   a  product  of the form
\begin{equation*}
   {\mathcal{S}}  _o:=      {S}^{(1)}_{ o}\times   {S}^{(2)}_{ o}\times \Box_{0
      }^{(3)}=\left\{(\mathbf{z},\mathbf{t}): \mathbf{z}\in \Box_{ \mathbf{0 }}^+, \mathbf{t}\in  {I}_{\mathbf{z} } \right \},
 \end{equation*}
where  $ {S}^{(\mu)}_{ o}$ is the  union of tiles in $ {\mathscr H}_\mu$ defined by   \eqref{eq:shard0}, $\mu=1,2$, and
 \begin{equation*}
    {I}_{\mathbf{z} }:=(f _o(\mathbf{z}_1), f _o(\mathbf{z}_2))+ \left[0 ,
   2^{\kappa}  \right)\times  \left[0 ,
   2^{\kappa}  \right)
 \end{equation*}by \eqref{eq:I-y}.
  It is   a twisted  product in the sense that for a fixed $\mathbf{z}$, points $\mathbf{t}$ belong
 to a square ${I}_{\mathbf{z} }$,  whose   position depends on $\mathbf{z}$.
 Its translates
 under the lattice
$
2^{\kappa } \mathbb{Z} \times 2^{\kappa } \mathbb{Z}
$
   in $\mathbb{R}^2$ constitute a partition of
 $
    \Box_{ \mathbf{j }}^+\times \mathbb{R}^2.
 $

The corresponding object of scale $\mathbf{j}$  is
 \begin{equation}\label{eq:twisted-product}
   {\mathcal{S}}^{\mathbf{j}}_o:=    \delta_{2^{j_1 }} {S}^{(1)}_{ o}\times \delta_{2^{j_2 }} {S}^{(2)}_{ o}\times \Box_{
   j_3 }^{(3)+}=\left\{(\mathbf{z},\mathbf{t}): \mathbf{z}\in \Box_{ \mathbf{j }}^+, \mathbf{t}\in \hat  {I}_{\mathbf{z},   \mathbf{j}} \right \},
 \end{equation}
 with
 \begin{equation}\label{eq:twisted-product2}
  \hat  {I}_{\mathbf{z},  \mathbf{j}}:=(2  ^{ 2j_1}  f_o(2  ^{ - j_1}\mathbf{z}_1),  2  ^{ 2j_2}  f_o(2  ^{ - j_2}\mathbf{z}_2))+ 2^{2j_1+\kappa} \left[0 ,
   1\right)\times  2^{2j_2+\kappa} \left[0 ,
  1 \right).
 \end{equation}
  It is also a twisted  product in the above sense, and its translates
 under the lattice
 \begin{equation}\label{eq:lattice-R-2}
 2^{2j_1+\kappa }  \mathbb{Z} \times 2^{2j_2+\kappa }  \mathbb{Z}
 \end{equation}
   in $\mathbb{R}^2$ constitute a partition of
 $
    \Box_{ \mathbf{j }}^+\times \mathbb{R}^2
$.  Then, translates of
  $  \Box_{ \mathbf{j }}^+\times \mathbb{R}^2$ under  the   lattice
 \begin{equation*}
    2^{\mathbf{j}  } \mathbb{Z}^{2\mathbf{n} }:=2^{j_1 } \mathbb{Z}^{2n_1}\times 2^{j_2 } \mathbb{Z}^{2n_2 } \times 2^{j_3 } \mathbb{Z}^{ 2n_3}
 \end{equation*}
   constitute a partition of the group $ {\mathscr N} $. Namely,    translates of ${\mathcal{S}}^{\mathbf{j}}_o$ of scale $\mathbf{j}$ under
\begin{equation}\label{eq:G-j}
   2^{\mathbf{j}  } \mathbb{Z}^{2\mathbf{n} }\times  2^{2j_1 +\kappa } \mathbb{Z} \times
    2^{2j_2 +\kappa }   \mathbb{Z}
\end{equation} constitute a partition of the group $ {\mathscr N} $.

Now let us construct the basic shard as the union of translates of ${\mathcal{S}}^{\mathbf{j}}_o$ so that its shape is comparable to the tube $T(\mathbf{0},2^{\mathbf{j}
} )$,  and their translates   constitute a partition $ {\mathscr N} $. Without
loss of generality, we assume  $j_2  < j_1$.

{\it Case  i:  $j_3<j_2   $}.  We define the {\it basic shard $ \mathcal{R}^{\mathbf{j} }_o $ of scale $\mathbf{j}$}  to be the
union of $4$ translates of $ {\mathcal{S}}^{\mathbf{j}}_o$:
\begin{equation}\label{eq:R0-j0}
     \mathcal{R}^{\mathbf{j} }_o:=\bigcup_{ m_1,m_2=0,-1 } \left(\mathbf{0}_{\mathbf{z}},m_12^{2j_1+\kappa  }, m_2 2^{2j_2+\kappa  }\right)
     {\mathcal{S}}^{\mathbf{j}}_o=\left\{(\mathbf{z},\mathbf{t}): \mathbf{z}\in \Box_{ \mathbf{j }}^+, \mathbf{t}\in  {I}_{\mathbf{z},   \mathbf{j}}  \right \},
 \end{equation}so that ${I}_{\mathbf{z},  \mathbf{j}}$ contain  the origin as an inner point,
where
 \begin{equation}\label{eq:I-zj'}
    {I}_{\mathbf{z},  \mathbf{j}} :=(2  ^{ 2j_1}  f_o(2  ^{ - j_1}\mathbf{z}_1),  2  ^{ 2j_2}  f_o(2  ^{ - j_2}\mathbf{z}_2))+ 2^{2j_1+\kappa}\left[ - 1 ,
1  \right)\times  2^{2j_2+\kappa}\left[ -1 ,1
  \right).
 \end{equation}
It is essential a cuboid
 $
    \mathcal{R}^{\mathbf{j} }_o\approx \Box_{ \mathbf{j }}^+\times2^{2j_1 +\kappa} (- 1, 1)\times 2^{2j_2 +\kappa }(- 1, 1).
$
A {\it shard of scale $\mathbf{j}$} is
  the  translate of $ \mathcal{R}^{\mathbf{j} }_o $  under an element of the group
\begin{equation}\label{eq:G-j-1}G_{\mathbf{j}}:=
   2^{\mathbf{j}  } \mathbb{Z}^{2\mathbf{n} }  \times 2^{2j_1 +\kappa } 2\mathbb{Z} \times
    2^{2j_2 +\kappa }   2\mathbb{Z}.
\end{equation}

{\it Case  ii: $j_2 <j_3< j_1 $}.  The {\it basic shard  of scale $\mathbf{j}$}  is the union of some translates of $ {\mathcal{S}}^{\mathbf{j}}_o$:
\begin{equation}\label{eq:R0-j}
     \mathcal{R}^{\mathbf{j} }_o:=\bigcup_{\substack {m_1=0,-1,\\ m_2\in    \Gamma_1} } \left(\mathbf{0}_{\mathbf{z}}, m_12^{2j_1+\kappa  }, m_22^{2j_2+\kappa
     }\right)  {\mathcal{S}}^{\mathbf{j}}_o.
\end{equation}
where
\begin{equation*}
     \Gamma_1:=   \Big\{  - 2^{2(j_3 -j_2) } , \cdots, -1,0,1 \cdots,2^{2(j_3 -j_2)} -1\Big\}
\end{equation*}with $\# \Gamma_1=2\cdot 2^{2(j_3 -j_2)}$.
  Then
 $
   \mathcal{R}^{\mathbf{j} }_o = \{(\mathbf{z},\mathbf{t}): \mathbf{z}\in \Box_{ \mathbf{j }}^+, \mathbf{t}\in     {{I}}_{\mathbf{z},   \mathbf{j} }'  \}
$
 with
 \begin{equation*}\label{eq:I-z-j'}\begin{split}
    {{I}}_{\mathbf{z},   \mathbf{j} }':= &\left\{2  ^{ 2j_1}  f_o(2  ^{ - j_1}\mathbf{z}_1)+ \left[- 2^{2j_1+\kappa} ,
     2^{2j_1+\kappa}  \right)\right\} \times\bigcup_{ m_2\in    \Gamma_1} \left\{ 2  ^{ 2j_2}f_o(2  ^{ - j_2}\mathbf{z}_2)+\left[ m_2   2^{2j_2+ \kappa }  ,
   ( m_2+1)2^{2j_2+ \kappa }  \right) \right\} \\
= &( 2  ^{ 2j_1}  f_o(2  ^{ - j_1}\mathbf{z}_1),  2  ^{ 2j_2}f_o(2  ^{ - j_2}\mathbf{z}_2))+ 2^{2j_1+\kappa}\left[ - 1 ,
1  \right)\times  2^{2j_3+\kappa}\left[ -1 ,1
  \right).\end{split} \end{equation*}$\mathcal{R}^{\mathbf{j} }_o$ is essential a cuboid
$
    \mathcal{R}^{\mathbf{j} }_o\approx \Box_{ \mathbf{j }}^+\times 2^{2j_1+\kappa}(-1, 1)\times 2^{2j_3+\kappa }(-1, 1).
$
Since
$
      2^{2j_2 +\kappa}   \mathbb{Z} =2^{2j_3 +\kappa} 2 \mathbb{Z} \cdot  2^{2j_2 +\kappa}\Gamma_1, $
we see that
\begin{equation*}
    \left(\mathbf{0},  2 {m }_1 2^{2j_1+\kappa  },2 {m }_2 2^{2j_3+\kappa }\right) \mathcal{R}^{\mathbf{j} }_o,
\end{equation*} $ m_1,m_2 \in \mathbb{Z } ,$  are disjoint and their union is  $\Box_{ \mathbf{j }}\times \mathbb{R}^2$. The translate of $ \mathcal{R}^{\mathbf{j} }_o $ under an element of the group
\begin{equation}\label{eq:G-j-2}G_{\mathbf{j}}:=
   2^{\mathbf{j}  } \mathbb{Z}^{2\mathbf{n} } \times  2^{2j_1 +\kappa } 2\mathbb{Z} \times
    2^{2j_3 +\kappa }   2\mathbb{Z}
\end{equation}is called
a  {\it shard  of scale $\mathbf{j}$}. We will see that they constitute  a partition $ {\mathscr N} $.

{\it Case iii:    $j_1<j_3 $}. In this case,  $j_3 $ is very large. As in \eqref{eq:R0-j},   we first define
\begin{equation*}
   \widehat{  {\mathcal{R}}}^{\mathbf{j} }_o:=\bigcup_{\substack {m_1=0,-1,\\ m_2\in    \Gamma_2} }  \left(\mathbf{0}_{\mathbf{z}},m_12^{2j_1+\kappa  },
   m_22^{2j_2+\kappa  }\right)  {\mathcal{S}}^{\mathbf{j}}_o,
\end{equation*}
where
\begin{equation*}
     \Gamma_2:=   \left\{   - 2^{2(j_1 -j_2)} , \cdots, -1,0,1 \cdots,  2^{2(j_1 -j_2)} -1\right\}.
\end{equation*}
Then as in the Case  $ii$, we have
 $
  \widehat{ \mathcal{R}}^{\mathbf{j} }_o =\{(\mathbf{z},\mathbf{t}): \mathbf{z}\in \Box_{ \mathbf{j }}^+, \mathbf{t}\in     {{I}}_{\mathbf{z},   \mathbf{j} }''  \}
$
 with
 \begin{equation}\label{eq:twisted-product3-I}
    {{I}}_{\mathbf{z},   \mathbf{j} }''= ( 2  ^{ 2j_1}  f_o(2  ^{ - j_1}\mathbf{z}_1),  2  ^{ 2j_2}f_o(2  ^{ - j_2}\mathbf{z}_2))+ 2^{2j_1+\kappa}\left[-1 ,
     1  \right)\times 2^{2j_1+\kappa}\left[-1 ,
     1  \right)  .
 \end{equation}
 $ \widehat{\mathcal{R}}^{\mathbf{j} }_o$ is a small translate of  the  cuboid $    \Box_{ \mathbf{j }}^+\times 2^{2j_1+\kappa}\left(-1 ,
     1  \right)\times 2^{2j_1+\kappa}\left(-1 ,
     1  \right) .
$

Now consider the union of $\widehat{ \mathcal{R}}^{\mathbf{j} }_o$ with its two neighbors  in $t_1$ direction:
\begin{equation}\label{eq:hat-R-o}
   \check{ { \mathcal{R}}}^{\mathbf{j} }_o: =\left(\mathbf{0}_{\mathbf{z}},  -2\cdot 2^{2j_1+\kappa  }, 0\right) \widehat{ \mathcal{R}}^{\mathbf{j} }_o \bigcup \widehat{
   \mathcal{R}}^{\mathbf{j} }_o \bigcup \left(\mathbf{0}_{\mathbf{z}},  2\cdot 2^{2j_1+\kappa   }, 0\right)\widehat{\mathcal{R}}^{\mathbf{j} }_o.
\end{equation}
Then we further consider the union of its translate  along the diagonal: \begin{equation}\label{eq:R-o}
     \mathcal{R}^{\mathbf{j} }_o:=\bigcup_{ m \in  \Gamma_3}  \left(\mathbf{0}, m 2^{2j_1 +\kappa},  m2^{2j_1+\kappa }\right)
    \check{{\mathcal{R}}}^{\mathbf{j} }_o,
\end{equation}which is called the {\it basic shard  of scale $\mathbf{j}$},  where
\begin{equation}\label{eq:Gamma-3}
   \Gamma_3:=   \left\{   - 2^{2(j_3 -j_1)}  , \cdots,-2,0,2 , \cdots,  2^{2(j_3 -j_1)}-2 \right\}.
\end{equation}
\begin{figure}[h]
  \centering
  \includegraphics[width=1\textwidth]{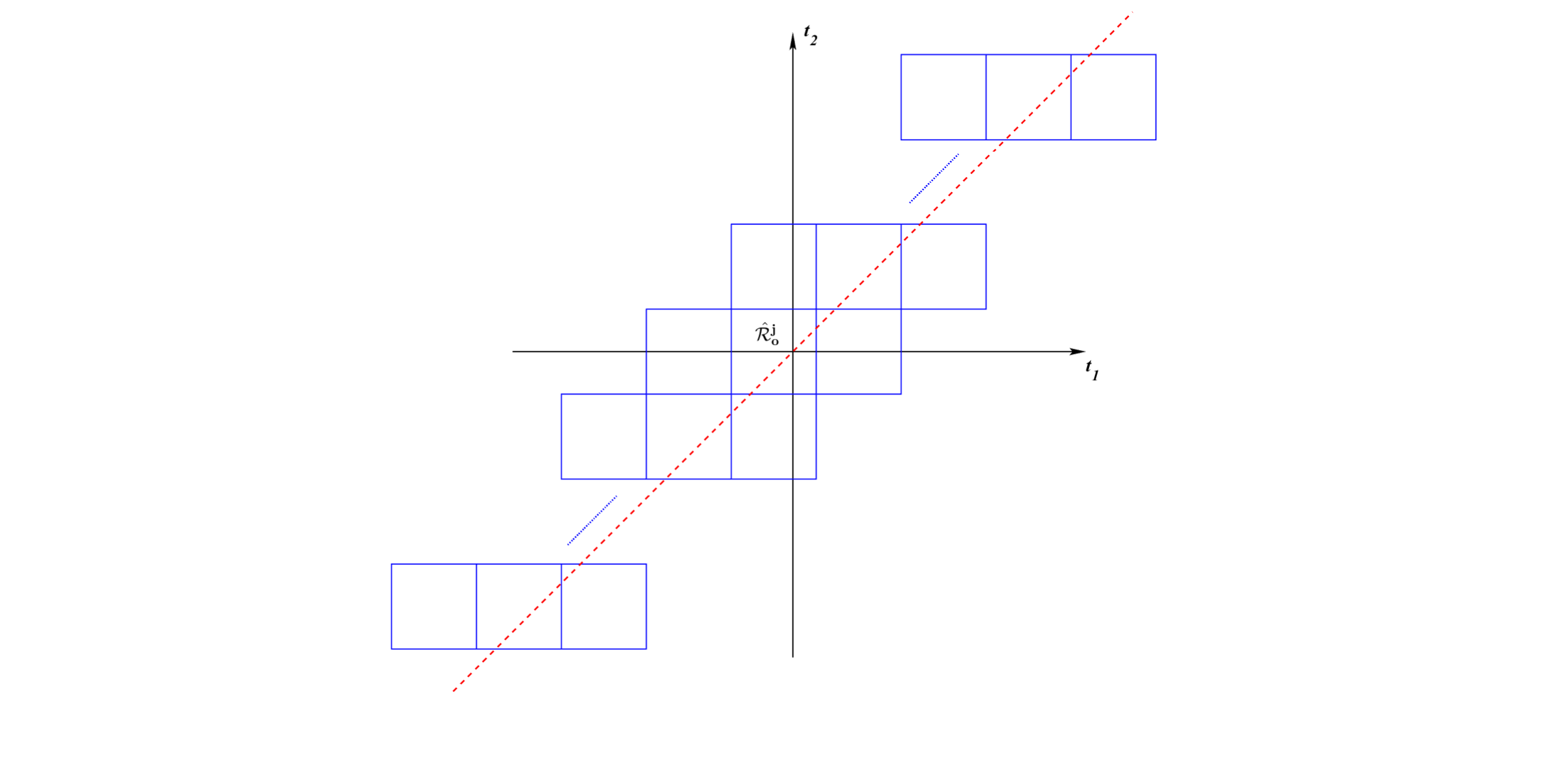}
  \caption{$\mathbf{t}$ slices of $\mathcal{R}^{\mathbf{j}}_{o}$}
  \label{fig:fig3}
\end{figure}

\noindent
The  translates of $\mathcal{R}^{\mathbf{j} }_o $ under an element of
 \begin{equation}\label{eq:G-j-3}G_{\mathbf{j}}:=
   2^{\mathbf{j}  } \mathbb{Z}^{2\mathbf{n} } \times  2^{2j_1 +\kappa } 6\mathbb{Z} \times
2^{2j_3 +\kappa }    2\mathbb{Z}
\end{equation}
is called a {\it shard  of scale $\mathbf{j}$} (cf. Figure 2). We will see they constitute  a partition $ {\mathscr N} $. Now denote the set of  shards of scale $\mathbf{j} $ by
$
   \mathfrak{R }_{\mathbf{j}}:=\left\{ \mathbf{g}\mathcal{R}^{\mathbf{j} }_o;\mathbf{g}\in
G_{\mathbf{j}}\right\} ,
$
 and the set of all shards by
$
   \mathfrak{R }:=\bigcup_{\mathbf{j}\in \mathbb{Z} ^3}\mathfrak{R }_{\mathbf{j}}.
$

 \begin{lem}\label{prop:shard-P} We have
     \begin{equation}\label{eq:shard-P}\Box_{ \mathbf{j }}^+\times  P_{2^{2j_1+\kappa-3  } ,2^{2j_3+\kappa-3  } }
 \subset   \mathcal{R}^{\mathbf{j} }_o\subset \Box_{ \mathbf{j }}^+\times  P_{ 2^{2j_1+\kappa+3}, 2^{2j_3+\kappa+3 }}.
\end{equation}
  \end{lem}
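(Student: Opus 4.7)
The plan is to proceed by case analysis matching the three-case definition of the basic shard $\mathcal{R}^{\mathbf{j}}_o$ in \eqref{eq:R0-j0}, \eqref{eq:R0-j}, \eqref{eq:R-o}. In each case, $\mathcal{R}^{\mathbf{j}}_o$ is a ``twisted product'' $\{(\mathbf{z},\mathbf{t}):\mathbf{z}\in\Box_{\mathbf j}^+,\ \mathbf{t}\in \widetilde I_{\mathbf{z},\mathbf j}\}$, so the two inclusions of the lemma reduce to showing that each slice $\widetilde I_{\mathbf{z},\mathbf j}\subset\mathbb R^2$ is sandwiched between parallelograms $P_{2^{2j_1+\kappa\pm 3},\,2^{2j_3+\kappa\pm 3}}$.

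The first ingredient is the pointwise bound \eqref{eq:f}, namely $|f_o|<\nu$, which after rescaling gives
$$
|2^{2j_\mu}f_o(2^{-j_\mu}\mathbf z_\mu)|<n_\mu 2^{2j_\mu}\qquad\text{on }\Box_{j_\mu}^{(\mu)+}.
$$
Since $\kappa$ was chosen so large that $\nu<2^{\kappa-10}$, the ``twists'' $a:=2^{2j_1}f_o(2^{-j_1}\mathbf z_1)$ and $b:=2^{2j_2}f_o(2^{-j_2}\mathbf z_2)$ are dominated by the scales $2^{2j_1+\kappa}$ and $2^{2j_2+\kappa}$, and can be absorbed into the $O(1)$-factors $2^{\pm 3}$ appearing in the statement.

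For Cases (i) and (ii), where by \eqref{eq:R0-j0}--\eqref{eq:R0-j} the slice is a rectangle $\widetilde I_{\mathbf z,\mathbf j}=(a,b)+2^{2j_1+\kappa}[-1,1)\times 2^{2(j_2\vee j_3)+\kappa}[-1,1)$, both inclusions are direct. For the outer inclusion, bound $|t_2|$ and $|t_1-t_2|$ by the triangle inequality together with $|a|,|b|$ small and $j_2\vee j_3<j_1$. For the inner inclusion, given $(t_1,t_2)\in P_{2^{2j_1+\kappa-3},\,2^{2j_3+\kappa-3}}$, set $s_1:=t_1-a$, $s_2:=t_2-b$ and verify $|s_\mu|<2^{2j_\mu+\kappa}$ via $|s_1|\le|t_1-t_2|+|t_2|+|a|$ and the analogous estimate for $s_2$.

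The main obstacle is Case (iii) ($j_1<j_3$), where $\mathcal R^{\mathbf j}_o$ is built in two stages: first \eqref{eq:hat-R-o} forms $\check{\mathcal R}^{\mathbf j}_o$ as three $t_1$-neighbors of $\widehat{\mathcal R}^{\mathbf j}_o$, whose slices by \eqref{eq:twisted-product3-I} take the form $(a,b)+2^{2j_1+\kappa}[-3,3)\times 2^{2j_1+\kappa}[-1,1)$; then \eqref{eq:R-o} takes the diagonal union over $m\in\Gamma_3$ with shifts $(m\cdot 2^{2j_1+\kappa},\ m\cdot 2^{2j_1+\kappa})$. The key computation is that, as $m$ runs through the step-$2$ arithmetic progression $\Gamma_3$ in \eqref{eq:Gamma-3}, the $t_2$-intervals $2^{2j_1+\kappa}[m-1,m+1)$ exactly partition $2^{2j_1+\kappa}[-2^{2(j_3-j_1)}-1,\,2^{2(j_3-j_1)}-1)\approx 2^{2j_3+\kappa}[-1,1)$, while for each such $t_2$ the $t_1$-slice remains within a $2^{2j_1+\kappa+2}$-neighborhood of the diagonal $\{t_1=t_2\}$. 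This recovers exactly the parallelogram shape $\{|t_1-t_2|\lesssim 2^{2j_1+\kappa},\ |t_2|\lesssim 2^{2j_3+\kappa}\}$, and the small twists $(a,b)$ are absorbed into $2^{\pm 3}$ as before. The only delicate book-keeping is near the edges $|t_2|\approx 2^{2j_3+\kappa}$, where one must choose the appropriate $m\in\Gamma_3$ (and, for the lower inclusion, exhibit an explicit $s_1\in 2^{2j_1+\kappa}[-3,3)$) so that $(t_1,t_2)$ lies in the corresponding diagonal translate; the factor $2^{-3}$ built into the lower parallelogram guarantees enough slack for this to succeed.
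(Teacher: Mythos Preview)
Your proposal is correct and follows essentially the same approach as the paper's proof: both reduce to slice-by-slice inclusions using the smallness estimate $|2^{2j_\mu}f_o(2^{-j_\mu}\mathbf z_\mu)|<2^{2j_\mu+\kappa-10}$ for the twists, and in Case~(iii) both exploit the fact that the diagonal translates indexed by $\Gamma_3$ tile a strip $|t_2|\lesssim 2^{2j_3+\kappa}$ while each $t_1$-slice stays within $O(2^{2j_1+\kappa})$ of the diagonal. The paper only writes out Case~(iii) explicitly and declares the other cases similar, so your outline of Cases~(i)--(ii) is already more detailed than what the paper provides.
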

\begin{proof} We only show the most complicated case $iii$. It is similar for   other cases.  Recall  the estimate
 \begin{equation}\label{eq:estimate-f}
   |f_o(2  ^{ - j_\mu}\mathbf{z}_\mu)| <2^{\kappa-10},
\end{equation}
by the choice of $\kappa$ and $2  ^{ - j_\mu}\mathbf{z}_\mu\in [0,1)^{2n_\mu}$ for $\mathbf{z}_\mu\in  \Box_{
   j_\mu }^{(\mu)+}$,
  $\mu=1,2$. We see that  $I _{\mathbf{z},   \mathbf{j} }''$ in \eqref{eq:twisted-product3-I} for $ \mathbf{z}\in \Box_{ \mathbf{j }}^+$
  is a small translate of a fixed square $ 2^{2j_1+\kappa} [-1,1 )\times 2^{2j_1+\kappa}[-1,1)$ by
 \begin{equation}\label{eq:estimate-f-2} | ( 2  ^{ 2j_1}  f_o(2  ^{ - j_1}\mathbf{z}_1)|,\quad |  2  ^{ 2j_2}f_o(2  ^{ - j_2}\mathbf{z}_2) |<2^{2j_1+\kappa-10}\end{equation}
Similarly, it follows from  the  definition \eqref{eq:hat-R-o} of $ \check{ \mathcal{R}} ^{\mathbf{j} }_o$ that $
 \check{ \mathcal{R}} ^{\mathbf{j} }_o = \{(\mathbf{z},\mathbf{t}): \mathbf{z}\in \Box_{ \mathbf{j }}, \mathbf{t}\in     \check I_{\mathbf{z},   \mathbf{j} }  \}
$
 with $  \check I_{\mathbf{z},   \mathbf{j} }  := ( \check I_{\mathbf{z},   \mathbf{j} }  )_1\times ( \check I_{\mathbf{z},   \mathbf{j} }  )_2$, where
 \begin{equation}\label{eq:twisted-product4-I}\begin{split}
    ({{ \check I}}_{\mathbf{z},   \mathbf{j} } )_1:=   2  ^{ 2j_1}  f_o(2  ^{ - j_1}\mathbf{z}_1) + 2^{2j_1+\kappa}\left[-3  ,
   3  \right) ,\\
    (  \check I _{\mathbf{z},   \mathbf{j} } )_2=   2  ^{ 2j_2}f_o(2  ^{ - j_2}\mathbf{z}_2) +  2^{2j_1+\kappa}\left[-1 ,
    1  \right).
 \end{split}  \end{equation}
  We see that  for fixed $t_2\in   (\check I  _{\mathbf{z},   \mathbf{j} } )_2$, we have the inclusion
  \begin{equation*}\label{eq;segment}
   (t_2-2^{2j_1 +\kappa} ,t_2+2^{2j_1  +\kappa})\subset 2^{2j_1+\kappa}\left[-5/2  ,5/
   2  \right)   \subset  ( \check I _{\mathbf{z},   \mathbf{j} } )_1
  \end{equation*}by the estimate \eqref{eq:estimate-f-2}. So by translation,
  we get
  \begin{equation*}
   (m2^{2j_1+\kappa}+t_2-2^{2j_1 +\kappa  } ,m2^{2j_1+\kappa}+t_2+2^{2j_1 +\kappa  })\subset m2^{2j_1+\kappa}+({{ \check I}}_{\mathbf{z},   \mathbf{j} } )_1,
  \end{equation*}for $m \in  \Gamma_3$ in \eqref{eq:Gamma-3}. Thus  $(t_2-2^{2j_1+\kappa} ,t_2+2^{2j_1+\kappa})\times \{t_2\}\subset \mathcal{R}^{\mathbf{j} }_o$ for $|t_2|\leq 2^{2j_3+\kappa-3} $. The first inclusion follows.

  The second inclusion follows from that for fixed $t_2\in ({{ \check I}}_{\mathbf{z},   \mathbf{j} } )_2$,
   \begin{equation*}
    ({{ \check I}}_{\mathbf{z},   \mathbf{j} } )_1 \subset  (t_2-8\cdot 2^{2j_1 +\kappa} ,t_2+8\cdot 2^{2j_1 +\kappa })   \end{equation*}
and for $m \in  \Gamma_3$ in \eqref{eq:Gamma-3}, we have $|m2^{2j_1+\kappa}+t_2|<    2^{2j_1 +\kappa  }(2^{2(j_3 -  j_1)}+3) <2^{2j_3+\kappa+3 }$.
\end{proof}

Since the origin belongs to the boundary of the cube $\Box_{ \mathbf{j }}^+$, we consider the point
\begin{equation*}
   \zeta_{\mathbf{j}}:=(\zeta_{\mathbf{j}1}, \zeta_{\mathbf{j}2}, \zeta_{\mathbf{j}3},0,0), \qquad {\rm where}\quad \zeta_{\mathbf{j}\mu}:= \Big
   (\underbrace{2^{j_\mu-1},\ldots,2^{j_\mu-1}}_{n_\mu}\Big),
\end{equation*}$\mu=1,2,3$,  as the {\it center of the shard $\mathcal{R}^{\mathbf{j} }_o$}. We will see it  an inner point of the basic shard
$\mathcal{R}^{\mathbf{j} }_o$.
 Unlike tiles in the Heisenberg group are nested by Theorem \ref{thm:Tiles}, shards are not nested. But they still constitute  a partition $ {\mathscr N} $ for each scale by the following proposition.

\begin{prop} \label{prop:shard}
 (1)
    $\mathfrak{R }_{\mathbf{j}}$
      is a partition $ {\mathscr N} $ for each $\mathbf{j}=(j_1,j_2,j_3)\in \mathbb{Z}^3$.
\\
       (2) There exists an absolute positive integer   $\sigma  $ only depending on    $ {\mathscr N} $ such that for $\mathbf{g}\in G_{\mathbf{j}}$, we have
\begin{equation*}   T(\mathbf{g}\zeta_{\mathbf{j}}, 2^{ \mathbf{j} -\sigma }   )  \subset \mathbf{g} \mathcal{R}^{\mathbf{j} }_o\subset T(\mathbf{g},  2^
{\mathbf{j}+\sigma  } )
.
\end{equation*}
\end{prop}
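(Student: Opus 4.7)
The plan is to prove part~(1) by bootstrapping from the partition of $\mathscr{N}$ by translates of $\mathcal{S}^{\mathbf{j}}_o$ under the lattice $G:=2^{\mathbf{j}}\mathbb{Z}^{2\mathbf{n}}\times 2^{2j_1+\kappa}\mathbb{Z}\times 2^{2j_2+\kappa}\mathbb{Z}$, established in the paragraphs preceding the statement. In each of Cases~i--iii the basic shard $\mathcal{R}^{\mathbf{j}}_o$ is by construction a disjoint union of translates of $\mathcal{S}^{\mathbf{j}}_o$ by a finite set $\Lambda\subset G$ whose elements have vanishing $\mathbf{z}$-component, so they act as pure Euclidean shifts in the $\mathbf{t}$-coordinate. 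The partition claim therefore reduces to the combinatorial identity $G_{\mathbf{j}}+\Lambda=G$ in the $\mathbf{t}$-lattice. This is routine: in Case~i, $\Lambda=\{0,-1\}^{2}$ complements $2\mathbb{Z}\times 2\mathbb{Z}$ to $\mathbb{Z}\times\mathbb{Z}$; in Case~ii, $\{0,-1\}\times\Gamma_1$ complements $2\mathbb{Z}\times 2\mathbb{Z}$ after identifying the $t_2$-scale as $2^{2j_3+\kappa}$; in Case~iii, the identity is verified by walking through the three-stage construction in \eqref{eq:R0-j}, \eqref{eq:hat-R-o}, \eqref{eq:R-o}, using in particular that the triple-width $\{-2,0,2\}$ of \eqref{eq:hat-R-o} combined with the $\{0,-1\}$ from $\mathcal{S}$-shifts fills every residue class mod~$6$ in the $t_1$-direction, and that $\Gamma_3$ covers every even residue mod $2\cdot 2^{2(j_3-j_1)}$ in the diagonal direction.

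For part~(2), I first reduce to $\mathbf{g}=e$: since $T(\mathbf{g}\mathbf{h},\mathbf{r})=\mathbf{g}\,T(\mathbf{h},\mathbf{r})$ by the definition of tubes, both inclusions become
\[T(\zeta_{\mathbf{j}},2^{\mathbf{j}-\sigma})\subset\mathcal{R}^{\mathbf{j}}_o\subset T(\mathbf{0},2^{\mathbf{j}+\sigma}).\]
The outer inclusion follows at once from Lemma~\ref{prop:shard-P} (with analogous statements for Cases~i and~ii, which are simpler): the shard sits inside $\Box_{\mathbf{j}}^{+}\times P_{c\cdot 2^{2j_1},\,c\cdot(2^{2j_2}\vee 2^{2j_3})}$ for some constant $c=c(\kappa)$, whereas $T(\mathbf{0},2^{\mathbf{j}+\sigma})=\Box_{2^{\mathbf{j}+\sigma}}\times P_{2^{2(j_1+\sigma)},\,2^{2(j_2+\sigma)}\vee 2^{2(j_3+\sigma)}}$ by \eqref{eq:tube-cube}, so choosing $\sigma$ large enough in $\kappa$ yields the inclusion. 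For the inner inclusion, a typical point of $T(\zeta_{\mathbf{j}},2^{\mathbf{j}-\sigma})$ reads
\[\zeta_{\mathbf{j}}(\mathbf{z},\mathbf{t})=\bigl(\zeta_{\mathbf{j}}+\mathbf{z},\,t_1+\Phi_1(\zeta_{\mathbf{j} 1},\mathbf{z}_1)+\Phi_3(\zeta_{\mathbf{j} 3},\mathbf{z}_3),\,t_2+\Phi_2(\zeta_{\mathbf{j} 2},\mathbf{z}_2)+\Phi_3(\zeta_{\mathbf{j} 3},\mathbf{z}_3)\bigr),\]
with $\mathbf{z}\in\Box_{2^{\mathbf{j}-\sigma}}$ and $\mathbf{t}$ in the appropriate $P$-region. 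Since each coordinate of $\zeta_{\mathbf{j}\mu}$ equals $2^{j_{\mu}-1}$, the sum $\zeta_{\mathbf{j}}+\mathbf{z}$ lies in $\Box_{\mathbf{j}}^{+}$ for $\sigma\ge 2$, and $|\Phi_{\mu}(\zeta_{\mathbf{j}\mu},\mathbf{z}_{\mu})|\lesssim 2^{2j_{\mu}-\sigma}$. Combined with the lower inclusion in Lemma~\ref{prop:shard-P} and the bound \eqref{eq:estimate-f} on the twist $f_o$, the shifted $\mathbf{t}$-coordinate lies inside the $t$-region of $\mathcal{R}^{\mathbf{j}}_o$ provided $\sigma$ exceeds $\kappa$ by an absolute amount.

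The main obstacle is Case~iii of the inner inclusion. Here the $t_2$-extent of $T(\zeta_{\mathbf{j}},2^{\mathbf{j}-\sigma})$ is of size $2^{2(j_3-\sigma)}$, potentially much larger than the width $2^{2j_1+\kappa}$ of the basic piece $\check{\mathcal{R}}^{\mathbf{j}}_o$, so the naive choice $m=0$ in the diagonal union \eqref{eq:R-o} cannot absorb all such $\mathbf{t}$. One must instead select $m\in\Gamma_3$ so that $m\cdot 2^{2j_1+\kappa}$ lies within $2^{2j_1+\kappa}$ of $t_2+\Phi_2(\zeta_{\mathbf{j} 2},\mathbf{z}_2)+\Phi_3(\zeta_{\mathbf{j} 3},\mathbf{z}_3)$; this is possible because $\Gamma_3$ has spacing $2$ and range $\sim 2^{2(j_3-j_1)}$ in units of $2^{2j_1+\kappa}$. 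For this choice of $m$, the tube constraint $|t_1-t_2|\lesssim 2^{2(j_1-\sigma)}$, the $\Phi$-bounds, and the triple-width in $t_1$ supplied by \eqref{eq:hat-R-o} together guarantee that the shifted $t_1$-coordinate also lies in the $m$-th diagonal translate of $\check{\mathcal{R}}^{\mathbf{j}}_o$. Establishing this uniformly in $\mathbf{j}\in\mathbb{Z}^{3}$, with $\sigma$ depending only on $\mathscr{N}$, is the technical heart of the proof.
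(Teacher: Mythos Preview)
Your approach is correct and essentially the same as the paper's. Part~(1) via coset bookkeeping $G_{\mathbf{j}}\cdot\Lambda=G$ is just the abstract restatement of the paper's direct geometric verification; the paper also only writes out Case~iii and checks disjointness of the $t_1$-intervals of width $6\cdot 2^{2j_1+\kappa}$, then the $t_2$-strips, then the $\mathbf{z}$-part.

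For part~(2), your outline through the outer inclusion and the first paragraph of the inner inclusion matches the paper. However, your final paragraph is redundant: the ``main obstacle'' you describe---that $t_2$ may have size $\sim 2^{2j_3}$, forcing a choice of $m\in\Gamma_3$---is exactly what the \emph{lower} inclusion in Lemma~\ref{prop:shard-P} already absorbs. The paper does not revisit any $m$-selection; it simply bounds
\[
|t_1-t_2|\le |t_1'-t_2'|+|\Phi_1|+|\Phi_2|<2^{2j_1},\qquad |t_2|\le |t_2'|+|\Phi_2|+|\Phi_3|<2^{2j_3},
\]
for $\sigma$ large in terms of $n_1,n_2,n_3$, concludes $(\mathbf{z},\mathbf{t})\in\Box_{\mathbf{j}}^{+}\times P_{2^{2j_1},2^{2j_3}}$, and invokes Lemma~\ref{prop:shard-P} (which holds since $\kappa\ge 3$). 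So your sentence ``combined with the lower inclusion in Lemma~\ref{prop:shard-P}'' already finishes the job in all three cases; the subsequent paragraph re-proves part of that lemma. Also, $\sigma$ need not exceed $\kappa$: for the inner inclusion $\sigma$ depends only on the $n_\mu$, while $\kappa$ (fixed once, large) is what makes $P_{2^{2j_1},2^{2j_3}}\subset P_{2^{2j_1+\kappa-3},2^{2j_3+\kappa-3}}$ automatic. The dependence $\sigma\gtrsim\kappa$ is only needed for the outer inclusion.
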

\begin{proof} We only prove the most complicated case $j_2< j_1<j_3$. It is similar for   other cases.

(1)  It is easy to see that
 \begin{equation*}
    ( 6 m  2^{2j_1 +\kappa },
0 )+ {{{ \check I}}}_{\mathbf{z},   \mathbf{j} }
  \end{equation*}
for ${{{ \check I}}}_{\mathbf{z},   \mathbf{j} }
 $ given by \eqref{eq:twisted-product4-I}, are mutually disjoint for different $
  m \in \mathbb{Z} $, and so are their translates  in  along the diagonal: $\left(\mathbf{0}_{\mathbf{z}}, 2m  2^{2j_1 +\kappa }, 2 {m } 2^{2j_1+\kappa }\right)
       \check{\mathcal{R}}^{\mathbf{j} }_o$. Namely,
     \begin{equation*}
  (\mathbf{0}_{\mathbf{z}},6 m  2^{2j_1 +\kappa },
0 )+ {\mathcal{R}}^{\mathbf{j} }_o,
 \end{equation*}for $ m \in \mathbb{Z}$, are mutually disjoint and their union is $ \{(\mathbf{z},\mathbf{t}): \mathbf{z}\in \Box_{ \mathbf{j }}^+, \mathbf{t}\in
  \mathfrak{ I }_{\mathbf{z},   \mathbf{j} }    \}$ with
 \begin{equation*}
     \mathfrak{ I }_{\mathbf{z},   \mathbf{j} }   =\mathbb{R}\times\Big\{ 2  ^{ 2 j_2 } f_0(2  ^{ - j_2 }\mathbf{z}_2)+\Big[- 2^{2j_3 +\kappa} - 2^{2j_1 +
     \kappa} ,
 2^{2j_3 + \kappa}- 2^{2j_1 + \kappa}  \Big)\Big\}.
 \end{equation*}
 Translates of $\mathfrak{ I }_{\mathbf{z},   \mathbf{j} }$  under $
2^{2j_3 +\kappa }  2  \mathbb{Z}$ in $t_2$ direction   are also mutually disjoint and their union is $\mathbb{R}^2$. Thus we have shown  translates of ${
\mathcal{R}}^{\mathbf{j} }_o$  under   $0_{\mathbf{z}}\times 2^{2j_1 +\kappa } 6\mathbb{Z} \times
2^{2j_3 +\kappa }    2\mathbb{Z}$ are  mutually disjoint and their union is
$
   \Box_{ \mathbf{j }}^+\times\mathbb{R}^2.
$

Note that the group $G_{\mathbf{j}}$ in \eqref{eq:G-j-3} can be written as the product of two mutually commuting subgroups:
\begin{equation*}
   G_{\mathbf{j}} =
  \Big (2^{\mathbf{j } } \mathbb{Z}^{2\mathbf{n} } \times (0,0) \Big)  \Big(0_{\mathbf{z}}\times  2^{2j_1 +\kappa } 6\mathbb{Z} \times
2^{2j_3 +\kappa }  2  \mathbb{Z}  \Big),
\end{equation*}and $2^{\mathbf{j } } \mathbb{Z}^{2\mathbf{n} } \Box_{ \mathbf{j }}^+ $ is obviously a partition of  $\mathbb{C}^{N}$. On the other hand,  an element $(\mathbf{z},0,0)$ in $2^{\mathbf{j } } \mathbb{Z}^{2\mathbf{n} } \times (0,0) $ maps  $\Box_{ \mathbf{j }}^+\times\mathbb{R}^2$ bijectively to
$\mathbf{z}\Box_{ \mathbf{j }}^+\times\mathbb{R}^2$: the slice $\{\mathbf{z}' \}\times\mathbb{R}^2$ is mapped to $\{\mathbf{z}+\mathbf{z}' \}\times\mathbb{R}^2$ by a translate. So $\mathbf{z}\Box_{ \mathbf{j }}^+\times\mathbb{R}^2$ is disjoint for different such $\mathbf{z}$'s, and their union   over such $\mathbf{z}$ is obviously $\mathbb{C}^{N}\times\mathbb{R}^2$. In summary, we have proved that $\mathbf{g}{ \mathcal{R}}^{\mathbf{j} }_o$'s for $\mathbf{g}\in
  G_{\mathbf{j}}$ are   mutually disjoint and their union is $ {\mathscr N} $.

(2) It is sufficient to show the inclusion for $\mathbf{g}=\mathbf{0}$. For $(\mathbf{z},\mathbf{t})\in T( \zeta_{\mathbf{j}}, 2^{ \mathbf{j} -\sigma }   ) $, we
have
$ \mathbf{z}_{\mu  }\in [0,2^{  {j}_\mu   })^{2n_\mu}$ since
$| {x}_{\mu a}-2^{  {j}_\mu -1 } |<2^{  {j}_\mu -\sigma }$ for  $\sigma>1$, $a=1,\ldots,2n_\mu$. Namely, $ \mathbf{z}\in \Box_{ \mathbf{j }}^+$.
On the other hand, by $(\mathbf{z},\mathbf{t})\in T( \zeta_{\mathbf{j}}, 2^{ \mathbf{j} -\sigma }   ) = \zeta_{\mathbf{j}} T(\mathbf{0}, 2^{ \mathbf{j} -\sigma }
) $, we have
\begin{equation*}
   t_1=t_1'+\Phi_1(\zeta_{\mathbf{j}1},\mathbf{z}_1')+\Phi_3(\zeta_{\mathbf{j}3},\mathbf{z}_3'),\qquad
   t_2=t_2'+\Phi_2(\zeta_{\mathbf{j}2},\mathbf{z}_2')+\Phi_3(\zeta_{\mathbf{j}3},\mathbf{z}_3')
\end{equation*}
for some
$(t_1',t_2')\in P_{2^{  2({j}_1 -\sigma) }, 2^{ 2( {j}_3 -\sigma) }  }$ and $\mathbf{z}_\mu '\in [0, 2^{  j_\mu -\sigma }   )^{2n_\mu}$. Thus, we have
\begin{equation*}\begin{split}
  | t_1- t_2|&\leq |t_1'-t_2'|+|\Phi_1( \zeta_{\mathbf{j}1},\mathbf{z}_1')|  +|\Phi_2( \zeta_{\mathbf{j}2},\mathbf{z}_2')|\\
  &\leq 2^{  2{j}_1 -2\sigma }   + n_12^{  2{j}_1 - \sigma } + n_22^{  2{j}_2 - \sigma }   <2^{  2{j}_1 },
 \end{split} \end{equation*}
 for large $\sigma$, and similarly, $|  t_2| \leq | t_2'|+|\Phi_2( \zeta_{\mathbf{j}2},  \mathbf{z}_2')|+|\Phi_1( \zeta_{\mathbf{j}3}, \mathbf{z}_3')|    <2^{  2{j}_3 }$. Consequently, $(\mathbf{z},\mathbf{t})\in \Box_{ \mathbf{j }}^+\times  P_{2^{2j_1  } ,2^{2j_3  } }$ and so the first inclusion follows by
 using Lemma \ref{prop:shard-P}.

The second inclusion follows from
     $ \mathcal{R}^{\mathbf{j} }_o\subset \Box_{ \mathbf{j }}^+\times  P_{ 2^{2j_1+\kappa+3}, 2^{2j_3+\kappa+3 }}$ in Lemma \ref{prop:shard-P} and
     $\Box_{ \mathbf{j }}^+\subset \Box_{ \mathbf{j }}$.
 \end{proof}

  \section{Atomic decomposition of the Hardy space $H_{area,\boldsymbol \varphi}^
1(\mathscr N)$}
\subsection{The tent of a shard}

For each shard $R $ in $\mathfrak{R }_{\mathbf{j}}$,      the {\it tent $T(R)$ over  $R $}   is defined as
\begin{equation}\label{eq:tent-2} T(R):=  R\times  \left[  2^{ j_1 },
   2^{ j_1+1 }  \right)\times \left[  2^{ j_2 },
   2^{ j_2+1 }  \right)\times \left[  2^{ j_3  }  ,
   2^{ j_3+1 }  \right) \subset \mathscr N \times\mathbb{R}_+^3,
\end{equation}as in the flag case.

\begin{prop}\label{prop:tent-decomposition}
 We have the disjoint decomposition
 \begin{equation}\label{eq:tent-decomposition}
   \mathscr N \times\mathbb{R}_+^3= \bigcup_{R\in {\mathfrak{R}}} T(R).
 \end{equation}
\end{prop}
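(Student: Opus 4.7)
The proof is essentially a combination of two disjoint partitions: the dyadic partition of $\mathbb{R}_+^3$ and the shard partition of $\mathscr{N}$ at each scale. The plan is as follows.

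First I would observe that the intervals $\{[2^{j_\mu}, 2^{j_\mu+1}) : j_\mu \in \mathbb{Z}\}$ form a disjoint partition of $\mathbb{R}_+$, so that the products
\begin{equation*}
  Q_{\mathbf{j}} := [2^{j_1}, 2^{j_1+1}) \times [2^{j_2}, 2^{j_2+1}) \times [2^{j_3}, 2^{j_3+1}),\qquad \mathbf{j} \in \mathbb{Z}^3,
\end{equation*}
give a disjoint partition of $\mathbb{R}_+^3$. In particular, any $\mathbf{r} \in \mathbb{R}_+^3$ belongs to a unique $Q_{\mathbf{j}(\mathbf{r})}$, where $j_\mu(\mathbf{r}) = \lfloor \log_2 r_\mu \rfloor$.

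Next I would invoke Proposition \ref{prop:shard}(1), which states that for each fixed $\mathbf{j} \in \mathbb{Z}^3$, the collection $\mathfrak{R}_{\mathbf{j}}$ is a disjoint partition of $\mathscr{N}$. Combining the two partitions gives, for any $(\mathbf{g}, \mathbf{r}) \in \mathscr{N} \times \mathbb{R}_+^3$, a unique scale $\mathbf{j}(\mathbf{r}) \in \mathbb{Z}^3$ such that $\mathbf{r} \in Q_{\mathbf{j}(\mathbf{r})}$, and then a unique shard $R \in \mathfrak{R}_{\mathbf{j}(\mathbf{r})}$ with $\mathbf{g} \in R$. By the definition \eqref{eq:tent-2} of the tent, this is exactly to say that $(\mathbf{g}, \mathbf{r}) \in T(R)$, so the right-hand side of \eqref{eq:tent-decomposition} covers $\mathscr{N} \times \mathbb{R}_+^3$.

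For disjointness, suppose $(\mathbf{g}, \mathbf{r}) \in T(R) \cap T(R')$ with $R \in \mathfrak{R}_{\mathbf{j}}$ and $R' \in \mathfrak{R}_{\mathbf{j}'}$. The $\mathbf{r}$-component of $T(R)$ is the dyadic box $Q_{\mathbf{j}}$, which determines $\mathbf{j}$ uniquely from $\mathbf{r}$ by the first step. Hence $\mathbf{j} = \mathbf{j}'$, and then $R, R' \in \mathfrak{R}_{\mathbf{j}}$ both contain $\mathbf{g}$; by Proposition \ref{prop:shard}(1) we conclude $R = R'$. There is no real obstacle here: the statement is a bookkeeping corollary of the already established shard partition, with the dyadic structure of the $\mathbf{r}$-parameter ensuring that scale is recoverable from the tent.
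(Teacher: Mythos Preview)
Your proof is correct and follows essentially the same approach as the paper: both combine the dyadic partition of $\mathbb{R}_+^3$ into boxes $Q_{\mathbf{j}}$ with the shard partition $\mathfrak{R}_{\mathbf{j}}$ of $\mathscr{N}$ from Proposition \ref{prop:shard}(1), and both argue disjointness by first recovering the scale $\mathbf{j}$ from the $\mathbf{r}$-coordinate and then using disjointness of shards at that fixed scale.
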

 \begin{proof} Since for given $\mathbf{j}\in \mathbb{Z}^3$, $\mathfrak{R }_{\mathbf{j}}$
      is a partition of $ {\mathscr N} $ by Proposition \ref{prop:shard}, and so
      \begin{equation}\label{eq:tent-3}
         \bigcup_{R\in \mathfrak{R }_{\mathbf{j}} }   T(R)= {\mathscr N} \times \left[  2^{ j_1 },
   2^{ j_1+1}  \right)\times \left[  2^{ j_2 },
   2^{ j_2 +1}  \right)\times \left[  2^{ j_3  }  ,
   2^{ j_3+1 }  \right) ,
      \end{equation}
is a partition.
      On the other hand, if $R\in \mathfrak{R }_{\mathbf{j}}$ and $R'\in \mathfrak{R }_{\mathbf{j}'}$ with $\mathbf{j}\neq\mathbf{j}'$, there exists some $\mu$
      such that $j_\mu\neq j_\mu'$.  Hence, $\left[ 2^{ j_\mu },
   2^{ j_\mu +1}  \right) \cap   [  2^{ j_\mu'  }  ,
   2^{ j_\mu '+1}   )=\emptyset$, and so $T(R)\cap T(R')=\emptyset$.
  Therefore,  $T(R)$ are disjoint for any two different $R$'s in ${ \mathcal{R}} $. By taking union of \eqref{eq:tent-3} over $\mathbf{j}$, we get the decomposition
  \eqref{eq:tent-decomposition}.  \end{proof}

Define the ($\sigma$-){\it enlargement} $ R^{* }$  of a shard $R=\mathbf{g}\mathcal{R}^{\mathbf{j} }_o$ with $\mathbf{g}\in
G_{\mathbf{j}}$ to be the tube $T(\mathbf{g},  2^ {\mathbf{j} + 2\sigma} )$. Then $R \subset R^{* }  $ by Proposition \ref{prop:shard}.

\begin{prop}For any shard $R $, we have $M(\mathbbm 1_{R})(\mathbf{g}')\geq 2^{ - 4\sigma Q }$ for $\mathbf{g}'\in  R^{* }$.
\end{prop}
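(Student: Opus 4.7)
The plan is to exploit the two-sided inclusion from Proposition \ref{prop:shard}: for a shard $R=\mathbf{g}\mathcal{R}^{\mathbf{j}}_o$ with $\mathbf{g}\in G_{\mathbf{j}}$, we have $T(\mathbf{g}\zeta_{\mathbf{j}},2^{\mathbf{j}-\sigma})\subset R\subset T(\mathbf{g},2^{\mathbf{j}+\sigma})$, while by definition $R^{*}=T(\mathbf{g},2^{\mathbf{j}+2\sigma})$. For $\mathbf{g}'\in R^{*}$, I will choose the single radius $\mathbf{r}_0=2^{\mathbf{j}+3\sigma}$ in the supremum defining $M$ and estimate
\[
M(\mathbbm{1}_R)(\mathbf{g}')\;\geq\;\frac{1}{|T(\mathbf{g}',\mathbf{r}_0)|}\int_{T(\mathbf{g}',\mathbf{r}_0)}\mathbbm{1}_R\,d\mathbf{h}
\;=\;\frac{|R|}{|T(\mathbf{g}',\mathbf{r}_0)|},
\]
provided I first verify the containment $R\subset T(\mathbf{g}',\mathbf{r}_0)$.

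For the containment, I observe that $\mathbf{g}'\in T(\mathbf{g},2^{\mathbf{j}+2\sigma})$ means $\mathbf{g}^{-1}\mathbf{g}'\in T(\mathbf{0},2^{\mathbf{j}+2\sigma})$, and any $\mathbf{h}\in R$ satisfies $\mathbf{g}^{-1}\mathbf{h}\in T(\mathbf{0},2^{\mathbf{j}+\sigma})$. Thus $\mathbf{g}'{}^{-1}\mathbf{h}=(\mathbf{g}^{-1}\mathbf{g}')^{-1}(\mathbf{g}^{-1}\mathbf{h})$ is a product of two elements whose $\mathbf{z}$-coordinates lie in $\Box_{2^{\mathbf{j}+2\sigma}}$ and $\Box_{2^{\mathbf{j}+\sigma}}$ respectively, and whose $\mathbf{t}$-coordinates lie in the corresponding parallelograms. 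A direct computation with the multiplication law \eqref{eq:multiplication1}, using the bilinearity bound $|\Phi_\mu(\mathbf{z},\mathbf{z}')|\lesssim|\mathbf{z}||\mathbf{z}'|$, shows that this product lies in $T(\mathbf{0},2^{\mathbf{j}+3\sigma})$, provided the absolute integer $\sigma$ from Proposition \ref{prop:shard} is taken sufficiently large (if necessary, one enlarges $\sigma$ at the outset to absorb the quasi-triangle constant for tubes into a single factor of $2^{\sigma}$). This yields $R\subset T(\mathbf{g}',2^{\mathbf{j}+3\sigma})$.

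For the volume ratio, I use left-invariance of the Haar measure together with the explicit formula \eqref{eq:tube-volume}, which implies the clean scaling $|T(\mathbf{0},2^{k}\mathbf{r})|=2^{kQ}|T(\mathbf{0},\mathbf{r})|$ under uniform dilation. Hence
\[
|T(\mathbf{g}',2^{\mathbf{j}+3\sigma})|=|T(\mathbf{0},2^{\mathbf{j}+3\sigma})|=2^{3\sigma Q}\,|T(\mathbf{0},2^{\mathbf{j}})|,
\qquad
|R|\geq|T(\mathbf{g}\zeta_{\mathbf{j}},2^{\mathbf{j}-\sigma})|=2^{-\sigma Q}\,|T(\mathbf{0},2^{\mathbf{j}})|,
\]
so the ratio is exactly $2^{-\sigma Q}/2^{3\sigma Q}=2^{-4\sigma Q}$, as required.

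The main obstacle is the containment step, i.e.\ verifying a quasi-triangle type inequality of the form $T(\mathbf{g},\mathbf{r})\subset T(\mathbf{g}',C\mathbf{r})$ whenever $\mathbf{g}'\in T(\mathbf{g},\mathbf{r})$ with an absolute constant $C$. Because tubes are not balls of a genuine pseudometric on $\mathscr N$ (they have the flag-like hexagonal shape from Section 3.1, involving the parallelograms $P_{a,b}$), this inclusion must be checked coordinate-by-coordinate and requires control of the cross-terms $\Phi_\mu$ arising in the group law \eqref{eq:multiplication1}. Choosing the exponent $3\sigma$ in $\mathbf{r}_0$ (equivalently, enlarging $\sigma$ once and for all so that the quasi-triangle constant is $\leq 2^{\sigma}$) is precisely what makes this step work while preserving the clean final bound $2^{-4\sigma Q}$.
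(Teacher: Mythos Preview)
Your proposal is correct and follows essentially the same route as the paper: choose the single radius $2^{\mathbf{j}+3\sigma}$, verify $R\subset T(\mathbf{g}',2^{\mathbf{j}+3\sigma})$ via a tube-product inclusion, and read off the volume ratio $|T(\mathbf{0},2^{\mathbf{j}-\sigma})|/|T(\mathbf{0},2^{\mathbf{j}+3\sigma})|=2^{-4\sigma Q}$. The only cosmetic difference is that the paper packages the product step as the clean inclusion $T(\mathbf{0},\mathbf{r})T(\mathbf{0},\mathbf{r}')\subset T(\mathbf{0},2^{\sigma}(\mathbf{r}+\mathbf{r}')/2)$ obtained by lifting to $\tilde{\mathscr N}$ through Proposition~\ref{prop:tube-pi}, whereas you propose checking it directly from the multiplication law; both amount to enlarging $\sigma$ once so that $2^{\sigma}$ dominates the quasi-triangle constant.
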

\begin{proof} Let $R=\mathbf{g}\mathcal{R}^{\mathbf{j} }_o$ for some $\mathbf{g}\in
G_{\mathbf{j}}$. Then for $\mathbf{g}' \in R^{* }=\mathbf{g}T(\mathbf{0},  2^ {\mathbf{j}  +2\sigma  } )$, we have $\mathbf{g} \in  \mathbf{g}' T(\mathbf{0},
2^ {\mathbf{j}  +2\sigma } )$, since the tube is invariant under the transformation $\mathbf{g}\rightarrow \mathbf{g}^{-1}$ by definition.  On the other hand,
$R\subset  \mathbf{g}T\left(\mathbf{0},    2^ {\mathbf{j} +\sigma  }\right )$ by Proposition \ref{prop:shard}. Thus,
\begin{equation}\label{eq:gT-g'T}\begin{split}& R\subset    \mathbf{g}'T\left(\mathbf{0},  2^ {\mathbf{j} +2\sigma   }\right )T\left(\mathbf{0}, 2^
{\mathbf{j}+ \sigma } \right) \subset \mathbf{g}'T\left(\mathbf{0},  2^ {\mathbf{j} +3\sigma  } \right).
 \end{split}\end{equation}This is because  $\tilde{{B}}_\mu (\mathbf{0} ,  {r}_\mu)  \tilde{{B}}_\mu (\mathbf{0} ,  {r}_\mu') \subset \tilde{{B}}_\mu (\mathbf{0} ,
 \sqrt{ n_\mu} ({r}_\mu +{r}_\mu'))$ by the multiplication law \eqref{eq:multiplication3}, and so
 \begin{equation}\label{eq:T-T}\begin{split}
   T(\mathbf{0}, \mathbf{r}) T(\mathbf{0},\mathbf{r}')&\subset \pi \left(\tilde{{B}} (\mathbf{0} , 2\mathbf{r})\right)\pi \left(\tilde{{B}} (\mathbf{0}
   ,2\mathbf{r}') \right)=\pi \left(\tilde{{B}} (\mathbf{0} , 2\mathbf{r} )  \tilde{{B}} (\mathbf{0} , 2\mathbf{r}')\right)\\
   &\subset \pi\left (   \tilde{{B}} (\mathbf{0} ,2^{\sigma-2} (\mathbf{r}+\mathbf{r}'))\right)\subset  T(\mathbf{0},2^{\sigma}(\mathbf{r}+\mathbf{r}')/2)
 \end{split}\end{equation}for   $2^{\sigma }>8 \sqrt{n_\mu}$, by using Proposition \ref{prop:tube-pi}  and $\pi$  being a homomorphism. Thus,
 \begin{equation}\label{eq:g'-Tg}\begin{split} M(\mathbbm 1_{R})(\mathbf{g}')&\geq \frac 1{\left|T\left(\mathbf{g}', 2^ {\mathbf{j} +3\sigma  }
 \right)\right|}
 \int_{T\left(\mathbf{g}', 2^ {\mathbf{j} +3\sigma }  \right)}
 \mathbbm 1_{R} (\mathbf{g}'')d\mathbf{g}''\\
 &=\frac {|R|}{\left|T\left(\mathbf{0},2^ {\mathbf{j} +3\sigma  } \right)\right|} \geq \frac {\left|T\left(\mathbf{0},  2^ {\mathbf{j}-\sigma }    \right)\right|
 }{\left|T\left(\mathbf{0}, 2^ {\mathbf{j} +3\sigma}  \right)\right|} = \frac 1 {
 2^{  4\sigma Q}}, \end{split}\end{equation} by Proposition \ref{prop:shard} and \eqref{eq:tube-volume}, where $Q$ is the homogeneous dimension of $ \mathscr N$.
 \end{proof}

\subsection{Atomic decomposition}
Suppose that $E$ is an open set of $\mathscr N $
of finite measure. We write $\mathfrak{R}(E)$ for the
set of all shards $R$ in $\mathfrak{R}$ whose interior is a subset of $E$, and $\mathfrak{M}(E)$ for the collection of all maximal
such $R$ in $\mathfrak{R}(E)$.

Fix a positive integer  $M $. An {\it atom} $a$ is  a  function in
 $L^2
(\mathscr N)$ such that there exists an open subset $E$ of  $
 \mathscr N $
of finite measure   and functions $a_R$ in $L^2
(\mathscr N)$,  and $b_R$ in ${\rm Dom}( {\triangle}_1^M {\triangle}_2^M {\triangle}_3^M)$
   for all $R \in  \mathfrak{ M}(E)$ such that
\\
(i) $a_R=  {\triangle}_1^M {\triangle}_2^M {\triangle}_3^Mb_R$
  and supp $b_R\subset R^*$,
  where $R^*$
is the  enlargement of $R$;
\\
(ii)
  for all sign sequences $\tau:  \mathfrak{ M}(E)\rightarrow \{\pm 1\}$, the sum $\sum_{R\in \mathcal{M}(E)} \tau_Ra_R$ converges in $L^2
(\mathscr N)$ to $ a_\tau$ say,
and $\|a_\tau\|_{L^2
(\mathscr N)}\leq |E|^{-\frac 12}$;
\\
(iii)
$ a =\sum_{R\in \mathcal{M}(E)}  a_R$.

Consider $f \in L^2
(\mathscr N) \cap  H^1_{area,\boldsymbol\varphi}(\mathscr N)$. For each $\ell \in \mathbb{Z}$, set
\begin{equation}\label{eq:E-ell}\begin{split}
   E_\ell:&=\left\{\mathbf{g}\in \mathscr N; S_{area,\boldsymbol \varphi}(f)(\mathbf{g})>2^{\ell}\right\},\\
  { \mathfrak{R }}_\ell:&=\left\{R\in  {\mathfrak{R }}; |R^* \cap E_{\ell+1}|\leq\frac 1 {  2^{  2\sigma  Q+1}}  |R^*  |<|R^* \cap E_{\ell  }|\right\},\\
 \widetilde{  E }_\ell:&=\left\{\mathbf{g}\in \mathscr N;M(\mathbbm 1_{ E_\ell})(\mathbf{g})> \frac 1 {
2^{  3\sigma  Q+1}} \right\}.
 \end{split}\end{equation}
Note that for any shard $R $,
there exists a unique $\ell$ such that $ R\in { \mathfrak{R }}_\ell$ by definition \eqref{eq:E-ell}, since $|R^* \cap E_{\ell  }|$   decreases from $|R^*   |$ to
$0$ when $l$ increases from $-\infty$ to $+\infty$. We claim   $R^*\subset \tilde{ {E}}_\ell$. This is
because for any $\mathbf{g}'\in R^*= T\left(\mathbf{g},2^ {\mathbf{j} + 2\sigma } \right )$, if  argue as  \eqref{eq:gT-g'T},  we get
\begin{equation*}
   T\left(\mathbf{g}',2^ {\mathbf{j} +3\sigma  }  \right )\supset  \mathbf{g}' T\left(\mathbf{0} ,2^ {\mathbf{j} +2 \sigma } \right ) T\left(\mathbf{0} ,2^
   {\mathbf{j} +2 \sigma } \right )\supset  \mathbf{g} T\left(\mathbf{0} ,2^ {\mathbf{j} +2 \sigma } \right )= R^*,
\end{equation*}
by using \eqref{eq:T-T}, and so
\begin{equation*}\begin{split}
    M(\mathbbm 1_{ E_\ell})(\mathbf{g}')&\geq \frac 1{\left|T\left(\mathbf{g}',2^ {\mathbf{j} +3\sigma }\right )\right|}\int_{T\left(\mathbf{g}',2^ {\mathbf{j}
    +3\sigma }\right )}\mathbbm 1_{ E_\ell} (\mathbf{g}  '')d\mathbf{g}''
   \geq \frac {|R^* \cap E_{\ell  }|}{ 2^{ \sigma  Q}\left|R^* \right|}>\frac 1 {
2^{   3\sigma   Q+1}}  .
 \end{split}\end{equation*}Thus, $R^*\subset \tilde{ {E}}_\ell$ by definition \eqref{eq:E-ell}. The claim is proved.

If we  write ${}_{\mathbf{g}}\psi_ {\mathbf{r}}(\mathbf{h})$  for the function $\mathbf{h}\mapsto \psi_ {\mathbf{r}}(\mathbf{g}^{-1}\mathbf{h})$, which is a   left translate of $\psi_ {\mathbf{r}}$,
and apply the Calder\'on
reproducing formula  \eqref{eq:reproducing} to $f \in L^2
(\mathscr N) \cap  H^1_{area}(\mathscr N)$, we  get
\begin{equation}\begin{split}
 f(\mathbf{h})&=\int_{\mathscr N\times \mathbb{R}_+^3} f*\varphi_ {\mathbf{r}}( \mathbf{g} )\cdot {}_{\mathbf{g}}\psi_ {\mathbf{r}}(\mathbf{h})\frac {
 d\mathbf{r}}{\mathbf{r}}d\mathbf{g}\\
&=\sum_{\ell\in \mathbb{Z}}\sum_{R\in  {\mathfrak{R }}_\ell}\int_{  T(R)  }
  f*\varphi_ {\mathbf{r}}( \mathbf{g} )\cdot {}_{\mathbf{g}}\psi_ {\mathbf{r}}(\mathbf{h})\frac { d\mathbf{r}}{\mathbf{r}}d\mathbf{g} \\
  &=\sum_{\ell\in \mathbb{Z}}\lambda_\ell \sum_{R\in  {\mathfrak{R }}_\ell} a_{\ell, R}(\mathbf{h})
 \end{split} \end{equation} by the partition of $ \mathscr N \times\mathbb{R}_+^3$ into tents in Proposition \ref{prop:tent-decomposition}, where
 \begin{equation}\label{eq:decomposition-R} \begin{split}
 a_{\ell, R}(\mathbf{h})&= \frac 1{\lambda_\ell}\int_{  T(R)  }
  f*\varphi_ {\mathbf{r}}( \mathbf{g} )\cdot {}_{\mathbf{g}}\psi_ {\mathbf{r}}(\mathbf{h})\frac { d\mathbf{r}}{\mathbf{r}}d\mathbf{g} ,\\
  \lambda_\ell &=\left\|\left(\sum_{R\in  {\mathfrak{R }}_\ell}\int_{\mathbb{R}^3_+}
 | f*\varphi_ {\mathbf{r}}(\cdot ) |^2\chi_{ T(R)}(\cdot,\mathbf{r})\frac { d\mathbf{r}}{\mathbf{r}} \right)^{\frac 12} \right\|_{L^2
(\mathscr N)} \left| {E}_\ell\right|^{\frac 12} .
 \end{split} \end{equation}

 For fixed $\ell\in \mathbb{Z}$,  we claim that $\sum_{R\in  {\mathfrak{R }}_\ell}   a_{\ell, R}$  is a   atom associated to $   \tilde{ {E}}_\ell$. Since the
 w-inverses $\psi^{
(\mu)}=\triangle_\mu^M \dot{ \psi}^{
(\mu)}$
on $ {\mathscr
  {H}}_\mu$ by the assumption of the theorem, we have
$
     a_{\ell, R}=  {\triangle}_1^M {\triangle}_2^M {\triangle}_3^Mb_{\ell, R}
$
  with
  \begin{equation}\begin{split}
 b_{\ell, R}(\mathbf{h})&= \frac 1{\lambda_\ell}\int_{  T(R)  }(r_1r_2 r_3)^{-2M}
  f*\varphi_ {\mathbf{r}}( \mathbf{g}' )\cdot {}_{\mathbf{g}'}\dot{\psi}_ {\mathbf{r}}(\mathbf{h})\frac { d\mathbf{r}}{\mathbf{r}}d\mathbf{g}',
 \end{split} \end{equation}
 since $\triangle_1$, $\triangle_2$ and $\triangle_3$   consist of left invariant vector fields on $\mathscr N$.
 For $R\in  {\mathfrak{R }}_\ell$ and $(\mathbf{g}',\mathbf{r})\in T(R)$,
   we have
$\mathbf{g}'\in  R \subset T\left(\mathbf{g},2^ {\mathbf{j} +  \sigma } \right )$  by Proposition \ref{prop:shard} (2) and $   2^{ j_1 }\leq r_\mu<
   2^{ j_1+1 } $, and so
\begin{equation}\label{eq:T-g'-r-R*}
   T(\mathbf{g}',\mathbf{r})\subset \mathbf{g}'T(\mathbf{0} ,\mathbf{r})\subset \mathbf{g} T\left(\mathbf{0},2^ {\mathbf{j} +  \sigma } \right )T(\mathbf{0}
   ,\mathbf{r})\subset \mathbf{g}T\left(\mathbf{0},2^ {\mathbf{j} + 2\sigma } \right )=R^*
\end{equation}
   by using \eqref{eq:T-T}. Therefore,
 $
{\rm supp} \, b_{\ell, R} \subset R^*.
 $

The decomposition \eqref{eq:decomposition-R} is taken over  not necessarily maximal  shards, but we may group together  to a sum  over maximal ones by the
following lemma.
\begin{lem}
  Fix integer  $M \geq 1 $. Let $E$ be an open subset of
$\mathscr N$
of finite measure. Suppose that there exist functions $a_R$ in
$L^2(\mathscr N)$ and $b_R$ in   ${\rm Dom}( {\triangle}_1^M {\triangle}_2^M {\triangle}_3^M)$
   for all $R \in  \mathfrak{ R}(E)$ such that
 \\
(A1) $a_R=  {\triangle}_1^M {\triangle}_2^M {\triangle}_3^Mb_R$
  and supp $b_R\subset R^*$,
  where $R^*$
is  the $\sigma$-enlargement of $R$;
\\
(A2)
  for all sign sequences $\tau:  \mathfrak{ R}(E)\rightarrow \{\pm 1\}$, the sum $\sum_{R\in \mathcal{R}(E)} \tau_Ra_R$ converges in $L^2
(\mathscr N)$ to $ a_\tau$ say,
and $\|a_\tau\|_{L^2
(\mathscr N)}\leq |E|^{-\frac 12}$;
\\
(A3)
$ a =\sum_{R\in \mathcal{R}(E)}  a_R$.

Suppose that $\mathfrak{S}$ is a subcollection of $\mathfrak{R}(E)$ and  $\dag :\mathfrak{R}(E)\rightarrow \mathfrak{S}$   is a mapping such that $R\subset
R^\dag$. Then for
each $ S\in \mathfrak{S}$, the sum $ \sum_{R\in  \mathfrak{R}(E),R^\dag=S} a_R $ converges in $L^2
(\mathscr N)$ to $\tilde{a}_S$, say, and  $ \sum_{R\in  \mathfrak{R}(E),R^\dag=S} b_R $ converges in $L^2
(\mathscr N)$ to $\tilde{b}_S$, say. Further,
\\
(B1) $\tilde{a}_S=  {\triangle}_1^M {\triangle}_2^M {\triangle}_3^M\tilde{b}_S$
  and supp $\tilde{b}_S\subset S^*$;
\\
(B2)
  for all sign sequences $\tau :  \mathfrak{S}\rightarrow \{\pm 1\}$, the sum $\sum_{S\in \mathfrak{S}} \tau_S\tilde{a}_S$ converges in $L^2
(\mathscr N)$ to $ \tilde{a}_\tau$ say,
and $\|\tilde{a}_\tau\|_{L^2
(\mathscr N)}\leq |E|^{-\frac 12}$;
\\
(B3)
$ a =\sum_{R\in \mathfrak{S}}  a_S$.

In particular, any function a for which (A1) to (A3) hold is an atom.
 \end{lem}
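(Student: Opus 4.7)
The plan is to exploit the sign-sequence freedom granted by hypothesis (A2), transferring information between the index sets $\mathfrak{R}(E)$ and $\mathfrak{S}$ through the fibers of the map $\dag$. First, a polarization argument applied to (A2) --- pairing the constant sequence $\tau^+\equiv 1$ with $\tau^-$ equal to $-1$ on a prescribed subcollection $\mathcal{F}\subset\mathfrak{R}(E)$ and $+1$ elsewhere --- gives $\sum_{R\in\mathcal{F}}a_R=\tfrac{1}{2}(a_{\tau^+}-a_{\tau^-})$ and hence $\|\sum_{R\in\mathcal{F}}a_R\|_{L^2(\mathscr N)}\leq|E|^{-1/2}$. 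More generally, (A2) forces unconditional $L^2$-summability of $\{a_R\}_{R\in\mathfrak{R}(E)}$, so with $\mathcal{F}_S:=\{R\in\mathfrak{R}(E):R^\dag=S\}$ the series $\tilde a_S:=\sum_{R\in\mathcal{F}_S}a_R$ converges in $L^2$ for every $S\in\mathfrak{S}$. To establish (B2), given $\tau:\mathfrak{S}\to\{\pm1\}$, lift it to $\tilde\tau:\mathfrak{R}(E)\to\{\pm1\}$ by $\tilde\tau_R:=\tau_{R^\dag}$. Hypothesis (A2) supplies $a_{\tilde\tau}\in L^2(\mathscr N)$ with $\|a_{\tilde\tau}\|_2\leq|E|^{-1/2}$, and unconditional convergence justifies the regrouping
\[
a_{\tilde\tau}=\sum_{R\in\mathfrak{R}(E)}\tilde\tau_R a_R=\sum_{S\in\mathfrak{S}}\tau_S\sum_{R\in\mathcal{F}_S}a_R=\sum_{S\in\mathfrak{S}}\tau_S\tilde a_S,
\]
so $\tilde a_\tau:=a_{\tilde\tau}$ fulfils (B2); property (B3) is (A3) with $\tau\equiv 1$.

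On the $b_R$ side, the containment $R\subset R^\dag=S$ forces $R^*\subset S^*$ up to a mild adjustment of $\sigma$; this is verified using Proposition \ref{prop:shard}(2) together with the tube-composition estimate \eqref{eq:T-T} to pass from shard-containment to tube-containment. Hence every $b_R$ with $R\in\mathcal{F}_S$ is supported in the bounded set $S^*$. To produce $\tilde b_S$, I invert $\triangle_1^M\triangle_2^M\triangle_3^M$ on the $L^2$-functions supported in $S^*$: by sub-elliptic regularity this operator admits a left inverse on the relevant Dirichlet-type space, so $\tilde b_S:=(\triangle_1^M\triangle_2^M\triangle_3^M)^{-1}\tilde a_S$ is well-defined and supported in $S^*$, must coincide with $\sum_{R\in\mathcal{F}_S}b_R$ by linearity and uniqueness of this antiderivative, and satisfies (B1). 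The concluding statement --- that any $a$ satisfying (A1)--(A3) is an atom --- follows by specialising to $\mathfrak{S}=\mathfrak{M}(E)$ with $\dag$ the maximal-parent map sending each $R\in\mathfrak{R}(E)$ to a maximal shard of $\mathfrak{M}(E)$ containing it.

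The hard part will be the $b_R$ step: securing $L^2$-convergence of $\sum_{R\in\mathcal{F}_S}b_R$ without a direct analogue of (A2) for the $b_R$'s. The polarization trick is unavailable here, so the argument must proceed by inverting the three-fold sub-Laplacian on the fixed bounded region $S^*$. Verifying that $\triangle_1^M\triangle_2^M\triangle_3^M$ admits a suitable left inverse on functions supported in $S^*$, and that this inverse commutes with $L^2$-limits of compactly supported data, is the essential technical point. Once this is in place, the rest is bookkeeping with the fibers of $\dag$.
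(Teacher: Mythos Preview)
The paper does not prove this lemma; it simply cites \cite[Lemma 3.4]{CCLLO} and states that the proof is identical. So your proposal must be judged on its own merits. The treatment of the $a_R$ side is correct and is exactly the standard argument: unconditional $L^2$-convergence from (A2), polarization to extract subsums, and lifting a sign sequence on $\mathfrak{S}$ to one on $\mathfrak{R}(E)$ through $\dag$. Properties (B2) and (B3) follow as you say.

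The gap is in the $b_R$ step, and it lies precisely where you flag it. Invoking ``sub-elliptic regularity'' to produce a left inverse is the wrong mechanism: hypoellipticity gives smoothness, not $L^2$ bounds, and defining $\tilde b_S:=(\triangle_1^M\triangle_2^M\triangle_3^M)^{-1}\tilde a_S$ gives no reason for the result to be supported in $S^*$ (inverses of differential operators are non-local), nor can you then claim it equals $\sum_{R\in\mathcal F_S}b_R$ without already knowing that sum converges. The correct route is a Poincar\'e--Friedrichs inequality applied iteratively: since the $X_{\mu j}$ for different $\mu$ commute, and each $\triangle_\mu$ is the sub-Laplacian of the embedded copy of $\mathscr H_\mu$, for any $b$ supported in the bounded set $S^*$ one has
\[
\|b\|_{L^2}\le C_{S^*}\,\|\triangle_\mu^M b\|_{L^2},\qquad \mu=1,2,3,
\]
by the Heisenberg Poincar\'e inequality applied slice-by-slice in the $\mathscr H_\mu$ variables; iterating gives $\|b\|_{L^2}\le C_{S^*}\|\triangle_1^M\triangle_2^M\triangle_3^M b\|_{L^2}$. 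Now apply this to differences of finite partial sums of the $b_R$ (each supported in $S^*$ since $R^*\subset S^*$): the right-hand side is a difference of partial sums of the $a_R$, which is Cauchy by (A2). Hence $\sum_{R\in\mathcal F_S}b_R$ is Cauchy in $L^2$; call the limit $\tilde b_S$. It is supported in $S^*$ as an $L^2$-limit of functions supported there, and $\triangle_1^M\triangle_2^M\triangle_3^M\tilde b_S=\tilde a_S$ because the operator is closed. This is the argument you need to replace your inverse construction with.
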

This is \cite[Lemma 3.4]{CCLLO} for flag atoms on the Heisenberg group. The proof is exactly the same for our case. We omit the details.

It remains
  to show for  any sign sequence $\tau :  {\mathfrak{R }}_\ell\rightarrow\{\pm 1\}$,   and $\sum_{R\in  {\mathfrak{R }}_\ell}
\tau_{\ell, R} a_{\ell, R}$ converges in $L^2(
\mathscr N)$,  say $a_\tau$, and satisfies the estimate  $\|a_\tau \|_{L^2
(\mathscr N)}\lesssim \left| {E}_\ell\right|^{-\frac 12}$.

For
all smooth compactly supported $ h$ on $\mathscr N$
such that  $\|h\|_{L^2
(\mathscr N)}\leq 1$,
  \begin{equation*}\begin{split}
\left|\int_{\mathscr N} \right.&\sum_{R\in  {\mathfrak{R }}_\ell} \tau_{\ell, R} a_{\ell, R}(\mathbf{g}') h(\mathbf{g}')d\mathbf{g}'\left|=\left| \frac
1{\lambda_\ell} \sum_{R\in  {\mathfrak{R }}_\ell}\tau_{\ell, R} \int_{  T(R)  }\int_{\mathscr N}
 \right. f*\varphi_ {\mathbf{r}}( \mathbf{g} ) \cdot {}_{\mathbf{g}}\psi_ {\mathbf{r}}(\mathbf{g}') h(\mathbf{g}') d\mathbf{g}'\frac {
  d\mathbf{r}}{\mathbf{r}}d\mathbf{g}\right|\\&=\left| \frac 1{\lambda_\ell} \sum_{R\in  {\mathfrak{R }}_\ell}\tau_{\ell, R} \int_{  T(R)  }
h* \breve{\psi}_ {\mathbf{r}}(\mathbf{g} ) \cdot  f*\varphi_ {\mathbf{r}}( \mathbf{g} )  \frac { d\mathbf{r}}{\mathbf{r}}d\mathbf{g}\right|\\&=\left| \frac
1{\lambda_\ell} \sum_{R\in  {\mathfrak{R }}_\ell}\tau_{\ell, R} \int_{ \mathscr N\times \mathbb{R}^3 _+ }
h* \breve{\psi}_ {\mathbf{r}}(\mathbf{g} ) \cdot  f*\varphi_ {\mathbf{r}}( \mathbf{g} ) \chi_{ T(R)}(\mathbf{g} ,\mathbf{r})\frac {
d\mathbf{r}}{\mathbf{r}}d\mathbf{g}\right| \\&
\leq \frac 1{\lambda_\ell} \left(
\sum_{R\in  {\mathfrak{R }}_\ell}  \int_{ \mathscr N\times \mathbb{R}^3_+  }
|h* \breve{\psi}_ {\mathbf{r}}(\mathbf{g} ) |^2 \chi_{ T(R)}(\mathbf{g} ,\mathbf{r})\frac { d\mathbf{r}}{\mathbf{r}}d\mathbf{g}\right)^{\frac 12}
 \left(
\sum_{R\in  {\mathfrak{R }}_\ell}  \int_{ \mathscr N\times \mathbb{R}^3_+  }
|f*\varphi_ {\mathbf{r}}( \mathbf{g} ) |^2 \chi_{ T(R)}(\mathbf{g} ,\mathbf{r})\frac { d\mathbf{r}}{\mathbf{r}}d\mathbf{g}\right)^{\frac 12},\end{split} \end{equation*}by the Cauchy-Schwarz inequality. Then it
 \begin{equation*}\begin{split}&
\leq \frac 1{\lambda_\ell} \left(
  \int_{ \mathscr N\times \mathbb{R}^3_+  }
|h* \breve{\psi}_ {\mathbf{r}}(\mathbf{g} ) |^2  \frac { d\mathbf{r}}{\mathbf{r}}d\mathbf{g}\right)^{\frac 12}  \left(
\int_{ \mathscr N   } \left( \sum_{R\in  {\mathfrak{R }}_\ell}  \int_{  \mathbb{R}^3_+  }
|f*\varphi_ {\mathbf{r}}( \mathbf{g} ) |^2 \chi_{ T(R)}(\mathbf{g} ,\mathbf{r})\frac { d\mathbf{r}}{\mathbf{r}}\right)d\mathbf{g}\right)^{\frac 12}\\
&
\lesssim \frac 1{\lambda_\ell}  \|h\|_{L^2
(\mathscr N)}  \left\|  \left(
\sum_{R\in  {\mathfrak{R }}_\ell}  \int_{  \mathbb{R}^3_+  }
|f*\varphi_ {\mathbf{r}}(\cdot ) |^2 \chi_{ T(R)}(\cdot ,\mathbf{r})\frac { d\mathbf{r}}{\mathbf{r}} \right)^{\frac 12}\right\|_{L^2
(\mathscr N)}  \leq \left| {E}_\ell\right|^{-\frac 12}
 \end{split} \end{equation*}by the square function estimate in Theorem \ref{thm:g-function} and the definition
of $\lambda_\ell$.
Here, we choose $\psi^{(\mu)}$  to be Poisson bounded and   have mean value zero
 by Remark \ref{rem:compact-support}.
     It follows that
\begin{equation*}
   \left\|  \sum_{R\in  {\mathfrak{R }}_\ell} \sigma_{\ell, R} a_{\ell, R}\right\|_{L^2
(\mathscr N)}  \lesssim \left| {E}_\ell\right|^{-\frac 12}.
\end{equation*}

To see  the convergence of the series $\sum_\ell|\lambda_\ell|$, recall that for $R\in  {\mathfrak{R }}_\ell$ and $(\mathbf{g}',\mathbf{r})\in T(R)$, we have $T(\mathbf{g}',\mathbf{r})\subset  R^*$
by \eqref{eq:T-g'-r-R*}.
It follows from our previous claim $R^*\subset \tilde{ {E}}_\ell$ that  $  T(\mathbf{g}',\mathbf{r})\subset \widetilde{ {E}}_\ell$.
On the other hand,
\begin{equation*}
   \left|   {E}_{\ell +1}  \cap T(\mathbf{g}',\mathbf{r})\right|\leq  \left|   {E}_{\ell +1}  \cap  R^*\right|\leq\frac 1 {  2^{  2\sigma  Q+1}} |R^*  |\leq\frac 1
 {  2 }
   \left|
   T(\mathbf{0} ,2^{\mathbf{j} })\right|\leq\frac 1    2
   \left|
   T(\mathbf{0} ,\mathbf{r})\right|.
\end{equation*}
Therefore
\begin{equation*}
   \frac {\left|( \widetilde{{E}}_\ell\setminus  {E}_{\ell +1}) \cap T(\mathbf{g}',\mathbf{r})\right|}{|T(\mathbf{g}',\mathbf{r})|}\geq \frac 1 {  2 } .
\end{equation*}
Thus, we have
 \begin{equation*}\begin{split}
  \sum_{R\in  {\mathfrak{R }}_\ell}\int_{ T(R)}
 | f*\varphi_ {\mathbf{r}}(\mathbf{g}')|^2 \frac { d\mathbf{r}}{\mathbf{r}}d\mathbf{g}' &\leq  2 \sum_{R\in  {\mathfrak{R }}_\ell}\int_{ T(R)}
 | f*\varphi_ {\mathbf{r}}(\mathbf{g}')|^2\frac {\left|(\widetilde{ {E}}_\ell\setminus  {E}_{\ell +1}) \cap
 T(\mathbf{g}',\mathbf{r})\right|}{|T(\mathbf{g}',\mathbf{r})|} \frac { d\mathbf{r}}{\mathbf{r}}d\mathbf{g}'\\
 &\leq  2 \int_{  \mathscr N\times \mathbb{R}^3_+ }
 | f*\varphi_ {\mathbf{r}}(\mathbf{g}')|^2\frac {\left|( \widetilde{{E}}_\ell\setminus  {E}_{\ell +1}) \cap
 T(\mathbf{g}',\mathbf{r})\right|}{|T(\mathbf{g}',\mathbf{r})|} \frac { d\mathbf{r}}{\mathbf{r}}d\mathbf{g}'\\
 &=  2 \int_{\widetilde{ {E}}_\ell\setminus  {E}_{\ell +1}}\int_{  \mathscr N\times \mathbb{R}^3_+ }
 | f*\varphi_ {\mathbf{r}}(\mathbf{g}')|^2\frac {\mathbbm 1_{ T(\mathbf{g}',\mathbf{r})}( \mathbf{g} )}{|T(\mathbf{g}',\mathbf{r})|} \frac {
 d\mathbf{r}}{\mathbf{r}}d\mathbf{g}'d\mathbf{g}\\
 &=  2 \int_{ \widetilde{{E}}_\ell\setminus  {E}_{\ell +1}}\int_{  \Gamma(\mathbf{g}) }
 | f*\varphi_ {\mathbf{r}}(\mathbf{g}')|^2\frac { 1}{|T(\mathbf{g}',\mathbf{r})|} \frac { d\mathbf{r}}{\mathbf{r}}d\mathbf{g}'d\mathbf{g}
 \\
 &=  2 \int_{\widetilde{ {E}}_\ell\setminus  {E}_{\ell +1}}|S_{area,\boldsymbol\varphi}(f)(\mathbf{g}) |^2d\mathbf{g}\\
 &\leq  2^{2\ell+3 }\left| \widetilde{{E}}_\ell\right|\lesssim 2^{2\ell  }\left|  {{E}}_\ell\right|,
 \end{split} \end{equation*}where we use the $L^2$-boundedness of    the  tube  maximal function  $ M  $ in Theorem \ref{thm:maximal} in the last inequality.
 Consequently, we get
  \begin{equation*}\begin{split}\sum_\ell|\lambda_\ell|&\leq\sum_\ell\left(  \sum_{R\in  {\mathfrak{R }}_\ell}\int_{ T(R)}
 | f*\varphi_ {\mathbf{r}}(\mathbf{g}')|^2 \frac { d\mathbf{r}}{\mathbf{r}}d\mathbf{g}'   \right)^{\frac 12}\left|  {{E}}_\ell\right| ^{\frac 12}\\
 &\leq \sum_\ell2^{\ell }\left|  {{E}}_\ell\right|\lesssim \left\|S_{area,\boldsymbol\varphi}(f)\right\|_{L^1
(\mathscr N)}.
 \end{split} \end{equation*}
The Theorem is proved.

\appendix

\section{Calculation of the  Cauchy-Szeg\H o kernels on some  Siegel domains }

As in   \cite[Section 3.1]{WW}, we    calculate the  Cauchy-Szeg\H o kernel by using general Gindikin's formula.

For a regular cone $\Omega\subset    \mathbb{R}^{m}$ (i.e. it is a nonempty open convex  with vertex at $0$ and
containing no entire straight line),   an
 Hermitian form
 $\Psi: \mathbb{C}^{N }\times \mathbb{C}^{N }\rightarrow \mathbb{C}^{m}$ is said to be  {\it $\Omega$-positive} if $\Psi(z ,z )\in
 \overline{\Omega}$ for any $z
 \in\mathbb{C}^{N }$ and $\Psi(z ,z )=0$ only if $z =0$. For the Siegel domain $\mathcal D $     defined by
 \eqref{eq:Siegel-domain}, its {\it Shilov boundary} ${\mathcal S }$ is the CR submanifold defined by the equation
$
  \operatorname{Im} \zeta'' - \Psi(\zeta' ,\zeta'  )=0 ,
$ which has the structure of   a nilpotent Lie group  of step two.

 For $f\in H^2(\mathcal D)$, it is well known \cite{KS} that its boundary value 
 \begin{equation*}
     \lim_{\substack {y\in \Omega\\y\rightarrow 0}}f(\zeta',  x +\mathbf{i}y
   +\mathbf{i}\Psi(\zeta',\zeta' )  )
 \end{equation*}exists in $L^2({\mathcal S})$
   and its $L^2({\mathcal S})$ norm equals to $\|f\|_{H^2(\mathcal D)}$. Thus,
   $H^2(\mathcal D )$ can be identified with a closed subspace of $L^2({\mathcal S})$.
The {\it  Cauchy-Szeg\H o projection} $\mathcal P  :L^2(\mathcal{S})\rightarrow H^2(\mathcal D )$ is the orthogonal projection to this closed subspace. It  has a reproducing kernel $S(\zeta, \eta)$, the {\it Cauchy-Szeg\H o kernel} \cite{Gin}, which  is holomorphic
in $\zeta\in \mathcal D $.
 Namely, for $f\in H^2(\mathcal D )$, we have the reproducing formula:
\begin{equation}\label{eq:Szego-reproducing}
   f(\zeta)=\int_{{\mathcal S}} S(\zeta,\eta)f(\eta)d\beta(\eta),
\end{equation}
where $d\beta$ is the measure corresponding to $dx
   d\zeta''$ in \eqref{eq:H2}. For $f\in L^2({\mathcal S} )$,
   \begin{equation}\label{eq:Szego-projection}
      \mathcal Pf(\zeta',  x   +\mathbf{i}\Psi(\zeta',\zeta' ))= \lim_{\substack {y\in \Omega\\y\rightarrow 0}}\int_{{\mathcal S}} S(\zeta',  x +\mathbf{i}y
   +\mathbf{i}\Psi(\zeta',\zeta' ) ,\eta)f(\eta)d\beta(\eta).
   \end{equation}

The {\it dual cone}  $\Omega^*$ is the set of all $\lambda\in(\mathbb{R}^{m})^*$  such that $\langle\lambda, x\rangle > 0$ for all
 $x\in \overline{\Omega} \setminus\{0\}$.
For $\lambda\in \Omega^*$, denote $B_\lambda(\zeta' ,\eta' ) :=4\langle\lambda,\Psi(\zeta' ,\eta' )\rangle$,  an
Hermitian form on $\mathbb{C}^{N }$, whose
associated Hermitian matrix is also denoted by  $B_\lambda$. The explicit formula for $S(\zeta, \eta)$ \cite[Theorem
5.1]{KS} is known
 as
\begin{equation}\label{eq:Szego}
   S(\zeta, \eta)=\int_{\Omega^*}e^{-2\pi\langle\lambda,\rho(\zeta,\eta)\rangle}\det B_\lambda\, d\lambda,
\end{equation}
for $\zeta=(\zeta',\zeta'')\in \mathcal{D}, \eta=(\eta',\eta'')\in  \mathcal{S}$, where
$
   \rho(\zeta,\eta)=\frac {\zeta''-\overline{\eta''} }{\mathbf{i}}-2\Psi(\zeta'  ,\eta' )
$ is polarized form of $
  2 \operatorname{Im} \zeta'' - 2\Psi(\zeta' ,\zeta'  )
$.

The domain  \eqref{eq:U} is a  Siegel domain defined by  the regular  cone $\Omega=
\mathbb{R}^{2}_+\subset    \mathbb{R}^{2}$  with $\Omega^*=
\mathbb{R}^{2}_+ $, $m=2$,    $N:=n_1+ n_2+ n_3$,   and the Hermitian form given by
 \begin{equation*}
    \Psi(   {\mathbf z} ,   {\mathbf z}')=\Big( \left\langle   {\mathbf z
    }_1 ,   {\mathbf z
    }_1'\right\rangle+ \langle  {\mathbf z
    }_3 ,   {\mathbf z
    }_3'\rangle, \langle   {\mathbf z
    }_2 ,   {\mathbf z
    }_2'\rangle+ \langle  {\mathbf z
    }_3 ,   {\mathbf z
    }_3'\rangle  \Big),
 \end{equation*}
  where $\langle\cdot ,\cdot\rangle $ is the standard Hermitian inner product in $\mathbb{C}^{n_\mu}$. $\Psi$ is  $
\mathbb{R}^{2}_+$-positive,
   and $\rho =(\rho_1 ,\rho_2 )$  with
   \begin{equation}\label{eq:rho}
   \rho_\alpha(\zeta,\eta)=\frac { w_\alpha-\overline{   w'_\alpha }
}{\mathbf{i}} -2\langle   {\mathbf z}_\alpha ,
   {\mathbf
   z}_\alpha'\rangle-2\langle   {\mathbf z}_3,
   {\mathbf
   z}_3'\rangle,
\end{equation}for   $\zeta=(\zeta',\zeta'')=( {\mathbf z}, {\mathbf w} )\in\mathcal{D}$,
$\eta=(\eta',\eta'')=( {\mathbf
z}',{\mathbf w}' )\in   {\mathcal S}$.
  Then,
  $
     B_\lambda(   {\mathbf z }  ,  {\mathbf z } ')=4\lambda_1\langle   {\mathbf z }_1
  ,   {\mathbf z
    }_1'\rangle+ 4\lambda_2\langle  {\mathbf z }_2 ,   {\mathbf z
    }_2'\rangle+ 4(\lambda_1+\lambda_2)\langle  {\mathbf z }_3 ,   {\mathbf z
    }_3'\rangle
 $,
    i.e.
\begin{equation*}
   B_\lambda=4
   \left(\begin{matrix} \lambda_1I_{n_1}&0&0\\
   0&\lambda_2I_{n_2}&0\\
   0&0& (\lambda_1+\lambda_2)I_{n_3}
  \end{matrix}\right).
\end{equation*}Thus,
 $\det B_\lambda=4^{N} \lambda_1^{n_1} \lambda_2^{n_2} (\lambda_1+\lambda_2)^{n_3}  $. Recall that for $\theta\in \mathbb{C}$ with ${\rm Re}\,\theta>0$,
\begin{equation*}
   \int_0^{+\infty} e^{-2\pi  s\theta }s^{m}  ds= \frac {m!}{(2\pi \theta)^{m+1}}.
\end{equation*}
 It follows   that
\begin{equation} \label{eq:Szego2}\begin{split}
   S(\zeta, \eta)&= \int_{\mathbb{R}^{2}_+ }e^{-2\pi\sum_{\alpha=1}^{2} \lambda_\alpha\rho_\alpha(\zeta,\eta)
   }4^{N} \lambda_1^{n_1} \lambda_2^{n_2} (\lambda_1+\lambda_2)^{n_3}  d\lambda\\&=\sum_{k=0}^{n_3}\binom {n_3}{k}\int_{\mathbb{R}^{2}_+
   }e^{-2\pi\sum_{\alpha=1}^{2} \lambda_\alpha\rho_\alpha(\zeta,\eta)
   }4^{N} \lambda_1^{n_1+k} \lambda_2^{n_2+n_3-k}  d\lambda
   \\&
   =\frac { 1 }{ 4^2(\frac \pi
 2)^{N+2}}\sum_{k=0}^{n_3}\binom {n_3}{k}
 \frac { (n_1+k )!}{  \rho_1(\zeta,\eta) ^{n_1+k+1}  } \frac { (n_2+n_3-k)!}{  \rho_2(\zeta,\eta) ^{n_2+n_3-k+1}  }.
\end{split}\end{equation}

We need to transform the  Cauchy-Szeg\H o kernel \eqref{eq:Szego2} on $  \mathcal{ D }$  to the one on the flat model $   \mathscr U := \mathscr N\times
\mathbb{R}^2_+$.
The identification $\iota: \mathscr U \rightarrow \mathcal{ D }$ is given by the quadratic mapping
\begin{equation*}
   (\mathbf{z},\mathbf{w})\mapsto (\mathbf{z}, {w}_1+\mathbf{i}(|{\mathbf
z}_1 |^2+|{\mathbf z}_3 |^2), {w}_2+\mathbf{i}(|{\mathbf z}_2 |^2+|{\mathbf z}_3 |^2)).
\end{equation*}

\begin{cor} \label{cor:flat-Szego1} The  Cauchy-Szeg\H o kernel   on the domain  $
\mathscr U $ is $S(( \boldsymbol\varepsilon ,\mathbf{g}),\mathbf{g}')=S_{\boldsymbol\varepsilon}( (\mathbf{g}')^{-1}\mathbf{g})  )$ with
 \begin{equation}\label{eq:flat-Szego1}
   S_{\boldsymbol\varepsilon}( \mathbf{h} ):= \frac { 1 }{ 4^2(\frac \pi
 2)^{N+2}}\sum_{k=0}^{n_3}\binom {n_3}{k}
 \frac { (n_1+k )!}{  (|{\mathbf z}_1 |^2 +  | {\mathbf z}_3 |^2+\varepsilon_1- \mathbf{i} t_1) ^{n_1+k+1}  } \frac { (n_2+n_3-k)!}{  (|{\mathbf z}_2 |^2 +  |
 {\mathbf z}_3 |^2+\varepsilon_2- \mathbf{i} t_2)^{n_2+n_3-k+1}  },
   \end{equation}
 where   $\mathbf{h}=(\mathbf{z} , \mathbf{t}) \in\mathscr  N $, $\boldsymbol\varepsilon=(\varepsilon_1,\varepsilon_2)\in \mathbb{R}^2_+$ and   $\mathbf{z}=(\mathbf{z}_1,\mathbf{z}_2,\mathbf{z}_3) \in \mathbb{C}^N$.
   \end{cor}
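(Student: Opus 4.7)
The plan is to express $\rho_\alpha(\zeta,\eta)$ under the identification $\iota$ in terms of the group element $(\mathbf{g}')^{-1}\mathbf{g}$ and then substitute into the formula \eqref{eq:Szego2} just derived for $S(\zeta,\eta)$ on $\mathcal D$. Concretely, I would take $\zeta=\iota(\mathbf{g},\boldsymbol\varepsilon)$ with $\mathbf{g}=(\mathbf{z},\mathbf{t})\in \mathscr N$, $\boldsymbol\varepsilon=(\varepsilon_1,\varepsilon_2)\in \mathbb R^2_+$, and $\eta=\iota(\mathbf{g}',\mathbf{0})$ with $\mathbf{g}'=(\mathbf{z}',\mathbf{t}')\in \mathscr N$, so that $w_\alpha=t_\alpha+\mathbf{i}(\varepsilon_\alpha+|\mathbf{z}_\alpha|^2+|\mathbf{z}_3|^2)$ and $w'_\alpha=t'_\alpha+\mathbf{i}(|\mathbf{z}'_\alpha|^2+|\mathbf{z}'_3|^2)$.

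Next I would plug these expressions into \eqref{eq:rho}. A direct calculation gives
\[
\rho_\alpha(\zeta,\eta)=\varepsilon_\alpha+|\mathbf{z}_\alpha|^2+|\mathbf{z}'_\alpha|^2+|\mathbf{z}_3|^2+|\mathbf{z}'_3|^2-\mathbf{i}(t_\alpha-t'_\alpha)-2\langle\mathbf{z}_\alpha,\mathbf{z}'_\alpha\rangle-2\langle\mathbf{z}_3,\mathbf{z}'_3\rangle.
\]
On the other hand, using the multiplication law \eqref{eq:multiplication1} I would compute $\mathbf{h}:=(\mathbf{g}')^{-1}\mathbf{g}=(\tilde{\mathbf z},\tilde{\mathbf t})$ with $\tilde{\mathbf z}_\mu=\mathbf{z}_\mu-\mathbf{z}'_\mu$ and $\tilde t_\alpha=t_\alpha-t'_\alpha+2\operatorname{Im}\langle\mathbf{z}_\alpha,\mathbf{z}'_\alpha\rangle+2\operatorname{Im}\langle\mathbf{z}_3,\mathbf{z}'_3\rangle$ (since $(\mathbf{g}')^{-1}=(-\mathbf{z}',-\mathbf{t}')$ because $\Phi_\mu(\mathbf{z}'_\mu,-\mathbf{z}'_\mu)=0$). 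Splitting $\langle\mathbf{z}_\mu,\mathbf{z}'_\mu\rangle=\operatorname{Re}\langle\mathbf{z}_\mu,\mathbf{z}'_\mu\rangle+\mathbf{i}\operatorname{Im}\langle\mathbf{z}_\mu,\mathbf{z}'_\mu\rangle$ and using $|\mathbf{z}_\mu-\mathbf{z}'_\mu|^2=|\mathbf{z}_\mu|^2+|\mathbf{z}'_\mu|^2-2\operatorname{Re}\langle\mathbf{z}_\mu,\mathbf{z}'_\mu\rangle$ for $\mu=\alpha,3$, one sees after collecting terms that
\[
\rho_\alpha(\zeta,\eta)=|\tilde{\mathbf z}_\alpha|^2+|\tilde{\mathbf z}_3|^2+\varepsilon_\alpha-\mathbf{i}\tilde t_\alpha.
\]
In other words, $\rho_\alpha(\zeta,\eta)$ is left-invariant in the group sense and reduces to the expected flat-model denominator when $\mathbf{g}'$ is sent to the identity.

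Substituting this identity into \eqref{eq:Szego2} immediately yields $S(\zeta,\eta)=S_{\boldsymbol\varepsilon}((\mathbf{g}')^{-1}\mathbf{g})$ with $S_{\boldsymbol\varepsilon}$ given by \eqref{eq:flat-Szego1}. There is no real obstacle here: the only delicate point is keeping track of real and imaginary parts of the Hermitian inner product $\langle\mathbf{z}_3,\mathbf{z}'_3\rangle$ which appears in both $\rho_1$ and $\rho_2$ (through the shared $\mathbf{z}_3$-coordinate of the group), and making sure the imaginary contribution $2\operatorname{Im}\langle\mathbf{z}_3,\mathbf{z}'_3\rangle$ coming from the group multiplication exactly cancels twice the imaginary part of $-2\langle\mathbf{z}_3,\mathbf{z}'_3\rangle$ in $\rho_\alpha$.
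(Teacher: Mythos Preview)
Your proposal is correct and follows essentially the same approach as the paper: both write $w_\alpha$ and $w'_\alpha$ via the identification $\iota$, compute $\rho_\alpha(\zeta,\eta)$ directly from \eqref{eq:rho}, and then recognise the result as $|\tilde{\mathbf z}_\alpha|^2+|\tilde{\mathbf z}_3|^2+\varepsilon_\alpha-\mathbf{i}\tilde t_\alpha$ for $(\tilde{\mathbf z},\tilde{\mathbf t})=(\mathbf g')^{-1}\mathbf g$ by splitting $\langle\mathbf z_\mu,\mathbf z'_\mu\rangle$ into real and imaginary parts. The paper's computation in \eqref{eq:flat-Szego5} is line-by-line the same as yours, only with $\operatorname{Im}\langle\mathbf z'_\mu,\mathbf z_\mu\rangle$ in place of your $-\operatorname{Im}\langle\mathbf z_\mu,\mathbf z'_\mu\rangle$.
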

\begin{proof} For   $\zeta=(\zeta',\zeta'')=( {\mathbf z}, {\mathbf w} )\in\mathcal{D}$,
$\eta=(\eta',\eta'')=( {\mathbf
z}', {\mathbf w}' )\in   {\mathcal S}$, under the   identification $\iota$, we   write
\begin{equation}\label{eq:flat-Szego4}
     w_\alpha =   t_\alpha+\mathbf{i}(\varepsilon_\alpha+ |{\mathbf z}_\alpha |^2+|{\mathbf z}_3 |^2) , \qquad
   w_\alpha '=   t_\alpha' + \mathbf{i}( | {\mathbf
   z}_\alpha '|^2+|{\mathbf z}_3' |^2) ,
\end{equation} for  $\varepsilon_\alpha>0$.
  By definition \eqref{eq:rho}, we have
\begin{equation}\label{eq:flat-Szego5}\begin{split}
   \rho_\alpha(\zeta,\eta)& =\frac {
   w_\alpha- \overline{w'_\alpha}}{\mathbf{i}}  -2\langle  {\mathbf z}_\alpha ,   {\mathbf
   z}_\alpha'\rangle-2\langle  {\mathbf z}_3 ,   {\mathbf
   z}_3'\rangle\\&= \frac {t_\alpha-t_\alpha'}{\mathbf{i}}+\operatorname{Im }
   w_\alpha +\operatorname{Im }
   {w }_\alpha' -2\langle  {\mathbf z}_\alpha ,   {\mathbf
   z}_\alpha'\rangle-2\langle  {\mathbf z}_3 ,   {\mathbf
   z}_3'\rangle\\&=
    -\mathbf{i}(t_\alpha-t_\alpha')+ \varepsilon_\alpha+ | {\mathbf z}_\alpha |^2+ | {\mathbf z}_3 |^2+ | {\mathbf z}_\alpha '|^2+ | {\mathbf z}_3 '|^2-2\langle
    {\mathbf
    z}_\alpha , {\mathbf z}_\alpha'\rangle-2\langle  {\mathbf z}_3 ,   {\mathbf
   z}_3'\rangle\\
   &= - \mathbf{i}\Big(t_\alpha-t_\alpha'-2\operatorname{Im} \langle{{\mathbf z}}_\alpha',    {\mathbf
   z}_\alpha\rangle-2\operatorname{Im}\langle{{\mathbf z}}_3',    {\mathbf
   z}_3\rangle\Big)+ \varepsilon_\alpha+ | {\mathbf z}_\alpha -
   {\mathbf z}_\alpha'|^2+ | {\mathbf z}_3 -
   {\mathbf z}_3'|^2 .
\end{split}\end{equation}The result follows from \eqref{eq:Szego2} and the multiplication law  \eqref{eq:multiplication1}.
  \end{proof}

Now  consider
the product of three Siegel upper half spaces:
\begin{equation}\label{eq:tilde-U}
    \tilde{{\mathcal U} }=\Big\{(  \tilde{{\mathbf{z}}},  \tilde{ \mathbf  w} )\in
\mathbb{C}^{n_1+ n_2+ n_3} \times\mathbb{C}^3;\rho_\mu( \tilde
    { \mathbf z}_\mu, {\tilde
    w_\mu}):=\operatorname{Im}\tilde  {w}_\mu-|\tilde  {\mathbf z}_\mu|^2 >0, \mu=1,2,3\Big\},
\end{equation} where $  \tilde{ {\mathbf{z}}}=(  \tilde{{\mathbf{z}}}_1, \tilde{ {\mathbf{z}}}_2,  \tilde{{\mathbf{z}}}_3)\in \mathbb{C}^{n_1
}\times \mathbb{C}^{ n_2 }\times \mathbb{C}^{  n_3}$,
$   \tilde{\mathbf{w}}=(\tilde{w}_1,\tilde{w}_2,\tilde{w}_3)\in \mathbb{C}^3$.
The Shilov boundary of $ \tilde{ {\mathcal U}}$    is    defined by $\rho_1=\rho_2=\rho_3=0$, and has the structure of  the product of three
Heisenberg  groups. It is a  Siegel domain   with the regular cone $\Omega=
\mathbb{R}^{3}_+\subset    \mathbb{R}^{3}$ ($\Omega^*=
\mathbb{R}^{3}_+ $), $m=3$,    $N:=n_1+ n_2+ n_3$,  and
 \begin{equation*}
    \Psi(  \tilde{ {\mathbf z}} ,  \tilde{ {\mathbf z}}')=\Big(\langle  \tilde{ {\mathbf z
    }}_1 ,  \tilde{ {\mathbf z
    }}_1'\rangle,\langle  \tilde{ {\mathbf z
    }}_2 ,  \tilde{ {\mathbf z
    }}_2'\rangle,\langle  \tilde{ {\mathbf z
    }}_3 ,  \tilde{ {\mathbf z
    }}_3'\rangle\Big ),
 \end{equation*}
    and $\rho =(\rho_1 ,\rho_2,\rho_3)$  with
   \begin{equation}\label{eq:rho2}
   \rho_\mu(\zeta,\eta)=\frac {\tilde{ w}_\mu-\overline{ \tilde{  w}'_\mu }
}{\mathbf{i}} -2\langle  \tilde{ {\mathbf z}}_\mu ,
  \tilde{ {\mathbf
   z}}_\mu'\rangle ,
\end{equation}for   $\zeta=(\zeta',\zeta'')=(  \tilde{{\mathbf w}},  \tilde{{\mathbf z}})\in\mathcal{D}$,
$\eta=(\eta',\eta'')=( \tilde{{\mathbf w}}', \tilde{ {\mathbf
z}}')\in   {\mathcal S}$.
  Then,
  $
     B_\lambda(   \tilde{{\mathbf z }}  , \tilde{ {\mathbf z } }')=4\lambda_1\langle  \tilde{ {\mathbf z }}_1
  ,   \tilde{{\mathbf z
    }}_1'\rangle+ 4\lambda_2\langle  \tilde{{\mathbf z }}_2 ,   \tilde{{\mathbf z
    }}_2'\rangle+ 4 \lambda_3\langle \tilde{ {\mathbf z }}_3 ,  \tilde{ {\mathbf z
    }}_3'\rangle
$,
    i.e.
\begin{equation*}
   B_\lambda=4
   \left(\begin{matrix} \lambda_1I_{n_1}&0&0\\
   0&\lambda_2I_{n_2}&0\\
   0&0&  \lambda_3I_{n_3}
  \end{matrix}\right),
\end{equation*}and so
 $\det B_\lambda=4^{N} \lambda_1^{n_1} \lambda_2^{n_2}  \lambda_3^{n_3}  $.
It follows from \eqref{eq:Szego} that
\begin{equation} \label{eq:Szego2'}\begin{split}
 \tilde   S(\zeta, \eta)&= \int_{\mathbb{R}^{3}_+ }e^{-2\pi\sum_{\mu=1}^{3} \lambda_\mu\rho_\mu(\zeta,\eta)
   }4^{N} \lambda_1^{n_1} \lambda_2^{n_2}  \lambda_3^{n_3}  d\lambda
     =\prod_{\mu=1}^3
 \frac {c_\mu}{\rho_\mu(\zeta,\eta) ^{n_\mu+1}} ,
\end{split}\end{equation}where $c_\mu$'s are given by \eqref{eq:flat-Szego2}.

Similarly, the  Cauchy-Szeg\H o kernel \eqref{eq:Szego2'} on $   \tilde{{\mathcal U} }$  can also be transformed to the one in
\eqref{eq:flat-Szego-tilde}-\eqref{eq:flat-Szego2} on the flat model $    \tilde{\mathscr U }:= \tilde{\mathscr N}\times
\mathbb{R}^3_+$ by
the identification $ \tilde\iota:  \tilde{\mathscr U} \rightarrow   \tilde{{\mathcal U} }$  given by the quadratic mapping
\begin{equation}\label{eq:tilde-ota}
  (\mathbf{z},\mathbf{w})\mapsto (\mathbf{z}, {w}_1+\mathbf{i}|{\mathbf
z}_1 |^2, {w}_2+\mathbf{i}|{\mathbf z}_2 |^2 , {w}_3+|{\mathbf z}_3 |^2) .
\end{equation}


\begin{thebibliography}{10}

\bibitem{AC}{\sc
Auscher, P. and Carro, M. J.},
Transference for radial multipliers and dimension free estimates, {\it
Trans. Am. Math. Soc.} {\bf  342}  (1994), 575-593.


\bibitem{CHW}{\sc
Chang, D-C., Han, Y. and Wu, X.}, Relations between product and flag Hardy spaces, {\it J. Geom.
Anal.} {\bf  31} (2021), 6601-6623.




\bibitem{CF80}{\sc
Chang, S.-Y. and  Fefferman, R.},
A continuous version of duality of $H^1$ with BMO on the bidisc, {\it
Ann. Math.} {\bf 112}  (1980), 179-201.

\bibitem{CF85}{\sc
Chang, S.-Y. and  Fefferman, R.},
Some recent developments in Fourier analysis and $H^p$-theory on product domains, {\it
Bull. Amer. Math. Soc.} {\bf 12}  (1985), 1-43.

\bibitem{CCLLO}{\sc   Chen, P., Cowling, M., Lee, M.-Y., Li, J.
  and  Ottazzi, A.},
Flag Hardy space theory on Heisenberg groups and applications, 	arXiv:2102.07371v3.


\bibitem{CDLWY}{\sc   Chen, P.,  Duong, X. T.,  Li, J.,
  Ward, L. and    Yan, L.},
Product Hardy spaces associated to operators with heat
kernel bounds on spaces of homogeneous type, {\it Math. Z.} {\bf 282}  (2016), 1033-1065.


\bibitem{ch}{\sc  Christ, M.},  A $T(b)$ theorem with remarks on analytic capacity and the Cauchy integral, {\it
    Colloq. Math.} {\bf
60}  (1990), 601-628.



\bibitem{CW} {\sc
Coifman, R.  and Weiss, G.}, {\it Transference methods in analysis},  CBMS Reg.
Conf. Series in Math. {\bf 31}, Amer. Math. Soc., Providence, 1977.


\bibitem{DLOPW}{\sc
Duong, X., Li, J.,   Ou, Y.  Pipher, J. and Wick, B.},  Commutators of multi-parameter flag singular
integrals and applications, {\it Analysis PDE}  {\bf  12} (2019), 1325-1355.


\bibitem{FWZ}{\sc
Fan, D.,  Wu, H. and Zhao, F.},
Transference of certain maximal Hilbert transforms on the torus, {\it Pacific J. Math.} {\bf 284} (2016), no.2, 343-364.


\bibitem{FeffS}{\sc   Fefferman, C. and Stein, E.M.},
Some maximal inequalities, {\it
Amer. J. Math.} {\bf  93}   (1971), 107-115.

\bibitem{Feff-S}{\sc  Fefferman, R. and Stein, E.M.},
  Singular integrals on product spaces, {\it  Adv. Math.}  {\bf   45} (1982),
117-143.



\bibitem{FS}{\sc  Folland, G. and Stein, E.M.}, {\it Hardy Spaces on Homogeneous Groups},  Mathematical
Notes  {\bf 28}, Princeton University Press,  Princeton,
N. J., 1982.

\bibitem{Gel}{\sc Geller, D.},
  Some results in $H^p$ theory for the Heisenberg group, {\it Duke Math. J.} {\bf  47} (1980), 365-390.

\bibitem{GM}{\sc Geller, D. and Mayeli, A.},
Continuous wavelets and frames on stratified Lie groups  I, {\it
J. Fourier Anal. Appl.} {\bf 12}  (2006), 543-579.

\bibitem{Gin}{\sc
 Gindikin, S.}, Analysis in homogeneous domains, {\it Russ. Math. Surv.} {\bf 19} (1964), 1-89.


\bibitem{GH}{\sc
Grafakos, L. and Honzak, P.}, Maximal transference and summability of multilinear Fourier series, {\it J. Aust. Math. Soc.}  {\bf  80} (2006), 65-80.

\bibitem{GuS}{\sc
  Gundy, R. and Stein, E.M.},   $H^p$
 theory for the poly-disc,  {\it Proc. Natl. Acad. Sci. USA} {\bf
76} (1979), 1026-1029.










\bibitem{HLLW}{\sc Han, Y. S., Lee,  M. Y.,  Li, J. and Wick,   B.},
 Maximal function, Littlewood-Paley theory, Riesz
transform and atomic decomposition in the multi-parameter flag setting, {\it Mem. Amer. Math.
Soc.} {\bf 279} (2023), no. 1373, 106 pp.

\bibitem{HLPW}{\sc Han, Y. S., Li, J., Pereyra,  M. C. and Ward, L. A.},
  Atomic decomposition of product Hardy spaces via
wavelet bases on spaces of homogeneous type, {\it  New York J. Math.} {\bf   27} (2021), 1173-1239.



\bibitem{HLW}{\sc Han, Y. S., Li, J.  and Ward, L. A.},
  Hardy space theory on spaces of homogeneous type via orthonormal
wavelet bases, {\it Appl. Comp. Harm. Anal.} {\bf   45} (2018), 120-169.

\bibitem{HLW19}{\sc Han, Y. S., Lin, C.-C.  and  Wu, X.},
Boundedness of singular integrals with flag kernels on weighted flag Hardy spaces, {\it
Pac. J. Math.} {\bf   302} (2019), 545-598.

\bibitem{HLS}{\sc Han, Y. S., Lu, G. and Sawyer, E.},
 Flag Hardy spaces and Marcinkiewicz multipliers on the Heisenberg
group, {\it  Analysis PDE} {\bf 7} (2014), 1465-1534.

\bibitem{J}{\sc
Journ\'e, J.-L.},
A covering lemma for product spaces, {\it
Proc. Amer. Math. Soc.} {\bf  96} (1986), 593-598.


\bibitem{KS}{\sc Koranyi, A. and Stein, E. M.}, $ H^2$-spaces of generalized half-planes, {\it  Studia Math.}  {\bf
    44} (1972), 379-388.


\bibitem{MM}{\sc
Malliavin, M.-P. and Malliavin, P.},
Int\'egrales de Lusin-Calder\'on pour les fonctions biharmoniques, {\it
Bull. Sci. Math.}  {\bf  101}  (1977), 357-384.




\bibitem{MRS}{\sc M\"uller,D. Ricci F.and Stein, E. M.}, Marcinkiewicz multipliers and multi-parameter structure
on Heisenberg (-type) groups  I, {\it  Invent. Math.}  {\bf  119}  (1995), 199-233.

\bibitem{MRS2}{\sc M\"uller,D. Ricci F.and Stein, E. M.}, Marcinkiewicz multipliers and multi-parameter structure
on Heisenberg (-type) groups  II, {\it
Math. Z.}  {\bf 221} (1996), 267-291.

\bibitem{NS}{\sc Nagel, A. and Stein  E. M.},
The $\overline{\partial}_b$-complex on decoupled boundaries in $\mathbb{C}^n$, {\it
Ann. Math. (2)}  {\bf 164}, No. 2,  (2006), 649-713.

\bibitem{NRS}{\sc Nagel, A., Ricci, F. and Stein  E. M.},
 Singular integrals with flag kernels and
analysis on quadratic CR manifolds, {\it   J. Funct. Anal.}  {\bf  181} (2001), 29-118.

 \bibitem{NRSW12}{\sc Nagel, A., Ricci, F., Stein  E. M. and Wainger S.},
Singular integrals with flag kernels on homogeneous groups  I, {\it
Rev. Mat. Iberoam.}  {\bf 28}  (2012), 631-722.

\bibitem{NRSW}{\sc Nagel, A., Ricci, F., Stein  E. M. and Wainger S.},
 Algebras of singular integral operators with
kernels controlled by multiple norms, {\it   Mem. Amer. Math. Soc.}  {\bf   256} (2018), no. 1230.

\bibitem{P}{\sc
Pipher, J.},  Journ\'e's covering lemma and its extension to higher dimensions, {\it Duke Math. J.}  {\bf   53}
(1986), 683-690.


\bibitem{RS}{\sc
Rothschild, L.  and Stein, E. M.}, Hypoelliptic differential operators and nilpotent groups, {\it  Acta Math.}  {\bf  137} (1976), 247-320.


\bibitem{St70}{\sc Stein  E. M.}, {\it Singular integrals and differentiability properties of functions},
Princeton Mathematical Series  {\bf 30}, Princeton University Press, Princeton,
N. J., 1970.

\bibitem{St93}{\sc Stein  E. M.}, {\it
  Harmonic analysis: real-variable methods, orthogonality, and oscillatory Integrals}, (with the assistance of Timothy S. Murphy),
  Princeton University Press, Princeton, NJ, 1993.

\bibitem{St}{\sc
Strichartz, R.}, Self-similarity on nilpotent Lie groups,  in {\it Generalized convex
bodies and generalized envelopes},   Contemp. Math. {\bf  140}, P. 123-157,  American Mathematical Society, Providence, 1992.


\bibitem{Ty}{\sc Tyson, J.}, Global conformal Assouad dimension in the Heisenberg group, {\it  Conf. Geom. Dynam.}  {\bf
12} (2008), 32-57.


\bibitem{WW}{\sc  Wang, W. and Wu, Q. Y.},
Atomic decomposition theorem for Hardy spaces on products of Siegel upper half spaces and bi-parameter Hardy spaces, {\it J. Geom. Anal.}  {\bf 33} (2023), no. 11,
Paper No. 351, 40 pp.

\end{thebibliography}
\end{document}